\DeclareMathAlphabet{\mathsfsl}{OT1}{cmss}{m}{sl}
\DeclareMathOperator*{\tr}{tr} 
\DeclareMathOperator*{\Ad}{Ad} \DeclareMathOperator*{\ad}{ad}
\DeclareMathOperator*{\im}{Im}\DeclareMathOperator*{\gl}{g\ell}
\DeclareMathOperator*{\Div}{div}
\DeclareMathOperator*{\id}{id}
\DeclareMathOperator*{\rang}{rang}
\newcommand{\esp}{\quad\mbox{et}\quad}
\newcommand{\prs}{\langle\;,\;\rangle}
\newcommand{\too}{\longrightarrow}
\newcommand{\om}{\omega}
\newcommand{\G}{{\cal G}}
\newcommand{\h}{{\cal H}}
\newcommand{\Li}{{\cal L}}
\newcommand{\B}{{\cal B}}
\newcommand{\Om}{\Omega}
\newcommand{\wi}{\widetilde}
\newcommand{\al}{\alpha}
\newcommand{\be}{\beta}
\newcommand{\ga}{\gamma}
\font\bb=msbm10
\def\B{\hbox{\bb B}}
\def\R{\hbox{\bb R}}
\newtheorem{theorem}{Théorème}
\newtheorem{corollary}{Corollaire}
\newtheorem{definition}{Définition}
\newtheorem*{example}{Exemples}
\newtheorem{lemma}{Lemme}
\newtheorem*{problem}{Problème}
\newtheorem{proposition}{Proposition}
\newtheorem*{remark}{Remarque}
\newtheorem*{remarks}{Remarques}
\def\thickhrulefill{\leavevmode \leaders \hrule height 0.5ex \hfill \kern \z@}
\def\@makechapterhead#1{%
  \vspace*{0\p@}%
  {\parindent \z@ \centering \reset@font
        \thickhrulefill\quad
        \scshape \@chapapp{} \thechapter
        \quad \thickhrulefill
        \par\nobreak
        \vspace*{10\p@}%
        \interlinepenalty\@M
        \hrule
        \vspace*{10\p@}%
        \Huge #1\par\nobreak
        \par
        \vspace*{10\p@}%
        \hrule
    \vskip 100\p@
  }}
\begin{document}
\pagenumbering{arabic}
\begin{center}
\large{Remerciements}
\end{center}
\qquad Ma thèse s'est déroulée à moitié à l'Université Kasdi Marbah de Ouargla et le Laboratoire Émile Picard de l’Université Paul Sabatier de Toulouse. Tout d'abord, je remercie vivement Monsieur Philippe Monnier de m'avoir accueilli dans son laboratoire afin que je puisse y effectuer mon travail de thèse. Mon séjour y a été des plus agréables et j'ai pu y bénéficier d'une ambiance stimulante. Ce double statut est le résultat d'un programme national de bourses algéro-françaises et du fait que Mohamed Belkhelfa et Mohamed Boucetta ont tous deux accepté de diriger ma thèse, durant ces cinq années, avec patience et dévouement. Je dois dire que cette situation m'a permis de bénéficier d'un encadrement de grande qualité et je tiens à leur exprimer ma plus sincère reconnaissance et toute ma sympathie. J'ai beaucoup apprécié Monsieur M. Belkhelfa pour son enthousiasme scientifique et Monsieur M. Boucetta pour son attention et son souci de rigueur permanent. Je les remercie pour l'expérience et les savoirs qu'ils ont su partager avec moi et pour m'avoir initié au vaste domaine qu'est la géométrie de Poisson.

     Je tiens à remercier Monsieur M. Benchohra d'avoir accepté de présider le jury et pour l'attention qui a porté sur mon travail et pour ce qu'il a fait, pour le bon déroulement de la soutenance. Je tiens également à remercier Monsieur M. Djaa et Monsieur M. Lakmeche d'avoir accepté la lourde tache d'examinateur, de m'avoir accordé leur temps et leur attention, ainsi que pour les remarques qu'ils ont pu me faire et qui m'ont permis de clarifier certains points de ce manuscrit.
     
     Je voudrais exprimer ma gratitude et ma vive sympathie à toutes les personnes qui m'ont permis d'avancer dans ma perception de certains problèmes : André Diatta, Nicolas Ciccoli, Paolo Bertozzini, Charles-Michel Marle, Pierre Lecomte, Dominique Manchon, Christian Ohn, Mohamed Sbai,... et bien d'autres encore, enseignants et chercheurs, dont la liste serait trop longue à énumérer ici.
     
	Enfin c'est avec joie que je remercie ma famille et mes amis pour leur soutien et encouragements.
Un grand merci à ma femme, à qui je dédie cette thèse.
\newpage
\newpage
\begin{center}
\large{\bf Connexions contravariantes sur les groupes de Lie-Poisson}
\end{center}
\begin{center}
\large{Résumé}
\end{center}
Ce travail est consacré à l'étude d'une classe de groupes de Lie-Poisson à métriques invariantes à gauche. Plus précisément, les triplets $(G,\pi,<\,,\,>)$, où $G$ est un groupe de Lie simplement connexe, $\pi$ est un tenseur de Poisson multiplicatif et $<\,,\,>$ est une métrique riemannienne invariante à gauche telles que les conditions de Hawkins sont satisfaites. Les conditions de Hawkins sont des conditions nécessaires, pour la déformation de l'algèbre graduée des formes différentielles d'une variété riemannienne. Ces conditions proviennent de la déformation non commutative du triplet spectral qui décrit la variété.

Le résultat principal de la thèse est l'équivalence entre le problème géométrique de classification des groupes de Lie-Poisson riemanniens qui vérifient les conditions de Hawkins et le problème algébrique de classification des structures de bigèbres de Lie sur les algèbres de Milnor qui vérifient certaines conditions.

Exploitant le fait que les structures de bigèbres de Lie sur les algèbres de Milnor, dans certaines situations, peuvent être calculées, On a déterminé les groupes de Lie-Poisson riemanniens satisfaisant les conditions de Hawkins dans le cas linéaire, dans le cas de Heisenberg, dans le cas triangulaire et en petites dimensions (jusqu'à la dimension $5$). Le cas général reste un problème ouvert.
\paragraph{Mots clefs :} Groupes de Lie-Poisson, connexions contravariantes, métacourbure.
\bigskip
\begin{center}
\rule{8cm}{.6pt}
\end{center}
\bigskip
\begin{center}
\large{\bf Contravariant connections on Poisson-Lie groups}
\end{center}
\begin{center}
\large{Abstract}
\end{center}
 This work is devoted to the study of a class of Poisson-Lie groups endowed with left invariant metrics. The triples $(G,\pi,<\,,\,>)$ are considered, where $G$ is a simply connected Lie group, $\pi$ is a multiplicative Poisson tensor and $<\,,\,>$ is a left invariant riemannian metric such that Hawkins conditions are satisfied. Hawkins conditions are necessary conditions for the deformation of the graded algebra of differential forms of a riemannian manifold. These conditions come from the deformation of the noncommutative spectral triple describing the manifold. 
The main result of this thesis is the equivalence between, on one hand, the geometric problem of classifying riemannian Poisson-Lie groups that satisfy the conditions of Hawkins and, secondly, the problem of classifying algebraic structures of Lie bialgebras on Milnor algebras that satisfy certain conditions. 
Exploiting the fact that the structures of Lie bialgebras on Milnor algebras, in certain situations, can be calculated, we determine riemannian Poisson-Lie groups that satisfy Hawkins in the linear case, in the case of Heisenberg in the triangular case and in low dimensions (up to dimension 5). The general case remains an open problem.
\paragraph{Keywords :} Poisson-Lie groups, contravariant connections, metacurvature.
\bigskip
\begin{center}
\rule{8cm}{.6pt}
\end{center}
\bigskip
\transfalse
\begin{arabtext}
\begin{center}
\large{\bf AlmtrAb.tAt kntrAfrywnt `l_A zmr ly pwAswn}
\end{center}
\begin{center}
\large{m--_ht.sr}
\end{center}
nt`r.d fy h_dh al'a.trw.hT 'il_A zmr ly pwAswn al--mzwdT b--mtryT ry--mAn al--lAmt.gyrT mn alysAr. ndrs al_tlA_tyAt $(G,\pi,<\,,\,>)$ Alty t--.hq--q ^srw.t t^swyh hAwkynz .hy_t $G$ zmrT mtrAb.tT b--bsA.tT, $\pi$ mwtr pwAswn ^gdA'iy w $<\,,\,>$ mtryT ry--mAn .gyr mt.gyrT mn alysAr.\\
Alnty^gT Alr'iysyT lh_dh al'a.trw.hT hw AltkAfw' byn Alms'alT AlhndsyT lhAwkynz w Alms'alT al^gbryT lm.dA`fT ^gbr ly `l_A bn_A ^gbr mlnwr. qmnA bdrAsT kAmlT lzmr pwAswn Al_h.tyT, zmr hAyznbr.g w kl zmr ly pwAswn _dAt Al'ab`Ad Al.s.gyrT, 'il_A .gAyT Alb`d Al_hAms. Al.hAlT Al`AmT lA tzAl t--m_tl ms'alT mftw.hT.\\
{\bf AlklmAt AldAlT $:$} zmr ly pwAswn, AlmtrAb.tAt kntrAfrywnt, mytA-'in--.hnA'.
\end{arabtext}
\newpage
\tableofcontents
\newpage
\thispagestyle{empty}
\begin{center}
\Large{Introduction}
\end{center}
\rule{.6cm}{0cm}
{\fontfamily{cmss}\selectfont
Le problème mathématique traité dans cette thèse trouve son origine dans un travail récent de Hawkins (\cite{haw1} et \cite{haw2}). Animé par des motivations physiques, Hawkins a étudié les déformations non commutatives de l'algèbre différentielle d'une variété différentielle. Plus précisément, étant donnée une variété différentielle $M$, l'espace des formes différentielles $(\Om^*(M),\wedge,d)$ muni du produit extérieur $\wedge$ et de la différentielle $d$ est une algèbre différentielle graduée, associative et commutative (au sens gradué). Il est connu que cette algèbre encode d'une certaine manière la géométrie de $M$\footnote{En fait, c'est l'idée de base et l'origine de la géométrie non commutative à la Connes.}. Une manière de chercher d'autres géométries (non commutatives) est de déformer cette algèbre.
Une déformation non commutative de $(\Om^*(M),\wedge,d)$ est la donnée d'une algèbre différentielle graduée $(\mathcal{A},\cdot,\delta)$, non nécessairement commutative, qui soit une extension de $(\Om^*(M),\wedge,d)$, c'est-à-dire, qu'il existe un homomorphisme (surjectif) d'algèbres différentielles graduées\footnote{Cet homomorphisme vérifie d'autres hypothèses que nous omettons ici, par souci de clarté.}
$$\mathcal{P} : \mathcal{A}\too \Om^*(M).$$
Si $\alpha,\beta$ sont deux éléments de $\Om^*(M)$, la formule
\begin{equation}\label{crochetpoisson}
\left\{\alpha,\beta\right\}=\mathcal{P}\left([\wi\alpha,\wi\beta]\right),\end{equation}
où $\wi\al,\wi\be$ sont deux antécédents de $\alpha$ et $\beta$, définit sur
$\Om^*(M)$ un crochet, appelé par Hawkins {\it crochet de Poisson généralisé}. Hawkins montre alors que $(\Om^*(M),\wedge,d,\{\;,\;\})$ est une algèbre de Poisson différentielle graduée (voir chapitre \ref{chapter2}). Il montre aussi que le crochet de Poisson généralisé sur $\Om^*(M)$ est entièrement déterminé par les crochets
\begin{equation}\label{crochetpoisson1}
\left\{f,g\right\}\esp \left\{f,\alpha\right\}\quad f,g\in C^\infty(M),\ \alpha\in\Om^1(M).\end{equation}
Le premier crochet donne naissance à une {\it structure de Poisson}\footnote{C'est la première notion fondamentale de ce travail.} sur $M$ et donc un tenseur de Poisson $\pi\in\Gamma(\wedge^2TM)$ et, en posant
\begin{equation}\label{connexion}\mathcal{D}_{df}\alpha:=\left\{f,\al\right\},\end{equation}
on définit $$\mathcal{D}:\Om^1(M)\times\Om^1(M)\too\Om^1(M),$$ qui est en fait une {\it connexion contravariante}\footnote{C'est la deuxième notion fondamentale de ce travail.} associée à $\pi$.
Cette connexion est sans courbure ni torsion.\\
Inversement, étant donnée une structure de Poisson sur $M$ et une connexion contravariante sans courbure ni torsion $\mathcal{D}$, les formules \eqref{crochetpoisson1} et \eqref{connexion} se généralisent pour définir sur $\Om^*(M)$ un crochet $\{\;,\;\}$ compatible avec $\wedge$ et $d$ (voir chapitre \ref{chapter2}). En général, ce crochet ne vérifie pas l'identité de Jacobi graduée. Hawkins a mis en évidence un tenseur ${\mathcal M}$  de type $(2,3)$ appelé {\it métacourbure}\footnote{C'est la troisième notion fondamentale de ce travail.} et a montré que ce crochet $\{\;,\;\}$ vérifie l'identité de Jacobi graduée, si et seulement si, ${\mathcal M}$ est identiquement nul.\\
En conclusion, toute déformation non commutative de $(\Om^*(M),\wedge,d)$ définit sur $\Om^*(M)$ un crochet de Poisson généralisé $\{\;,\;\}$. Ce crochet est entièrement déterminé par la donnée d'un tenseur de Poisson $\pi$ sur $M$, d'une connexion contravariante $\mathcal{D}$ associée à $\pi$ qui est sans courbure ni torsion et dont le tenseur de métacourbure est identiquement nul.
\paragraph{Problème mathématique étudié}~\\
Dans \cite{haw1} et \cite{haw2}, Hawkins a montré que si une déformation de l'algèbre des formes différentielles d'une variété riemannienne $(M,g)$ provient d'une déformation du triplet spectrale\footnote{La notion de triplet spectrale est une notion clef de la géométrie non commutative, nous invitons le lecteur curieux à consulter l'abondante littérature sur le sujet. (Voir \cite{var}).} décrivant la structure riemannienne, alors le tenseur de Poisson $\pi$ (associé à la déformation) et la métrique riemannienne vérifient les conditions suivantes :
\begin{enumerate}
\item la connexion de Levi-Civita contravariante $\mathcal{D}$ associée au couple $(\pi,g)$ est plate,
\item la métacourbure de $\mathcal{D}$ est nulle,
\item le tenseur de Poisson est {\it unimodulaire} par rapport au volume riemannien $\mu$, c'est-à-dire, $d(i_{\pi}\mu)=0$.
\end{enumerate}
La connexion de Levi-Civita contravariante $\mathcal{D}$ associée au couple $(\pi,g)$ est l'analogue de la connexion de Levi-Civita classique ; elle a été introduite dans \cite{bou:compatibility}. Un triplet $(M,\pi,g)$ satisfaisant  les conditions 1. et 2. (respectivement, 1., 2. et .3) sera dit {\it compatible (respectivement, fortement compatible) au sens de Hawkins.} \\
Dans \cite{haw2}, Hawkins a étudié la géométrie des triplets $(M,\pi,g)$ fortement compatibles, lorsque la variété $M$ est compacte. L'étude  des triplets  $(M,\pi,g)$ compatibles ou fortement compatibles au sens de Hawkins dans le cas général reste un problème ouvert et dans \cite{bou:yang-baxter} une large classe d'exemples a été donnée. On est maintenant en mesure d'énoncer le problème mathématique, objet de cette thèse.
\begin{problem}Caractériser les triplets $(G,\pi,\prs)$ compatibles ou fortement compatibles au sens de Hawkins, où $G$ est un groupe de Lie connexe, $\pi$ un tenseur de Lie-Poisson sur $G$ et $\prs$ une métrique riemannienne invariante à gauche sur $G$.\end{problem}
\paragraph{Solution du problème}~\\
Nous allons maintenant décrire la solution, du problème ci-dessus, telle qu'elle a été élaborée dans ce travail. Cette solution, formulée d'une manière précise dans les théorèmes \eqref{main1}-\eqref{main3}, constitue le cœur de cette thèse. Vu l'importance des groupes de Heisenberg, nous les avons traités séparément dans le Théorème \ref{main4} qui donne la solution du problème dans le cas particulier de ces groupes.\\ Avant d'énoncer ces théorèmes nous allons faire quelques rappels.
Plus précisément, on va introduire la notion d'{\it algèbre de Milnor}, notion centrale de la thèse, et rappeler brièvement la définition des  groupes de Lie-Poisson et leurs propriétés essentielles. La notion de groupe de Lie-Poisson a été introduite par Drinfel'd \cite{dr:bialgebra} et étudiée par Semenov Tian Shansky \cite{sts:dressing} (voir aussi \cite{lu-we:poi}).
\begin{enumerate}
\item Une algèbre de Milnor $\G$ est une algèbre de Lie réelle de dimension finie, munie d'un produit scalaire $\prs$ (défini-positif) telle que :
\begin{enumerate}
\item la sous-algèbre $S=\{u\in\mathcal{G}\mid \ad_u+\ad_u^t=0\}$ est abélienne (où $\ad_u^t$ dénote l'adjoint de $\ad_u$ par rapport à $\prs$),
\item l'idéal dérivé $[\G,\G]$ est abélien et $S^\perp=[\G,\G]$ (où $S^\perp$ est l'orthogonal de $S$ par rapport au produit scalaire $\prs$).\\ Cette terminologie est justifiée par un résultat classique de Milnor. En effet, dans \cite{mil}, Milnor a montré qu'une métrique riemannienne invariante à gauche sur un groupe de Lie est plate, si et seulement si, son algèbre de Lie est une somme semi-directe d'une algèbre abélienne $\mathfrak{b}$ avec un idéal abélien $\mathfrak{u}$ et, pour tout $u\in\mathfrak{b}$, $\ad_u$ est antisymétrique. Ce résultat peut être reformulé de façon plus précise et, en Proposition \ref{prmilnor}, on va montrer qu'une métrique riemannienne invariante à gauche sur un groupe de Lie est plate, si et seulement si, son algèbre de Lie est de Milnor.
\end{enumerate}
\item Soit $G$ un groupe de Lie et soit $\G$ son algèbre de Lie. Un tenseur de Poisson $\pi$ sur $G$ est dit multiplicatif si, pour tout $a,b\in G$, $$\pi(ab)=(L_{a})_*\pi(b)+(R_{b})_*\pi(a),$$ où $(L_{a})_*$ (resp. $(R_{b})_*$) dénote l'application tangente de la translation à gauche de $G$ par $a$ (resp. translation à droite de $G$ par $b$). En ramenant $\pi$ à l'élément neutre $e$ de $G$ par translation à droite, on obtient l'application $\pi_r:G\too{\cal G}\wedge{\cal G}$, définie par $\pi_r(g)=(R_{g^{-1}})_*\pi(g)$. Soit
$$\xi:=d_e\pi_r:{\cal G}\too{\cal G}\wedge{\cal G},$$ la dérivée intrinsèque de
$\pi_r$ en $e$. Le fait que $\pi$ est multiplicatif et de Poisson entraîne que $(\G,[\;,\;],\xi)$ est une bigèbre de Lie, c'est-à-dire, $\xi$ est un 1-cocycle par rapport à la représentation adjointe de $\G$ sur $\G\wedge\G$, et l'application duale de $\xi$, $$[\;,\;]^*:{\cal G}^*\times{\cal G}^*\too{\cal G}^*,$$
est un crochet de Lie sur ${\cal G}^*$. La bigèbre $(\G,[\;,\;],\xi)$ est appelé {\it bigèbre de Lie associée} à $(G,\pi)$. La correspondance
$$(G,\pi)\too (\G,[\;,\;],\xi)$$ entre l'ensemble des groupes de Lie-Poisson connexes et simplement connexes et l'ensemble des bigèbres de Lie est bi-univoque (Voir Théorème \ref{theomultiplicatif}, chapitre \ref{chapter1}).\\
D'un autre côté, $(\G^*,[\;,\;]^*,\rho)$ est aussi une bigèbre de Lie, où $\rho:\G^*\too\G^*\wedge\G^*$ est l'application duale du crochet de Lie de $\G$. La bigèbre de Lie $(\G^*,[\;,\;]^*,\rho)$ est appelée {\it bigèbre de Lie duale} de $(G,\pi)$ ou $(\G,[\;,\;],\xi)$. En vertu de ce qui précède, il existe un groupe de Lie-Poisson connexe et simplement connexe $(G^*,\pi^*)$ associé canoniquement à $(\G^*,[\;,\;]^*,\rho)$. C'est le groupe de Lie-Poisson dual de $(G,\pi)$. À noter que si $G$ est connexe et simplement connexe alors $(G,\pi)$ est le dual de $(G^*,\pi^*)$. À remarquer aussi, que si $\G^*$ est identifiée à l'espace des 1-formes différentielles invariantes à gauche et si $d:\G^*\too\G^*\wedge\G^*$ est la différentielle extérieure alors \begin{equation}\label{rho}\rho=-d.\end{equation} \item
 Un groupe de Lie-Poisson muni d'une métrique riemannienne invariante à gauche sera appelé {\it groupe de Lie-Poisson riemannien}.\\ La donnée d'une métrique invariante à gauche sur $G$ est équivalente à
 la donnée d'un produit scalaire $\prs$ sur son algèbre de Lie $\G$. On notera  $\prs^*$ le produit scalaire associé sur $\G^*$, défini par
 $$\langle\al,\be\rangle^*=\langle\#(\al),\#(\be)\rangle,$$où $\#:\G^*\too\G$ est l'isomorphisme défini par $\prs$.
\end{enumerate}
Exposons maintenant, nos principaux résultats :
\begin{theorem}\label{main1}
Soit $(G,\pi,\langle\;,\;\rangle)$ un groupe de Lie-Poisson riemannien et $(\mathcal{G}^*,[\,,\,]^*,\rho)$ sa bigèbre de Lie duale. Alors $(G,\pi,\langle\;,\;\rangle)$ est compatible au sens de Hawkins, si et seulement si :
\begin{enumerate}
\item $(\mathcal{G}^*,[\,,\,]^*,\langle\;,\;\rangle^*_e)$ est une algèbre de Milnor,
\item pour tout $\alpha,\beta,\gamma\in S=\{\alpha\in\mathcal{G}^*\mid \ad_\alpha+\ad_\alpha^t=0\}$,
\begin{equation}\label{flat}
\ad\nolimits_\alpha\ad\nolimits_\beta\rho(\gamma)=0.
\end{equation}
\end{enumerate}
\end{theorem}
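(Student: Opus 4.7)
\emph{Stratégie globale.} Le plan consiste à réduire les deux conditions géométriques de Hawkins (platitude et métacourbure nulle) sur $(G,\pi,\prs)$ à des conditions algébriques sur la bigèbre duale $(\mathcal{G}^*,[\,,\,]^*,\rho)$. Le point clef est que la multiplicativité de $\pi$ combinée à l'invariance à gauche de $\prs$ fait que $\mathcal{D}$ et la métacourbure $\mathcal{M}$ sont entièrement déterminées par leur action sur l'espace des $1$-formes invariantes à gauche, que l'on identifie naturellement à $\mathcal{G}^*$. Tout se ramène alors à des calculs sur $\mathcal{G}^*$ impliquant $[\,,\,]^*$, $\rho$ et le produit scalaire $\prs^*_e$.

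\emph{Équivalence avec la condition (1).} On montre d'abord que la restriction de $\mathcal{D}$ à $\mathcal{G}^*$ vérifie la formule de Koszul contravariante de \cite{bou:compatibility}, formule qui, après substitution, prend exactement la forme algébrique du produit de Levi-Civita d'une métrique invariante à gauche sur le groupe de Lie $G^*$. Par conséquent, la courbure de $\mathcal{D}$ s'annule à l'élément neutre (et donc partout, par homogénéité et multiplicativité) si et seulement si la métrique invariante à gauche correspondante sur $G^*$ est plate. La Proposition \ref{prmilnor} appliquée à $G^*$ fournit alors la condition (1) : $(\mathcal{G}^*,[\,,\,]^*,\prs^*_e)$ est une algèbre de Milnor.

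\emph{Équivalence avec la condition (2).} Supposons la structure de Milnor acquise : on a la décomposition orthogonale $\mathcal{G}^* = S\oplus S^\perp$ avec $S^\perp$ égal à l'idéal dérivé, $S$ et $S^\perp$ abéliens, et $\ad_\alpha$ antisymétrique pour $\alpha\in S$. Ces contraintes simplifient drastiquement $\mathcal{D}$ : la formule de Koszul donne $\mathcal{D}_\alpha\beta = \ad_\alpha\beta$ pour $\alpha\in S$, alors que $\mathcal{D}_\alpha\beta$ s'annule pour $\alpha\in S^\perp$, $\beta\in S$, et se réduit à un terme symétrique élémentaire sur $S^\perp\times S^\perp$. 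En injectant ces expressions dans la définition de $\mathcal{M}$ (telle que présentée dans \cite{haw1}) et en décomposant chaque entrée selon $S\oplus S^\perp$, l'identité de cocycle pour $\rho$ et l'abélianité de $S^\perp$ font s'annuler la quasi-totalité des termes. Ce qui subsiste, après projection, est précisément $\ad_\alpha\ad_\beta\rho(\gamma)$ pour $\alpha,\beta,\gamma\in S$ ; la nullité de $\mathcal{M}$ se traduit donc exactement par \eqref{flat}.

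\emph{Obstacle principal.} La partie délicate est le calcul de la métacourbure. Le tenseur $\mathcal{M}$ est de type $(2,3)$ avec des symétries mixtes, et son développement naïf produit de nombreux termes ; l'identification avec \eqref{flat} repose sur un usage répété et coordonné de l'identité de Jacobi dans $(\mathcal{G}^*,[\,,\,]^*)$, de la compatibilité de bigèbre (condition de $1$-cocycle pour $\rho$, compte tenu de l'égalité $\rho=-d$), et d'un suivi minutieux des composantes dans $S$ et dans $S^\perp$. Je m'attends à ce que ce soit le cœur technique de la preuve, le reste consistant essentiellement en une traduction entre vocabulaires géométrique et algébrique.
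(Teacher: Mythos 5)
Votre réduction de la première condition de Hawkins est correcte et suit exactement la démarche du texte : la restriction de $\mathcal{D}$ aux $1$-formes invariantes à gauche s'identifie au produit de Levi-Civita d'une métrique invariante à gauche sur le groupe dual $G^*$, et la Proposition \ref{prmilnor} appliquée à $G^*$ donne la condition (1).

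En revanche, votre traitement de la métacourbure comporte une lacune réelle, que le fait de la signaler comme \emph{obstacle principal} ne comble pas : la méthode que vous proposez pour la surmonter ne peut pas aboutir telle quelle. Vous affirmez qu'en injectant les valeurs de $\mathcal{D}$ sur $\mathcal{G}^*$ dans la définition de $\mathcal{M}$ et en décomposant chaque entrée selon $S\oplus[\mathcal{G}^*,\mathcal{G}^*]$, tout s'annule sauf $\ad_\alpha\ad_\beta\rho(\gamma)$ sur $S\times S\times S$. Or $\mathcal{M}$ n'est pas une expression algébrique en la restriction de $\mathcal{D}$ à $\mathcal{G}^*$ : la formule $\mathcal{M}(\alpha,\beta,\gamma)=-\mathcal{D}_\alpha\mathcal{D}_\beta d\gamma$ de \eqref{2parallel} n'est valable que si $\beta$ et $\gamma$ sont parallèles. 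C'est le cas des éléments de $S$ (d'où la composante $S\times S\times S$ et \eqref{flat}, puisque $\rho=-d$), mais pas des éléments de $[\mathcal{G}^*,\mathcal{G}^*]$, pour lesquels $\mathcal{D}_\gamma\beta=\ad_\gamma\beta\neq0$ en général lorsque $\gamma\in S$. Pour les entrées dans l'idéal dérivé il faut revenir au jacobiateur en écrivant localement $\alpha=\sum f_i\,dg_i$ ; le cas $\alpha,\beta\in[\mathcal{G}^*,\mathcal{G}^*]$ est le plus dur : on établit d'abord $\mathcal{M}(\alpha,\beta,\beta)=-[\beta,[\beta,d\alpha]]$, puis on montre que ce double crochet s'annule parce que $d\alpha$, image par le cocycle $d$ d'un élément de l'idéal dérivé, se décompose dans $S\wedge[\mathcal{G}^*,\mathcal{G}^*]\oplus\wedge^2[\mathcal{G}^*,\mathcal{G}^*]$ et que $[\mathcal{G}^*,\mathcal{G}^*]$ est abélien. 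Sans ces étapes, vous n'obtenez que la nécessité de \eqref{flat}, pas la suffisance (il faut prouver que $\mathcal{M}$ s'annule automatiquement dès qu'une entrée est dans l'idéal dérivé). Notez enfin que pour $\alpha\in[\mathcal{G}^*,\mathcal{G}^*]$ la formule de Koszul donne $\mathcal{D}_\alpha=0$ sur tout $\mathcal{G}^*$, et non seulement sur $S$ avec un terme symétrique résiduel sur $S^\perp\times S^\perp$.
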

\begin{theorem}\label{main2} Soit $(G,\pi)$ un groupe de Lie-Poisson connexe et unimodulaire et soit $\mu$ une forme volume invariante à gauche sur $G$. Alors $d\left(i_{\pi}\mu\right)=0$, si et seulement si :
\begin{enumerate}
\item $(\mathcal{G}^*,[\,,\,]^*)$ est une algèbre de Lie unimodulaire,
\item pour tout $u\in\G$,
\begin{equation}\label{unimodular}
\rho(i_{\xi(u)}\mu_e)=0,
\end{equation}où $\xi$ est le $1$-cocycle associé à $\pi$ et $\rho=-d$ est le $1$-cocycle dual prolongé comme différentiel à $\wedge^{\dim\G-2}\G^*$.\end{enumerate}
\end{theorem}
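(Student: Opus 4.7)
Le plan est d'exploiter l'hypothèse que $G$ est unimodulaire, ce qui entraîne que $\mu$ est en réalité bi-invariante. Travaillant dans un repère invariant à droite $(\tilde X_1,\ldots,\tilde X_n)$ et son dual $(\tilde\al_1,\ldots,\tilde\al_n)$, on a alors $\mu=\tilde\al_1\wedge\cdots\wedge\tilde\al_n$ et $\pi=\sum_{i<j}\pi_r^{ij}(g)\tilde X_i\wedge\tilde X_j$, où $\pi_r^{ij}$ sont les composantes de $\pi_r(g)\in\G\wedge\G$. En dérivant la relation de cocycle $\pi_r(\exp(tu)g)=\pi_r(\exp(tu))+\Ad_{\exp(tu)}\pi_r(g)$ en $t=0$, on obtient $\tilde X_u(\pi_r)(g)=\xi(u)+\ad_u\pi_r(g)$. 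De plus, comme $\rho=-d$ sur les formes invariantes à gauche et que les champs invariants à droite ont un crochet opposé à ceux invariants à gauche, la forme invariante à droite $\tilde\nu$ associée à toute $\nu\in\Lambda^k\G^*$ vérifie $d\tilde\nu=\widetilde{\rho\nu}$. Ces deux ingrédients permettent d'écrire $d(i_\pi\mu)|_g$ comme la valeur en $g$ de la forme invariante à droite associée à $\theta(g)=A+B(g)\in\Lambda^{n-1}\G^*$ où
\[A=\sum_k e_k^*\wedge i_{\xi(e_k)}\mu_e,\qquad B(g)=\sum_k e_k^*\wedge i_{\ad_{e_k}\pi_r(g)}\mu_e+\rho(i_{\pi_r(g)}\mu_e).\]
Comme $\pi_r(e)=0$, on a $B(e)=0$ ; ainsi $d(i_\pi\mu)=0$ équivaut à $A=0$ et $B\equiv 0$.

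L'étape suivante sera l'interprétation de $A=0$. L'identité élémentaire $\al\wedge i_P\mu_e=-i_{i_\al P}\mu_e$ ($\al\in\G^*$, $P\in\G\wedge\G$) ramène $A=0$ à $\sum_k i_{e_k^*}\xi(e_k)=0\in\G$. Un calcul direct dans la base $\{e_j\}$ montre que le coefficient de $e_j$ dans cette somme vaut $-\tr(\ad^*_{e_j^*})$, où $\ad^*$ désigne la représentation adjointe de $(\G^*,[\,,\,]^*)$. Par conséquent $A=0$ équivaut exactement à l'unimodularité de $\G^*$, c'est-à-dire à la condition~(1).

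L'étape la plus délicate concerne $B\equiv 0$. L'hypothèse d'unimodularité de $G$ assure que $\mu_e$ est $\ad$-invariant, donc que l'opérateur $I:P\mapsto i_P\mu_e$ est $\G$-équivariant : $I(\ad_uP)=L_uI(P)$ avec $L_u=-(\rho i_u+i_u\rho)$ (formule de Cartan avec le signe $\rho=-d$). En combinant cette équivariance avec l'identité $\sum_k e_k^*\wedge i_{e_k}\omega=(\deg\omega)\,\omega$ sur $\Lambda^*\G^*$, je vérifierai que l'opérateur $B_0:\Lambda^2\G\too\Lambda^{n-1}\G^*$ défini par $B_0(P)=\sum_k e_k^*\wedge I(\ad_{e_k}P)+\rho I(P)$ est lui-même $\G$-équivariant, donc $G$-équivariant par connexité. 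Couplée à la relation de cocycle pour $\pi_r$, cette équivariance donne $B(gh)=B(g)+\Ad_g^*B(h)$ pour $B=B_0\circ\pi_r$ ; ainsi $B$ est une 1-cochaîne de groupe à valeurs dans $\Lambda^{n-1}\G^*$ muni de la représentation coadjointe. Comme $G$ est connexe et $B(e)=0$, une telle cochaîne s'annule identiquement si et seulement si sa dérivée en $e$, à savoir $u\mapsto B_0(\xi(u))$, s'annule.

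Pour conclure, il restera à identifier $B_0(\xi(u))$ sous l'hypothèse $A=0$. La relation de 1-cocycle pour $\xi$, $\xi([u,v])=\ad_u\xi(v)-\ad_v\xi(u)$, combinée à l'équivariance de $I$, permet de réécrire $\sum_k e_k^*\wedge I(\ad_{e_k}\xi(u))=L_uA$ ; ce terme s'annule dès que $A=0$, d'où $B_0(\xi(u))=\rho(i_{\xi(u)}\mu_e)$. La condition $B\equiv 0$ devient ainsi précisément la condition~(2) de l'énoncé. Le point le plus technique sera la vérification soignée de l'équivariance de $B_0$, qui repose sur des manipulations précises dans $\Lambda^*\G^*$ utilisant la formule de Cartan et l'identité de l'opérateur degré.
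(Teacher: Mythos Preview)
Votre démarche est correcte et aboutit, mais elle diffère de celle de l'article. L'article travaille avec le champ de vecteurs modulaire $X_\mu$ défini par $d(i_\pi\mu)=i_{X_\mu}\mu$ : un calcul direct donne $X_\mu(e)=\kappa$ (le caractère modulaire de $\G^*$), puis la formule de Koszul $i_{[X,Q]}\mu=i_Xdi_Q\mu+(-1)^{\deg Q}di_Xi_Q\mu-i_Qdi_X\mu$, appliquée avec l'unimodularité de $G$, fournit $d(i_{[X,\pi]}\mu)=-i_{[X,X_\mu]}\mu$ ; on en déduit que $X_m=X_\mu-\kappa^+$ est un champ multiplicatif, et la conclusion découle du critère d'annulation des tenseurs multiplicatifs sur un groupe connexe. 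Votre approche contourne la formule de Koszul en calculant $d(i_\pi\mu)$ directement dans un repère invariant à droite : la décomposition $\theta(g)=A+B(g)$ et le fait que $B=B_0\circ\pi_r$ soit un $1$-cocycle de groupe jouent exactement le même rôle structurel que la décomposition $X_\mu=\kappa^++X_m$ et la multiplicativité de $X_m$ dans l'article. Les deux preuves reposent in fine sur le même argument (un cocycle sur un groupe connexe s'annule si et seulement si sa dérivée en $e$ s'annule). L'équivariance de $B_0$ que vous signalez comme point technique se vérifie bien : pour la partie $\rho I$ c'est la commutation $L_u\rho=\rho L_u$, et pour la somme $\sum_k e_k^*\wedge I(\ad_{e_k}\,\cdot\,)$ elle se ramène, via $[\ad_{e_k},\ad_u]=\ad_{[e_k,u]}$ et un simple changement d'indices, à une identité triviale. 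Votre argument est plus calculatoire que celui de l'article, mais il a l'avantage d'être auto-contenu et de rendre transparent où intervient chaque hypothèse (bi-invariance de $\mu$, relation de cocycle pour $\pi_r$, relation de cocycle pour $\xi$).
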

On va voir ({\it cf.} Proposition \ref{unimod.néc}) que pour un groupe de Lie-Poisson connexe quelconque, la condition $d\left(i_{\pi}\mu\right)=0$ implique \eqref{unimodular}.

Si $G$ est abélien alors $\rho=0$ et on peut déduire des Théorèmes \eqref{main1} et \eqref{main2} le résultat suivant.
\begin{corollary}\label{corollary1}Soit $(\G,\prs)$ une algèbre de Lie munie d'un produit scalaire et soit $\pi_\ell$ la structure de Poisson linéaire canonique sur $\G^*$. Alors $(\G^*,\pi_\ell,\prs^*)$ est fortement compatible au sens de Hawkins si et seulement si $(\G,\prs)$ est une algèbre de Milnor.
\end{corollary}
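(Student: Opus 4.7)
The plan is to apply Theorems~\ref{main1} and~\ref{main2} directly to the Lie--Poisson group $(\G^*,\pi_\ell,\prs^*)$, where $\G^*$ is viewed as the additive abelian Lie group underlying the dual vector space (an abelian group, hence unimodular, on which $\pi_\ell$ is automatically multiplicative because all translations are affine). The guiding observation is that because this underlying group is abelian, several of the conditions appearing in those theorems will trivialize.

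First I would identify the associated Lie bialgebra and its dual. Since the group $\G^*$ is abelian, its Lie algebra is $\G^*$ with zero bracket. A short computation of $\xi=d_0\pi_{\ell,r}$, using that right translations on $\G^*$ are affine and hence act as the identity on tangent spaces, shows that the cocycle $\xi:\G^*\too\G^*\wedge\G^*$ is precisely the dual of the Lie bracket of $\G$. Consequently the dual Lie bialgebra is $(\G,[\,,\,],\rho)$, where the bracket on $\G$ is the original one and the cocycle $\rho$ vanishes identically; the inner product induced on $\G$ is $(\prs^*)^*=\prs$.

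Applying Theorem~\ref{main1} to this data, condition~(1) becomes exactly the statement that $(\G,[\,,\,],\prs)$ is a Milnor algebra, while condition~(2), which involves $\rho(\gamma)$, holds trivially since $\rho=0$. To obtain strong compatibility I would then invoke Theorem~\ref{main2}. Its hypothesis~(1) demands that $(\G,[\,,\,])$ be unimodular, which follows for free from $(\G,\prs)$ being Milnor: in the orthogonal splitting $\G=S\oplus[\G,\G]$, the operator $\ad_u$ is skew-symmetric and hence traceless for $u\in S$, whereas for $u\in[\G,\G]$ one has $\ad_u([\G,\G])=0$ (because $[\G,\G]$ is abelian) and $\ad_u(S)\subset[\G,\G]$, so $\ad_u$ is strictly lower-triangular in this splitting and again traceless. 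Hypothesis~(2) once more features the dual cocycle $\rho=0$ and is therefore trivially verified.

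The main (modest) obstacle is careful bookkeeping: in this application the roles of $\G$ and $\G^*$ are interchanged compared with the statements of the two theorems, so one must check that the dual of the dual bialgebra genuinely recovers the original Milnor bialgebra $(\G,[\,,\,],\prs)$, and that the Milnor inner product condition really involves $\prs$ itself rather than some musically raised version of it.
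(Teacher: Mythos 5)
Your proposal is correct and follows essentially the same route as the paper: the underlying group $\G^*$ is abelian, so the dual cocycle $\rho$ vanishes, conditions (2) of Théorème~\ref{main1} and (2) of Théorème~\ref{main2} become vacuous, and condition (1) of Théorème~\ref{main2} (unimodularité de $\G$) is automatic for a Milnor algebra — a fact the paper records in the remarks following the definition of Milnor algebras, with the same trace argument you give. Your bookkeeping of the bidual identification and of the induced inner product $(\prs^*)^*=\prs$ is also consistent with the paper's conventions.
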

Le Théorème suivant est une conséquence intéressante des Théorèmes \eqref{main1} et \eqref{main2}.
\begin{theorem}\label{main3}Soit $(G,\pi,\prs)$ un groupe de Lie-Poisson riemannien. Si $G$ est compact semi-simple, $\prs$ est bi-invariant et $\pi=r^--r^+$ où $r^+$ (resp. $r^-$) est le champ de bivecteurs invariant à gauche (resp. invariant à droite) associé à $r\in\wedge^2\G$. Alors $(G,\pi,\prs)$ est fortement compatible au sens de Hawkins, si et seulement si, $[r,r]=0$.\end{theorem}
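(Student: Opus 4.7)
The strategy is to apply Theorems \ref{main1} and \ref{main2} to translate the Hawkins conditions into algebraic conditions on the dual Lie bialgebra $(\G^*,[\,,\,]^*,\rho)$, and then to exploit the bi-invariance of $\prs$ together with the compact semi-simplicity of $G$ to show that those conditions are equivalent to $[r,r]=0$.

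First I set up the computational framework. Since $\prs$ is bi-invariant, the musical isomorphism $\#:\G^*\too\G$ is $\Ad$-equivariant and every $\ad_u$ is skew with respect to $\prs$. Let $R:\G\too\G$ be the skew operator defined by $\langle Ra,b\rangle=r(\#^{-1}a,\#^{-1}b)$. The $1$-cocycle associated to $\pi=r^--r^+$ is the coboundary $\xi(u)=\ad_u r$, and a direct computation transports the dual bracket $[\,,\,]^*$ via $\#$ to
\[
[a,b]_R=[a,Rb]+[Ra,b]
\]
on $\G$; under the same identification the subspace $S\subset\G^*$ of Theorem \ref{main1} becomes $\{a\in\G\mid\ad_a\circ R+R\circ\ad_a=0\}$.

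For the implication $[r,r]=0\entraine$ strong Hawkins-compatibility, the classical Yang--Baxter equation gives $[Ra,Rb]=R\bigl([a,b]_R\bigr)$, so $R$ is a Lie algebra morphism from $(\G,[\,,\,]_R)$ to $(\G,[\,,\,])$. From this one verifies that $\ker R\subseteq S$ is abelian, that $\im R$ is abelian under $[\,,\,]_R$, and that $S^\perp=\im R=[\G,\G]_R$, so $(\G^*,[\,,\,]^*,\prs^*)$ is Milnor. The flatness condition \eqref{flat} and the two unimodularity conditions of Theorem \ref{main2} then follow by direct calculation, using that $\G$ is unimodular and that $\xi$ is a coboundary.

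For the converse, Theorem \ref{main1} forces $(\G^*,[\,,\,]^*,\prs^*)$ to be Milnor; in particular the derived ideal $[\G,\G]_R$ is abelian with respect to $[\,,\,]_R$. The defect of $R$ being a Lie morphism,
\[
\Phi(a,b):=[Ra,Rb]-R\bigl([a,b]_R\bigr),
\]
is controlled explicitly by the $\Ad$-invariant trivector $[r,r]\in(\wedge^3\G)^\G$, since in the bi-invariant compact semi-simple setup the standard identities for the Schouten bracket of $r$ with itself can be fully evaluated on $\G$. Combining this with the abelian character of $[\G,\G]_R$, the flatness \eqref{flat} and the $\Ad$-invariance of $[r,r]$, one finds that every invariant component of $[r,r]$ must vanish, yielding $[r,r]=0$. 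The main obstacle will be precisely this converse direction: establishing the clean identity expressing $\Phi$ in terms of $[r,r]$, and then using the Milnor structure together with \eqref{flat} and the unimodularity conditions to eliminate all $\Ad$-invariant components of $[r,r]$. Bi-invariance of $\prs$ is indispensable here, since it allows every condition on $\G^*$ to be translated transparently into one on $\G$, and the compact semi-simplicity is used to rule out potentially ``exotic'' $\Ad$-invariant trivectors through the rigidity of the Lie algebra structure.
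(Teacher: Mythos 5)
Your converse direction contains the genuine gap, and it is a gap of idea, not merely of detail. Strong compatibility in the sense of Hawkins already contains, as its first condition, the flatness of the contravariant Levi-Civita connection $\mathcal{D}$ associated with $(\pi,\prs)$, and in the bi-invariant situation this connection has the explicit form $\mathcal{D}_\al\be=\ad^*_{r_{\#}(\al)}\be$, whose curvature is $K(\al,\be)\ga=\ad^*_{[r,r](\al,\be,\cdot)}\ga$ (this is the formula the paper quotes from \cite{haw1}). Since $\G$ is semi-simple its center is trivial, so $K=0$ forces $[r,r](\al,\be,\cdot)=0$ for all $\al,\be$, i.e. $[r,r]=0$: the implication you flag as ``the main obstacle'' is an immediate consequence of this curvature formula, before the conditions \eqref{flat} and \eqref{unimodular} even enter. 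Your proposed substitute --- expressing the morphism defect $\Phi$ through $[r,r]$ and then using the Milnor structure, \eqref{flat} and the unimodularity conditions to eliminate ``all $\Ad$-invariant components of $[r,r]$'' --- is left entirely unestablished, and the mechanism you invoke is dubious: $\Ad$-invariant trivectors on a compact semi-simple algebra are not exotic at all (the Cartan form $(x,y,z)\mapsto\langle[x,y],z\rangle$ is one), so no ``rigidity'' of $\G$ can rule them out; what kills $[r,r]$ is flatness together with the triviality of the center, and neither \eqref{flat} nor the modular conditions play any role in this implication.

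For the direct implication your plan runs parallel to the paper's, but the assertions you leave as ``one verifies'' and ``follow by direct calculation'' are precisely where the work lies. The paper does not check the Milnor property of $(\G^*,[\,,\,]^*,\prs^*)$ by hand: it obtains it from flatness (again via the curvature formula) through Theorem \ref{main1}, and the real content is then the verification of \eqref{flat}, which rests on the nontrivial fact that $\im r_{\#}$ is a symplectic unimodular, hence solvable, subalgebra carrying a bi-invariant scalar product and is therefore abelian, followed by a computation in a basis adapted to $\G=\im r_{\#}\oplus(\im r_{\#})^\perp$, and finally the verification of \eqref{unimodular} via the Koszul formula and the unimodularity of $G$. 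Your claims that $\ker R\subseteq S$, that $\im R$ is abelian for $[\,,\,]_R$ and that $S^\perp=\im R=[\G,\G]_R$ are not obvious (the last equality in particular needs an argument), so as written both halves of your proposal are programmatic; the missing ingredient in each is the explicit expression of $\mathcal{D}$ and of its curvature in terms of $r$ in the bi-invariant case.
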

Soit $H_n$ le groupe de Heisenberg de dimension $2n+1$, $\h_n$ son algèbre de Lie et $z$ un élément non nul du centre de $\h_n$. Il existe une 2-forme $\om$ sur $\h_n$, telle que $i_{z}\om=0$, la projection de $\om$ sur $\h_n/(\R z)$ est non-dégénérée et, pour tous $u,v\in\h_n$, 
\[[u,v]=\om(u,v)z.\]
\begin{theorem}\label{main4}Soient $\pi$ et $\prs$, respectivement, un tenseur de Poisson multiplicatif et une métrique riemannienne invariante à gauche sur $H_n$. Alors $(H_n,\pi,\prs)$ est fortement  compatible au sens de Hawkins, si et seulement si :
\begin{enumerate}
\item il existe un endomorphisme $J:\mathcal{H}_n\too\mathcal{H}_n$ antisymétrique par rapport à $\prs_e$ tel que $J(z)=0$ et, pour tout $u\in\mathcal{H}_n$, $\xi(u)=z\wedge Ju,$ où $\xi:\h_n\too \h_n\wedge \h_n$ est le $1$-cocycle associé à $\pi$,
\item pour tous $u,v\in\mathcal{H}_n$, $\omega(J^2u,v)+\omega(u,J^2v)+2\omega(Ju,Jv)=0.$
\end{enumerate}
\end{theorem}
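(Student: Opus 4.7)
The plan is to apply Theorems \ref{main1} and \ref{main2} to $H_n$, which together characterize strong Hawkins compatibility by four conditions on the bialgebra $(\h_n,[\,,\,],\xi)$ and its dual: (A) $(\h_n^*,[\,,\,]^*,\prs_e^*)$ is a Milnor algebra, (B) $\ad_\alpha\ad_\beta\rho(\gamma)=0$ for all $\alpha,\beta,\gamma\in S$, (C) $(\h_n^*,[\,,\,]^*)$ is unimodular, and (D) $\rho(i_{\xi(u)}\mu_e)=0$ for all $u\in\h_n$. I first parametrize Lie bialgebra structures on $\h_n$: the cocycle identity, combined with $[u,v]=\omega(u,v)z$ and the centrality of $z$, forces $\xi(z)\in z\wedge\h_n$. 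Writing $\h_n=\R z\oplus V$ with $V=z^\perp$, every cocycle admits a decomposition $\xi(u)=z\wedge Lu+\phi(u)$ with $\phi(u)\in\wedge^2V$ and $\phi(z)=0$. A useful Heisenberg-specific identity is $d=-\omega\wedge i_z$ on $\wedge^*\h_n^*$, which follows from $d\alpha=-\alpha(z)\omega$ on $1$-forms together with the antiderivation property.

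For the easy implication $(\Leftarrow)$, suppose $\xi(u)=z\wedge Ju$ with $J$ antisymmetric, $Jz=0$, and assume condition 2. Through the metric identification $\#:\h_n^*\to\h_n$, the dual bracket takes the form $[U,V]'=\langle V,z\rangle JU-\langle U,z\rangle JV$. A direct computation gives $S=\ker J$ and $[\h_n^*,\h_n^*]'=\im J=(\ker J)^\perp$, both abelian, so (A) holds; the trace $\tr\ad_U'=\langle JU,z\rangle-\langle U,z\rangle\tr J=0$ gives (C); the identity $d=-\omega\wedge i_z$ together with $i_z^2=0$ yields $\rho(i_{\xi(u)}\mu_e)=\omega\wedge i_z(i_{Ju}i_z\mu_e)=-\omega\wedge i_{Ju}i_z^2\mu_e=0$, hence (D). For (B), on $\alpha\in S$ one has $\ad_\alpha'=-\langle U_\alpha,z\rangle J$ extended as a derivation, while $\rho(\gamma)$ corresponds under $\#$ to $\langle U_\gamma,z\rangle\omega$; taking $\alpha=\beta=\gamma=z^\flat$ (which lies in $S$ since $z\in\ker J$) reduces (B) to $\Li_J^2\omega=0$, which unfolds to $\omega(J^2u,v)+2\omega(Ju,Jv)+\omega(u,J^2v)=0$, exactly condition 2.

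For the converse $(\Rightarrow)$, I must derive from (A)-(D) that $\phi=0$, $Lz=0$, and $L|_V$ is antisymmetric; then $J:=L$ yields condition 1, and condition 2 follows from (B) by the computation above. The main obstacle is the elimination of $\phi$. In dimension $3$ ($n=1$), an explicit co-Jacobi calculation for $[\,,\,]'$ shows that $\phi\neq 0$ forces $S=\{0\}$, which contradicts the Milnor identity $S^\perp=[\h_n^*,\h_n^*]'$ since the derived ideal is at most $2$-dimensional. For general $n$, I expect the argument to combine the Milnor decomposition with condition (D)---which unfolds to $i_{\phi(u)}\omega=0$, forcing the symplectic trace of $\phi(u)$ to vanish---and with the co-Jacobi identity for $[\,,\,]'$, so as to eliminate the remaining components of $\phi$. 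Once $\phi=0$, the cocycle relation yields $Lz=0$, and $z$ cannot lie in the derived ideal (which is contained in $V$), so the orthogonal decomposition $\h_n^*=S\oplus S^\perp$ forces $z\in S$; via the identity $\ad_z'|_V=L^t$, this is equivalent to $L|_V$ being antisymmetric.
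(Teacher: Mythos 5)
Your forward implication is fine and, modulo the dualization, it is essentially the paper's own verification: the dual bracket formula $[U,V]'=\langle V,z\rangle JU-\langle U,z\rangle JV$, the identifications $S=\ker J$ and $[\h_n^*,\h_n^*]=\im J=(\ker J)^\perp$, the identity $d=-\omega\wedge i_z$ on invariant forms, and the observation that condition (B) collapses to the single equation at $z^\flat$ reproduce the paper's computation $[\lambda,[\lambda,d\lambda]]=0$ with $d\lambda=-\omega$ and $[\lambda,\rho](u,v)=\rho(Ju,v)+\rho(u,Jv)$, as well as its check of the modular condition.

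The converse, however, has a genuine gap at its crux: the elimination of the component $\phi(u)\in\wedge^2V$ of the cocycle. You carry this out only for $n=1$, and for general $n$ you merely state that you ``expect'' a combination of the Milnor decomposition, condition (D) and the co-Jacobi identity to work; that is a plan, not a proof. Note that (D) yields only the scalar relation $\langle\omega,\phi(u)\rangle=0$ for each $u$ (vanishing of the symplectic trace of $\phi(u)$), which is very far from $\phi(u)=0$ once $n\geq 2$, and the co-Jacobi identity for the dual bracket in the presence of a nonzero $\phi$ is a substantial computation that you have not performed. This missing step is precisely where the paper invests its effort: from the Milnor structure of $(\h_n^*,[\;,\;]^*)$ it transports an adapted basis through $\sharp$, obtaining $\sharp(S)=\ker\xi$ together with $\xi(e_i)=-v_i\wedge f_i$, $\xi(f_i)=v_i\wedge e_i$; it then proves $\xi(z)=0$ by contradiction (if $z\notin\ker\xi$ then $\ker\xi$ is abelian, one finds $u_0\in\ker\xi$, $v_0\in\ker\xi^{\perp}$ with $[u_0,v_0]=z$, and the cocycle identity forces $\omega(u,e_i)^2+\omega(u,f_i)^2=0$, contradicting $z\notin\ker\xi$); finally the cocycle identity applied to the pairs $(e_i,f_j)$ and $(u,e_i)$ shows that the $v_i$ lie in the center, hence are proportional to $z$, which is exactly what produces $\xi(u)=z\wedge Ju$ with $J$ antisymmetric and $Jz=0$. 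Until you supply an argument of comparable strength valid for every $n$ (note the paper extracts it from flatness alone, i.e.\ from your condition (A)), the hard direction of the theorem is not established. A smaller point: with $\phi=0$ the cocycle identity only gives $z\wedge Lz=0$, i.e.\ $Lz\in\R z$, so you must first normalize $L$ modulo perturbations with values in $\R z$ (which do not change $\xi$) before asserting $Lz=0$ and reading the antisymmetry of $L$ off $z\in S$.
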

Au vu des Théorèmes \ref{main1} et \ref{main2}, on peut dire que :
\begin{enumerate}
\item La classification des groupes de Lie-Poisson riemanniens connexes et simplement connexes qui sont compatibles au sens de Hawkins, est équivalent à la classification des structures de bigèbres de Lie sur les algèbres de Milnor pour lesquelles \eqref{flat} est vérifiée.
\item La classification des groupes de Lie-Poisson riemanniens unimodulaires, connexes et simplement connexes qui sont fortement compatibles au sens de Hawkins, est équivalent à la classification des structures de bigèbres de Lie sur les algèbres de Milnor pour lesquelles \eqref{flat} et \eqref{unimodular} sont vérifiées.
\end{enumerate}
En conclusion, nous avons donc réduit le problème géométrique à un problème algébrique largement plus simple, comme le montre les calculs faits en petites dimensions. En effet, les structures de bigèbres de Lie sur les algèbres de Milnor de dimension $\leq5$ peuvent être calculées, et donc les groupes de Lie-Poisson riemanniens de dimension $\leq 5$ fortment compatibles au sens  de Hawkins peuvent être déduits. (Voir chapitre \ref{chapter4}).
\paragraph{Plan de la thèse}~\\
Cette thèse se présente de la façon suivante :
\begin{enumerate}
\item Premier chapitre de généralités, où on rappelle brièvement les notions et résultats classiques sur les variétés de Poisson, les groupes de Lie-Poisson et la classe modulaire des structures de Poisson.
\item Deuxième chapitre, consacré à la notion de connexion contravariante, le crochet de Poisson généralisé et la notion de métacourbure. Ce sont des notions moins connues et nous avons tenu à les introduire d'une manière détaillée.
\item Le troisième chapitre est consacré aux démonstration des Théorèmes \ref{main1} et \ref{main2}.
\item Le quatrième chapitre est consacré aux exemples:
 \begin{enumerate}\item on détermine tous les triplets $(V,\pi,\prs)$ compatibles au sens de Hawkins, où $V$ est un $\R$-espace vectoriel, $\pi$ un tenseur de Poisson linéaire et $\prs$ un produit scalaire sur $V$,
 \item on démontre le Théorème \ref{main4},
 \item on démontre le Théorème \ref{main3},
 \item on détermine tous les triplets $(G,\pi,\prs)$ fortement compatibles au sens de Hawkins, quand $\dim G\leq5$.
 \end{enumerate}
\item Dans l'annexe on indique quelques méthodes pour intégrer les bigèbres de Lie, on donne quelques définitions utiles, attachées à différentes notions de cohomologie.
\end{enumerate}
On termine avec une conclusion, où on indique quelques problèmes ouverts qui constituent la suite logique de ce travail. À la fin, on donne une bibliographie, forcément non exhaustive, de l'abondante littérature qui existe sur les sujets liés à cette thèse.}

\pagestyle{fancy}
\renewcommand{\chaptermark}[1]{%
\markboth{#1}{}}
\renewcommand{\chaptermark}[1]{\markboth{#1}{}}
\fancyhf{} 
\fancyhead[R]{\scriptsize{\textcircled{c}
\textsf{BAHAYOU}}\hspace{0.5cm}\bfseries\thepage}\fancyhead[L]{
\bfseries\leftmark}
\renewcommand{\headrulewidth}{0.5pt}
\renewcommand{\footrulewidth}{0pt}
\addtolength{\headheight}{0.5pt} 
\fancypagestyle{plain}{ 
\fancyhead{} 
\renewcommand{\headrulewidth}{0pt} 
}
\newpage
\chapter{Généralités sur les structures de Poisson}\label{chapter1}
\epigraph{La musique est une mathématique sonore, la mathématique une musique silencieuse.}{Edouard Herriot.}
\lettrine[lines=3, nindent=0em]{D}{ans} ce premier chapitre, on introduit les notions essentielles qui sont attachées à une variété de Poisson. Tout le matériel exposé ici est connu, mais on l'introduit pour avoir un texte "complet", avec un minimum de renvois à des références extérieures. Pour un traitement détaillé de ces thèmes, le lecteur pourra consulter les ouvrages de Vaisman \cite{vai}, Dufour \& Zung \cite{duf-zun}.
\section{Éléments de géométrie de Poisson}
Le crochet de Poisson de fonctions a été introduit par D. Poisson \cite{poi}, comme outil pour l'étude du mouvement des planètes. Par la suite, les structures de Poisson sont apparues comme généralisation des variétés symplectiques, qui formalisent la mécanique hamiltonienne.\\
Les variétés de Poisson, comme on les connait aujourd'hui, ont été introduites par Lichnerowicz                                                                \cite{lich1}, \cite{lich2}. Leur importance a été rapidement reconnue par Weinstein qui en a étudié les propriétés locales \cite{we:local}. Elles jouent aussi un rôle fondamental en physique dans la transition mécanique classique-mécanique quantique (théorie de quantification). Depuis, les articles de Weinstein, plusieurs aspects de la géométrie de Poisson ont fait l'objet d'intenses recherches, citons notamment : le problème de linéarisation, la cohomologie de Poisson-Lichnerowicz, la quantification par déformation, les groupes de Lie-Poisson...
\paragraph{Variété de Poisson.}
Un \emph{crochet de Poisson} sur une variété différentiable $M$ est une application $\mathbb{R}$-bilinéaire                                                                                                   $\{\,,\,\}$ sur l'algèbre $C^\infty(M)$ des fonctions lisses sur $M$ vérifiant
\begin{itemize}
\item[$(\mathrm{i})$] L'antisymétrie
$$\{f,g\}=-\{g,f\},\quad\text{pour tout}\ f,g\in C^\infty(M).$$
\item[$(\mathrm{ii})$] La règle de Leibniz
$$\{f,gh\}=\{f,g\}h+g\{f,h\},\quad\text{pour tout}\ f,g,h\in C^\infty(M).$$
\item[$(\mathrm{iii})$] L'identité de Jacobi
$$\{f,\{g,h\}\}+\{g,\{h,f\}\}+\{h,\{f,g\}\}=0,\quad\text{pour tout}\ f,g,h\in C^\infty(M).$$
\end{itemize}
Une variété munie d'un crochet de Poisson est appelée \emph{variété de Poisson}.\\
Soient $(M,\{\,,\,\})$ une variété de Poisson et $\mathcal{A}=C^\infty(M)$. Le crochet de Poisson fait de $\mathcal{A}$ une algèbre de Lie. La règle de Leibniz implique, pour toute fonction lisse $f$ sur $M$, que l'application linéaire $g\mapsto\{f,g\}$ est une dérivation de $\mathcal{A}$. À chaque fonction $f$ correspond un champ de vecteurs $X_f$, appelé l'hamiltonien de $f$, défini par $X_f(g)=\{f,g\}$. De l'identité de Jacobi on déduit également que
\begin{equation}
[X_f,X_g]=X_{\{f,g\}}.
\end{equation}
Autrement dit, l'ensemble des champs de vecteurs hamiltoniens est une sous-algèbre de l'algèbre de Lie des champs de vecteurs $\left(\mathfrak{X}(M),[\,,\,]\right)$ et l'application $f\mapsto X_f$ définit un homomorphisme d'algèbres de Lie de $\mathcal{A}$ dans $\mathfrak{X}(M)$.\\
Pour tout crochet $\{\,,\,\}$, sur l'algèbre des fonctions d'une variété différentielle $M$, bilinéaire, antisymétrique qui vérifie l'identité de Leibniz, est associé un unique tenseur $2$-fois contravariant antisymétrique, noté $\pi$, tel que
\begin{equation}\label{tenseur}
\{f,g\}=\pi(df,dg).
\end{equation}
Si de plus $\{\,,\,\}$ vérifie l'identité de Jacobi, $\pi$ est appelé \emph{tenseur de Poisson} de $M$.\\
En coordonnées locales $(U,x_1,...,x_n)$ un tel tenseur s'écrit :
\[\pi=\sum_{i<j}\pi_{ij}\,\partial_{x_i}\wedge\partial_{x_j}=\frac{1}{2}\sum_{i,j}\pi_{ij}\,\partial_{x_i}\wedge\partial_{x_j},\]
où les $\pi_{ij}$ sont des fonctions $C^\infty$ sur $U$, tel que $\pi_{ij}=-\pi_{ji}$.\\
L'identité de Jacobi pour $\{,\}$ est équivalente à la condition :
\begin{equation}\label{Poisson}
\oint_{jk\ell}\sum_{i=1}^n\frac{\partial \pi_{k\ell}}{\partial x_i}\pi_{ij}=0,\quad\text{pour tout}\ j,k,\ell=1,...,n
\end{equation}
où $\oint_{jk\ell}a_{jk\ell}$ dénote la somme circulaire $a_{jk\ell}+a_{k\ell j}+a_{\ell jk}$. En effet, compte tenu de \eqref{tenseur}, le jacobiateur
\[J(f,g,h)=\{f,\{g,h\}\}+\{g,\{h,f\}\}+\{h,\{f,g\}\},\]
est égal à 
\[J(f,g,h)=\sum_{j,k,\ell}\left[\sum_{i=1}^n\left(\frac{\partial \pi_{k\ell}}{\partial x_i}\pi_{ij}+\frac{\partial \pi_{\ell j}}{\partial x_i}\pi_{ik}+\frac{\partial \pi_{jk}}{\partial x_i}\pi_{i\ell}\right)\right]\left(\frac{\partial f}{\partial x_k}\,\frac{\partial g}{\partial x_\ell}\,\frac{\partial h}{\partial x_j}\right).\]
Notons $\mathfrak{X}^k(M)$ l'ensemble des champs de multivecteurs de degré $k$, avec \[\mathfrak{X}^0(M)=C^\infty(M),\ \mathfrak{X}^1(M)=\mathfrak{X}(M)\ \text{et}\ \mathfrak{X}^*(M)=\oplus_{k=0}^n\mathfrak{X}^k(M).\]
On peut définir les structures de Poisson grâce au crochet de Schouten \cite{duf-zun}, qui est l'unique extension du crochet de Lie à l'espace des champs de multivecteurs $\mathfrak{X}^*(M)$, caractérisé par
\begin{enumerate}
\item $[P,Q]\in\mathfrak{X}^{p+q-1}(M)$,
\item $[f,g]=0$ et $[X,P]=\mathscr{L}_XP$,
\item $[P,Q]=-\left(-1\right)^{(p-1)(q-1)}[Q,P]$,
\item $[P,Q\wedge R]=[P,Q]\wedge R+\left(-1\right)^{(p-1)q}Q\wedge[P,R]$,
\item $\left(-1\right)^{(p-1)(r-1)}\left[P,[Q,R]\right]+\left(-1\right)^{(q-1)(p-1)}\left[Q,[R,P]\right]+\left(-1\right)^{(r-1)(q-1)}\left[R,[P,Q]\right]=~0$,
\end{enumerate}
pour $P\in\mathfrak{X}^p(M)$, $Q\in\mathfrak{X}^q(M)$, $R\in\mathfrak{X}^r(M)$, $X\in\mathfrak{X}(M)$ et $f,g\in C^\infty(M)$.\\
Une autre définition équivalente du crochet de Schouten, due à Lichnerowicz, est donnée par la proposition suivante \footnote{Cette formule a l'avantage d'être adaptée aux calculs de nature globals.}
\begin{proposition} Pour tout $P\in\mathfrak{X}^p(M)$, $Q\in\mathfrak{X}^q(M)$ et $\omega\in\Omega^{p+q-1}(M)$.
\begin{equation}\label{lichn}
<\omega,[P,Q]>=(-1)^{(p-1)(q-1)}<d(i_Q\omega),P>-<d(i_P\omega),Q>+(-1)^p<d\omega,P\wedge Q>.
\end{equation}
\end{proposition}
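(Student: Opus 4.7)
La stratégie que je propose consiste en une récurrence sur le degré total $p+q$, après avoir réduit au cas des multivecteurs décomposables. On remarque d'abord que les deux membres de \eqref{lichn} sont $C^\infty(M)$-multilinéaires en $P$ et $Q$ (pour le membre de droite, cela résulte des règles de Leibniz graduées pour $i_P$ et pour $d$), donc il suffit de vérifier la formule localement sur des champs de multivecteurs décomposables $P=X_1\wedge\cdots\wedge X_p$ et $Q=Y_1\wedge\cdots\wedge Y_q$.

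Le cas de base essentiel est celui où $p=q=1$. Dans cette situation, la formule se réduit à l'identité classique
\[
\omega([X,Y])=X(\omega(Y))-Y(\omega(X))-d\omega(X,Y),\quad X,Y\in\mathfrak{X}(M),\ \omega\in\Omega^1(M),
\]
qui découle directement de la formule magique de Cartan $\mathscr{L}_X=d\circ i_X+i_X\circ d$ appliquée à $\omega$, combinée à $\mathscr{L}_X(\omega(Y))=(\mathscr{L}_X\omega)(Y)+\omega([X,Y])$. Il faut également traiter le cas dégénéré $p=0$ (i.e. $P=f\in C^\infty(M)$) : le crochet $[f,Q]$ s'identifie à la contraction $-i_{df}Q$, et une vérification directe à partir de la règle de Leibniz pour $i_{df}$ par rapport à $\wedge$ donne la formule.

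Pour l'étape de récurrence, l'idée est de montrer que les deux membres de \eqref{lichn} satisfont la même règle de Leibniz par rapport au produit extérieur du premier argument. D'un côté, la propriété 4 de la définition axiomatique du crochet de Schouten fournit une décomposition de $[P\wedge X,Q]$ en somme de $P\wedge[X,Q]$ et $[P,Q]\wedge X$, affectée de signes gradués appropriés. De l'autre, le membre de droite se décompose en utilisant la factorisation $i_{P\wedge X}=i_X\circ i_P$ ainsi que le caractère de dérivation graduée de $d$. Par double récurrence sur $p$, puis sur $q$ (en exploitant au passage l'antisymétrie graduée du crochet, propriété 3, pour basculer les rôles de $P$ et $Q$), on en déduit l'identité en toute généralité.

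La principale difficulté, à mon sens, réside dans le contrôle rigoureux des signes qui interviennent à chaque étape : ceux de la règle de Leibniz graduée du crochet de Schouten, ceux de la dualité entre $i_P$ et $\wedge$, et ceux des permutations intervenant dans la définition intrinsèque de $d$ sur les formes de degré quelconque. Une rédaction soigneuse requiert de fixer sans ambiguïté les conventions de signe dès le départ pour $[\,,\,]$, $i_P$ et la parité de $P$, afin de ne pas se laisser piéger par des incompatibilités sur un terme particulier de la formule finale.
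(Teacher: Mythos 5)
Le texte de la thèse ne démontre pas cette proposition : la « preuve » y est un simple renvoi à l'article de Lichnerowicz. Votre stratégie — réduction aux multivecteurs décomposables puis récurrence graduée appuyée sur le calcul de Cartan — est donc forcément une autre route ; c'est une route classique et viable, mais telle qu'elle est rédigée elle présente deux défauts réels.

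D'abord, l'affirmation que les deux membres de \eqref{lichn} sont $C^\infty(M)$-multilinéaires en $P$ et $Q$ est fausse : le crochet de Schouten est un opérateur bidifférentiel d'ordre un et non un tenseur ; déjà pour $p=q=1$ on a $[X,fY]=f[X,Y]+X(f)\,Y$, et le membre de droite n'est pas davantage $C^\infty$-linéaire (le terme $d(i_{fY}\omega)=d(f\,i_Y\omega)$ fait apparaître $df$). La réduction aux décomposables reste légitime, mais elle doit s'appuyer sur la seule $\mathbb{R}$-bilinéarité et sur le fait que, localement, tout $p$-vecteur est somme de termes $f\,X_1\wedge\cdots\wedge X_p=(fX_1)\wedge X_2\wedge\cdots\wedge X_p$, donc de décomposables. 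Ensuite, l'étape de récurrence — qui est le cœur de la preuve — n'est qu'annoncée : pour faire correspondre la décomposition de $[P\wedge X,Q]$ (propriété 4 combinée à l'antisymétrie graduée) avec celle du membre de droite, la factorisation $i_{P\wedge X}=i_X\circ i_P$ et le caractère de dérivation de $d$ pour $\wedge$ ne suffisent pas, car $d$ ne commute pas avec $i_X$ ; il faut faire traverser $d$ par $i_X$ au moyen de la formule de Cartan $\mathscr{L}_X=d\circ i_X+i_X\circ d$, de la relation $[\mathscr{L}_X,i_Q]=i_{\mathscr{L}_XQ}$ et de l'adjonction $\langle i_X\eta,P\rangle=\langle\eta,X\wedge P\rangle$. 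C'est exactement là que se concentrent les signes dont vous reconnaissez la difficulté, et c'est aussi ce qui fournit le cas $p=1$, $q$ quelconque (la formule pour $\mathscr{L}_XQ$), indispensable dès que l'on détache un facteur vectoriel mais absent de vos cas de base. En l'état, l'argument n'est donc pas complet, même si son architecture peut être menée à terme.
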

\begin{proof}
Voir l'article de Lichnerowicz \cite{lich1}.
\end{proof}
Il en résulte, en particulier, que le crochet de Schouten de deux champs de multivecteurs nuls en un point, est un champs de multivecteurs nul aussi en ce point.\\
Soit $\pi$ un bivecteur sur une variété différentielle $M$ et soit $\{\,,\,\}$ le crochet associé sur $C^\infty(M)$, c'est-à-dire
\[\{f,g\}=<df\wedge dg,\pi>.\]
Le Lemme suivant permet d'exprimer l'identité de Jacobi en terme du tenseur $\pi$.
\begin{lemma}
Le jacobiateur est égale à la moitié du crochet de Schouten de $\pi$ par lui même, c'est-à-dire pour tout $f,g,h\in C^\infty(M)$
\begin{equation}
\{f,\{g,h\}\}+\{g,\{h,f\}\}+\{h,\{f,g\}\}=\frac{1}{2}[\pi,\pi]\left(df,dg,dh\right).
\end{equation}
\end{lemma}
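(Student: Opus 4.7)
The plan is to apply the Lichnerowicz formula \eqref{lichn} with $P=Q=\pi$ and $\omega=df\wedge dg\wedge dh$, and read off the Jacobiator from the right-hand side. Since $p=q=2$, we have $(-1)^{(p-1)(q-1)}=-1$, and since $\omega$ is a wedge product of closed 1-forms, $d\omega=0$, so the third term of \eqref{lichn} vanishes. The formula then collapses to
\begin{equation*}
\langle df\wedge dg\wedge dh,[\pi,\pi]\rangle=-2\,\langle d(i_\pi\omega),\pi\rangle,
\end{equation*}
reducing everything to computing $i_\pi(df\wedge dg\wedge dh)$ and pairing its differential with $\pi$.

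Next, I would verify the identity
\begin{equation*}
i_\pi(df\wedge dg\wedge dh)=\{f,g\}\,dh-\{f,h\}\,dg+\{g,h\}\,df,
\end{equation*}
by expanding $\pi$ locally as a sum of simple bivectors $X\wedge Y$ and using $i_{X\wedge Y}=i_Y\circ i_X$ together with $\pi(df,dg)=\{f,g\}$. Applying $d$ gives $d\{f,g\}\wedge dh-d\{f,h\}\wedge dg+d\{g,h\}\wedge df$, and pairing with $\pi$ via $\pi(du,dv)=\{u,v\}$ produces
\begin{equation*}
\langle d(i_\pi\omega),\pi\rangle=\{\{f,g\},h\}-\{\{f,h\},g\}+\{\{g,h\},f\}.
\end{equation*}

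Finally, using the antisymmetry $\{u,v\}=-\{v,u\}$ on each term, the right-hand side becomes $-\bigl(\{f,\{g,h\}\}+\{g,\{h,f\}\}+\{h,\{f,g\}\}\bigr)=-J(f,g,h)$. Substituting back yields
\begin{equation*}
[\pi,\pi](df,dg,dh)=\langle df\wedge dg\wedge dh,[\pi,\pi]\rangle=2\,J(f,g,h),
\end{equation*}
which is the desired equality.

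The main obstacle, if any, is bookkeeping of signs: one must be careful with the convention for the interior product of a bivector with a 3-form, and with the sign $(-1)^{(p-1)(q-1)}$ in \eqref{lichn}. All other steps are direct substitution, and no further structural input beyond the previously stated Lichnerowicz formula is needed.
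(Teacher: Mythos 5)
Your proposal is correct and follows essentially the same route as the paper: apply the Lichnerowicz formula \eqref{lichn} with $P=Q=\pi$ and $\omega=df\wedge dg\wedge dh$ (so the last term dies since $d\omega=0$), compute $i_\pi(df\wedge dg\wedge dh)$, differentiate, pair with $\pi$, and rearrange signs to recover the Jacobiator. The sign bookkeeping you flag works out exactly as in the paper's computation, so no further input is needed.
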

Ceci permet de considérer le jacobiateur comme un tenseur $3$-fois contravariant ; donc nul sur toute variété de dimension $\leq2$.\\
\begin{proof} 
On a d'après la formule de Lichnerowicz \eqref{lichn}
\begin{align*}
<df\wedge dg\wedge dh,[\pi,\pi]>=&-2<d\left(i_\pi(df\wedge dg\wedge dh)\right),\pi>\\
=&-2<d\left(\{g,h\}df-\{f,h\}dg+\{f,g\}dh\right),\pi>\\
=&-2<d\{g,h\}\wedge df-d\{f,h\}\wedge dg+d\{f,g\}\wedge dh,\pi>\\
=&\phantom{-}2\left(\{f,\{g,h\}\}+\{g,\{h,f\}\}+\{h,\{f,g\}\}\right)
\end{align*}
\end{proof}
On peut définir donc une variété de Poisson $M$ par la donnée d'un tenseur $2$-fois contravariant antisymétrique $\pi$ sur $M$, qui vérifie $[\pi,\pi]=0$.
\paragraph{\underline{Exemple $1$ (Variétés symplectiques)}.}
Une variété différentielle $M$ est dite \emph{symplectique} si elle est munie d'une $2$-forme différentielle $\omega$ fermée et non dégénérée, c'est-à-dire le morphisme de fibrés $\omega^\flat : TM\rightarrow T^*M$ défini par \(\omega^\flat(X)=i_X\omega\), pour tout \(X\in\mathfrak{X}(M)\), est un isomorphisme. Pour tout $f\in C^\infty(M)$, il existe un unique champ de vecteurs $X_f$ tel que $i_{X_f}\omega=-df$. On montre alors que le crochet 
\[\{f,g\}=\omega\left(X_f,X_g\right)=-\langle df,X_g\rangle=-X_g(f)=X_f(g),\]
est un crochet de Poisson, de sorte que toute variété symplectique est de Poisson. En effet, 
\begin{align*}
0=&\phantom{-}d\omega\left(X_f,X_g,X_h\right)=X_f\cdot\omega\left(X_g,X_h\right)-X_g\cdot\omega\left(X_f,X_h\right)+X_h\cdot\omega\left(X_f,X_g\right)\\
&\hspace*{3.5cm}-\omega\left([X_f,X_g],X_h\right)+\omega\left([X_f,X_h],X_g\right)-\omega\left([X_g,X_h],X_f\right)\\
=&\phantom{-}X_f\cdot X_g(h)+X_g\cdot X_h(f)+X_h\cdot X_f(g)-[X_f,X_g](h)+[X_f,X_h](g)-[X_g,X_h](f)\\
=&-\Big(\{f,\{g,h\}\}+\{g,\{h,f\}\}+\{h,\{f,g\}\}\Big).
\end{align*}
\paragraph{\underline{Exemple $2$ (Variétés de Poisson Linéaires)}.}
Une structure de variété de Poisson, qui n'est pas symplectique, est donnée canoniquement sur le dual $\G^*$ d'une algèbre de Lie $\G$ par le tenseur, appelé \emph{tenseur de Poisson linéaire} :
\[\pi_x(X,Y)=\langle x,[X,Y]\rangle,\quad\text{pour tout}\ X,Y\in\G,\ x\in\G^*.\]
En d'autres termes, pour tout $f,g\in C^\infty(\G^*)$
\[\{f,g\}(x)=\langle x,[d_xf,d_xg]\rangle,\]
avec l'identification de $\G$ avec son bidual $\G^{**}$, ce qui permet de considérer la différentielle d'une fonction sur $\G^*$ comme un élément de $\G$.\\
Si $X_1,...,X_n$ est une base de $\G$ et $x_1,...,x_n$ le système de coordonnées globales de $\G^*$, associé à cette base, alors
\begin{equation}\label{lineaire1}
\{x_i,x_j\}=\sum_{k=1}^n c_{ij}^kx_k,
\end{equation}
où les $c_{ij}^k$ sont les constantes de structure de $\G$.
\paragraph{\underline{Exemple $3$ (Groupes de Lie-Poisson)}.}
Les groupes de Lie munis d'un tenseur de Poisson \emph{mutliplicatif} constituent une classe importante de variétés de Poisson. Ils jouent un rôle central dans ce travail, c'est pour cela que nous leur consacrons toute une section (voir Section \ref{liepoisson})
\paragraph{Morphisme de Poisson.}~\\
Soient $(M,\{\,,\,\}_M)$ et $(N,\{\,,\,\}_N)$ deux variétés de Poisson. Une application $\varphi : M\rightarrow N$ de classe $C^\infty$ est un \emph{morphisme de Poisson} si $\varphi^* : C^\infty(M)\rightarrow C^\infty(N)$ est un morphisme d'algèbres de Lie, c'est-à-dire, pour tout $f,g\in C^\infty(N)$
\begin{equation}
\{\varphi^*f,\varphi^*g\}_M=\varphi^*\{f,g\}_N,
\end{equation}
où $\varphi^*f$ désigne l'image réciproque de $f$ par $\varphi$, c'est-à-dire l'application $f\circ\varphi$.
\paragraph{Algébroïde de Lie associé à une variété de Poisson.}~\\
Soit $(M,\pi)$ une variété de Poisson. Il est associé au champ de tenseurs $\pi$ un morphisme de fibrés vectoriels $\pi_\sharp : T^*M\rightarrow TM$, appelé \emph{ancrage, ou application ancre}, défini, pour tout $\alpha,\beta\in\Omega^1(M)$, par
\[\beta\left(\pi_\sharp(\alpha)\right)=\pi(\alpha,\beta).\]
Noter que pour tout $f\in C^\infty(M)$, le champ hamiltonien associé est donné par 
\[X_f=\pi_\sharp(df).\]
En coordonnées locales, si $\pi=\frac{1}{2}\sum_{i,j}^n\pi_{ij}\,\partial_{x_i}\wedge\partial_{x_j}$ alors
\[\pi_\sharp(dx_i)=\sum_{j=1}^n\pi_{ij}\,\partial_{x_j}.\]
On appelle \emph{rang du tenseur de Poisson} $\pi$ au point $x$, le rang de l'application linéaire $\pi_\sharp(x) : T_x^*M\rightarrow T_xM$. On le note
\[\rho(x)=\rang\pi_\sharp(x)=\dim\im\pi_\sharp(x).\]
En coordonnées locales, c'est le rang de la matrice $(\pi_{ij}(x))_{1\leq i,j\leq n}$.\\
Un point $x\in M$ est dit \emph{régulier} s'il existe un voisinage de $x$ sur lequel le rang est constant (égal à $\rho(x)$). Une variété de Poisson est dite \emph{régulière} si son tenseur est de rang constant.
La proposition suivante résume les propriétés du rang.
\begin{proposition}
\begin{enumerate}
\item Pour tout $x\in M$, $\rho(x)$ est paire.
\item L'application $x\mapsto\rho(x)$ est semi-continue inférieurement.
\item L'ensemble des points réguliers est un ouvert dense.\footnote{L'ensemble des points régulier est appelé \emph{l'ouvert régulier} de $(M,\pi)$.}
\end{enumerate}
\end{proposition}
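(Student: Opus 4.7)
Le plan est de traiter successivement les trois assertions en s'appuyant sur l'expression locale $\pi=\frac{1}{2}\sum_{i,j}\pi_{ij}\,\partial_{x_i}\wedge\partial_{x_j}$ dans une carte au voisinage de $x$, et sur quelques faits élémentaires d'algèbre linéaire. Aucune des étapes ne sollicite l'identité de Jacobi, ce qui est rassurant : le rang est une invariant purement ponctuel du bivecteur.

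Pour la parité de $\rho(x)$, j'observerais d'abord que le rang de $\pi_\sharp(x):T_x^*M\to T_xM$ coïncide avec celui de la matrice $(\pi_{ij}(x))_{1\leq i,j\leq n}$, laquelle est antisymétrique puisque $\pi_{ij}=-\pi_{ji}$. Or toute matrice antisymétrique réelle est de rang pair : sa forme normale (de Pfaff) ne comporte que des blocs $2\times 2$ non triviaux et des zéros. D'où $\rho(x)\in 2\N$.

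Pour la semi-continuité inférieure, le point-clef est la caractérisation usuelle du rang par les mineurs : la matrice $(\pi_{ij}(x))$ est de rang $\geq k$ si et seulement si l'un de ses mineurs d'ordre $k$ est non nul. Comme les $\pi_{ij}$ sont des fonctions lisses de $x$, chacun de ces mineurs est continu, et l'ensemble $\{x\in M\mid\rho(x)\geq k\}$ est donc ouvert pour tout $k\in\N$. Cela exprime précisément la semi-continuité inférieure de $\rho$.

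L'ouverture de l'ensemble des points réguliers résulte alors directement de la définition (un tel point admet par hypothèse un voisinage de rang constant). Le seul point demandant un peu de soin est la densité, que j'aborderais ainsi : soit $U$ un ouvert non vide quelconque de $M$ ; par la première assertion, $\rho$ prend ses valeurs dans l'ensemble fini $\{0,2,\ldots,2\lfloor n/2\rfloor\}$, donc $k_0:=\max_{x\in U}\rho(x)$ existe. L'ensemble $V:=\{x\in U\mid\rho(x)\geq k_0\}$ est ouvert par semi-continuité, non vide par définition même du maximum, et tous ses points sont de rang exactement $k_0$ ; ainsi $\rho$ est constant sur $V$, et chaque point de $V$ est régulier. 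Tout ouvert de $M$ rencontre donc l'ouvert des points réguliers, ce qui établit la densité.
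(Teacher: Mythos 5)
Votre démonstration est correcte. Pour les deux premiers points vous suivez essentiellement le même chemin que le texte : parité du rang d'une matrice antisymétrique réelle, puis semi-continuité inférieure obtenue en remarquant qu'un mineur d'ordre $k$ non nul en $x$ reste non nul au voisinage, ce qui rend ouvert l'ensemble $\{\rho\geq k\}$. C'est sur la densité que votre route diffère : le texte procède par une construction itérative d'ouverts $U_1,U_2,\ldots$ de rang maximal pris successivement sur des complémentaires, et conclut que leur réunion finie est l'ensemble des points réguliers ; vous, au contraire, localisez d'emblée : dans tout ouvert non vide $U$, le rang, qui ne prend qu'un nombre fini de valeurs, atteint son maximum $k_0$, l'ensemble $V=\{x\in U\mid\rho(x)\geq k_0\}$ est ouvert par semi-continuité, non vide, et le rang y vaut exactement $k_0$ puisque $k_0$ majore $\rho$ sur $U$ ; tout point de $V$ est donc régulier et $U$ rencontre l'ouvert régulier. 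Votre argument est plus direct et contourne les points délicats de l'itération du texte (le complémentaire $M\setminus U_1$ n'est pas ouvert en général, et l'identification de la réunion des $U_i$ avec l'ensemble exact des points réguliers demanderait une justification supplémentaire) ; il fournit précisément ce qu'exige la densité, au prix d'aucune hypothèse nouvelle, la condition $[\pi,\pi]=0$ n'intervenant dans aucune des deux approches.
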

\begin{proof}
\begin{enumerate}
\item La matrice $(\pi_{ij}(x))_{1\leq i,j\leq n}$ est antisymétrique, donc de rang paire. 
\item Si $\rho(x)=k$ alors le plus grand mineur de la matrice $(\pi_{ij}(x))_{1\leq i,j\leq n}$, de déterminant non nul, est de type $k\times k$. Par continuité, ce mineur est de déterminant non nul sur un voisinage $V$ de $x$ et donc le rang est au moins égal à $k$ sur ce voisinage. 
\item Par définition même, l'ensemble des points réguliers est un ouvert. Notons $U_1$ l'ouvert de $M$ sur lequel le rang est maximal. La restriction de $\pi$ à l'ouvert $M\setminus U_1$ est un tenseur de Poisson ; soit $U_2$ l'ouvert de $M\setminus U_1$ où le rang est maximal. En réitérant ce processus, on construit une famille fini d'ouverts $U_1$, $U_2$,...,$U_k$ tels que sur chaque $U_i$ le rang
de $\pi$ est constant et tel que l'ensemble des points réguliers est exactement $M^{\text{rég}}=\cup_{i=1}^kU_i$ qui est dense dans $M$.
\end{enumerate}
\end{proof}
Soit $(M,\pi)$ une variété de Poisson. On définit un crochet sur l'ensemble des $1$-formes, appelé \emph{crochet de Koszul} des $1$-formes, pour tout $\alpha,\beta\in\Omega^1(M)$, par
\begin{equation}\label{crochet.forme}
[\alpha,\beta]_\pi=\mathscr{L}_{\pi_\sharp(\alpha)}\beta-\mathscr{L}_{\pi_\sharp(\beta)}\alpha-d\left(\pi(\alpha,\beta)\right).
\end{equation}
Pour simplifier, on notera ce crochet simplement par $[\,,\,]$ (sans l'indice $\pi$).\\
La proposition suivante résume les propriétés essentielles de ce crochet.
\begin{proposition}
Soit $(M,\pi)$ une variété de Poisson. Alors le crochet de Koszul vérifie les propriétés suivantes :
\begin{enumerate}
\item $[\alpha,f\beta]=\pi_\sharp(\alpha)(f)\beta+f[\alpha,\beta],\ \alpha,\beta\in\Omega^1(M)$ et $f\in C^\infty(M)$.
\item $[\,,\,]$ est un crochet de Lie sur $\Omega^1(M)$ et, pour tout $f,g\in C^\infty(M)$
\[[df,dg]=d\{f,g\}.\]
\item $\pi_\sharp : \Omega^1(M)\rightarrow\mathfrak{X}(M)$ est un morphisme d'algèbres de Lie, c'est-à-dire, pour tout $\alpha,\beta\in\Omega^1(M)$,
\[\pi_\sharp\left([\alpha,\beta]\right)=[\pi_\sharp(\alpha),\pi_\sharp(\beta)].\]
\end{enumerate}
\end{proposition}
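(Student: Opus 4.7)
~\\
La preuve traite les trois propriétés dans l'ordre logique suivant : (1), puis la formule $[df,dg]=d\{f,g\}$, puis (3), et enfin l'identité de Jacobi. L'ensemble repose sur la formule de Cartan $\mathscr{L}_{fX}=f\mathscr{L}_X+df\wedge i_X$ valable sur les formes différentielles, et sur l'identité préliminaire $i_{\pi_\sharp(\beta)}\alpha=\alpha(\pi_\sharp(\beta))=\pi(\beta,\alpha)=-\pi(\alpha,\beta)$.

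Pour la règle de Leibniz (1), je partirais de la définition \eqref{crochet.forme} appliquée à $[\alpha,f\beta]$ et développerais les trois termes. Le premier produit $\mathscr{L}_{\pi_\sharp(\alpha)}(f\beta)$ donne $\pi_\sharp(\alpha)(f)\beta+f\mathscr{L}_{\pi_\sharp(\alpha)}\beta$ par Leibniz usuel. Le deuxième, $\mathscr{L}_{f\pi_\sharp(\beta)}\alpha$, vaut $f\mathscr{L}_{\pi_\sharp(\beta)}\alpha+df\wedge i_{\pi_\sharp(\beta)}\alpha=f\mathscr{L}_{\pi_\sharp(\beta)}\alpha-\pi(\alpha,\beta)\,df$ grâce à l'identité préliminaire. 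Le troisième donne $d(f\pi(\alpha,\beta))=df\cdot\pi(\alpha,\beta)+f\,d\pi(\alpha,\beta)$. En additionnant, les deux termes en $\pi(\alpha,\beta)\,df$ se compensent et on obtient exactement $\pi_\sharp(\alpha)(f)\beta+f[\alpha,\beta]$.

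Pour la formule $[df,dg]=d\{f,g\}$, j'utiliserais la commutation de $\mathscr{L}$ avec $d$ et son action triviale sur les fonctions : $\mathscr{L}_{X_f}dg=d(X_f(g))=d\{f,g\}$ et $\mathscr{L}_{X_g}df=d\{g,f\}=-d\{f,g\}$. Comme $d\pi(df,dg)=d\{f,g\}$, l'addition avec les signes de \eqref{crochet.forme} donne $d\{f,g\}+d\{f,g\}-d\{f,g\}=d\{f,g\}$. L'antisymétrie $[\alpha,\beta]=-[\beta,\alpha]$ est immédiate à partir de la définition. Pour (3), je remarque d'abord que (1) et l'antisymétrie fournissent la règle de Leibniz dans la première variable : $[f\alpha,\beta]=f[\alpha,\beta]-\pi_\sharp(\beta)(f)\alpha$. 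Localement, toute $1$-forme s'écrit $\sum f_i\,dg_i$ ; par bilinéarité et ces deux règles de Leibniz, il suffit de vérifier $\pi_\sharp([\alpha,\beta])=[\pi_\sharp(\alpha),\pi_\sharp(\beta)]$ sur des formes exactes, les termes en dérivées de $f_i$ et $h_j$ se traduisant identiquement dans les deux membres via la formule $[fX,hY]=fh[X,Y]+f X(h)Y-h Y(f)X$. Sur les formes exactes, on calcule $\pi_\sharp([df,dg])=\pi_\sharp(d\{f,g\})=X_{\{f,g\}}=[X_f,X_g]=[\pi_\sharp(df),\pi_\sharp(dg)]$, en utilisant la formule (1) déjà établie dans le texte.

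Il reste l'identité de Jacobi. Je définirais le jacobiateur $J(\alpha,\beta,\gamma)=[\alpha,[\beta,\gamma]]+[\beta,[\gamma,\alpha]]+[\gamma,[\alpha,\beta]]$ et je prouverais qu'il est $C^\infty$-linéaire en chaque argument. Pour la linéarité en $\gamma$, on développe $J(\alpha,\beta,f\gamma)$ via (1) et sa variante dans la première variable ; après regroupement, tous les termes non-tensoriels se factorisent sous la forme
\[\bigl(\pi_\sharp(\alpha)\pi_\sharp(\beta)(f)-\pi_\sharp(\beta)\pi_\sharp(\alpha)(f)-\pi_\sharp([\alpha,\beta])(f)\bigr)\,\gamma,\]
quantité nulle en vertu de (3). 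On obtient $J(\alpha,\beta,f\gamma)=fJ(\alpha,\beta,\gamma)$. Comme $J$ est tensoriel et que toute $1$-forme est localement combinaison $C^\infty$ de formes exactes, il suffit d'évaluer $J(df,dg,dh)$. Mais $[df,[dg,dh]]=[df,d\{g,h\}]=d\{f,\{g,h\}\}$, et la somme cyclique est $d(0)=0$ par l'identité de Jacobi du crochet de Poisson.

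Le point le plus délicat est la vérification de la tensorialité du jacobiateur : le calcul fait apparaître sept ou huit termes « parasites » contenant des dérivées de $f$, et il faut que l'ensemble se réduise exactement au défaut de morphisme de $\pi_\sharp$ appliqué à $f$, ce qui est fourni précisément par (3). C'est là que réside toute la cohérence de l'argument, et c'est aussi ce qui dicte l'ordre dans lequel je traite les trois propriétés.
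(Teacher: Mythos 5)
Votre démonstration est correcte, mais il faut préciser le rapport avec le texte : le mémoire ne démontre pas cette proposition, il renvoie simplement au livre de Vaisman (pages 41--42). Votre rédaction fournit donc la vérification directe que le texte omet, et elle suit la voie classique à laquelle renvoie cette référence : la règle de Leibniz par la formule de Cartan $\mathscr{L}_{fX}=f\mathscr{L}_X+df\wedge i_X$ (les deux termes en $\pi(\alpha,\beta)\,df$ se compensent bien, compte tenu de $i_{\pi_\sharp(\beta)}\alpha=-\pi(\alpha,\beta)$) ; l'identité $[df,dg]=d\{f,g\}$ via $\mathscr{L}_{X_f}dg=d(X_f(g))$ ; la propriété de morphisme de $\pi_\sharp$ ramenée aux formes exactes, où elle se réduit à $X_{\{f,g\}}=[X_f,X_g]$ --- relation qui ne dépend que de l'identité de Jacobi du crochet de Poisson, de sorte qu'il n'y a aucune circularité lorsque vous l'utilisez ensuite pour l'identité de Jacobi du crochet de Koszul --- et enfin la tensorialité du jacobiateur, dont le terme parasite $\bigl(\pi_\sharp(\alpha)\pi_\sharp(\beta)(f)-\pi_\sharp(\beta)\pi_\sharp(\alpha)(f)-\pi_\sharp([\alpha,\beta])(f)\bigr)\gamma$ est exactement celui que votre point (3) annule. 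Deux précisions de rédaction seulement : dites explicitement que la $C^\infty$-linéarité en un seul argument suffit parce que le jacobiateur est alterné (cyclique par construction et changeant de signe par transposition de deux arguments), et que la réduction aux formes exactes repose sur l'écriture locale $\alpha=\sum f_i\,dg_i$, licite une fois la tensorialité acquise puisque l'objet est alors ponctuel. L'argument est complet et constitue, pour l'essentiel, la démonstration standard que le texte se contente de citer.
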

\begin{proof}
Voir le livre de Vaisman \cite{vai}, pages 41-42.
\end{proof}
Cette proposition signifie que le triplet $(\pi_\sharp,T^*M,M)$ est un algébroïde de Lie au sens de Pradines \cite{pr:68}.
\paragraph{Structure locale d'une variété de Poisson.}
Le Théorème de décomposition de Weinstein \cite{we:local}, affirme que toute variété de Poisson est localement le produit d'une variété symplectique par une variété de Poisson dont le rang s'annule en un point. Plus précisément, tout point $x_0$ d'une variété de Poisson $(M,\pi)$, où le rang de $\pi$ est égal à $2k$, admet un voisinage $U$ isomorphe, par un difféomorphisme de Poisson, au produit d'une variété symplectique de dimension $2k$ et d'une variété de Poisson de rang nul en $x_0$. De plus, cette
décomposition est unique à un isomorphisme local près. Le Théorème s'énonce comme suit :
\begin{theorem}\label{weinstein.decomp}
Soit $(M,\pi)$ une variété de Poisson de dimension $n=2k+r$, et $a$ un point de $M$ où $\pi$ est de rang $2k$. Il existe alors un système de coordonnées $(U,x_1,...x_{2k},y_1,...,y_r)$ centré en $a$ tel que
\begin{equation}\label{weinstein}
\pi=\sum_{i=1}^k\partial_{x_i}\wedge\partial_{x_{i+k}}+\frac{1}{2}\sum_{i<j}\phi_{ij}(y)\,\partial_{y_i}\wedge\partial_{y_j},
\end{equation}
où chaque $\phi_{ij}$ ne dépend que des coordonnées $y_1,...,y_r$ et $\phi_{ij}(a)=0$.
\end{theorem}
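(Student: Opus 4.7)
La preuve procède par récurrence sur le demi-rang $k$. Le cas de base $k=0$ est immédiat : si $\pi(a)=0$, tout système de coordonnées $(y_1,\ldots,y_n)$ centré en $a$ vérifie automatiquement $\phi_{ij}(a)=0$. Tout le contenu est dans le pas de récurrence, qui consiste à extraire une paire de Darboux $(p_1,q_1)$ de la structure de Poisson et à se ramener au même problème sur une tranche transverse de codimension $2$.

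Pour le pas de récurrence, le rang en $a$ étant $2k\geq 2$, on choisit une fonction lisse $p_1$ telle que $X_{p_1}(a)\neq 0$, où $X_{p_1}=\pi_\sharp(dp_1)$. Le théorème de redressement appliqué à $X_{p_1}$ fournit, au voisinage de $a$, une fonction $q_1$ telle que $X_{p_1}(q_1)=1$, c'est-à-dire $\{p_1,q_1\}=1$. L'homomorphisme d'algèbres de Lie $f\mapsto X_f$ entraîne alors $[X_{p_1},X_{q_1}]=X_{\{p_1,q_1\}}=0$ ; de plus l'identité $X_{q_1}(p_1)=-\{p_1,q_1\}=-1$ assure l'indépendance linéaire de $X_{p_1}$ et $X_{q_1}$ en $a$. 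Le théorème de redressement simultané, appliqué à ces deux champs commutants et indépendants, fournit alors des coordonnées $(p_1,q_1,w_1,\ldots,w_{n-2})$ centrées en $a$ dans lesquelles $X_{p_1}=\partial_{q_1}$, $X_{q_1}=-\partial_{p_1}$, et par construction $\{p_1,w_i\}=\{q_1,w_i\}=0$ pour tout $i$.

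Posant $\phi_{ij}:=\{w_i,w_j\}$, le tenseur de Poisson s'écrit dans ces coordonnées
\[
\pi = \partial_{p_1}\wedge\partial_{q_1} + \frac{1}{2}\sum_{i,j}\phi_{ij}\,\partial_{w_i}\wedge\partial_{w_j}.
\]
Les identités de Jacobi $\{p_1,\{w_i,w_j\}\}+\text{cycl.}=0$ et $\{q_1,\{w_i,w_j\}\}+\text{cycl.}=0$ se réduisent, compte tenu de $\{p_1,w_i\}=\{q_1,w_i\}=0$, à $\partial_{p_1}\phi_{ij}=\partial_{q_1}\phi_{ij}=0$, si bien que les $\phi_{ij}$ ne dépendent que des $w$. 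Par suite $\pi':=\frac{1}{2}\sum_{i,j}\phi_{ij}\,\partial_{w_i}\wedge\partial_{w_j}$ est un tenseur de Poisson authentique sur la tranche transverse, dont le rang en $a$ vaut $2k-2$. L'hypothèse de récurrence appliquée à $\pi'$ livre les $k-1$ paires de Darboux restantes et les coordonnées transverses $y_1,\ldots,y_r$ avec $\phi_{ij}(a)=0$ ; après réindexation ($x_1:=p_1$, $x_{k+1}:=q_1$) ceci achève la récurrence.

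La difficulté principale tient à la première étape : produire simultanément la coordonnée $q_1$ conjuguée à $p_1$ \emph{et} des coordonnées transverses $w_i$ qui commutent au sens de Poisson avec $p_1$ et $q_1$. C'est précisément ce que permet le théorème de redressement simultané appliqué aux champs commutants $X_{p_1},X_{q_1}$, garantissant que la décomposition $\pi=\partial_{p_1}\wedge\partial_{q_1}+\pi'$ est effectivement un produit. Une fois cette décomposition établie, le fait que les $\phi_{ij}$ ne dépendent que des $w$ est une conséquence mécanique de $[\pi,\pi]=0$, et la récurrence fait le reste.
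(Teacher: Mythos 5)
Votre démonstration est correcte et suit exactement l'argument classique (extraction d'une paire de Darboux $(p_1,q_1)$ par le théorème de redressement, réduction à une tranche transverse, puis récurrence sur le demi-rang), c'est-à-dire la preuve de Weinstein reproduite dans Dufour--Zung et Vaisman, que la thèse se contente de citer sans la rédiger. Le seul point énoncé un peu vite est l'étape de redressement simultané : elle fournit des coordonnées dans lesquelles $X_{p_1}$ et $X_{q_1}$ sont des champs coordonnés, et il reste à vérifier que $(p_1,q_1,w_1,\dots,w_{n-2})$ est bien elle-même une carte (immédiat, car $dp_1,dq_1,dw_1,\dots,dw_{n-2}$ sont indépendantes en $a$, en évaluant sur $X_{p_1}$ et $X_{q_1}$), après quoi tout s'enchaîne comme vous l'écrivez.
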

\begin{proof}Voir l'article de Weinstein \cite{we:local}, on encore \cite{duf-zun}, \cite{vai}.
\end{proof}
On a les corollaires importants suivants :
\begin{corollary}[Théorème de Lie]
Si $\pi$ est un tenseur de Poisson sur $M$, de rang localement constant, alors tout point de
$M$ possède un voisinage dans lequel la matrice $(\pi_{\sharp_{ij}})$ est constante.
\end{corollary}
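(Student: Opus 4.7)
Mon approche consisterait à appliquer directement le Théorème de décomposition de Weinstein \ref{weinstein.decomp}. Soit $a\in M$ un point arbitraire ; par hypothèse de constance locale du rang, celui-ci vaut $2k$ sur un voisinage $V$ de $a$. D'après le Théorème \ref{weinstein.decomp}, quitte à restreindre $V$, il existe un système de coordonnées $(U,x_1,\ldots,x_{2k},y_1,\ldots,y_r)$ centré en $a$, avec $U\subset V$, dans lequel
\[\pi=\sum_{i=1}^k\partial_{x_i}\wedge\partial_{x_{i+k}}+\frac{1}{2}\sum_{i<j}\phi_{ij}(y)\,\partial_{y_i}\wedge\partial_{y_j},\]
les fonctions $\phi_{ij}$ ne dépendant que des coordonnées transverses $y_1,\ldots,y_r$ et vérifiant $\phi_{ij}(a)=0$.

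Dans ces coordonnées, la matrice $(\pi_{\sharp_{ij}})$ se décompose en deux blocs diagonaux indépendants : un bloc symplectique standard de taille $2k\times 2k$ et de rang constant $2k$, et un bloc antisymétrique $\Phi(y)=(\phi_{ij}(y))$ de taille $r\times r$ ne dépendant que de $y$. Pour tout point $p\in U$, on a donc
\[\rang\pi(p)=2k+\rang\Phi(y(p)).\]
Puisque, par hypothèse, $\rang\pi(p)=2k$ pour tout $p$ dans un voisinage de $a$, on en déduit $\rang\Phi(y)=0$ pour tout $y$ dans un voisinage de l'origine. L'antisymétrie du bloc $\Phi$ entraîne alors $\phi_{ij}\equiv 0$ pour tous $i<j$, et dans ce voisinage
\[\pi=\sum_{i=1}^k\partial_{x_i}\wedge\partial_{x_{i+k}},\]
de sorte que la matrice $(\pi_{\sharp_{ij}})$ y est bien constante (à savoir la matrice symplectique standard).

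Le cœur de la démonstration repose entièrement sur le Théorème de Weinstein \ref{weinstein.decomp} ; une fois la forme normale établie, la conclusion résulte d'un argument élémentaire de rang et ne présente aucune difficulté supplémentaire. La seule étape un peu délicate à souligner est la propagation de l'annulation du rang transverse : le fait que $\phi_{ij}(a)=0$ est purement ponctuel, mais l'hypothèse de constance locale du rang total \emph{force} l'annulation sur tout un voisinage, fournissant ainsi une véritable forme normale à coefficients constants, analogue au Théorème de Darboux pour les structures symplectiques.
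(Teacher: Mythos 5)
Votre démonstration est correcte et suit exactement la voie prévue par le texte : le corollaire y est présenté comme conséquence directe du Théorème de décomposition de Weinstein \ref{weinstein.decomp}, et votre argument (rang total $=2k+\rang\Phi(y)$, donc constance du rang $\Rightarrow\Phi\equiv0$ au voisinage, d'où la forme normale constante) est précisément la déduction attendue. Aucune objection ; notons seulement que l'antisymétrie de $\Phi$ n'est même pas nécessaire, puisqu'une matrice de rang nul est nulle.
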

\begin{corollary}
Soit $\G^*$ le dual d'une algèbre de Lie $\G$, muni de sa structure de
Lie-Poisson canonique. Le tenseur de Poisson de $\G^*$ à l'origine est de la forme :
\[\pi=\frac{1}{2}\sum_{i<j}\varphi_{ij}(y)\,\partial_{y_i}\wedge\partial_{y_j},\]
c'est-à-dire, sans la partie symplectique et les fonctions $\phi_{ij}$ sont linéaires.
\end{corollary}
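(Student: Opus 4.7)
The plan is to apply Theorem \ref{weinstein.decomp} at the point $a=0\in\G^*$ and then to observe that the canonical linear coordinates on $\G^*$ already constitute a Weinstein chart at the origin.

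First I would compute the rank of the Lie-Poisson tensor at $0$. Fix a basis $(X_1,\ldots,X_n)$ of $\G$ and let $(x_1,\ldots,x_n)$ be the associated global linear coordinates on $\G^*$. By \eqref{lineaire1} the components of $\pi$ in these coordinates are
\[
\pi_{ij}(x)=\{x_i,x_j\}(x)=\sum_{k=1}^n c_{ij}^k\,x_k,
\]
so $\pi_{ij}(0)=0$ for every $i,j$. Hence $\pi$ vanishes at the origin; in the notation of Theorem \ref{weinstein.decomp} this forces $2k=0$ and $r=n$, so the symplectic summand $\sum_{i=1}^k\partial_{x_i}\wedge\partial_{x_{i+k}}$ is simply absent from the decomposition.

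To finish, I would verify that the canonical coordinates $(x_1,\ldots,x_n)$ themselves may serve as the Weinstein coordinates $(y_1,\ldots,y_r)$ at $0$. Indeed $\pi(0)=0$ and the formula above can be rewritten as
\[
\pi=\frac{1}{2}\sum_{i<j}\left(\sum_{k=1}^n c_{ij}^k x_k\right)\partial_{x_i}\wedge\partial_{x_j},
\]
which is exactly of the shape \eqref{weinstein} with $\varphi_{ij}(y)=\sum_k c_{ij}^k y_k$; these coefficients are linear and vanish at the origin, as required. Theorem \ref{weinstein.decomp} asserts only the existence of \emph{some} Weinstein chart; what I am using here is that such a chart is furnished canonically by $\G^*$ itself.

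The argument is essentially a direct reading of Theorem \ref{weinstein.decomp} combined with the defining formula of the Lie-Poisson bracket, so no serious obstacle is expected. The only point worth underlining is that the linearity of the $\varphi_{ij}$ is tied to this particular, canonical Weinstein chart supplied by the vector space structure of $\G^*$, and need not persist under an arbitrary Poisson-preserving change of coordinates allowed by the uniqueness clause of Theorem \ref{weinstein.decomp}.
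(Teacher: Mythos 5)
Votre démonstration est correcte et suit exactement la voie que le texte laisse implicite (le corollaire y est énoncé sans preuve, comme conséquence directe du Théorème \ref{weinstein.decomp} et de la formule \eqref{lineaire1}) : puisque $\pi_{ij}(0)=\sum_k c_{ij}^k\cdot 0=0$, le rang en l'origine est nul, donc $k=0$ dans la décomposition de Weinstein, et les coordonnées linéaires canoniques fournissent elles-mêmes une carte dans laquelle les coefficients $\varphi_{ij}(y)=\sum_k c_{ij}^k y_k$ sont linéaires et s'annulent en $0$. Votre remarque finale, selon laquelle la linéarité est attachée à cette carte canonique et non à une carte de Weinstein arbitraire, est juste et bienvenue.
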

\begin{corollary}
Une variété de Poisson $(M,\pi)$ est symplectique, si et seulement si, $\pi$ est de
rang constant égal à la dimension, nécessairement paire, de $M$. De plus, si $\dim M=2n$,
alors $M$ est localement isomorphe à un ouvert de $\mathbb{R}^{2n}$ muni de sa structure symplectique
canonique.
\end{corollary}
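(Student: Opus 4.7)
Le plan consiste à scinder l'énoncé en trois assertions indépendantes : d'abord l'implication directe (symplectique $\Longrightarrow$ rang maximal constant), puis l'implication réciproque (rang maximal constant $\Longrightarrow$ symplectique), et enfin la forme normale locale, qui découlera directement du Théorème \ref{weinstein.decomp} dans le cas particulier où la partie "de rang nul" est absente.

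Pour l'implication directe, je partirais de la définition d'une variété symplectique $(M,\omega)$ donnée à l'Exemple 1, c'est-à-dire de l'isomorphisme de fibrés $\omega^\flat:TM\too T^*M$. On pose $\pi_\sharp:=-(\omega^\flat)^{-1}:T^*M\too TM$, ce qui définit un bivecteur $\pi$ par $\pi(\al,\be)=\be(\pi_\sharp(\al))$, puis on vérifie que le crochet associé coïncide avec $\{f,g\}=\omega(X_f,X_g)$, lequel est de Poisson par le calcul présenté dans l'Exemple 1. Comme $\pi_\sharp$ est un isomorphisme en chaque point, $\rho(x)=\dim M$ est constant ; la parité vient alors de l'assertion 1 de la proposition sur le rang.

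Pour la réciproque, je supposerais que $\pi_\sharp(x):T_x^*M\too T_xM$ est un isomorphisme en tout $x$, ce qui implique $\dim M=2n$. On définit alors une 2-forme $\omega$ par $\omega^\flat:=-(\pi_\sharp)^{-1}$, antisymétrique (car $\pi$ l'est) et non dégénérée par construction. Le point technique est de montrer que $d\omega=0$, et c'est là que je vois la principale difficulté à rédiger proprement : il faut traduire l'identité $[\pi,\pi]=0$ en $d\omega=0$. L'approche la plus économique est de fixer trois champs hamiltoniens $X_f,X_g,X_h$, d'utiliser la formule de Cartan pour $d\omega(X_f,X_g,X_h)$ et de remarquer que les termes produisent exactement le jacobiateur de $f,g,h$ à un signe près, donc s'annulent par l'identité de Jacobi de $\{\,,\,\}$. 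Comme les champs hamiltoniens engendrent $TM$ en chaque point (puisque $\pi_\sharp$ est surjective), on obtient $d\omega=0$ partout.

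Pour la forme normale locale, j'appliquerais le Théorème \ref{weinstein.decomp} en $a\in M$ quelconque avec $2k=\dim M$ et donc $r=0$. La formule \eqref{weinstein} se réduit alors à
\[
\pi=\sum_{i=1}^{n}\partial_{x_i}\wedge\partial_{x_{i+n}},
\]
ce qui est précisément le tenseur de Poisson associé à la 2-forme canonique $\omega_0=\sum_{i=1}^{n}dx_i\wedge dx_{i+n}$ sur un ouvert de $\R^{2n}$ ; le difféomorphisme donné par le système de coordonnées est un morphisme de Poisson, donc un symplectomorphisme local au sens voulu. Le seul piège serait de s'assurer que l'unicité (à isomorphisme près) mentionnée avant le Théorème \ref{weinstein.decomp} est compatible avec la structure symplectique standard, mais cela est automatique par le lemme de Darboux implicite dans l'énoncé de Weinstein.
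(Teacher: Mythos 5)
Votre démonstration est correcte, mais elle ne suit pas la même voie que le texte. Le manuscrit présente cet énoncé comme une conséquence immédiate du Théorème \ref{weinstein.decomp} et ne donne aucune preuve séparée : l'équivalence y est implicitement obtenue en remarquant que, le rang étant maximal, la partie transverse de la décomposition disparaît, de sorte que $\pi$ est localement le tenseur canonique de $\mathbb{R}^{2n}$, donc inversible et d'inverse fermée ; la réciproque (symplectique $\Rightarrow$ rang maximal) est immédiate. Vous, au contraire, n'utilisez le théorème de scindage que pour la forme normale locale, et vous établissez l'équivalence de façon autonome : sens direct par l'inversibilité de $\omega^\flat$ (avec la bonne convention de signe $\pi_\sharp=-(\omega^\flat)^{-1}$, cohérente avec $i_{X_f}\omega=-df$ et $X_f=\pi_\sharp(df)$ du chapitre 1), sens réciproque en posant $\omega^\flat=-(\pi_\sharp)^{-1}$ et en montrant $d\omega=0$ par la formule de Cartan évaluée sur des champs hamiltoniens, où l'on reconnaît le jacobiateur — c'est exactement le calcul de l'Exemple 1 lu à rebours, et l'argument de tensorialité plus le fait que les $X_f(x)$ engendrent $T_xM$ (surjectivité de $\pi_\sharp$) conclut proprement. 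Votre approche a l'avantage d'être élémentaire et de ne pas faire reposer l'équivalence sur le théorème de Weinstein, nettement plus profond ; la voie du texte est plus courte une fois ce théorème acquis, et donne d'un seul coup l'équivalence et la forme de Darboux locale. Seule petite précision à ajouter chez vous : vérifier explicitement que $\omega$ ainsi définie est bien antisymétrique et que $\omega(X_f,X_g)=\{f,g\}$ avec vos signes, ce qui se fait en une ligne et que vous évoquez sans l'écrire.
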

On retrouve, en particulier le Théorème de Darboux \cite{ber} (connu pour les variétés symplectiques). On déduit aussi que la décomposition, dans le Théorème \eqref{weinstein.decomp} ci-dessus, est unique à un isomorphisme local près.
\paragraph{Feuilletage symplectique}
Étant donnée une variété de Poisson $(M,\pi)$, on lui associe la distribution $D=\bigcup_{x\in M}D_x$ où chaque $D_x$ est le sous-espace vectoriel de $T_xM$ :
\[D_x=\{X_f(x)\mid f\in C^\infty(M)\},\]
où $X_f$ est le champ de vecteurs hamiltonien associé à $f$. Cette distribution est involutive, puisque
\[[X_f,X_g]=X_{\{f,g\}}\quad\text{pour tout}\ f,g\in C^\infty(M).\]
Si $M$ est régulière, c'est-à-dire, si les $D_x$ sont tous de même dimension, (ou, ce qui revient au même, que l'application $x\mapsto\rang\pi_\sharp(x)$ est constante), alors la distribution est intégrable d'après le Théorème de Frobenius (où le fait d'être de rang constant est essentiel). Dans le cas général, cette distribution est singulière (dite aussi généralisée), dans le sens que les $D_x$ ne sont pas tous de même dimension. Il existe cependant d'autres arguments, Théorème de Stefan-Sussmann entre autres, pour montrer l'intégrabilité.
En effet, dans ce cas l'involutivité est remplacée par l'invariance de la distribution par l'action des champs hamiltoniens
\[\left(\exp tX\right)_*D_x=D_{\exp tX(x)},\]
pour tout $x\in M$, pour tout champ hamiltonien $X$ et pour tout $t$ pour lequel le flot local de $X$, $\exp tX$ est défini. Si $D_x$ est de dimension $k$ et si $X_1,...,X_k$ sont des champs hamiltoniens tels que $X_1(x),...,X_k(x)$ engendrent $D_x$ alors l'application
\[(t_1,...,t_k)\mapsto\exp t_1X_1\circ...\circ\exp t_kX_k\,(x)\]
définit une immersion d'une boule ouverte $B(0,r)$ de $\mathbb{R}^k$ dans une sous-variété de $M$ de dimension $k$, contenant $x$. Ce qui donne l'intégrabilité de la distribution.\\
On peut montrer l'intégrabilité de cette distribution, directement à partir du Théorème Weinstein. En effet, on muni $M$ de la relation d'équivalence $\sim$ définie par : $x\sim y$, si et seulement si, il existe une
courbe joignant $x$ et $y$, dont chaque segment est un morceau de courbe intégrale d'un
champ de vecteurs hamiltonien sur $M$. Alors chaque classe d'équivalence $S_{x_0}$ est une sous-variété de dimension $k=\rang\pi_\sharp(x_0)$. De plus, chaque sous-variété $S_{x_0}$ possède une structure symplectique. (Voir \cite{duf-zun}, page 20).
\paragraph{Cohomologie de Poisson-Lichnerowicz}
C'est une cohomologie relative à toute structure de Poisson ; elle a été introduite par A. Lichnerowicz et c'est une notion centrale dans la théorie de déformation des structures de Poisson ; (voir \cite{gam}).

Soit $(M,\pi)$ une variété de Poisson et soit $\delta_\pi : \mathfrak{X}^*(M)\rightarrow\mathfrak{X}^{*+1}(M)$ l'opérateur défini par
\[\delta_\pi:=[\pi,\cdot].\]
L'identité de Jacobi graduée du crochet de Schouten permet de montrer que $\delta_\pi$ est un
opérateur de cohomologie, c'est-à-dire $\delta_\pi\circ\delta_\pi=0$. En effet, pour tout $X\in\mathfrak{X}^k(M)$, $\delta_\pi(P)\in\mathfrak{X}^{k+1}(M)$ et, d'après l'identité de Jacobi généralisée pour le crochet de Schouten,
\[(-1)^{k-1}[\pi,[\pi,X]]-[\pi,[X,\pi]]+(-1)^{k-1}[X,[\pi,\pi]]=0.\]
Comme $[\pi,\pi]=0$ et $[X,\pi]=-(-1)^{k-1}[\pi,X]$, alors $[\pi,[\pi,X]]=0.$\\
Le complexe $(\mathfrak{X}^*(M),\delta_\pi)$ est le complexe de Poisson de $M$ et la cohomologie associée, que l'on notera par $H_\pi^*(M)$, s'appelle la cohomologie de Poisson de $M$. 
\begin{equation}
H_\pi^k(M)=\frac{\ker\left(\delta_\pi : \mathfrak{X}^k(M)\rightarrow\mathfrak{X}^{k+1}(M)\right)}{\im\left(\delta_\pi : \mathfrak{X}^{k-1}(M)\rightarrow\mathfrak{X}^k(M)\right)}.
\end{equation}
Pour un tenseur de Poisson donné, on notera l'opérateur de cobord simplement par $\delta$ et le $k^{\text{ième}}$ espace de cohomologie par $H^k(M)$.\\
L'opérateur de cobord $\delta$ peut être défini par :
\begin{align}
\delta X(\alpha_1,...,\alpha_{k+1})=&\sum_{i=1}^{k+1}(-1)^{i-1}\pi_\sharp(\alpha_i)\cdot X\left(\alpha_1,...,\widehat{\alpha_i},...,\alpha_{k+1}\right)\label{differential}\\
&+\sum_{1\leq i<j\leq k+1}(-1)^{i+j}X\left([\alpha_i,\alpha_j],\alpha_1,...,\widehat{\alpha_i},...,\widehat{\alpha_j},...,\alpha_{k+1}\right)\notag,
\end{align}
où le symbole $``\ \widehat{}\ ''$ signifie que le terme correspondant est omis.\\
En effet, pour toute $f\in C^\infty(M)$ et toute $\alpha\in\Omega^1(M)$
\[\delta f(\alpha)=-X_f(\alpha)=\pi_\sharp(\alpha)\cdot f,\]
et pour tout champs de multivecteurs $X,Y\in\mathfrak{X}^*(M)$
\[\delta\left(X\wedge Y\right)=\delta X\wedge Y+(-1)^{\deg X}X\wedge\delta Y,\]
ce qui montre, de la même manière que dans le cas de la différentielle usuelle sur les formes, que $\delta$ coïncide avec l'opérateur \eqref{differential}.
L'application d'ancrage $\pi_\sharp : \omega^1(M)\rightarrow\mathfrak{X}(M)$, qui est un morphisme d'algèbres de Lie, s'étend aux formes de tout degrés par
\[\sharp\left(\alpha_1\wedge...\wedge\alpha_k\right)=\pi_\sharp(\alpha_1)\wedge...\wedge\pi_\sharp(\alpha_k),\]
et induit un homomorphisme entre la cohomologie de de Rham $H_d^*(M)$ et la cohomologie de Poisson $H_\pi^*(M)$. En effet, pour toute $k$-forme $\alpha$ on a :
\[\sharp(d\alpha)=-\delta_\pi\left(\sharp(\alpha)\right),\]
ce qui entrelace $\delta_\pi$ et la différentielle extérieure $d$ de de Rham. Cet homomorphisme est un isomorphisme dans le cas symplectique.
Comme en général pour toute cohomologie, les espaces de bas degrés admettent des interprétations simples,
$H^0(M)$ est l'ensemble des fonctions Casimir (éléments du centre de l'algèbre $C^\infty(M)$),
$H^1(M)$ est le quotient des champs de vecteurs de Poisson par les champs hamiltoniens, $H^2(M)$ est le quotient des bivecteurs compatibles avec $\pi$ par les dérivées de Lie de ce crochet. On voit donc l'intérêt de calculer la cohomologie de Poisson. Une question ouverte et difficile est de déterminer les structures de cohomologie de Poisson finie (i.e à groupes de cohomologie $H_\pi^k(M)$ de dimension finie), en dehors du cas symplectique.
\section{Classe modulaire d'une variété de Poisson}
Soit $(M,\pi)$ une variété de Poisson. On dit que $M$ est \emph{unimodulaire}\footnote{Si la variété n'est pas orientable, on peut définir la classe modulaire en remplaçant forme volume par densité.} si elle possède une forme volume $\mu$ invariante par tous les champs hamiltoniens, c'est-à-dire
\begin{equation*}
\mathscr{L}_{X_f}\mu=0,\quad\text{pour tout}\ f\in C^\infty(M).
\end{equation*}
On montre que ceci équivaut à l'existence d'un champ modulaire nul.
\paragraph{Champ de vecteurs modulaire}
Soit $(M,\pi)$ une variété de Poisson de dimension $n$, munie d'une forme volume $\mu$. Comme $\Omega^n(M)$ est de dimension $1$, en tant que $C^\infty(M)$-module, il est engendré par $\mu$ et on peut associer à tout champ de vecteurs $X\in\mathfrak{X}(M)$ la fonction, notée par $\Div_\mu X$, définie par
\[\mathscr{L}_X\mu=\left(\Div\nolimits_\mu X\right)\,\mu,\]
appelée \emph{divergence de $X$ par rapport à $\mu$}.\\La divergence possède les propriétés suivantes, pour tout $X,Y\in\mathfrak{X}(M)$ :
\begin{equation}\label{unimodular1}
\Div\nolimits_\mu[X,Y]=X\left(\Div\nolimits_\mu Y\right)-Y\left(\Div\nolimits_\mu X\right),
\end{equation}
\begin{equation}\label{unimodular2}
\Div\nolimits_\mu(fX)=f\Div\nolimits_\mu X+X(f),\ \text{pour tout}\ f\in C^\infty(M),
\end{equation}
et pour tout $f>0$
\begin{equation}\label{unimodular3}
\Div\nolimits_{f\mu}X=\Div\nolimits_\mu X+X(\log f).
\end{equation}
Pour la preuve de ces propriétés, on procède comme suit : On déduit \eqref{unimodular1} de l'égalité \[\mathscr{L}_{[X,Y]}\mu=\mathscr{L}_X\left(\mathscr{L}_Y\mu\right)-\mathscr{L}_Y\left(\mathscr{L}_X\mu\right).\]
Pour \eqref{unimodular2}, on a
\[\mathscr{L}_{fX}\mu=f\mathscr{L}_X\mu+df\wedge i_X\mu=f\mathscr{L}_X\mu+X(f)\mu.\]
Pour \eqref{unimodular3} on a, pour tout $f>0$
\begin{align*}
\mathscr{L}_X(f\mu)=&X(f)\mu+f\mathscr{L}_X\mu=\frac{X(f)}{f}f\mu+f\Div\nolimits_\mu X\mu\\
=&\left(X(\log f)+\Div\nolimits_\mu X\right)f\mu.
\end{align*}
On définit maintenant le champ modulaire, on montre que c'est un champ de vecteurs de Poisson, et que sa classe modulo les champs hamiltoniens, ne dépend pas du choix de la forme volume $\mu$.
\begin{definition}[Champ modulaire]
Soit $(M,\pi,\mu)$ une variété de Poisson orientable. On définit le champ modulaire \index{champ modulaire}, pour toute $f\in C^\infty(M)$, par
\[X_\mu(f)=\Div\nolimits_\mu X_f.\]
\end{definition}
Le champ modulaire possède les propriétés suivantes :
\begin{proposition}~
\begin{enumerate}
\item $X_\mu$ est un champ de vecteurs de Poisson.
\item Pour tout $f>0$, on a
\begin{equation}\label{modular.class}
X_{f\mu}=-X_{\log f}+X_\mu.
\end{equation}
\end{enumerate}
\end{proposition}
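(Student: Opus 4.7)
The plan is to deduce both parts of the proposition almost mechanically from the three properties \eqref{unimodular1}--\eqref{unimodular3} of the divergence, combined with the defining identity $X_\mu(f)=\Div_\mu X_f$ and the Lie-algebra homomorphism property $[X_f,X_g]=X_{\{f,g\}}$ of Hamiltonian vector fields that was recorded just after the definition of a Poisson bracket.

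For assertion 1, I would prove that $X_\mu$ is a Poisson vector field by checking directly that it acts as a derivation of the bracket, i.e.\
\[
X_\mu(\{f,g\})=\{X_\mu(f),g\}+\{f,X_\mu(g)\}\qquad\text{for all }f,g\in C^\infty(M).
\]
Rewriting the left-hand side as $X_\mu(\{f,g\})=\Div_\mu X_{\{f,g\}}=\Div_\mu[X_f,X_g]$ and applying \eqref{unimodular1} with $X=X_f$, $Y=X_g$, one obtains $X_f(\Div_\mu X_g)-X_g(\Div_\mu X_f)=X_f(X_\mu(g))-X_g(X_\mu(f))$. Translating back via $X_f(h)=\{f,h\}$ and using antisymmetry, this equals $\{f,X_\mu(g)\}+\{X_\mu(f),g\}$, which is exactly the Leibniz identity required.

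For assertion 2, I would evaluate $X_{f\mu}$ on an arbitrary test function $g$ and apply \eqref{unimodular3} to the Hamiltonian vector field $X_g$:
\[
X_{f\mu}(g)=\Div_{f\mu}X_g=\Div_\mu X_g+X_g(\log f)=X_\mu(g)+\{g,\log f\}=X_\mu(g)-X_{\log f}(g).
\]
Since $g$ is arbitrary, this gives the identity $X_{f\mu}=X_\mu-X_{\log f}$ as claimed.

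No substantial obstacle is anticipated; the only care needed is with the sign convention $X_f(g)=\{f,g\}$ when converting between Hamiltonian vector fields and Poisson brackets, so as to get the correct sign in the last step of assertion~2. Everything else is straightforward bookkeeping with the three divergence identities already stated and proved immediately above.
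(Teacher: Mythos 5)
Your treatment of the bracket identity and of assertion 2 is correct and follows exactly the paper's route: the same application of \eqref{unimodular1} together with $X_{\{f,g\}}=[X_f,X_g]$ for the Leibniz rule with respect to the Poisson bracket, and the same application of \eqref{unimodular3} to $X_g$, with the correct sign coming from $X_g(\log f)=\{g,\log f\}=-X_{\log f}(g)$, which yields \eqref{modular.class}.

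There is, however, one genuine omission in assertion 1: you never verify that $X_\mu$ is a vector field at all. A priori, $f\mapsto\Div\nolimits_\mu X_f$ is only an $\mathbb{R}$-linear operator on $C^\infty(M)$; to call it a \emph{champ de vecteurs} de Poisson, and to use the characterization of $\mathscr{L}_{X_\mu}\pi=0$ via the derivation property with respect to the bracket (an equivalence the paper proves, and which only makes sense once $X_\mu$ is a vector field so that $\mathscr{L}_{X_\mu}\pi$ is defined), you must first check that $X_\mu$ is a derivation with respect to the product of functions. This is precisely where \eqref{unimodular2} --- which you list in your toolkit but never actually use --- enters, together with $X_{fg}=fX_g+gX_f$: one computes $X_\mu(fg)=\Div\nolimits_\mu(X_{fg})=\Div\nolimits_\mu(fX_g+gX_f)=fX_\mu(g)+X_g(f)+gX_\mu(f)+X_f(g)=fX_\mu(g)+gX_\mu(f)$, since $X_g(f)+X_f(g)=\{g,f\}+\{f,g\}=0$. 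With this preliminary step added, your argument coincides with the paper's proof.
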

\begin{proof}~
\begin{enumerate}
\item Montrons tout d'abord que $X_\mu$ est un champ de vecteurs, c'est-à-dire une dérivation. Pour cela soient $f,g\in C^\infty(M)$
\begin{align*}
X_\mu(fg)&=\Div\nolimits_\mu(X_{fg})=\Div\nolimits_\mu(fX_g+gX_f)\\
&\stackrel{\eqref{unimodular2}}{=}fX_\mu(g)+X_g(f)+gX_\mu(f)+X_f(g)\\
&=fX_\mu(g)+\{g,f\}+gX_\mu(f)+\{f,g\}\\
&=fX_\mu(g)+gX_\mu(f).
\end{align*}
Rappelons qu'un champ de vecteurs $X$ est de Poisson si son flot préserve le tenseur de Poisson, c'est-à-dire $\mathscr{L}_X\pi=0$. On montre que ceci équivaut à
\begin{equation}\label{Poisson.field}
X\{f,g\}=\{X(f),g\}+\{f,X(g)\}.
\end{equation}
En effet, on a
\begin{align*}
X\{f,g\}=&\mathscr{L}_X\langle\pi,df\wedge dg\rangle\\
=&\langle\mathscr{L}_X\pi,df\wedge dg\rangle+
\langle\pi,\mathscr{L}_X\left(df\wedge dg\right)\rangle\\
=&\langle\mathscr{L}_X\pi,df\wedge dg\rangle+
\langle\pi,dX(f)\wedge dg\rangle+\langle\pi,df\wedge dX(g)\rangle\\
=&\langle\mathscr{L}_X\pi,df\wedge dg\rangle+\{X(f),g\}+\{f,X(g)\}.
\end{align*}
Montrons maintenant \eqref{Poisson.field} pour $X_\mu$.
\begin{align*}
X_\mu\left(\{f,g\}\right)=&\Div\nolimits_\mu\left(X_{\{f,g\}}\right)=\Div\nolimits_\mu\left([X_f,X_g]\right)\\
\stackrel{\eqref{unimodular1}}{=}&X_f\left(X_\mu(g)\right)-X_g\left(X_\mu(f)\right)=\{f,X_\mu(g)\}+\{X_\mu(f),g\}.
\end{align*}
\item Pour tout $f>0$, on a
\[X_{f\mu}(g)=\Div\nolimits_{f\mu}(X_g)\stackrel{\eqref{unimodular3}}{=}X_{\mu}(g)+X_g\left(\log f\right)=X_{\mu}(g)-X_{\log f}(g).\]
\end{enumerate}
\end{proof}
En vertu de la Proposition ci-dessus on déduit que, dans le premier espace de cohomologie de Poisson $H^1_\pi(M)$, le champ modulaire $X_\mu$ représente une classe qui ne dépend pas de la forme volume choisie, puisque $[X_{f\mu}]=[X_\mu]$ pour tout $f>0$. On notera cette classe par $\mathcal{M}$ et on l'appellera \emph{la classe modulaire} de $\pi$. La structure de Poisson est dite \emph{unimodulaire} si $\mathcal{M}=0$. On va comprendre l'origine de cette appellation lorsqu'on étudie le cas d'une variété de Poisson linéaire et son lien avec l'unimodularité de l'algèbre de Lie.
\begin{proposition}
On a l'équivalence des propriétés suivantes :
\begin{enumerate}
\item $(M,\pi)$ est unimodulaire.
\item Il existe une forme volume $\mu$ sur $M$ invariante par tout champ hamiltonien, c'est-à-dire
\[\mathscr{L}_{X_f}\mu=0,\ \text{pour toute}\ f\in C^\infty(M).\]
\item Il existe une forme volume $\mu$, sur $M$, telle que $d\left(i_\pi\mu\right)=0$.
\end{enumerate}
\end{proposition}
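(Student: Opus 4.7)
Le plan consiste à établir séparément les équivalences $(1)\Leftrightarrow(2)$ et $(2)\Leftrightarrow(3)$. La première est une conséquence directe de la formule \eqref{modular.class} de transformation du champ modulaire sous changement de forme volume~; la seconde repose sur l'identité ponctuelle
\begin{equation}\label{idmod}
d(i_\pi\mu) = i_{X_\mu}\mu,
\end{equation}
valable pour toute forme volume $\mu$ sur $M$. Une fois \eqref{idmod} acquise, le reste est purement formel.

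Pour $(1)\Rightarrow(2)$, je pars de $\mathcal{M}=[X_\mu]=0$ dans $H^1_\pi(M)$, ce qui fournit $h\in C^\infty(M)$ tel que $X_\mu = X_h$. L'application de \eqref{modular.class} avec $f = e^h$ donne immédiatement $X_{f\mu} = -X_h + X_h = 0$~; par définition du champ modulaire, ceci signifie $\Div_{f\mu}(X_g) = 0$, et donc $\mathscr{L}_{X_g}(f\mu) = 0$ pour toute $g\in C^\infty(M)$. Inversement, si une forme volume $\nu$ satisfait $(2)$, l'écrire $\nu = f\mu$ avec $f>0$ et appliquer \eqref{modular.class} montre que $X_\mu = X_{\log f}$ est hamiltonien, d'où $\mathcal{M}=0$.

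Pour $(2)\Leftrightarrow(3)$, je fixe une forme volume $\mu$ quelconque. La formule de Cartan, combinée à $d\mu = 0$, fournit $\mathscr{L}_{X_g}\mu = d(i_{X_g}\mu) = X_\mu(g)\mu$, qui s'annule pour toute $g\in C^\infty(M)$ si et seulement si $X_\mu = 0$. D'autre part, l'identité \eqref{idmod} ramène la condition $d(i_\pi\mu) = 0$ à $i_{X_\mu}\mu = 0$, ce qui, $\mu$ étant de rang maximal en chaque point, équivaut encore à $X_\mu = 0$. Les deux conditions sont donc équivalentes à la nullité de $X_\mu$, et a fortiori équivalentes entre elles comme énoncés existentiels.

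Le point délicat est la preuve de \eqref{idmod}, qui est le cœur technique du résultat. Je l'établirais par un calcul direct en coordonnées locales : en écrivant $\pi = \frac{1}{2}\sum \pi^{ij}\,\partial_{x_i}\wedge\partial_{x_j}$ et $\mu = dx_1\wedge\cdots\wedge dx_n$, puis en développant $i_\pi\mu$ et en lui appliquant $d$, un décompte attentif des signes de permutation montre que $d(i_\pi\mu)$ et $i_{X_\mu}\mu$ se réduisent tous deux à la $(n-1)$-forme $\sum_j (-1)^j\bigl(\sum_i \partial_{x_i}\pi^{ij}\bigr)\,dx_1\wedge\cdots\widehat{dx_j}\cdots\wedge dx_n$, la seconde expression s'obtenant à partir de la formule locale $X_\mu = \sum_j\bigl(\sum_k\partial_{x_k}\pi^{jk}\bigr)\partial_{x_j}$, elle-même déduite de $X_\mu(f) = \Div_\mu X_f$ après annulation du terme symétrique en $(i,k)$ contracté avec $\pi^{ik}$. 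Une voie alternative plus intrinsèque utiliserait la formule de Lichnerowicz \eqref{lichn} avec $P=\pi$, $Q=X_g$ pour $g$ arbitraire, et identification terme à terme~; mais le calcul local reste plus concis et évite toute manipulation délicate de signes graduels.
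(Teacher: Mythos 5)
Votre démonstration est correcte et suit pour l'essentiel la même voie que celle du texte : tout repose sur l'identité $d(i_\pi\mu)=i_{X_\mu}\mu$ établie par un calcul en coordonnées locales, le reste ((1)$\Leftrightarrow$(2) via \eqref{modular.class}, puis la réduction de (2) et (3) à $X_\mu=0$) étant formel — votre traitement de (1)$\Leftrightarrow$(2) est simplement un peu plus explicite que le « il est clair » du texte. Seule petite différence : vous normalisez $\mu=dx_1\wedge\cdots\wedge dx_n$, ce qui est légitime à condition de signaler qu'on peut choisir des coordonnées adaptées à $\mu$ (ou de garder le facteur $\Phi>0$ comme le fait le texte), l'identité à démontrer dépendant de $\mu$.
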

\begin{proof}
Il est clair que les propositions $1$ et $2$ sont équivalentes puisque
\begin{align*}
\mathscr{L}_{X_f}\mu=0,\ \forall f\in C^\infty(M)&\Longleftrightarrow\Div\nolimits_\mu X_f=0,\ \forall f\in C^\infty(M)\\
&\Longleftrightarrow X_\mu(f)=0,\ \forall f\in C^\infty(M)\\
&\Longleftrightarrow X_\mu=0.
\end{align*}
Pour montrer que $1$ et $3$ sont équivalentes, il suffit de montrer l'égalité
\begin{equation}\label{modular.Poisson.tensor}
d\left(i_\pi\mu\right)=i_{x_\mu}\mu,
\end{equation}
et de remarquer que, pour toute forme volume $\mu$, $i_X\mu=0\Longleftrightarrow X=0$.\\
Dans une carte locale connexe $(U,x_1,...,x_n)$, la forme volume s'écrit $\mu=\Phi\,dx_1\wedge...\wedge dx_n$ avec $\Phi>0$, le tenseur de Poisson s'écrit $\pi=\sum_{i<j}\pi_{ij}\,\partial_{x_i}\wedge\partial_{x_j}$ et le champ hamiltonien s'écrit $X_f=\sum_{j=1}^n\left(\sum_{i=1}^n\pi_{ij}\frac{\partial f}{\partial x_i}\right)\partial_{x_j}$. On a
\begin{multline*}
X_\mu(f)=\Div\nolimits_\mu X_f\stackrel{\eqref{unimodular3}}{=}\Div\nolimits_{dx_1\wedge...\wedge dx_n}X_f+X_f\left(\log\Phi\right)\\\stackrel{\eqref{unimodular2}}{=}\sum_{j=1}^n\left(\sum_{j=1}^n\pi_{ij}\frac{\partial f}{\partial x_i}\right)\Div\nolimits_{dx_1\wedge...\wedge dx_n}\partial_{x_j}+\sum_{j=1}^n\sum_{i=1}^n\partial_{x_j}\left(\pi_{ij}\frac{\partial f}{\partial x_i}\right)\\
+\frac{1}{\Phi}\sum_{j=1}^n\sum_{i=1}^n\pi_{ij}\frac{\partial f}{\partial x_i}\frac{\partial\Phi}{\partial x_j}.
\end{multline*}
comme
\[\Div\nolimits_{dx_1\wedge...\wedge dx_n}\partial_{x_j}=0,\ j=1,...,n\ \text{et}\ \sum_{j=1}^n\sum_{i=1}^n\pi_{ij}\frac{\partial^2f}{\partial x_i\partial x_j}=0,\]
on a $X_\mu=\sum_{i=1}^n\left(\sum_{j=1}^n\frac{\partial\pi_{ij}}{\partial x_j}+\frac{1}{\Phi}\pi_{ij}\frac{\partial\Phi}{\partial x_j}\right)\partial x_i$, et donc
\begin{equation*}
i_{X_\mu}\mu=\sum_{i=1}^n(-1)^{i+1}\Big(\sum_{j=1}^n\partial x_j(\pi_{ij}\Phi)\Big)\,dx_1\wedge...\wedge \widehat{dx_i}\wedge...\wedge dx_n.
\end{equation*}
D'autre part l'égalité
\begin{multline*}
d\omega(X_1,...,X_{k+1})=\sum_{i=1}^{k+1}(-1)^{i+1}X_i\cdot\omega\left(X_1,...,\widehat{X_i},...,X_{k+1}\right)\\
+\sum_{1\leq i<j\leq k+1}(-1)^{i+j}\omega\left([X_i,X_j],X_1,...,\widehat{X_i},...,\widehat{X_j},...,X_{k+1}\right),
\end{multline*}
appliquée à $\omega=i_\pi\mu$ et aux champs $X_i=\partial x_i$, qui commutent deux à deux, donne
\begin{align*}
d\left(i_\pi\mu\right)(\widehat{\partial_{x_1}},\partial_{x_2},...,\partial_{x_n})=&\phantom{-}\partial_{x_2}(\pi_{12}\Phi)+...+\partial_{x_n}(\pi_{1n}\Phi)\\
d\left(i_\pi\mu\right)(\partial_{x_1},\widehat{\partial_{x_2}},...,\partial_{x_n})=&-\partial_{x_1}(\pi_{21}\Phi)-...-\partial_{x_n}(\pi_{2n}\Phi)\\
\dotfill=&\phantom{-}\ldots\\
d\left(i_\pi\mu\right)(\partial_{x_1},\partial_{x_2},...,\widehat{\partial_{x_n}})=&(-1)^{n+1}\partial_{x_1}(\pi_{n1}\Phi)+...+(-1)^{n+1}\partial_{x_{n-1}}(\pi_{n\,n-1}\Phi).
\end{align*}
On déduit que
\begin{equation*}
d\left(i_\pi\mu\right)=\sum_{i=1}^n(-1)^{i+1}\Big(\sum_{j=1}^n\partial x_j(\pi_{ij}\Phi)\Big)\,dx_1\wedge...\wedge \widehat{dx_i}\wedge...\wedge dx_n,
\end{equation*}
ce qui donne l'égalité \eqref{modular.Poisson.tensor}.
\end{proof}
Ayant caractérisé l'unimodularité des structures de Poisson, passons à l'étude de quelques exemples.
\paragraph{Variétés symplectiques} Toute variété symplectique $(M,\omega)$ est unimodulaire ; la forme volume $\mu=\wedge^{\frac{\dim M}{2}}\omega$ étant invariante par tout champ hamiltonien. En effet, la forme symplectique $\omega$ est invariante par tout champ hamiltonien, (donc aussi $\mu$) :
\[\mathscr{L}_{X_f}\omega=di_{X_f}\omega+i_{X_f}d\omega=di_{X_f}\omega=d\left(-df\right)=0.\]
\paragraph{Variétés de Poisson linéaires}
Soit $\G$ une algèbre de Lie. Rappelons que $\G$ est dite unimodulaire si pour tout $u\in\G$
\[\tr({\ad}_u)=0.\]
On a le résultat suivant qui justifie le terme d'unimodularité.
\begin{proposition} La structure de Poisson linéaire sur $\G^*$ est unimodulaire (en tant que variété de Poisson), si et seulement si, $\G$ est unimodulaire (en tant qu'algèbre de Lie).\end{proposition}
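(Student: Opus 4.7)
L'idée est de calculer explicitement le champ modulaire $X_\mu$ associé à la forme volume canonique, puis d'identifier directement son annulation à la condition d'unimodularité de l'algèbre de Lie, et enfin d'utiliser l'annulation de $\pi$ à l'origine pour obtenir la réciproque.

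Plus précisément, je fixerais une base $X_1,\ldots,X_n$ de $\G$ de constantes de structure $c_{ij}^k$, avec les coordonnées globales $x_1,\ldots,x_n$ associées sur $\G^*$, de sorte que
\[\pi_{ij}(x)=\sum_{k=1}^n c_{ij}^k x_k\quad\text{et}\quad \mu=dx_1\wedge\cdots\wedge dx_n.\]
Cette forme $\mu$ est invariante par translation, donc $\Phi=1$. En appliquant la formule du champ modulaire établie dans la démonstration précédente, on obtient immédiatement
\[X_\mu=\sum_{i=1}^n\left(\sum_{j=1}^n\frac{\partial \pi_{ij}}{\partial x_j}\right)\partial_{x_i}=\sum_{i=1}^n\left(\sum_{j=1}^n c_{ij}^j\right)\partial_{x_i}=\sum_{i=1}^n \tr({\ad}_{X_i})\,\partial_{x_i},\]
la dernière égalité provenant du fait que la matrice de ${\ad}_{X_i}$ dans la base $(X_j)$ a pour coefficients $c_{ij}^k$.

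Le sens direct en découle aussitôt : si $\G$ est unimodulaire, alors $\tr({\ad}_{X_i})=0$ pour tout $i$, donc $X_\mu=0$, et la classe modulaire $\mathcal{M}=[X_\mu]$ est nulle. Pour la réciproque, je supposerais que $(\G^*,\pi_\ell)$ est unimodulaire, c'est-à-dire que $X_\mu$ est hamiltonien : $X_\mu=X_h$ pour une certaine $h\in C^\infty(\G^*)$. Le point clef est que le tenseur de Poisson linéaire $\pi_\ell$ s'annule à l'origine $0\in\G^*$ ; par conséquent tout champ hamiltonien $X_h=\pi_\sharp(dh)$ s'annule en $0$. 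En évaluant $X_\mu=X_h$ au point $0$, on obtient
\[\sum_{i=1}^n \tr({\ad}_{X_i})\,\partial_{x_i}\bigl|_0=0,\]
d'où $\tr({\ad}_{X_i})=0$ pour tout $i$, donc $\G$ est unimodulaire.

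Le calcul algébrique du champ modulaire constitue l'étape la plus technique, mais il se réduit pour l'essentiel à réutiliser la formule \eqref{modular.Poisson.tensor} du lemme précédent avec $\Phi=1$. Le véritable ressort conceptuel de la démonstration est l'observation simple, mais décisive, que $\pi_\ell$ s'annule à l'origine, ce qui permet de transformer la condition cohomologique globale ($X_\mu$ hamiltonien) en une condition ponctuelle ($X_\mu(0)=0$). Aucune difficulté majeure n'est à anticiper.
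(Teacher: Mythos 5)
Votre démonstration est correcte et suit essentiellement la même démarche que le texte : calcul explicite de $X_\mu$ en coordonnées linéaires, identification $\sum_j c_{ij}^j=\tr(\ad_{X_i})$, puis évaluation à l'origine où $\pi_\ell$ s'annule pour obtenir la réciproque. La seule nuance est que vous traitez la réciproque via la définition cohomologique ($X_\mu$ hamiltonien, donc nul en $0$), tandis que le texte garde un facteur $\Phi$ général et utilise $X_\mu=0$ avec $\pi_{ij}(0)=0$ ; les deux arguments reposent sur la même observation décisive et sont équivalents.
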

\begin{proof} On choisit un système de coordonnées linéaires $\{x_1,\ldots,x_n\}$ sur $\G^*$ associé à une base $(e_1,\ldots,e_n)$ et on considère une forme volume
$$\mu=\Phi\,dx_1\wedge\ldots\wedge dx_n.$$
On a, d'après la preuve de la Proposition 2,
$$X_\mu=\sum_{i=1}^n\left(\sum_{j=1}^n\frac{\partial\pi_{ij}}{\partial x_j}+\frac{1}{\Phi}\pi_{ij}\frac{\partial\Phi}{\partial x_j}\right)\partial x_i.$$
Or, d'après (\ref{lineaire1}),
$$\frac{\partial\pi_{ij}}{\partial x_j}=c_{ij}^j,$$
et $$\sum_{j=1}^nc_{ij}^j=\tr({\ad}_{e_i}).$$
On obtient alors que
$$X_\mu=\sum_{i=1}^n\left(\tr({\ad}_{e_i})+\frac{1}{\Phi}\pi_{ij}\frac{\partial\Phi}{\partial x_j}\right){\partial x_j}.$$
Si $\G$ est unimodulaire, on prend $\Phi=1$ et on obtient $X_\mu=0$. Inversement, si $X_\mu=0$, alors $X_\mu(0)=0$, ce qui entraîne que $\G$ est unimodulaire.
\end{proof}
\section{Groupes de Lie-Poisson}\label{liepoisson}
La notion de groupe de Lie-Poisson a été introduite par Drinfeld dans \cite{dr:bialgebra}, puis étudiée systématiquement par Semenov-Tian Shansky dans \cite{sts:dressing}. On peut se référer à l'excellent article de Lu \& Weinstein \cite{lu-we:poi} et à la thèse de Lu \cite{lu}, ou encore à \cite{duf-zun} et \cite{vai}.
\begin{definition}
Un groupe de Lie-Poisson\index{Lie-Poisson} est un groupe de Lie muni d'une structure de variété de
Poisson, tel que la multiplication $m : G\times G\rightarrow G$
soit un morphisme de Poisson. Si $\pi$ désigne le $2$-tenseur de Poisson sur $G$,
on a pour tout $g,h\in G$ :
\[\pi(gh)=L_{g*}\pi(h)+R_{h*}\pi(g).\]
\end{definition}
Plus généralement, un $k$-tenseur $K$ sur un groupe de Lie $G$ est multiplicatif\index{multiplicatif}
s'il vérifie la relation précédente, c'est-à-dire :
\begin{equation}\label{k-multiplicatif}
K(gh)=L_{g*}K(h)+R_{h*}K(g).
\end{equation}
Soit un $k$-tenseur $K$, et soient $K_\ell$ et $K_r :
G\rightarrow\wedge^k\G$ définis par les formules :
\[K_\ell(g)=L_{g^{-1}*}K(g),\quad K_r(g)=R_{g^{-1}*}K(g).\]
De l'égalité :
\begin{align*}
K_r(gh)-K_r(g)-\Ad(g)\cdot
K_r(h)=&R_{(gh)^{-1}*}K(gh)-R_{g^{-1}*}K(g)\\
&\hspace*{2cm}-L_{g*}R_{g^{-1}*}R_{h^{-1}*}K(h)\\
=&R_{(gh)^{-1}*}K(gh)-R_{g^{-1}*}K(g)\\
&\hspace*{2cm}-R_{(gh)^{-1}*}L_{g*}K(h)\\
&=R_{(gh)^{-1}*}\Big[K(gh)-R_{h*}K(g)-L_{g*}K(h)\Big],
\end{align*}
on déduit une première caractérisation :\\
\emph{
Un $k$-tenseur $K$ est multiplicatif, si et seulement si, son translaté à droite $K_r$ est un $1$-cocycle de $G$ dans $\wedge^k\G$ pour la représentation adjointe}
\[K_r(gh)=K_r(g)+\Ad(g)\cdot K_r(h).\]
(Voir l'annexe pour la cohomologie des groupes et des algèbres de Lie). On a une caractérisation analogue par $K_\ell$, c'est-à-dire
\[K_\ell(gh)=K_\ell(h)+\Ad(h^{-1})\cdot K_\ell(g).\] 
Si le groupe de Lie $G$ est connexe, alors on a une autre caractérisation importante :\\
\emph{Un $k$-tenseur $K$ est multiplicatif, si et seulement si, $K(e)=0$ et $\mathscr{L}_XK$ est invariant à gauche pour tout champ de vecteur $X$ invariant à gauche.} En effet, l'égalité \eqref{k-multiplicatif} appliquée à $g=h=e$ montre que $K(e)=0$, et :
\begin{align*}
\mathscr{L}_XK(g)=&\frac{d}{dt}\big|_{t=0}\left(\exp tX\right)^*K(\exp tX(g))\\
=&\frac{d}{dt}\big|_{t=0}R_{\exp tX}^*K(g\exp tX)\\
=&\frac{d}{dt}\big|_{t=0}R_{\exp tX}^*L_{g*}K(\exp tX)+\frac{d}{dt}\big|_{t=0}R_{\exp tX}^*R_{\exp tX*}K(g)\\
=&L_{g*}\frac{d}{dt}\big|_{t=0}R_{\exp tX}^*K(\exp tX)\\
=&L_{g*}\left(\mathscr{L}_XK\right)(e),
\end{align*}
ce qui veut dire que $\mathscr{L}_XK$ est invariant à gauche. Le raisonnement est le même pour les champs invariants à droite, en utilisant $K_\ell$.\\
Réciproquement, si $\mathscr{L}_XK$ est invariant à gauche pour tout champ invariant à gauche $X$, c'est-à-dire \[\mathscr{L}_XK_r(gh)=\Ad(g)\cdot\mathscr{L}_XK_r(h) \footnote{En fait un $k$-tenseur $T$ est invariant à gauche, si et seulement si, son translaté à droite $T_r(g)=R_{g^{-1}*}T(g)$ est équivariant, 
c'est-à-dire $T_r(gh)=\Ad(g)\cdot T_r(h)$},\]alors
\begin{align*}
\frac{d}{dt}\big[K_r(g\exp tX)&-K_r(g)-\Ad(g)\cdot K_r(\exp tX)\big]\\
=&\ R_{\exp tX}^*\left(\mathscr{L}_XK_r\right)(g\exp tX)-\Ad(g)\cdot R_{\exp tX}^*\left(\mathscr{L}_XK_r\right)(\exp tX)\\
=&\ R_{\exp tX}^*\big[\left(\mathscr{L}_XK_r\right)(g\exp tX)-\Ad(g)\cdot\left(\mathscr{L}_XK_r\right)(\exp tX)\big]\\
=&\ 0,
\end{align*}
car $\mathscr{L}_XK_r$ est équivariant. On a donc, pour tout $g\in G$ et tout $h$ dans un voisinage de $e$,
\[K_r(gh)=K_r(g)+\Ad(g)\cdot K_r(h).\]
Comme $G$ est connexe, il est engendré par l'image de l'application exponentielle
$\exp : \G\rightarrow G$, on déduit que $K_r$ est un $1$-cocycle pour la représentation adjointe de $G$ sur $\wedge^2\G$.
\paragraph{Dérivée intrinsèque}
Soit $K$ un $k$-tenseur s'annulant en $e$. La dérivée intrinsèque de
$K$ en $e$ est l'application linéaire :
\[\begin{array}{cccc}
d_eK : & \G & \longrightarrow & \wedge^k\G \\
 & v & \longmapsto & \mathscr{L}_XK(e)
  \end{array}\]
où $X$ est un champ de vecteurs quelconque, tel que $X(e)=v$.\\
Cette application est bien définie car elle ne dépend que de la valeur de $X$ en $e$. En effet, comme $K(e)=0$ alors $\mathscr{L}_XK(e)=0$, pour tout champ de vecteurs $X$ tel que $X(e)=0$.\footnote{Le crochet de Schouten de deux champs nuls en un point, est un champs nul nul en ce point \eqref{lichn}.}
\begin{proposition} Soit $G$ un groupe de Lie connexe Alors :
\begin{enumerate}
\item Un $k$-tenseur multiplicatif $K$ sur $G$ est nul, si et seulement si,
$d_eK=0$.
\item Soient $K$ un $k$-tenseur et $L$ un $\ell$-tenseur. Si $K$ et $L$ sont
multiplicatifs, leur crochet de Schouten 
$[K,L]$ est un $(k+\ell-1)$-tenseur multiplicatif.
\end{enumerate}
\end{proposition}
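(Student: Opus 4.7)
\emph{Première assertion.} L'implication directe est immédiate. Pour la réciproque, on utilise la caractérisation établie juste avant l'énoncé : comme $G$ est connexe, un $k$-tenseur $K$ est multiplicatif si et seulement si $K(e)=0$ et $\mathscr{L}_X K$ est invariant à gauche pour tout champ $X$ invariant à gauche. Supposons $d_e K=0$. Pour tout $X$ invariant à gauche on a $(\mathscr{L}_X K)(e)=d_e K(X(e))=0$, et comme $\mathscr{L}_X K$ est invariant à gauche, il s'annule identiquement sur $G$. Or le flot d'un champ invariant à gauche $X$ avec $X(e)=v$ est la translation à droite $R_{\exp(tv)}$ ; donc $K$ est invariant par $R_{\exp(tv)}$ pour tout $v\in\G$. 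Comme $G$ est connexe et engendré par $\exp(\G)$, $K$ est invariant par toute translation à droite, d'où $K(g)=R_{g*}K(e)=0$ pour tout $g\in G$.

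\emph{Seconde assertion.} On établit d'abord $[K,L](e)=0$ : c'est une conséquence directe de la remarque, tirée de la formule de Lichnerowicz \eqref{lichn}, que le crochet de Schouten de deux champs de multivecteurs nuls en un point est nul en ce point. Pour la multiplicativité proprement dite, je propose de la reformuler en termes de champs $m$-reliés, où $m:G\times G\to G$ est la multiplication. Soit $\widetilde K$ le $k$-tenseur sur $G\times G$ défini par $\widetilde K(g,h)=K(g)+K(h)$, où l'on identifie $\wedge^k T_gG$ et $\wedge^k T_hG$ à leurs images respectives dans $\wedge^k T_{(g,h)}(G\times G)$ via la décomposition $T_{(g,h)}(G\times G)=T_gG\oplus T_hG$. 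L'identité $m_{*,(g,h)}\bigl(K(g),K(h)\bigr)=R_{h*}K(g)+L_{g*}K(h)$ montre précisément que $K$ est multiplicatif si et seulement si $K$ et $\widetilde K$ sont $m$-reliés, et de même pour $L$ et $\widetilde L$.

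La naturalité du crochet de Schouten pour des champs de multivecteurs $f$-reliés (résultat classique) entraîne alors que $[K,L]$ et $[\widetilde K,\widetilde L]$ sont $m$-reliés. Un calcul direct, utilisant que deux champs de multivecteurs s'appuyant sur des facteurs distincts du produit $G\times G$ commutent pour le crochet de Schouten, donne $[\widetilde K,\widetilde L]=\widetilde{[K,L]}$. Par $m$-relation on conclut que $[K,L](gh)=R_{h*}[K,L](g)+L_{g*}[K,L](h)$, c'est-à-dire que $[K,L]$ est multiplicatif. L'étape qui demandera le plus de soin sera la vérification de l'égalité $[\widetilde K,\widetilde L]=\widetilde{[K,L]}$, qui peut s'effectuer en coordonnées locales adaptées au produit $G\times G$, ou plus intrinsèquement en appliquant la formule de Lichnerowicz \eqref{lichn} aux formes extérieures se décomposant selon les deux facteurs.
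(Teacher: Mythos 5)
Votre preuve est correcte. Pour le point 1, votre argument coïncide essentiellement avec celui de la thèse : $(\mathscr{L}_XK)(e)=d_eK(X(e))=0$ et l'invariance à gauche de $\mathscr{L}_XK$ donnent $\mathscr{L}_XK=0$ pour tout champ invariant à gauche $X$, d'où l'invariance de $K$ par translations à droite ($G$ connexe), puis $K\equiv0$ car $K(e)=0$. Pour le point 2, en revanche, vous suivez une route réellement différente. La thèse reste dans le cadre des dérivées de Lie : par la règle de Leibniz du crochet de Schouten, $\mathscr{L}_Y\mathscr{L}_X[K,L]$ se développe en quatre termes qui s'annulent tous parce que $\mathscr{L}_XK,\mathscr{L}_XL$ sont invariants à gauche et $\mathscr{L}_YK,\mathscr{L}_YL$ invariants à droite ; donc $\mathscr{L}_X[K,L]$ est invariant à gauche et, avec $[K,L](e)=0$, la caractérisation précédente conclut. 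Vous utilisez au contraire la caractérisation « produit » : $K$ est multiplicatif si et seulement si $K$ et $\widetilde K$ sont $m$-reliés (l'analogue, pour les $k$-tenseurs, du fait que $\pi$ est multiplicatif si et seulement si $m$ est un morphisme de Poisson pour la structure produit), puis la naturalité du crochet de Schouten pour les champs $f$-reliés et l'identité $[\widetilde K,\widetilde L]=\widetilde{[K,L]}$, dont les vérifications (termes croisés nuls, facteurs distincts) sont exactes. Votre approche est plus conceptuelle et évite les lemmes d'invariance gauche/droite, mais elle repose sur la naturalité du crochet de Schouten sous $f$-relation, résultat classique (Dufour--Zung, Vaisman) non établi dans le texte : il faudrait au minimum le citer ou le démontrer via \eqref{lichn}, ce que vous ne faites qu'esquisser ; la preuve de la thèse, elle, n'utilise que les outils déjà mis en place, au prix des deux faits auxiliaires que $\mathscr{L}_Y$ tue les tenseurs invariants à gauche pour $Y$ invariant à droite et que le crochet d'un champ invariant à gauche avec un champ invariant à droite est nul.
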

\begin{proof}~
\begin{enumerate}
\item Si $K$ est multiplicatif et $d_eK=0$, alors en particulier $\mathscr{L}_XK(e)=0$
pour tout champ $X$ invariant à gauche. 
Or $\mathscr{L}_XK$ est lui-même invariant à gauche, donc $\mathscr{L}_XK=0$. Mais $G$ étant connexe
et la dérivée de Lie de $K$ par rapport à tout 
champ de vecteurs invariant à gauche $X$ est nulle, $K$ est donc invariant à droite, et
donc nul puisque nul en $e$.
\item Soit $X$ (resp. $Y$) un champ de vecteurs invariant à gauche (respectivement, à droite). On a, grâce à la formule de Leibniz :
\[\mathscr{L}_Y\mathscr{L}_X[K,L]=[\mathscr{L}_Y\mathscr{L}_XK,L]+[\mathscr{L}_XK,\mathscr{L}_YL]+[\mathscr{L}_YK,\mathscr{L}_XL]+[K,\mathscr{L}_Y\mathscr{L}_XL],\]
or $\mathscr{L}_XK$ et $\mathscr{L}_XL$ sont invariants à gauche, tandis que $\mathscr{L}_YK$ et $\mathscr{L}_YL$ sont invariants à droite. Donc
\[\mathscr{L}_Y\mathscr{L}_XK=\mathscr{L}_Y\mathscr{L}_XL=[\mathscr{L}_XK,\mathscr{L}_YL]=[\mathscr{L}_YK,\mathscr{L}_XL]=0,\]
d'où 
\[\mathscr{L}_Y\mathscr{L}_X[K,L]=0.\]
Donc $\mathscr{L}_X[K,L]$ est invariant à gauche. Comme de plus $[K,L](e)=0$, le Théorème
précèdent montre que $[K,L]$ est multiplicatif.
\end{enumerate}
\end{proof}
Le crochet de Koszul de deux $1$-formes invariantes à gauche, sur un groupe de Lie-Poisson, est une $1$-forme invariante à gauche. Plus précisément
\begin{proposition}\label{kos.inv}
Si $(G,\pi)$ est un groupe de Lie-Poisson et si $\alpha,\beta\in\Omega^1(G)$ sont des $1$-formes 
invariantes à gauche, alors, pour tout champ de vecteurs invariant à gauche sur $G$,
\begin{equation}\label{koszul.inv}
[\alpha,\beta](X)=\left(\mathscr{L}_X\pi\right)(\alpha,\beta).
\end{equation}
\end{proposition}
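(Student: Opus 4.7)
\emph{Stratégie globale.} Je propose de calculer séparément les deux membres de l'égalité, en les développant jusqu'à les amener à une même expression faisant intervenir $X\cdot\pi(\alpha,\beta)$, $\alpha([\pi_\sharp\beta,X])$ et $\beta([\pi_\sharp\alpha,X])$. L'invariance à gauche des trois objets $\alpha,\beta,X$ ne servira qu'à la toute fin, pour éliminer des termes « parasites » qui apparaissent dans le développement du crochet de Koszul.

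\emph{Calcul du membre de gauche.} J'appliquerais d'abord la formule magique de Cartan $\mathscr{L}_Y = i_Y d + d\,i_Y$ à la définition \eqref{crochet.forme} pour réécrire
\[
[\alpha,\beta] \;=\; i_{\pi_\sharp\alpha}d\beta - i_{\pi_\sharp\beta}d\alpha + d\bigl(\pi(\alpha,\beta)\bigr).
\]
En évaluant sur $X$ et en utilisant l'identité standard $d\omega(Y,Z) = Y\cdot\omega(Z) - Z\cdot\omega(Y) - \omega([Y,Z])$ pour les 1-formes $\alpha$ et $\beta$, puis les relations $\beta(\pi_\sharp\alpha) = \pi(\alpha,\beta)$ et $\alpha(\pi_\sharp\beta) = -\pi(\alpha,\beta)$, les termes en $X\cdot\pi(\alpha,\beta)$ se simplifient et j'obtiendrais
\[
[\alpha,\beta](X) \;=\; \pi_\sharp\alpha\cdot\beta(X) - \pi_\sharp\beta\cdot\alpha(X) - X\cdot\pi(\alpha,\beta) - \beta([\pi_\sharp\alpha,X]) + \alpha([\pi_\sharp\beta,X]).
\]

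\emph{Calcul du membre de droite.} Je partirais de la règle usuelle de dérivation tensorielle
\[
(\mathscr{L}_X\pi)(\alpha,\beta) \;=\; X\cdot\pi(\alpha,\beta) - \pi(\mathscr{L}_X\alpha,\beta) - \pi(\alpha,\mathscr{L}_X\beta),
\]
puis je transformerais chacun des deux derniers termes à l'aide de l'antisymétrie $\pi(\gamma,\delta) = -\pi(\delta,\gamma)$, de la définition $\pi(\gamma,\delta) = \delta(\pi_\sharp\gamma)$ et de la formule $(\mathscr{L}_X\gamma)(V) = X\cdot\gamma(V) - \gamma([X,V])$. Après regroupement, les trois occurrences de $X\cdot\pi(\alpha,\beta)$ se combinent et l'on aboutit à
\[
(\mathscr{L}_X\pi)(\alpha,\beta) \;=\; -X\cdot\pi(\alpha,\beta) + \alpha([\pi_\sharp\beta,X]) - \beta([\pi_\sharp\alpha,X]).
\]

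\emph{Conclusion et obstacle principal.} La différence des deux expressions se réduit exactement à $\pi_\sharp\alpha\cdot\beta(X) - \pi_\sharp\beta\cdot\alpha(X)$. C'est ici qu'intervient enfin l'hypothèse d'invariance à gauche : comme $\alpha$ et $X$ sont tous deux invariants à gauche, le pairing $g\mapsto \alpha_g(X_g) = \alpha_e\bigl((L_{g^{-1}})_*(L_g)_* X_e\bigr) = \alpha_e(X_e)$ est constant sur $G$, et de même pour $\beta(X)$. Leurs dérivées par rapport à n'importe quel champ de vecteurs sont donc identiquement nulles, ce qui annule la différence et établit l'égalité. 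L'obstacle est purement calculatoire — la gestion soigneuse des signes dans les deux expansions de Cartan — et l'on remarque au passage que ni l'identité de Jacobi pour $\pi$ ni la multiplicativité n'ont été utilisées dans l'argument : l'égalité est en fait une identité purement différentielle, valable pour tout champ de bivecteurs, l'hypothèse véritablement essentielle étant l'invariance à gauche de $\alpha,\beta,X$.
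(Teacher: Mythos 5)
Votre preuve est correcte, mais elle n'emprunte pas tout à fait le même chemin que le texte. Le texte obtient l'égalité en une ligne : il écrit $\mathscr{L}_X\pi=[X,\pi]=-\delta_\pi(X)$ et applique la formule \eqref{differential} du cobord de Poisson à un champ de vecteurs, ce qui donne directement $(\mathscr{L}_X\pi)(\alpha,\beta)=-\pi_\sharp(\alpha)\cdot X(\beta)+\pi_\sharp(\beta)\cdot X(\alpha)+X([\alpha,\beta])$, les deux premiers termes disparaissant parce que les couplages $\alpha(X)$ et $\beta(X)$ sont des fonctions constantes. Vous refaites ce calcul ``à la main'' : développement du crochet de Koszul \eqref{crochet.forme} via la formule de Cartan d'un côté, règle de dérivation tensorielle de $\mathscr{L}_X\pi$ de l'autre, et la différence des deux membres se réduit à $\pi_\sharp(\alpha)\cdot\beta(X)-\pi_\sharp(\beta)\cdot\alpha(X)$, annulée par la même constance des couplages ; vos deux expressions intermédiaires et leurs signes sont exacts. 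Les deux arguments reposent donc sur la même annulation finale, mais le vôtre est plus élémentaire et auto-contenu (il ne présuppose pas la formule \eqref{differential}, qui encapsule précisément le calcul que vous faites), tandis que celui du texte est plus court parce qu'il réutilise l'opérateur $\delta_\pi$ déjà établi. Votre remarque de conclusion est juste : ni $[\pi,\pi]=0$ ni la multiplicativité ne servent pour l'identité elle-même, qui vaut pour tout champ de bivecteurs ; dans le texte, la multiplicativité n'intervient qu'après, pour affirmer que $\mathscr{L}_X\pi$ est invariant à gauche, donc que $(\mathscr{L}_X\pi)(\alpha,\beta)$ est constante et que $[\alpha,\beta]$ est invariante à gauche — ce qui est l'objet du commentaire qui suit la proposition, non de l'égalité \eqref{koszul.inv} que vous aviez à démontrer.
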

\begin{proof}
Soient $\alpha,\beta$ deux $1$-formes invariantes à gauche sur un groupe de Lie-Poisson $(G,\pi)$. Soit $X$ un champ invariant à gauche. On a d'après \eqref{differential}
\begin{align*}
\left(\mathscr{L}_X\pi\right)(\alpha,\beta)=[X,\pi](\alpha,\beta)=&-\delta_\pi(X)(\alpha,\beta)\\
=&-\pi_\sharp(\alpha)\cdot X(\beta)+\pi_\sharp(\beta)\cdot X(\alpha)+X([\alpha,\beta])\\
=&X([\alpha,\beta])=[\alpha,\beta](X).
\end{align*}
Comme $\pi$ est multiplicatif et $X$ est invariant à gauche, $\mathscr{L}_X\pi$ est invariant à gauche. $\left(\mathscr{L}_X\pi\right)(\alpha,\beta)$ est constante et donc aussi $[\alpha,\beta](X)$ ; ce qui signifie que $[\alpha,\beta]$ est invariant à gauche. Pour une autre preuve, voir \cite{we:dressing}.
\end{proof}
\paragraph{Bigèbres de Lie}
Ayant caractérisé les tenseurs multiplicatifs, nous passerons maintenant à la structure infinitésimale des groupes de Lie-Poisson.
\begin{definition} Une bigèbre de Lie est la donnée d'un triplet $(\G,\xi,\G^*)$, où
$\G$ est une algèbre de Lie, $\G^*$ son espace vectoriel dual et $\xi : \G\rightarrow\wedge^2\G,$ est un $1$-cocycle par rapport à l'action adjointe de $\G$ sur $\wedge^2\G$ : 
\[\xi([x,y])=\ad\nolimits_x\xi(y)-\ad\nolimits_y\xi(x),\ \text{pour tout}\ x,y\in\G,\]
tels que l'adjoint de $\xi$, $\xi^t : \wedge^2\G^*\rightarrow\G^*$, définit un crochet de Lie sur $\G^*$.
\end{definition}
Le théorème suivant montre qu'à tout groupe de Lie-Poisson est canoniquement associée une structure de bigèbre de Lie et réciproquement :
\begin{theorem}\label{theomultiplicatif} Soit $G$ un groupe de Lie et $\G$ son algèbre de Lie.
\begin{enumerate}
\item Si $K$ est un $k$-multivecteur multiplicatif sur $G$, alors $d_eK$ est un $1$-cocycle pour la représentation de $\G$ sur $\wedge^k\G$. Réciproquement, si $G$ est connexe et simplement connexe, alors pour tout $1$-cocycle $\xi : \G\rightarrow\wedge^k\G$ il existe un unique $k$-multivecteur $K\in\wedge^kTG$ tel que $d_eK=\xi$.
\item Si $\pi$ est un tenseur de Poisson sur $G$ tel que $\pi(e)=0$ et si $\xi=d_e\pi$, alors l'application adjointe $^t\xi$ définit une structure de Lie canonique sur le cotangent $\G^*$ qui coïncide avec la restriction du crochet de Koszul des $1$-formes invariantes à gauche en $e$. 
\end{enumerate}
\end{theorem}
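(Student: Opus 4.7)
Pour établir le point (1), j'utiliserais la caractérisation déjà démontrée : si $K$ est multiplicatif et $X$ est invariant à gauche, alors $\mathscr{L}_X K$ est invariant à gauche, de valeur $d_eK(X(e))$ en $e$. Le plan consiste, pour deux champs invariants à gauche $X,Y$ de valeurs respectives $x,y$ en $e$, à appliquer l'identité $\mathscr{L}_{[X,Y]} = [\mathscr{L}_X,\mathscr{L}_Y]$ au multivecteur $K$, puis à évaluer en $e$. Le membre de gauche fournit $\xi([x,y])$, où $\xi = d_eK$. Pour le membre de droite, le fait-clef est que la dérivée de Lie en $e$ d'un multivecteur invariant à gauche par un champ invariant à gauche se calcule via le crochet de Schouten et coïncide avec l'action adjointe $\ad_{X(e)}$ prolongée comme dérivation à $\wedge^k\G$. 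Appliqué à $\mathscr{L}_X K$ et $\mathscr{L}_Y K$, ceci donne $\ad_x\xi(y) - \ad_y\xi(x)$, d'où la relation de $1$-cocycle.

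Pour la réciproque, partant d'un $1$-cocycle $\xi : \G\to\wedge^k\G$, l'idée est d'intégrer $\xi$ en un $1$-cocycle au niveau du groupe, c'est-à-dire une application $K_r : G\to\wedge^k\G$ vérifiant $K_r(gh) = K_r(g) + \Ad(g)\cdot K_r(h)$ avec $d_eK_r = \xi$, puis de poser $K(g) = R_{g*}K_r(g)$. La caractérisation de la multiplicativité via $K_r$, établie plus haut, assure alors que $K$ est multiplicatif, et sa dérivée intrinsèque en $e$ coïncide avec $\xi$ par construction. L'unicité est automatique : si deux multivecteurs multiplicatifs ont la même dérivée intrinsèque, leur différence est un multivecteur multiplicatif de dérivée intrinsèque nulle, donc nul d'après la proposition précédente (valide car $G$ est connexe). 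L'obstacle principal est l'existence de $K_r$ : le long d'un chemin $\gamma$ joignant $e$ à $g$, la condition de cocycle jointe à $d_eK_r=\xi$ détermine $\frac{d}{dt}K_r(\gamma(t))$, mais l'indépendance du résultat vis-à-vis du chemin choisi exige la simple connexité de $G$. C'est précisément l'analogue, pour les cocycles, du théorème classique d'intégration des morphismes d'algèbres de Lie en morphismes de groupes.

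Pour le point (2), j'exploiterais la Proposition \ref{kos.inv} : pour $\al,\be$ des $1$-formes invariantes à gauche et $X$ un champ invariant à gauche, $[\al,\be](X) = (\mathscr{L}_X\pi)(\al,\be)$, et $[\al,\be]$ est elle-même invariante à gauche, donc entièrement caractérisée par sa valeur en $e$. Identifiant les $1$-formes invariantes à gauche à leur valeur en $e$ dans $\G^*$, on évalue en $e$ : posant $\al(e)=a$, $\be(e)=b$, $X(e)=x$,
\[\langle [\al,\be]_e, x\rangle = (\mathscr{L}_X\pi)_e(a,b) = d_e\pi(x)(a,b) = \langle \xi(x), a\wedge b\rangle = \langle x, {}^t\xi(a\wedge b)\rangle,\]
d'où $[\al,\be]_e = {}^t\xi(a\wedge b)$. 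Autrement dit, le crochet défini sur $\G^*$ par ${}^t\xi$ coïncide exactement avec la restriction en $e$ du crochet de Koszul aux $1$-formes invariantes à gauche. L'antisymétrie de ${}^t\xi$ est évidente, et l'identité de Jacobi se déduit de celle du crochet de Koszul, elle-même conséquence de $[\pi,\pi]=0$ et prouvée précédemment. Ceci munit $\G^*$ d'une structure d'algèbre de Lie canoniquement associée à $(G,\pi)$.
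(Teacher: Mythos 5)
Votre schéma est correct et suit essentiellement la même démarche que le texte : pour (1) l'identité $\mathscr{L}_{[X,Y]}=[\mathscr{L}_X,\mathscr{L}_Y]$ jointe au fait que $\mathscr{L}_X(\mathscr{L}_YK)(e)=\ad\nolimits_x\xi(y)$ (car $\mathscr{L}_YK$ est invariant à gauche), puis l'intégration du $1$-cocycle d'algèbre de Lie en un $1$-cocycle de groupe $\Lambda$ (grâce à la simple connexité) suivie de $K(g)=R_{g*}\Lambda(g)$, l'unicité se ramenant à la proposition sur les tenseurs multiplicatifs de dérivée intrinsèque nulle ; pour (2) le même calcul via la Proposition \ref{kos.inv}. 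Seule nuance mineure : vous déduisez l'unicité de cette proposition alors que le texte invoque l'unicité du cocycle intégrant $\Lambda$, ce qui revient au même.
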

\begin{proof}(Voir \cite{lu-we:poi}, \cite{duf-zun}.)
\begin{enumerate}
\item Soit $K$ un $k$-multivecteur sur un groupe $G$. Soient $X,Y$ deux éléments de $\G$ considérée comme l'espace des champs invariants à gauche. On pose $\xi=d_eK$ et on montre que 
\[\xi\left([X,Y]\right)=\ad\nolimits_X\xi(Y)-\ad\nolimits_Y\xi(X).\]
On a 
\begin{align*}
\ad\nolimits_X\xi(Y)=&\ad\nolimits_X\left(\mathscr{L}_YK(e)\right)=\frac{d}{dt}\Big|_{t=0}\Ad\nolimits_{\exp(tX)}\cdot\mathscr{L}_YK(e)\\
=&\frac{d}{dt}\Big|_{t=0}R_{\exp(-tX)^*}L_{\exp(tX)^*}\left(\mathscr{L}_YK(e)\right)\\
=&\frac{d}{dt}\Big|_{t=0}R_{\exp(-tX)^*}\left(\mathscr{L}_YK\right)\left(\exp(tX)\right)\\
=&\left(\mathscr{L}_X\mathscr{L}_YK\right)(e),
\end{align*}
et donc
\begin{align*}
\ad\nolimits_X\xi(Y)-\ad\nolimits_Y\xi(X)=&\left(\mathscr{L}_X\mathscr{L}_YK\right)(e)-\left(\mathscr{L}_Y\mathscr{L}_XK\right)(e)\\
=&\left(\mathscr{L}_{[X,Y]}K\right)(e)=\xi\left([X,Y]\right).
\end{align*}
Réciproquement\footnote{On reprend ici la preuve donnée dans \cite{lu-we:poi}. Voir aussi \cite{ksma:poilie}, \cite{lu}.}, soit $G$ le groupe de Lie connexe et simplement connexe d'algèbre de Lie $\G$ et soit $\xi : \G\rightarrow\wedge^k\G$ un $1$-cocycle pour la représentation adjointe. Il existe un unique $1$-cocycle \(\Lambda : G\rightarrow\wedge^k\G,\) qui intègre $\xi$, c'est-à-dire $d\Lambda=\xi$ et, pour tout $g,h\in G$
\[\Lambda(gh)=\Lambda(g)+\Ad(g)\cdot\Lambda(h).\]
Le champ de $k$-multivecteurs $K(g)=R_{g*}\Lambda$ est multiplicatif et $d_eK=\xi$. 
\item Soient $\alpha,\beta\in\G^*$ et $x\in\G$. Notons par $\alpha^+,\beta^+$ les $1$-formee invariantes à gauche associées à $\alpha,\beta$ et $X$ le champ de vecteurs invariant à gauche associé à $x$. On a d'après l'égalité \eqref{koszul.inv}
\begin{align*}
[\alpha,\beta](x)=&[\alpha^+,\beta^+]_e(X(e))=\left(\mathscr{L}_X\pi\right)(e)(\alpha,\beta)\\
=&\xi(x)(\alpha,\beta)=\xi^t(\alpha,\beta)(x)
\end{align*}
ce qui signifie que $\xi^t$ coïncide avec le crochet de Koszul des $1$-formes invariantes à gauche.
\end{enumerate}
\end{proof}
\begin{remark}
Ce qui est remarquable dans les bigèbres de Lie est le fait symétrique suivant :
Si $(\G,\xi,\G^*)$ est une bigèbre de Lie, alors $(\G^*,\rho,\G)$ est aussi une bigèbre de Lie, appelée \emph{bigèbre de Lie duale}, où $\rho : \G^*\rightarrow\wedge^2\G^*$ est le $1$-cocycle relative à l'action adjointe de $\G^*$ sur $\wedge^2\G^*$, dont la transposée est le crochet de Lie de $\G$. (Voir \cite{duf-zun}, page 137).
\end{remark} 
\paragraph{Équation de Yang-Baxter}
\begin{definition}
Soit $\G$ une algèbre de Lie réelle de dimension finie. On appelle \emph{équation de Yang-Baxter généralisée}, l'équation dont la variable est $r\in\wedge^2\G$ :
\begin{equation}\label{y-b.généralisée}
\ad\nolimits_x[r,r]=0,\ \text{pour tout}\ x\in\G.
\end{equation}
On appelle \emph{équation de Yang-Baxter classique}, l'équation dont la variable est $r\in\wedge^2\G$ :
\begin{equation}\label{y-b.classique}
[r,r]=0.
\end{equation}
\end{definition}
Soient $G$ un groupe de Lie et $\G$ son algèbre de Lie. On considère la restriction du crochet de Schouten sur les champs de multi-vecteurs invariants à gauche qui étend le crochet
de Lie de $\G$.\\
Soit $r\in\wedge^2\G$ et soit l'application associée $r_\sharp : \G^*\rightarrow\G$ définie, pour tout $\alpha,\beta\in\G^*$, par :
$$\beta\left(r_\sharp(\alpha)\right)=r(\alpha,\beta).$$
On considère le $1$-cobord associé à $r$, c'est-à-dire le cocycle $\xi : \G\rightarrow\wedge^2\G$ défini par 
\[\xi(x)=\ad\nolimits_x r.\]
Il est naturel de se demander à quelles conditions sur $r$, le triplet $(\G,\xi,\G)$ correspondant est une bigèbre de Lie.
Le Lemme suivant donne la condition, nécessaire et suffisante pour que $\xi^t$ définit un crochet de Lie sur $\G^*$.
\begin{lemma}
\begin{enumerate}
\item Le crochet défini par $\xi^t$ sur $\G^*$ est donné par
$$[\alpha,\beta]_r=\ad\nolimits_{r_\sharp(\beta)}^*\alpha-\ad\nolimits_{r_\sharp(\alpha)}^*\beta.$$
\item Le crochet $[\,,\,]_r$ est de Lie, si et seulement si, $r$ est solution de l'équation de Yang-Baxter, c'est-à-dire si $\ad_x[r,r]=0$ pour tout $x\in\G$.
\end{enumerate}
\end{lemma}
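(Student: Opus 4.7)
The plan is to first unravel part 1 from the definitions and then, for part 2, to transfer the Jacobi question to an infinitesimal--global comparison via Theorem~\ref{theomultiplicatif}.

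For part 1, I would start from $[\alpha, \beta]_r(x) = \xi^t(\alpha, \beta)(x) = (\ad_x r)(\alpha, \beta)$ and expand. Writing $r = \sum_i u_i \wedge v_i$ as a sum of decomposable bivectors and applying the Leibniz rule to $\ad_x r = \sum_i \bigl([x, u_i] \wedge v_i + u_i \wedge [x, v_i]\bigr)$ produces
\[
(\ad_x r)(\alpha, \beta) = \sum_i \bigl\{\alpha([x, u_i])\beta(v_i) - \alpha(v_i)\beta([x, u_i]) + \alpha(u_i)\beta([x, v_i]) - \alpha([x, v_i])\beta(u_i)\bigr\}.
\]
Using the explicit form $r_\sharp(\alpha) = \sum_i \bigl(\alpha(u_i) v_i - \alpha(v_i) u_i\bigr)$ and the coadjoint convention $(\ad^*_y \alpha)(x) = \alpha([y, x])$, a regrouping of these four terms identifies the right-hand side with $(\ad^*_{r_\sharp(\beta)}\alpha)(x) - (\ad^*_{r_\sharp(\alpha)}\beta)(x)$, which is the asserted formula.

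For part 2, antisymmetry of $[\,,\,]_r$ is automatic from $\xi(x) \in \wedge^2 \G$, so only the Jacobi identity needs checking. My preferred route is to invoke Theorem~\ref{theomultiplicatif}: since $\xi(x) = \ad_x r$ is automatically a $1$-cocycle (being a coboundary), the data $\xi^t$ defines a Lie bracket on $\G^*$ exactly when $(\G, \xi)$ is a Lie bialgebra. Letting $G$ be the connected and simply connected Lie group integrating $\G$, the bivector $\pi^r := r^+ - r^-$ (difference of the left- and right-invariant extensions of $r$) is multiplicative, vanishes at $e$, and has intrinsic derivative $d_e \pi^r = \xi$; by the theorem, the Jacobi identity for $[\,,\,]_r$ is thus equivalent to $\pi^r$ being Poisson, i.e.\ to $[\pi^r, \pi^r] = 0$. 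A short Schouten-bracket computation, using that left- and right-invariant multivectors commute (so $[r^+, r^-] = 0$) together with the standard identities $[r^+, r^+] = [r, r]^+$ and $[r^-, r^-] = -[r, r]^-$, yields $[\pi^r, \pi^r] = [r, r]^+ - [r, r]^-$. Since $G$ is connected, this trivector field vanishes identically if and only if $[r, r] \in \wedge^3 \G$ is $\Ad(G)$-invariant, which infinitesimally is exactly the generalized Yang-Baxter equation $\ad_x[r, r] = 0$ for all $x \in \G$.

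The main obstacle will be the careful bookkeeping of sign conventions in the Schouten-bracket identities $[r^+, r^+] = [r, r]^+$ and $[r^-, r^-] = -[r, r]^-$, together with the passage from $\Ad(G)$-invariance on a connected group to $\ad(\G)$-invariance of the value at $e$. A purely algebraic alternative, which avoids introducing $G$ altogether, is to expand directly the Jacobiator
\[
J(\alpha, \beta, \gamma) := [[\alpha, \beta]_r, \gamma]_r + [[\beta, \gamma]_r, \alpha]_r + [[\gamma, \alpha]_r, \beta]_r
\]
by applying the formula of part 1 twice and to verify the identity $J(\alpha, \beta, \gamma)(x) = (\ad_x[r, r])(\alpha, \beta, \gamma)$ for every $x \in \G$; the Jacobi identity is then equivalent to $\ad_x[r, r]$ vanishing on all decomposable trivectors $\alpha \wedge \beta \wedge \gamma$, hence identically. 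This second route gives the same conclusion but requires more algebraic manipulation.
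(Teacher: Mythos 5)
Your proposal is correct and follows essentially the same route as the paper: part 1 is the same direct computation (the paper transfers $\ad_x$ onto the covectors via $\ad_x^*$ instead of expanding $r$ into decomposables, a purely cosmetic difference), and part 2 is exactly the paper's argument, introducing $\pi=r^+-r^-$ on a connected group, using $[r^+,r^-]=0$, $[r^+,r^+]=[r,r]^+$, $[r^-,r^-]=-[r,r]^-$, invoking Théorème~\ref{theomultiplicatif}, and passing from $\Ad$-invariance to $\ad_x[r,r]=0$ by connectedness.
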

\begin{proof}
En effet, on a
\begin{enumerate}
\item Pour tout $\alpha,\beta\in\G^*$ et $x\in\G$,
\begin{align*}
[\alpha,\beta]_r(x)=&\langle\alpha\wedge\beta,\ad\nolimits_xr\rangle=r\left(\ad\nolimits_x^*(\alpha\wedge\beta)\right)=
r\left(\ad\nolimits_x^*\alpha\wedge\beta\right)+r\left(\alpha\wedge\ad\nolimits_x^*\beta\right)\\
=&-\ad\nolimits_x^*\alpha\left(r_\sharp(\beta)\right)+\ad\nolimits_x^*\beta\left(r_\sharp(\alpha)\right)
=\ad\nolimits_{r_\sharp(\beta)}^*\alpha(x)-\ad\nolimits_{r_\sharp(\alpha)}^*\beta(x),
\end{align*}
où on a utilisé l'égalité 
\[\langle\ad\nolimits_x^*\alpha,y\rangle=\langle\alpha,\ad\nolimits_xy\rangle=-\langle\alpha,\ad\nolimits_yx\rangle=-\langle\ad\nolimits_y^*\alpha,x\rangle.\]
\item Soit $G$ un groupe de Lie connexe d'algèbre de Lie $\G$. Notons par $r^+$, respectivement $r^-$, le champ invariant à gauche, respectivement invariant à droite, associé à $r$. Soit le champ de bivecteurs $\pi=r^+-r^-$. On a $\pi(e)=0$ et, pour tout champ de vecteurs $X$ invariant à gauche, $\mathscr{L}_X\pi=\mathscr{L}_Xr^+$ est aussi invariant à gauche et donc $\pi$ est multiplicatif. On déduit aussi que $d_e\pi=\xi$, car $\mathscr{L}_X\pi(e)=\mathscr{L}_Xr^+(e)=\xi(X(e))$. D'autre part, comme
\[[r^+,r^-]=0,\ [r^+,r^+]=[r,r]^+\ \text{et}\ [r^-,r^-]=-[r,r]^-,\]
on déduit que
\begin{align*}
[\pi,\pi]=0\Longleftrightarrow&[r,r]^+=[r,r]^-\\
\Longleftrightarrow&\Ad(g)\cdot[r,r]=[r,r],\ \text{pour tout}\ g\in G.
\end{align*}
Comme $G$ est connexe, alors 
\[\Ad(g)\cdot[r,r]=[r,r],\ \text{pour tout}\ g\in G\Longleftrightarrow\ad\nolimits_x[r,r]=0,\ \text{pour tout}\ x\in\G.\] D'après le théorème \eqref{theomultiplicatif}, $\xi^t$ définit un crochet de Lie sur $\G^*$, si et seulement si, $[\pi,\pi]=0$. On déduit alors que
\[\xi^t\ \text{définit un crochet de Lie sur}\ \G^*\Longleftrightarrow\ad\nolimits_x[r,r]=0,\ \text{pour tout}\ x\in\G.\]
Pour une preuve purement algébrique voir \cite{ksma:poilie}.
\end{enumerate}
\end{proof}

On termine cette partie par un résultat de cohomologie.
\begin{theorem}
Soit $G$ un groupe de Lie connexe. Si $G$ est compact ou semi-simple, alors tout tenseur de Poisson multiplicatif sur $G$ est de la forme
$$\pi=r^+-r^-$$
pour un $r\in\wedge^2\G$ tel que $[r,r]$ est invariant, par l'action adjointe de $\G$ sur $\wedge^3\G$.
\end{theorem}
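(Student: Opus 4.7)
Le plan repose sur le Théorème \ref{theomultiplicatif} : l'application $\pi \mapsto \xi = d_e\pi$ associe à tout tenseur multiplicatif un 1-cocycle à valeurs dans $\wedge^2\G$ pour la représentation adjointe, et cette correspondance est injective sur les tenseurs multiplicatifs en vertu de la proposition précédente (un tenseur multiplicatif est nul si et seulement si sa dérivée intrinsèque en $e$ l'est). Il suffit donc d'établir que $\xi$ est un 1-cobord, c'est-à-dire qu'il existe $r \in \wedge^2\G$ tel que $\xi(x) = \ad_x r$ pour tout $x \in \G$. Dans ce cas, le bivecteur $\pi' = r^+ - r^-$ est multiplicatif et satisfait $d_e\pi' = \xi$ (comme cela a été vu dans la preuve du lemme précédent concernant l'équation de Yang-Baxter), d'où $\pi = \pi'$ par injectivité.

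Le cœur de l'argument est donc l'annulation cohomologique $H^1(\G,\wedge^2\G) = 0$, que je traiterais séparément selon l'hypothèse. Si $\G$ est semi-simple, c'est exactement le premier lemme de Whitehead, conséquence classique du théorème de complète réductibilité de Weyl et de l'action de l'élément de Casimir. Si $G$ est compact, je procéderais par moyennation sur le groupe. Soit $\Lambda : G \to \wedge^2\G$ l'unique 1-cocycle intégrant $\xi$, vérifiant
\[\Lambda(gh) = \Lambda(g) + \Ad(g)\cdot\Lambda(h).\]
En intégrant cette relation en $h$ par rapport à la mesure de Haar normalisée, et en utilisant son invariance à gauche (qui donne $\int_G \Lambda(gh)\,dh = \int_G \Lambda(h')\,dh'$), on obtient $I = \Lambda(g) + \Ad(g)\cdot I$ où $I := \int_G \Lambda(h)\,dh$. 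En posant $r = -I \in \wedge^2\G$, il vient $\Lambda(g) = \Ad(g)\cdot r - r$, puis par dérivation en $e$ : $\xi(x) = \ad_x r$.

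Pour l'invariance de $[r,r]$, l'argument est direct : comme $\pi = r^+ - r^-$ est de Poisson, $[\pi,\pi] = 0$. Or le calcul effectué dans la démonstration du lemme précédent montre que cette annulation équivaut à $[r,r]^+ = [r,r]^-$, soit encore à $\Ad(g)\cdot[r,r] = [r,r]$ pour tout $g \in G$. Par connexité de $G$, ceci équivaut finalement à $\ad_x[r,r] = 0$ pour tout $x \in \G$, c'est-à-dire à l'invariance annoncée.

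L'obstacle essentiel, et la seule étape véritablement non formelle, est l'annulation $H^1(\G,\wedge^2\G) = 0$, qui exige deux arguments de natures différentes (algébrique via Casimir dans le cas semi-simple, analytique via la mesure de Haar dans le cas compact). Les autres étapes se déduisent de façon presque automatique des résultats déjà établis dans le chapitre.
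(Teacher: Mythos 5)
Votre démonstration est correcte et suit essentiellement la même démarche que le texte : annulation du premier groupe de cohomologie à valeurs dans $\wedge^2\G$ (par moyennisation de Haar dans le cas compact, par le lemme de Whitehead dans le cas semi-simple), puis identification $\pi=r^+-r^-$ grâce à l'unicité d'un tenseur multiplicatif de dérivée intrinsèque donnée sur un groupe connexe, et enfin l'équivalence $[\pi,\pi]=0\Longleftrightarrow \ad_x[r,r]=0$ issue du lemme sur l'équation de Yang-Baxter. Vous explicitez seulement deux points que le texte se contente de citer ou de laisser implicites (la preuve de l'annulation cohomologique dans le cas compact et l'invariance adjointe de $[r,r]$), ce qui ne change pas la nature de l'argument.
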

\begin{proof} Voir Théorème 1.10 de l'article de Lu \& Wienstein \cite{lu-we:poi}.\\
Soit $\pi_r$ le translaté à droite du tenseur de Poisson multiplicatif $\pi$. Comme $G$ est compact, tout $1$-cocycle pour l'action adjointe de $G$ sur $\wedge^2\G$ est un $1$-cobord. En particulier, comme $\pi_r$ est un $1$-cocycle, il existe alors $r\in\wedge^2\G$ tel que $\pi_r=\Ad(g)\cdot r-r$. On a donc
\[\pi=R_{g*}\pi_r=R_{g*}\left(\Ad(g)\cdot r-r\right)=L_{g*}r-R_{g*}r=r^+-r^-.\]
Si $G$ est semi-simple, tout $1$-cocycle pour l'action adjointe de $\G$ sur $\wedge^2\G$ est un $1$-cobord. En particulier, comme $d_e\pi=\xi$ est un $1$-cocycle, il existe alors $r\in\wedge^2\G$ tel que $\xi=\delta r$ et le tenseur de Poisson multiplicatif associé est $r^+-r^-$. Par unicité (voir Théorème~\ref{theomultiplicatif}, page~\pageref{theomultiplicatif}), ce tenseur $r^+-r^-$ doit être égal à $\pi$.
\end{proof}
\chapter{Déformations des structures riemanniennes}\label{chapter2}
\epigraph{Nous devons nous rappeler que ce que nous observons n'est pas la Nature elle-même, mais la Nature soumise à notre méthode de questionnement.}{Werner Heisenberg}
\lettrine[lines=3, nindent=0em]{É}{tant} donnée une variété de Poisson $(M,\pi)$ et $\nabla$ une connexion covariante sur $M$, il est naturel de considérer des conditions de compatibilités entre le tenseur de Poisson $\pi$ et la connexion $\nabla$.\\
Si l'on convient de dire que $\pi$ et $\nabla$ sont compatibles si $\pi$ est parallèle\footnote{Par analogie avec la compatibilité habituelle entre une métrique riemannienne et sa connexion de Levi-Civita associée.}, c'est-à-dire si
\[\nabla\pi=0,\]
alors dans ce cas le tenseur $\pi$ doit être de rang constant. Ceci est dû au fait que la dérivée covariante préserve le rang du tenseur. On va bien que cette notion ne convient que pour étudier les variétés de Poisson régulières ; ce qui exclue tout les exemples intéressants de : variétés de Poisson linéaires, groupes de Lie-Poisson...\\
La notion de dérivation contravariante était introduite par Vaisman dans \cite{vai}, puis la notion de connexion contravariante été étudié systématiquement par Fernandes dans \cite{fer}. 
Soit $(M,\pi)$ une variété de Poisson et soit $\pi_\sharp : T^*M\rightarrow TM$ l'ancrage défini par
\[\beta\left(\pi_\sharp(\alpha)\right)=\pi(\alpha,\beta).\]
Rappelons que l'ensemble des $1$-formes est une algèbres de Lie, pour le crochet de Koszul
\[[\alpha,\beta]=\mathscr{L}_{\pi_\sharp(\alpha)}\beta-\mathscr{L}_{\pi_\sharp(\beta)}\alpha-d\left(\pi(\alpha,\beta)\right).\]
Cette structure d'algébroïde de Lie (voir \cite{pr:68}) de $(T^*M,\pi_\sharp,TM)$ fournit tout les ingrédients pour définir la notion de connexion contravariante sur toute variété de Poisson.
\begin{definition}
Une connexion contravariante, sur une variété de Poisson $(M,\pi)$ est la donnée d'une application $\mathbb{R}$-bilinéaire \(\mathcal{D} : \Omega^1(M)\times\Omega^1(M)\rightarrow\Omega^1(M)\)
telle que, pour tout $f\in C^\infty(M)$ et tout $\alpha,\beta\in\Omega^1(M)$,
\begin{equation*}
\mathcal{D}_\alpha\left(f\beta\right)=f\mathcal{D}_\alpha\beta+\pi_\sharp(\alpha)(f)\beta\quad\text{et}\quad \mathcal{D}_{f\alpha}\beta=f\mathcal{D}_\alpha\beta.
\end{equation*}
\end{definition}

À noter que la définition d'une connexion contravariante est similaire à la définition d'une connexion covariante : les $1$-formes remplacent les champs de vecteurs, le crochet de Koszul remplace le crochet de Lie et la dérivée suivant une $1$-forme $\alpha$ est la dérivée de Lie suivant le champ de vecteurs $\pi_\sharp(\alpha)$.\\
On peut traduire beaucoup de notions, liées aux connexions covariantes (transport parallèle, géodésiques, courbures...), au cas des connexions contravariantes. Toutefois, il y a des résultats qui ne subsistent plus dans le cas contravariant (voir \cite{fer}).\\
On peut définir, comme dans le cas covariant, la torsion :
\[T(\alpha,\beta)=\mathcal{D}_\alpha\beta-\mathcal{D}_\beta\alpha-[\alpha,\beta],\]
c'est un tenseur de type $(2,1)$.
On peut montrer que si $\nabla$ est une connexion covariante alors $\mathcal{D}_\alpha=\nabla_{\pi_\sharp(\alpha)}$ défini une connexion contravariante, et si $M$ est symplectique, c'est-à-dire si $\pi_\sharp$ est un isomorphisme, alors toute connexion contravariante est de cette forme.\\

La différence majeure avec le cas covariant est que $\mathcal{D}_\alpha$ n'induit pas nécessairement une dérivation. En effet, si $\alpha\in\Omega^1(M)$ et si $\pi_\sharp(\alpha)=0$, alors 
\[\mathcal{D}_\alpha(f\beta)=f\mathcal{D}_\alpha\beta,\]
contrairement aux connexions covariantes.
\begin{example}
\begin{enumerate}
\item Si $\nabla$ est une connexion covariante, alors $\mathcal{D}_\alpha=\nabla_{\pi_\sharp(\alpha)}$ est une connexion contravariante spéciale, elle vérifie :
\begin{equation}\label{conn.ind.cov}
\pi_\sharp(\alpha)=0\Longrightarrow\mathcal{D}_\alpha=0.
\end{equation}
Si $M$ est symplectique alors l'ancrage $\pi_\sharp : T^*M\rightarrow TM$ est un isomorphisme et toute connexion contravariante $\mathcal{D}$ est induite de la connexion covariante :
\[\nabla_x=\mathcal{D}_{\alpha},\ \text{où}\ \alpha=\pi_\sharp^{-1}(x).\]
\item Il existe des connexions contravariantes qui ne sont pas induites d'une connexion covariante; c'est les connexions qui ne vérifient pas \ref{conn.ind.cov}. En voici un exemple :\\ 
Soit le groupe de Lie-Poisson $G=(\mathbb{R}^2,+,\pi)$, avec $\pi=y\,\partial_x\wedge\partial_y$. On considère la connexion contravariante sur $\G^*$, définie par 
\[\mathcal{D}_\alpha\beta=\frac{1}{2}[\alpha,\beta].\] 
On peut vérifier que $\mathcal{D}$ est sans torsion et $\mathcal{D}\pi=0$. Mais $\pi_\sharp(dx)=y\partial_y$ est nul sur tout point $(x,0)$ alors que $\mathcal{D}_{dx}dy=\frac{1}{2}[dx,dy]=\frac{1}{2}d\{x,y\}=\frac{1}{2}dy$ ne s'annule en aucun point.
\end{enumerate}
\end{example}
La notion de connexion contravariante est devenue une notion centrale en géométrie de Poisson. Pour ses application, on peut citer :
\begin{enumerate}
\item Les travaux de Boucetta sur la compatibilité des structures de Riemann et de Poisson. Voir \cite{bou:compatibility} et \cite{bou:riemann-lie-Poisson}.
\item La réalisation symplectique. Une réalisation d'une variété de Poisson $M$ est un morphisme de Poisson surjectif $J : S\rightarrow M$ où $S$ est une variété symplectique et $J$ est une submersion. L'existence d'une réalisation symplectique pour toute variété de Poisson est un résultat important dû à Karasev \cite{ka:anal} et Weinstein \cite{we:sympoid}. Il s'énonce comme suit :
\begin{theorem}
Toute variété de Poisson de dimension $n$, admet une réalisation symplectique de dimension $2n$.
\end{theorem}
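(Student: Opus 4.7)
Mon plan est de réaliser la variété $(M,\pi)$ sur (un voisinage de la section nulle de) son fibré cotangent $T^*M$, en déformant la forme symplectique canonique $\omega_{\mathrm{can}} = d\lambda$. La projection $p : T^*M \to M$ est la submersion candidate naturelle, mais $(T^*M,\omega_{\mathrm{can}})$ ne réalise pas $\pi$ en général : il faut déformer $\omega_{\mathrm{can}}$ en une forme symplectique $\omega$ telle que $p : (T^*M, \omega) \to (M, \pi)$ soit un morphisme de Poisson.

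Les étapes principales seraient les suivantes. Je choisirais d'abord un \emph{spray de Poisson} $V$ sur $T^*M$, c'est-à-dire un champ de vecteurs homogène de degré deux pour les homothéties fibrées, tel que $dp(V_\alpha) = \pi_\sharp(\alpha)$ pour tout $\alpha \in T^*M$. Un tel spray peut être construit à partir d'une connexion contravariante sur $M$ (au sens défini dans ce chapitre) en prenant le relèvement horizontal du champ $\alpha \mapsto \pi_\sharp(\alpha)$. En notant $\phi_t$ son flot, défini pour $t \in [0,1]$ sur un voisinage $U$ de la section nulle, on pose
\[\omega := \int_0^1 \phi_t^* \omega_{\mathrm{can}}\, dt.\]
Cette forme est fermée puisque $\omega_{\mathrm{can}}$ l'est, et comme $\phi_0 = \mathrm{id}$ elle coïncide avec $\omega_{\mathrm{can}}$ le long de la section nulle ; elle y est donc non dégénérée et le reste, par continuité, sur un voisinage $U' \subset U$. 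Il reste alors à démontrer que $p : (U', \omega) \to (M,\pi)$ est un morphisme de Poisson.

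La difficulté principale réside précisément dans cette dernière étape : établir que pour toutes $f,g \in C^\infty(M)$ on a $\{p^*f, p^*g\}_\omega = p^*\{f,g\}_\pi$. Ce calcul n'est pas routinier ; il repose de façon essentielle sur l'homogénéité du spray et sur la condition $dp \circ V = \pi_\sharp$. Une stratégie naturelle consiste à vérifier l'identité le long de la section nulle (où elle découle directement de la définition de $\pi_\sharp$), puis à l'étendre à tout $U'$ par un argument d'invariance faisant intervenir le flot du champ d'Euler en combinaison avec le spray. À titre de variante, on peut aussi procéder localement via le théorème de décomposition de Weinstein \ref{weinstein.decomp} (qui fournit une réalisation triviale dans le facteur symplectique), puis recoller les réalisations locales ; mais l'obstacle, à savoir la compatibilité des choix, est alors de même nature. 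Pour les détails techniques, on renvoie aux articles originaux cités \cite{ka:anal} et \cite{we:sympoid}.
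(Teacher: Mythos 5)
Le texte de la thèse ne démontre pas ce théorème : il l'énonce comme un résultat dû à Karasev \cite{ka:anal} et Weinstein \cite{we:sympoid}, et signale seulement que Crainic et Marcut \cite{cra-mar} en ont donné une autre preuve utilisant les connexions contravariantes — c'est précisément la stratégie que vous esquissez (spray de Poisson $V$ sur $T^*M$, moyenne $\omega=\int_0^1\phi_t^*\omega_{\mathrm{can}}\,dt$, projection $p$). Votre proposition n'est cependant pas une démonstration : l'étape qui constitue le c\oe ur du théorème, à savoir que $p:(U',\omega)\rightarrow(M,\pi)$ est un morphisme de Poisson, est explicitement laissée de côté et renvoyée aux articles originaux, alors que c'est exactement là que réside toute la difficulté (chez Crainic et Marcut, c'est l'objet du calcul principal, fondé sur $\frac{d}{dt}\phi_t^*\omega_{\mathrm{can}}=\phi_t^*\,d\,i_V\omega_{\mathrm{can}}$ et sur l'homogénéité du spray). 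De même, la variante locale via le Théorème \ref{weinstein.decomp} bute sur le recollement des réalisations locales, qui est historiquement l'obstacle ayant motivé les méthodes globales de Karasev et Weinstein ; l'invoquer sans le résoudre ne fournit pas de preuve.

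Par ailleurs, un point précis de votre argument est erroné : $\omega$ ne coïncide pas avec $\omega_{\mathrm{can}}$ le long de la section nulle. Le spray s'annule sur la section nulle, donc $\phi_t$ la fixe point par point, mais sa différentielle n'y est pas l'identité : en coordonnées, la linéarisation de $V$ en un point $(x,0)$ envoie $(\delta x,\delta y)$ sur $(\pi_\sharp(\delta y),0)$, de sorte que $\phi_t^*\omega_{\mathrm{can}}$ vaut en ce point $\omega_{\mathrm{can}}$ augmentée d'un terme proportionnel à $t\,\pi(\xi,\eta)$ sur les composantes verticales. Après intégration en $t$, la forme $\omega$ le long de la section nulle est $\omega_{\mathrm{can}}$ corrigée par un terme construit sur $\pi$ ; elle reste non dégénérée, mais cela résulte d'un calcul explicite par blocs et non de l'argument $\phi_0=\mathrm{id}$. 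En l'état, votre texte est un plan fidèle à la preuve de Crainic et Marcut, mais avec un trou au point décisif et une justification incorrecte de la non-dégénérescence le long de la section nulle.
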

Crainic \& Marcut ont donné, dans \cite{cra-mar}, une autre preuve de l'existence de la réalisation symplectique utilisant la notion de connexion contravariante.
\item Le problème de linéarisation. Si $\pi=\frac{1}{2}\sum_{i<j}\pi_{ij}\,\partial_{x_i}\wedge\partial_{x_j}$ est un tenseur de Poisson, sur une variété $M$, qui s'annule en un point $a$, on peut considérer le développement en série de Taylor de $\pi_{ij}$
\[\pi_{ij}(x)=\sum_{k=1}^nC_{ij}^k\,x_k+\mathrm{O}(x^2)\]
Les $(C_{ij}^k)$ forment des constantes de structure d'une algèbre de Lie $\G_a$, appelée \emph{algèbre d'isotropie}, qui n'est autre que le cotangent $T_a^*M$, muni du crochet de Koszul
\[[\alpha,\beta]=[\overline{\alpha},\overline{\beta}](a),\]
où $\alpha,\beta\in T_a^*M$ et $\overline{\alpha}$ et $\overline{\beta}$ sont des $1$-formes qui prolongent $\alpha$ et $\beta$ respectivement.
Le problème de linéarisation s'énonce comme suit :
\begin{center}
Existe-t-il un $\varphi : T_aM\rightarrow M$ tel que $\varphi^*\pi=\pi_\ell$ ?
\end{center}
où $\pi_\ell$ est le tenseur de Poisson linéaire canonique de $T_aM$. On dira que $\pi$ est formellement (resp. analytiquement, $C^\infty$)-linéarisable, au voisinage de $a$, suivant que $\varphi$ est formel, analytique ou $C^\infty$-difféomorphisme local. On a le résultat suivant (voir Conn~\cite{con1},~\cite{con2} et Fernandes et Monnier~\cite{fer-mon})
\begin{theorem}
Si l'algèbre d'isotropie $\G_a$ est semi-simple, alors $\pi$ est formellement et analytiquement linéarisable.
\end{theorem}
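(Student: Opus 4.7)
La démonstration se fait en deux temps : le cas formel remonte à Weinstein, le cas analytique à Conn.

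\textbf{Étape 1 : linéarisation formelle.} Je procéderais par récurrence sur l'ordre du développement de Taylor en $a$. En écrivant $\pi=\pi_\ell+\sum_{k\geq 2}\pi^{(k)}$ avec $\pi^{(k)}$ homogène polynomial de degré $k$, on cherche un difféomorphisme formel de la forme $\varphi=\id+\sum_{k\geq 2}\psi^{(k)}$ vérifiant $\varphi^*\pi=\pi_\ell$. L'équation $[\pi,\pi]=0$, développée ordre par ordre, montre qu'à chaque étape le reste de Taylor est un $2$-cocycle pour l'opérateur $\delta_{\pi_\ell}=[\pi_\ell,\cdot]$ agissant sur les bivecteurs polynomiaux homogènes, et cet opérateur s'identifie à la différentielle de Chevalley-Eilenberg de l'algèbre d'isotropie $\G_a$ à coefficients dans $S^k(\G_a^*)\otimes\G_a$. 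Le lemme de Whitehead donne $H^1(\G_a,V)=H^2(\G_a,V)=0$ pour toute représentation $V$ de dimension finie d'une algèbre semi-simple ; l'obstruction est donc un cobord, et l'on choisit $\psi^{(k)}$ pour la tuer. En itérant et en invoquant Borel pour réaliser la série formelle comme jet d'une fonction lisse, on obtient la linéarisation formelle.

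\textbf{Étape 2 : linéarisation analytique.} L'itération naïve précédente diverge dans la catégorie analytique ; il faut un schéma à convergence rapide de type Newton/Nash-Moser. À l'étape $n$ on dispose de $\pi_n=\pi_\ell+R_n$ où $R_n$ s'annule à l'ordre $2^n$ en $a$ ; je résoudrais l'équation linéarisée $\delta_{\pi_\ell}\psi_n=R_n$ via un opérateur d'homotopie issu du lemme de Whitehead, construit ici explicitement par moyenne sur l'action d'un sous-groupe compact maximal de $\G_a$ (l'astuce unitaire est licite grâce à la semi-simplicité). On pose alors $\varphi_{n+1}=\varphi_n\circ\exp(\psi_n)$. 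L'input analytique essentiel est une estimation \emph{tame} du type Cauchy, $\|\psi_n\|_{r_{n+1}}\leq C(r_n-r_{n+1})^{-d}\|R_n\|_{r_n}$, sur une suite décroissante de polydisques emboîtés autour de $a$, avec une perte de rayon sommable ; la convergence quadratique de $\|R_n\|$ compense alors largement cette perte de domaine.

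Le verrou principal du plan est précisément cette partie analytique : il faut ajuster finement le couple (décroissance géométrique des rayons, bornes de l'opérateur d'homotopie) pour que le schéma de Newton se referme, ce qui est le cœur technique des articles \cite{con1}, \cite{con2}. Une fois ces estimations établies, le passage à la limite fournit le biholomorphisme local $\varphi$ tel que $\varphi^*\pi=\pi_\ell$, et la linéarisation formelle énoncée dans le théorème s'en déduit par simple développement de Taylor de $\varphi$ (ou, alternativement, reste acquise de manière autonome par l'Étape 1).
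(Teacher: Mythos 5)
Le manuscrit ne démontre pas ce théorème : il est énoncé tel quel et renvoyé à Conn \cite{con1}, \cite{con2} et à Fernandes--Monnier \cite{fer-mon}. Votre plan reconstruit fidèlement l'architecture de ces références : pour la partie formelle (qui remonte à Weinstein), la récurrence sur l'ordre du jet, l'identification de l'obstruction à un $2$-cocycle de Chevalley--Eilenberg de $\G_a$ à coefficients dans $S^k(\G_a^*)\otimes\G_a$ et son annulation par les lemmes de Whitehead sont exactement le bon argument ; pour la partie analytique, le schéma de Newton à convergence quadratique, avec opérateurs d'homotopie bornés et estimations de type Cauchy sur des polydisques emboîtés dont les rayons décroissent de façon sommable, est bien le cœur de \cite{con1}. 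Deux réserves mineures : le recours à Borel est superflu, puisque la linéarisation formelle ne demande qu'un difféomorphisme formel et non sa réalisation lisse ; et la ``moyenne sur un sous-groupe compact maximal'' est plutôt l'ingrédient de la preuve lisse de \cite{con2}, valable sous une hypothèse de compacité de l'isotropie --- dans le cas analytique, Conn obtient les opérateurs d'homotopie avec estimations à partir de l'élément de Casimir (inversible sur les composantes isotypiques non triviales du complexe de Chevalley--Eilenberg), c'est de là que viennent les bornes tame que vous invoquez. Enfin, comme vous le reconnaissez vous-même, le verrou analytique (le réglage conjoint des rayons et des constantes qui referme l'itération) reste délégué à \cite{con1} : votre texte est donc un schéma de preuve correct, au même titre que la citation faite dans la thèse, mais non une démonstration autonome.
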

On définit alors le produit $A : T_a^*M\times T_a^* M\rightarrow T_a^* M$ par
\[2<A_\alpha\beta,\gamma>=<[\alpha,\beta],\gamma>+<[\gamma,\alpha],\beta> + <[\gamma,\beta],\alpha>.\]
On peut vérifier que 
\[A_\alpha\beta=(\mathcal{D}_\alpha\beta)(a).\]
Ainsi les courbures de $\mathcal{D}$ en $a$ coïncident avec les courbures de $A$. L'étude de la courbure des algèbres de Lie munies d'un produit scalaire a été faite dans \cite{mil}.
\item La théorie de déformation. Hawkins ajoute de l'importance à cette notion de connexion. Il a en fait, montré que le couple $\left(C^\infty(M),d\right)$, où $d$ est la différentielle extérieure, est déformable s'il existe un crochet de Poisson $\{\;,\;\}$ sur $M$ et ce crochet s'étend à l'ensemble $\Omega^1(M)$ des $1$-formes comme crochet de Lie gradué, s'il existe une connexion contravariante $\mathcal{D}$ définie par \[\{f,\alpha\}=\mathcal{D}_{d\!f}\alpha\]
qui est plate et sans torsion. Voir \cite{haw1} et \cite{haw2}.
\end{enumerate}
\section{Connexion de Levi-Civita contravariante}
Soit $(M,\pi,<,>)$ une variété de Poisson munie d'une métrique riemannienne. Soit $\Omega^1(M)$ l'espace de ses $1$-formes différentielles muni du crochet de Koszul.\\
On peut définir, sur $M$, une connexion appelée \emph{connexion de Levi-Civita contravariante} associée au couple $(\pi,<,>)$ :
\begin{equation*}
\begin{array}{cccc}
\mathcal{D} : & \Omega^1(M)\times\Omega^1(M) & \rightarrow & \Omega^1(M)\\
     & (\alpha,\beta) & \mapsto & \mathcal{D}_\alpha\beta
  \end{array}
\end{equation*}
telles que :
\begin{enumerate}
\item $\mathcal{D}$ est $\mathbb{R}$-bilinéaire,
\item $\mathcal{D}$ est $C^\infty(M)$-linéaire par rapport 
$\alpha$, c'est-à-dire, pour tout $f\in C^\infty(M)$
\[\mathcal{D}_{f\alpha}=f\mathcal{D}_{\alpha},\]
\item  $\mathcal{D}$ est une dérivation par rapport 
$\beta$, c'est-à-dire, pour tout $f\in C^\infty(M)$
\[\mathcal{D}\left(f\beta\right)=f\mathcal{D}_\alpha\beta+\pi_\sharp(\alpha)(f)\beta,\]
\item $\mathcal{D}_\alpha\beta-\mathcal{D}_\beta\alpha=[\alpha,\beta]$ (sans
torsion),
\item $\pi_\sharp(\alpha)\cdot<\beta,\gamma>=<\mathcal{D}_\alpha\beta,\gamma>+
<\beta,\mathcal{D}_\alpha\gamma>$ (métrique),
\end{enumerate}
où $[\;,\;]$ est le crochet de Koszul.
\begin{proposition}
La connexion de Levi-Civita contravariante est entièrement
déterminée par :
\begin{multline*}
2<\mathcal{D}_\alpha\beta,\gamma>=\pi_\sharp(\alpha)\cdot<\beta,\gamma>+
\pi_\sharp(\beta)\cdot<\alpha,\gamma>-\pi_\sharp(\gamma)\cdot<\alpha,\beta>\\
+<[\alpha,\beta],\gamma>+<[\gamma,\alpha],\beta>+<[\gamma,\beta],\alpha>.
\end{multline*}
\end{proposition}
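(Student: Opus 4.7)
La stratégie est exactement celle de la démonstration classique de la formule de Koszul pour la connexion de Levi-Civita, transposée au cadre contravariant : les $1$-formes y remplacent les champs de vecteurs, le crochet de Koszul remplace le crochet de Lie, et l'action dérivante selon $\alpha$ se lit via le champ $\pi_\sharp(\alpha)$.

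Pour établir la formule (et obtenir simultanément l'unicité, par non-dégénérescence du produit scalaire), j'écrirais la condition de métricité $(5)$ pour les trois permutations circulaires du triplet $(\alpha,\beta,\gamma)$, puis je formerais la combinaison \og première $+$ deuxième $-$ troisième \fg{}. Cela fait apparaître six termes contenant $\mathcal{D}$ au second membre. La condition d'absence de torsion $(4)$ permet de regrouper ces termes : par exemple
\[<\beta,\mathcal{D}_\alpha\gamma>-<\mathcal{D}_\gamma\alpha,\beta>=<\beta,[\alpha,\gamma]>,\]
et de façon analogue pour le couple $(\mathcal{D}_\beta\gamma,\mathcal{D}_\gamma\beta)$, tandis que la substitution $\mathcal{D}_\beta\alpha=\mathcal{D}_\alpha\beta-[\alpha,\beta]$ permet de faire apparaître le terme $2<\mathcal{D}_\alpha\beta,\gamma>$. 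Après collecte, on retrouve exactement les trois termes de crochet du membre de droite annoncé.

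Pour l'existence, je définirais $\mathcal{D}_\alpha\beta$ comme l'unique $1$-forme vérifiant la formule pour tout $\gamma$, en s'assurant d'abord que le second membre est $C^\infty(M)$-linéaire en $\gamma$. L'écueil principal sera ensuite la vérification des axiomes $(2)$ et $(3)$ : en remplaçant $\beta$ par $f\beta$ (respectivement $\alpha$ par $f\alpha$) dans le second membre, il faudra exploiter la règle de Leibniz du crochet de Koszul $[\alpha,f\beta]=\pi_\sharp(\alpha)(f)\beta+f[\alpha,\beta]$, rappelée dans la proposition précédente, ainsi que l'égalité $\pi_\sharp(f\alpha)=f\pi_\sharp(\alpha)$, afin de s'assurer que les termes supplémentaires en $df$ se combinent exactement en $2\pi_\sharp(\alpha)(f)<\beta,\gamma>$ dans le cas de la règle de Leibniz, et se compensent à zéro dans le cas de la $C^\infty(M)$-linéarité. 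Les propriétés $(4)$ et $(5)$ s'obtiennent alors directement en antisymétrisant la formule en $(\alpha,\beta)$ pour la torsion, et en sommant les expressions obtenues par permutation pour la métricité, exactement comme pour la connexion de Levi-Civita usuelle.
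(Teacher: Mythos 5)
Votre démonstration est correcte et suit essentiellement la même voie que celle du texte : on écrit la condition de métricité pour les trois permutations de $(\alpha,\beta,\gamma)$, on combine les identités et on utilise l'absence de torsion pour faire apparaître les crochets de Koszul, exactement comme pour la connexion de Levi-Civita covariante. Votre vérification supplémentaire de l'existence (linéarité en $\gamma$, règle de Leibniz via $[\alpha,f\beta]=\pi_\sharp(\alpha)(f)\beta+f[\alpha,\beta]$) va un peu au-delà de ce que le texte explicite, mais elle est standard et correcte.
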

\begin{proof} La preuve ne diffère guère de celle de la connexion de Levi-Civita covariante. En effet, comme la connexion $\mathcal{D}$ est métrique, on a 
\begin{align*}
\pi_\sharp(\alpha)\cdot<\beta,\gamma>=&<\mathcal{D}_\alpha\beta,\gamma>+<\beta,\mathcal{D}_\alpha\gamma>,\\
\pi_\sharp(\beta)\cdot<\alpha,\gamma>=&<\mathcal{D}_\beta\alpha,\gamma>+<\alpha,\mathcal{D}_\beta\gamma>,\\
\pi_\sharp(\gamma)\cdot<\alpha,\beta>=&<\mathcal{D}_\gamma\alpha,\beta>+<\alpha,\mathcal{D}_\gamma\beta>,
\end{align*}
\end{proof}
et comme la connexion $\mathcal{D}$ est sans torsion, alors
\begin{multline*}
\pi_\sharp(\alpha)\cdot<\beta,\gamma>+\pi_\sharp(\beta)\cdot<\alpha,\gamma>-\pi_\sharp(\gamma)\cdot<\alpha,\beta>=<\mathcal{D}_\alpha\beta,\gamma>\\+<\mathcal{D}_\alpha\beta-[\alpha,\beta],\gamma>
-<[\gamma,\alpha],\beta>-<[\gamma,\beta],\alpha>.
\end{multline*}
Ce qui donne la formule de Koszul.
\begin{definition} La courbure associée à la connexion $\mathcal{D}$ est
l'opérateur\\
$R : \Omega^1(M)\times\Omega^1(M)\times\Omega^1(M)\longrightarrow\Omega^1(M)$
défini par :
\begin{equation}\label{courbure}
R(\alpha,\beta)\gamma=\mathcal{D}_{[\alpha,\beta]}\gamma-\left(\mathcal{D}_\alpha \mathcal{D}_\beta\gamma-\mathcal{D}_\beta\mathcal{D}_\alpha\gamma\right).
\end{equation}
\end{definition}
\begin{proposition} L'opérateur de courbure $R$ est un tenseur de type $(1,3)$ qui vérifie :
\begin{enumerate} 
\item $R(\alpha,\beta)=-R(\beta,\alpha)$,
\item $R(f\alpha,\beta)\gamma=R(\alpha,f\beta)\gamma=R(\alpha,\beta)f\gamma=
fR(\alpha,\beta)\gamma$, pour tout $f\in C^\infty(M)$,
\item $R(\alpha,\beta)\gamma+R(\beta,\gamma)\alpha+R(\gamma,\alpha)\beta=0$. (Identité de
Bianchi),
\item $<R(\alpha,\beta)\gamma,\mu>=-<\gamma,R(\alpha,\beta)\mu>$,
\item $<R(\alpha,\beta)\gamma,\mu>=<R(\gamma,\mu)\alpha,\beta>$.
\end{enumerate}
\end{proposition}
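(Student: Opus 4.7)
The plan is to verify each of the five properties by direct computation from the definition $R(\alpha,\beta)\gamma=\mathcal{D}_{[\alpha,\beta]}\gamma-\mathcal{D}_\alpha\mathcal{D}_\beta\gamma+\mathcal{D}_\beta\mathcal{D}_\alpha\gamma$, following the classical scheme used for the Levi-Civita covariant curvature, but paying attention to the fact that the Koszul bracket plays the role of the Lie bracket and the anchor $\pi_\sharp$ intervenes. Property 1 is immediate from the definition and the antisymmetry of the Koszul bracket.

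The real work begins with property 2, which is the assertion that $R$ is a tensor. The non-trivial point is that although $\mathcal{D}_\alpha$ is a first-order differential operator in the third slot, the combination defining $R$ kills all derivatives of the function $f$. I would check the three linearities separately. For $R(f\alpha,\beta)\gamma$, I use $[f\alpha,\beta]=f[\alpha,\beta]-\pi_\sharp(\beta)(f)\alpha$ and the Leibniz rule for $\mathcal{D}_\beta$; the term $\pi_\sharp(\beta)(f)\mathcal{D}_\alpha\gamma$ appears twice with opposite signs. The case of $\beta$ follows from this together with property 1. For $R(\alpha,\beta)(f\gamma)$, I expand $\mathcal{D}_\alpha\mathcal{D}_\beta(f\gamma)$ using the Leibniz rule twice and check that all terms containing a derivative of $f$ cancel against their counterparts, leaving only $fR(\alpha,\beta)\gamma$; the key cancellation relies on $[\pi_\sharp(\alpha),\pi_\sharp(\beta)](f)=\pi_\sharp([\alpha,\beta])(f)$, which holds because $\pi_\sharp$ is a Lie algebra morphism.

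For property 3, the Bianchi identity, I replace $[\alpha,\beta]=\mathcal{D}_\alpha\beta-\mathcal{D}_\beta\alpha$ (torsion-freeness), expand each cyclic term, and collect. The sum rearranges into $\mathcal{D}_\alpha[\beta,\gamma]+\mathcal{D}_\beta[\gamma,\alpha]+\mathcal{D}_\gamma[\alpha,\beta]$ together with a cyclic expression involving $\mathcal{D}$ of $\mathcal{D}$ terms which collapses to zero; then applying torsion-freeness once more and invoking the Jacobi identity for the Koszul bracket finishes the argument. Property 4 is equivalent to $\langle R(\alpha,\beta)\gamma,\gamma\rangle=0$, and this will be the main computational step: expanding with metricity, one gets
\[
2\langle R(\alpha,\beta)\gamma,\gamma\rangle = \Bigl(\pi_\sharp([\alpha,\beta])-[\pi_\sharp(\alpha),\pi_\sharp(\beta)]\Bigr)\langle\gamma,\gamma\rangle,
\]
which vanishes exactly because $\pi_\sharp$ is a morphism of Lie algebras.

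Finally, property 5 is a formal consequence of 1, 3, 4: write the Bianchi identity four times with cyclic permutations of $(\alpha,\beta,\gamma,\mu)$ paired with the remaining argument, sum them, and use properties 1 and 4 to eliminate most terms; the familiar algebraic manipulation yields $2\langle R(\alpha,\beta)\gamma,\mu\rangle=2\langle R(\gamma,\mu)\alpha,\beta\rangle$. The main obstacle throughout is property 2, since ensuring the cancellation of all anchor-derivatives of the test function requires that $\pi_\sharp$ intertwines the Koszul and Lie brackets; once this structural fact is used, the remainder of the proof is a direct transposition of the classical Riemannian argument.
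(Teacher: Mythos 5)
Votre démonstration est correcte et suit essentiellement la même démarche que celle du texte : vérification directe à partir de la définition, en transposant l'argument classique de Levi-Civita, avec comme ingrédient structurel le fait que $\pi_\sharp$ est un morphisme d'algèbres de Lie, $\pi_\sharp([\alpha,\beta])=[\pi_\sharp(\alpha),\pi_\sharp(\beta)]$, pour la tensorialité en $\gamma$ et pour l'identité $\langle R(\alpha,\beta)\gamma,\gamma\rangle=0$, puis l'absence de torsion et l'identité de Jacobi du crochet de Koszul pour Bianchi. Pour la propriété 5, là où le texte renvoie au cas covariant classique (Lee), vous esquissez le même argument algébrique standard (quatre identités de Bianchi cycliques combinées avec 1 et 4), ce qui revient au même.
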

\begin{proof} L'opérateur $R$ s'identifie à
\begin{equation*}
\begin{array}{cccc}
T : &\mathfrak{X}(M)\times\Omega^1(M)\times\Omega^1(M)\times\Omega^1(M)&\longrightarrow &C^\infty(M)\\
&(X,\alpha,\beta,\gamma)&\longmapsto &\langle R(\alpha,\beta)\gamma,X\rangle
\end{array}
\end{equation*}
qui est symétrique par rapport à $(\alpha,\beta)$ et $C^\infty(M)$-linéaire par rapport à chacune des composantes $(\alpha,\beta,\gamma)$. En effet,
\begin{enumerate}
\item $R(\beta,\alpha)\gamma=\mathcal{D}_{[\beta,\alpha]}\gamma-\mathcal{D}_{\beta}\mathcal{D}_{\alpha}\gamma+
\mathcal{D}_{\alpha}\mathcal{D}_{\beta}\gamma=-R(\alpha,\beta)\gamma$.
\item Pour tout $f\in C^\infty(M)$, on a
\begin{align*}
R(\alpha,f\beta)\gamma=&\mathcal{D}_{[\alpha,f\beta]}\gamma-\mathcal{D}_\alpha
\mathcal{D}_{f\beta}\gamma+\mathcal{D}_{f\beta}\mathcal{D}_\alpha\gamma\\
=&\mathcal{D}_{\left(f[\alpha,\beta]+\left(\pi_\sharp(\alpha)f\right)\beta\right)}\gamma-\mathcal{D}_\alpha
\left(f\mathcal{D}_\beta\gamma\right)+f\mathcal{D}_\alpha \mathcal{D}_\beta\gamma\\
=&f\mathcal{D}_{[\alpha,\beta]}\gamma+\left(\pi_\sharp(\alpha)f\right)\mathcal{D}_\beta\gamma-f\mathcal{D}_\alpha\mathcal{D}_\beta\gamma-\left(\pi_\sharp(\alpha)f\right)\mathcal{D}_\beta\gamma+
f\mathcal{D}_\alpha\mathcal{D}_\beta\gamma\\
=&fR(\alpha,\beta)\gamma,
\end{align*}
et
$R(f\alpha,\beta)\gamma=-R(\beta,f\alpha)\gamma=-fR(\beta,\alpha)\gamma=fR(\alpha,\beta)\gamma$, et
\begin{align*}
R(\alpha,\beta)f\gamma=&\mathcal{D}_{[\alpha,\beta]}f\gamma-\mathcal{D}_\alpha
\mathcal{D}_{\beta}f\gamma+\mathcal{D}_{\beta}\mathcal{D}_\alpha f\gamma\\
=&f\mathcal{D}_{[\alpha,\beta]}\gamma+\left(\pi_\sharp([\alpha,\beta])f\right)\gamma-\mathcal{D}_\alpha
\left(f\mathcal{D}_\beta\gamma+\left(\pi_\sharp(\beta)f\right)\gamma\right)\\
&+\mathcal{D}_\beta\left(f\mathcal{D}_\alpha\gamma+\left(\pi_\sharp(\alpha)f\right)\gamma\right)\\
=&f\mathcal{D}_{[\alpha,\beta]}\gamma+\left(\pi_\sharp([\alpha,\beta])f\right)\gamma-f\mathcal{D}_\alpha
\mathcal{D}_\beta\gamma-\left(\pi_\sharp(\alpha)f\right)\mathcal{D}_\beta\gamma\\
&-\left(\pi_\sharp(\beta)f\right)\mathcal{D}_\alpha\gamma-\left(\pi_\sharp(\alpha)(\pi_\sharp(\beta)f)\right)\gamma+ f\mathcal{D}_\beta\mathcal{D}_\alpha\gamma+\left(\pi_\sharp(\beta)f\right)\mathcal{D}_\alpha\gamma\\
&+\left(\pi_\sharp(\alpha)f\right)\mathcal{D}_\beta\gamma+\left(\pi_\sharp(\beta)(\pi_\sharp(\alpha)f)\right)\gamma\\
=&fR(\alpha,\beta)\gamma+\left(\pi_\sharp([\alpha,\beta])f\right)\gamma-
\left([\pi_\sharp(\alpha),\pi_\sharp(\beta)]f\right)\gamma\\
=&fR(\alpha,\beta)\gamma,
\end{align*}
car
$[\pi_\sharp(\alpha),\pi_\sharp(\beta)]=\pi_\sharp\left([\alpha,\beta]\right)$.
\item Comme la connexion $\mathcal{D}$ est sans torsion, alors
\begin{align*}
R(\alpha,\beta)\gamma=&\ \mathcal{D}_\gamma[\alpha,\beta]+[[\alpha,\beta],\gamma]-\mathcal{D}_\alpha\mathcal{D}_\beta\gamma+\mathcal{D}_\beta\mathcal{D}_\alpha\gamma\\
=&\ \mathcal{D}_\gamma\mathcal{D}_\alpha\beta-\mathcal{D}_\alpha\mathcal{D}_\gamma\beta+[[\alpha,\beta],\gamma]-\mathcal{D}_\alpha\mathcal{D}_\beta\gamma+\mathcal{D}_\beta\mathcal{D}_\alpha\gamma.
\end{align*}
On déduit que
\[R(\alpha,\beta)\gamma+R(\beta,\gamma)\alpha+R(\gamma,\alpha)\beta+R(\alpha,\beta)\gamma=[[\alpha,\beta],\gamma]+[[\beta,\gamma],\beta]+[[\gamma,\alpha],\beta]=0.\]
\item Il suffit de montrer que $<R(\alpha,\beta)\gamma,\gamma>=0$, pour tout $\gamma\in\Omega^1(M)$, puis de conclure en développant l'identité
\[<R(\alpha,\beta)(\gamma+\mu),\gamma+\mu>=0.\]
Comme la connexion $\mathcal{D}$ est métrique, on a 
\begin{align*}
\pi_\sharp(\alpha)\cdot\pi_\sharp(\beta)\cdot<\gamma,\gamma>=&\pi_\sharp(\alpha)\cdot2<\mathcal{D}_\beta\gamma,\gamma>\\
=&2<\mathcal{D}_\alpha\mathcal{D}_\beta\gamma,\gamma>+2<\mathcal{D}_\beta\gamma,\mathcal{D}_\alpha\gamma>.
\end{align*}
De même, $\pi_\sharp(\beta)\cdot\pi_\sharp(\alpha)\cdot<\gamma,\gamma>=2<\mathcal{D}_\beta\mathcal{D}_\alpha\gamma,\gamma>+2<\mathcal{D}_\alpha\gamma,\mathcal{D}_\beta\gamma>$.\\ On déduit que
\begin{equation}\label{anti1}
\pi_\sharp\left([\alpha,\beta]\right)\cdot<\gamma,\gamma>=[\pi_\sharp(\alpha),\pi_\sharp(\gamma)]\cdot<\gamma,\gamma>,
\end{equation}
et comme, d'autre part on a
\begin{equation}\label{anti2}
\pi_\sharp\left([\alpha,\beta]\right)\cdot<\gamma,\gamma>=2<\mathcal{D}_{[\alpha,\beta]}\gamma,\gamma>,
\end{equation}
on déduit alors, de \eqref{anti1} et \eqref{anti2}, que 
\[<R(\alpha,\beta)\gamma,\gamma>=0,\ \text{pour tout}\ \gamma\in\Omega^1(M).\]
\item La preuve de cette propriété est plus subtile. Il s'agit d'utiliser judicieusement les autres propriétés ci-dessus. La preuve est identique au cas covariant qu'on peut trouver, par exemple, dans le livre de Lee \cite{lee}, page 123.
\end{enumerate}
\end{proof}
\section{Crochet de Poisson généralisé et métacourbure}
Soit $M$ une variété différentielle et soit $(\Om^*(M),\wedge)$ l'espace de ses formes différentielles  muni du produit extérieur $\wedge$, qui est une algèbre commutative au sens gradué 
\[\alpha\wedge\beta=(-1)^{\deg\alpha\deg\beta}\beta\wedge\alpha.\]
Une déformation non commutative de $(\Om^*(M),\wedge)$ est la donnée d'une algèbre graduée $(\mathcal{A},\cdot)$, non nécessairement commutative, qui soit une extension de $(\Om^*(M),\wedge)$, c'est-à-dire, qu'il existe une suite exacte d'algèbres sur $\mathbb{R}$ :
\[0\rightarrow\mathcal{N}\rightarrow\mathcal{A}\stackrel{\mathcal{P}}{\longrightarrow}\Om^*(M)\rightarrow0,\]
où $\mathcal{N}$ s'identifie à une sous-algèbre centrale de $\mathcal{A}$ et $\mathcal{P}$ est surjectif et réalise un isomorphisme entre $\Om^*(M)$ et $\mathcal{A}/\mathcal{N}$.
Si $\alpha,\beta$ sont deux éléments de $\Om^*(M)$, la formule
\begin{equation}\label{crochetpoisson}
\left\{\alpha,\beta\right\}=\mathcal{P}\left([\wi\alpha,\wi\beta]\right),\end{equation}
où $\wi\al,\wi\be$ sont deux antécédents de $\alpha$ et $\beta$, définit sur
$\Om^*(M)$ un crochet, appelé par Hawkins {\it crochet de Poisson généralisé}. Hawkins montre alors que $(\Om^*(M),\wedge,d,\{\;,\;\})$ est une algèbre de Poisson différentielle graduée (voir chapitre \ref{chapter2}). Il montre aussi que le crochet de Poisson généralisé sur $\Om^*(M)$ est entièrement déterminé par les crochets
\begin{equation}\label{crochetpoisson1}
\left\{f,g\right\}\esp \left\{f,\alpha\right\}\quad f,g\in C^\infty(M),\ \alpha\in\Om^1(M).\end{equation}
Le premier crochet donne naissance à une {\it structure de Poisson} sur $M$ et donc un tenseur de Poisson $\pi\in\Gamma(\wedge^2TM)$ et, en posant
\begin{equation}\label{connexion}\mathcal{D}_{df}\alpha:=\left\{f,\al\right\},\end{equation}
on définit $$\mathcal{D}:\Om^1(M)\times\Om^1(M)\too\Om^1(M),$$ qui est en fait une {\it connexion contravariante}.\\
Inversement, étant donnée une structure de Poisson sur $M$ et une connexion contravariante sans courbure ni torsion $\mathcal{D}$, les formules \eqref{crochetpoisson1} et \eqref{connexion} se généralisent pour définir sur $\Om^*(M)$ un crochet $\{\;,\;\}$ compatible avec $\wedge$ et $d$ (voir chapitre \ref{chapter2}). En général, ce crochet ne vérifie pas l'identité de Jacobi graduée. Hawkins a mis en évidence un tenseur ${\mathcal M}$  de type $(2,3)$ appelé {\it métacourbure} et a montré que ce crochet $\{\;,\;\}$ vérifie l'identité de Jacobi graduée, si et seulement si, ${\mathcal M}$ est identiquement nul.\\
En conclusion, une structure d'algèbre de Poisson graduée sur $\Omega^\star(M)$ est déterminée par
des "crochets initiaux"
$$\{f,g\}=\pi(df,dg),\{f,\alpha\}=\mathcal{D}_{df}\alpha,\{\alpha,\beta\}=\Psi(\alpha,\beta),$$
où $\pi$ est un bivecteur Poisson, $\mathcal{D}$ est une connexion contravariante plate et sans torsion et
$\Psi$ est un opérateur bi-différentiel d'ordre $1$ qui satisfait des condition assurent l'identité de Jacobi. Le tenseur de métacourbure est un objet géométrique intéressant qui est l'ostruction à la déformation non commutative de l'algèbre différentielle de Poisson graduée $(\Om^*(M),\wedge,d,\{\;,\;\})$.
\paragraph{Métacourbure}
Soit $(M,\pi)$ une variété de Poisson et soit $\mathcal{D}$ une connexion contravariante sans torsion sur $M$. Il existe un unique crochet sur l'espace des formes différentielles $\Omega^*(M)$ tel que :
\begin{enumerate}
\item $\{\;,\;\}$ est $\mathbb{R}$-bilinéaire, antisymétrique et de degré $0$, c'est-à-dire
\[\{\alpha,\beta\}=-(-1)^{\deg\alpha\deg\beta}\{\beta,\alpha\},\quad\deg\{\alpha,\beta\}=\deg\alpha+\deg\beta.\]
\item La différentielle extérieure $d$ est une dérivation par rapport à $\{\;,\;\}$, c'est-à-dire
\[d\{\alpha,\beta\}=\{d\alpha,\beta\}+(-1)^{\deg\alpha}\{\alpha,d\beta\}.\]
\item Le crochet $\{\;,\;\}$ vérifie l'identité du produit
\[\{\alpha,\beta\wedge\gamma\}=\{\alpha,\beta\}\wedge\gamma+\left(-1\right)^{\deg\alpha\deg\beta}\beta\wedge\{\alpha,\gamma\}.\]
\item Pour tout $f,g\in C^\infty(M)$ et pour tout $\alpha\in\Omega^\star(M)$, le crochet $\{f,g\}$ coincide avec le crochet de Poisson initial sur $M$ et
\[\{f,\alpha\}=\mathcal{D}_{df}\alpha.\]
\end{enumerate}
Ce crochet est appelé \emph{crochet de Poisson généralisé}.\\
On associe à tout crochet de Poisson généralisé, un crochet $\mathbb{R}$-trilinéaire, appelé \emph{jacobiateur} $J : \Omega^\star(M)\times\Omega^\star(M)\times\Omega^\star(M)\rightarrow\Omega^\star(M)$, défini par
\begin{equation}
J(\alpha,\beta,\gamma)=\{\{\alpha,\beta\},\gamma\}-\{\alpha,\{\beta,\gamma\}\}+\left(-1\right)^{\deg\alpha\deg\beta}\{\beta,\{\alpha,\gamma\}\}.
\end{equation}
On peut vérifier que
\begin{equation}
J(\beta,\alpha,\gamma)=-\left(-1\right)^{\deg\alpha\deg\beta}J(\alpha,\beta,\gamma),\ \quad J(\alpha,\gamma,\beta)=-\left(-1\right)^{\deg\beta\deg\gamma}J(\alpha,\beta,\gamma),
\end{equation}
\begin{equation}
dJ(\alpha,\beta,\gamma)=J(d\alpha,\beta,\gamma)+\left(-1\right)^{\deg\alpha}J(\alpha,d\beta,\gamma)+\left(-1\right)^{\deg\alpha+\deg\beta}J(\alpha,\beta,d\gamma).
\end{equation}
À noter que
\begin{equation}
J(f,g,h)=\{\{f,g\},h\}+\{\{g,h\},f\}+\{\{h,f\},g\}=0,
\end{equation}
et
\begin{equation}
J(f,g,\alpha)=\mathcal{D}_{[df,dg]}\alpha-\left(\mathcal{D}_{df}\mathcal{D}_{dg}\alpha-\mathcal{D}_{dg}
\mathcal{D}_{df}\alpha\right)=R(df,dg)\alpha,
\end{equation}
où $R$ est la courbure de $\mathcal{D}$. 
On a 
\begin{align*}
J(\alpha\wedge\beta,\gamma,\mu)=&\phantom{-}\{\{\alpha\wedge\beta,\gamma\},\mu\}-\{\alpha\wedge\beta,\{\gamma,\mu\}\}+\left(-1\right)^{(\deg\alpha+\deg\beta)\deg\gamma}\{\gamma,\{\alpha\wedge\beta,\mu\}\}\\
=&\phantom{-}\{\alpha\wedge\{\beta,\gamma\},\mu\}+(-1)^{\deg\beta\deg\gamma}\{\{\alpha,\gamma\}\wedge\beta,\mu\}\\
&-\alpha\wedge\{\beta,\{\gamma,\mu\}\}-(-1)^{\deg\beta(\deg\gamma+\deg\mu)}\{\alpha,\{\gamma,\mu\}\}\wedge\beta\\
&+(-1)^{(\deg\alpha+\deg\beta)\deg\gamma}\{\gamma,\alpha\wedge\{\beta,\mu\}\}\\
&+(-1)^{(\deg\alpha+\deg\beta)\deg\gamma}(-1)^{\deg\beta\deg \mu}\{\gamma,\{\alpha,\mu\}\wedge\beta\}\\
=&\phantom{-}\alpha\wedge\{\{\beta,\gamma\},\mu\}+(-1)^{(\deg\beta+\deg\gamma)\deg\mu}\{\alpha,\mu\}\wedge\{\beta,\gamma\}\\
&+(-1)^{\deg\beta\deg\gamma}\{\alpha,\gamma\}\wedge\{\beta,\mu\}+(-1)^{\deg\beta(\deg\gamma+\deg\mu)}\{\{\alpha,\gamma\},\mu\}\wedge\beta\\
&-\alpha\wedge\{\beta,\{\gamma,\mu\}\}-(-1)^{\deg\beta(\deg\gamma+\deg\mu)}\{\alpha,\{\gamma,\mu\}\}\wedge\beta\\
&+(-1)^{(\deg\alpha+\deg\beta)\deg\gamma}\{\gamma,\alpha\}\wedge\{\beta,\mu\}\\
&+(-1)^{(\deg\alpha+\deg\beta)\deg\gamma}(-1)^{\deg\alpha\deg\gamma}\alpha\wedge\{\gamma,\{\beta,\mu\}\}\\
&+(-1)^{(\deg\alpha+\deg\beta)\deg\gamma}(-1)^{\deg\beta\deg\mu}\{\gamma,\{\alpha,\mu\}\}\wedge\beta\\
&+(-1)^{(\deg\alpha+\deg\beta)\deg\gamma}(-1)^{\deg\beta\deg\mu}(-1)^{(\deg\alpha+\deg\mu)\deg\gamma}\{\alpha,\mu\}\wedge\{\gamma,\beta\}\\
=&\phantom{-}\alpha\wedge\left(\{\{\beta,\gamma\},\mu\}-\{\beta,\{\gamma,\mu\}\}+(-1)^{\deg\beta\deg\gamma}\{\gamma,\{\beta,\mu\}\}\right)\\
&+(-1)^{\deg\beta(\deg\gamma+\deg\mu)}\Big(\{\{\alpha,\gamma\},\mu\}-\{\alpha,\{\gamma,\mu\}\}\\
&+(-1)^{\deg\alpha\deg\gamma}\{\gamma,\{\alpha,\mu\}\}\Big)\wedge\beta\\
=&\phantom{-}\alpha\wedge J(\beta,\gamma,\mu)+\left(-1\right)^{\deg\beta(\deg\gamma+\deg\mu)}J(\alpha,\gamma,\mu)\wedge\beta,
\end{align*}
et donc
\begin{equation}
J(\alpha\wedge\beta,\gamma,\mu)=\alpha\wedge J(\beta,\gamma,\mu)+\left(-1\right)^{\deg\beta(\deg\gamma+\deg\mu)}J(\alpha,\gamma,\mu)\wedge\beta.
\end{equation}
On déduit que, pour tout $f,g\in C^\infty(M)$ et tout $\alpha,\beta\in\Omega^1(M)$,  
\begin{align}\label{tensoriel}
J(fg,\alpha,\beta)=&fJ(g,\alpha,\beta)+gJ(f,\alpha,\beta),\\
J(f,g\alpha,\beta)=&gJ(f,\alpha,\beta)+\alpha\wedge R(df,dg)\beta.
\end{align}
Le jacobiateur défini un tenseur symétrique de type $(2,3)$ 
\begin{proposition}
Si la courbure $R$ de la connexion contravariante $\mathcal{D}$ est nulle, alors la quantité
\begin{equation}\label{metacourbure}
\mathcal{M}(df,\alpha,\beta)=J(f,\alpha,\beta),
\end{equation}
définit, pour tout $f\in C^\infty(M)$ et tout $\alpha,\beta\in\Omega^1(M)$, un tenseur symétrique $2$-fois covariant et $3$-fois contravariant.
\end{proposition}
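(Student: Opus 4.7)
Pour démontrer que $\mathcal{M}$ définit bien un tenseur du type annoncé, le plan est d'établir la $C^\infty(M)$-linéarité ponctuelle en chacun des trois arguments covariants et la symétrie dans les deux derniers arguments, cette dernière résultant directement de l'antisymétrie graduée du jacobiateur $J$ appliquée aux degrés $(0,1,1)$.

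Premièrement, j'observe que $J(c,\alpha,\beta)=0$ lorsque $c\in\R$ est constante, puisque $\{c,\eta\}=\mathcal{D}_{dc}\eta=0$ pour toute forme $\eta$, ce qui annule les trois termes définissant $J(c,\alpha,\beta)$. L'identité de Leibniz $J(fg,\alpha,\beta)=fJ(g,\alpha,\beta)+gJ(f,\alpha,\beta)$, déjà établie en \eqref{tensoriel}, affirme alors que $J(\cdot,\alpha,\beta)$ est une dérivation $\R$-linéaire de $C^\infty(M)$ à valeurs dans $\Omega^2(M)$. Pour en déduire que $J(f,\alpha,\beta)(x_0)$ ne dépend que de $(df)_{x_0}$, j'utiliserai l'argument local classique : si $(df)_{x_0}=0$, on écrit, en coordonnées locales centrées en $x_0$, la différence $f-f(x_0)$ comme une somme finie $\sum_i g_ih_i$ avec $g_i(x_0)=h_i(x_0)=0$ (obtenue via un développement de Taylor à l'ordre $1$ puis factorisation). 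L'itération de la règle de Leibniz, combinée avec l'annulation de $J$ sur les constantes, force alors $J(f,\alpha,\beta)(x_0)=0$, ce qui permet de poser $\mathcal{M}((df)_{x_0},\alpha,\beta):=J(f,\alpha,\beta)(x_0)$ sans ambiguïté et garantit la $\R$-linéarité ponctuelle par rapport au covecteur $df$.

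Deuxièmement, la seconde identité tensorielle $J(f,g\alpha,\beta)=gJ(f,\alpha,\beta)+\alpha\wedge R(df,dg)\beta$ se réduit, sous l'hypothèse $R=0$, à $J(f,g\alpha,\beta)=gJ(f,\alpha,\beta)$, ce qui donne directement la $C^\infty(M)$-linéarité en $\alpha$. L'antisymétrie graduée $J(\alpha,\gamma,\beta)=-(-1)^{\deg\beta\deg\gamma}J(\alpha,\beta,\gamma)$, appliquée aux degrés $(0,1,1)$, entraîne $J(f,\beta,\alpha)=J(f,\alpha,\beta)$, d'où à la fois la $C^\infty(M)$-linéarité en $\beta$ et la symétrie voulue dans les deux derniers arguments contravariants.

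En combinant ces étapes, $\mathcal{M}$ est ponctuellement tri-linéaire en ses trois arguments de type $1$-forme et à valeurs dans $\wedge^2T^*M$, donc définit bien un tenseur de type $(3,2)$ symétrique dans ses deux derniers arguments. L'obstacle principal de la démonstration se trouve dans la première étape : si l'identité de Leibniz est déjà acquise, l'argument local permettant d'en déduire la factorisation ponctuelle par $df$ doit être mené avec précaution, car il faut contrôler que la décomposition $\sum_ig_ih_i$ au voisinage de $x_0$ et l'itération de la règle de Leibniz, combinées à la nullité sur les constantes, produisent effectivement un résultat indépendant du représentant $f$ choisi et compatible avec la nature de $J(f,\alpha,\beta)$ comme champ de $2$-formes. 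Les autres propriétés (linéarité en $\alpha,\beta$, symétrie) découlent de manière essentiellement formelle une fois cette première étape franchie.
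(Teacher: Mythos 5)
Your argument for tensoriality is sound, and on one point more careful than the paper: the reduction of the first slot to a pointwise dependence on $(df)_{x_0}$, via the vanishing on constants, the identité de Leibniz \eqref{tensoriel} and a factorisation de type Hadamard, is exactly what the paper leaves implicit. The $C^\infty(M)$-linéarité en $\alpha$ (from $J(f,g\alpha,\beta)=gJ(f,\alpha,\beta)+\alpha\wedge R(df,dg)\beta$ and $R\equiv0$) and the symmetry $J(f,\alpha,\beta)=J(f,\beta,\alpha)$ obtained from the graded antisymmetry also coincide with the paper's route.

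There is, however, a genuine gap concerning the word \emph{symétrique}. In this proposition (as in Hawkins' work, and as used later in the paper, where $\mathcal{M}(\alpha,\beta,\gamma)=0$ pour tout $\alpha,\beta,\gamma\in[\G^*,\G^*]$ is deduced by symmetry and polarisation from $\mathcal{M}(\alpha,\beta,\beta)=0$), the tensor $\mathcal{M}$ is symmetric in \emph{all three} $1$-form arguments, not only in the last two. You only establish the symmetry in $(\alpha,\beta)$; the nontrivial part is the exchange of the $df$-slot with the other slots, and it is precisely here that the hypothesis $R\equiv0$ is used a second time. The paper's argument: since $J(f,g,dh)=R(df,dg)dh=0$ and $d$ is a derivation with respect to $J$,
\[
0=dJ(f,g,dh)=J(df,g,dh)+J(f,dg,dh),
\]
so that, using the graded antisymmetry with one entry of degree $0$,
\[
\mathcal{M}(df,dg,dh)=J(f,dg,dh)=-J(df,g,dh)=J(g,df,dh)=\mathcal{M}(dg,df,dh),
\]
and the full symmetry then follows by $C^\infty(M)$-linéarité and locality, every $1$-form being locally a sum $\sum f_i\,dg_i$. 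Without this step, what you prove is strictly weaker than the statement of the proposition.
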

\begin{proof}
De \ref{tensoriel}, on déduit que $\mathcal{M} : \Omega^1(M)\times\Omega^1(M)\times\Omega^1(M)\rightarrow\Omega^2(M)$, est telle que $\mathcal{M}(df,\alpha,\beta)$ est $C^\infty(M)$-linéaire par rapport à $\alpha$ (et par rapport à $\beta$). Comme $\mathcal{M}$ s'identifie à
\begin{equation*}
\begin{array}{cccc}
T : &\mathfrak{X}(M)\times\mathfrak{X}(M)\times\Omega^1(M)\times\Omega^1(M)\times\Omega^1(M)&\longrightarrow &C^\infty(M)\\
&(X,Y,\alpha,\beta,\gamma)&\longmapsto &\langle\mathcal{M}(\alpha,\beta,\gamma),X\wedge Y\rangle
\end{array}
\end{equation*}
alors $\mathcal{M}$ est un tenseur de type $(2,3)$.\\
D'autre part, on a $\mathcal{M}(df,\alpha,\beta)=\mathcal{M}(df,\beta,\alpha)$ et comme $R\equiv0$ alors
\[0=d\left(R(df,dg)dh\right)=dJ(f,g,dh)=J(df,g,dh)+J(f,dg,dh),\]
et donc
\[\mathcal{M}(df,dg,dh)=J(f,dg,dh)=-J(df,g,dh)=J(g,df,dh)=\mathcal{M}(dg,df,dh),\]
d'où $\mathcal{M}$ est symétrique.
\end{proof}
Le tenseur $\mathcal{M}$ est appelé \emph{métacourbure}\footnote{Voir l'article de Hawkins.} et c'est l'obstruction à la nullité du jacobiateur. En effet, Si $J\equiv0$ alors $\mathcal{M}\equiv0$ et réciproquement si $\mathcal{M}$ est nulle alors pour tout $f,g,h\in C^\infty(M)$ et tout $\alpha,\beta\in\Omega^1(M)$
\[J(f,g,h)=0,\ J(f,g,\alpha)=R(df,dg)\alpha=0,\ J(f,\alpha,\beta)=\mathcal{M}(df,\alpha,\beta)=0.\]
On va donner une formule explicite pour le crochet de Poisson généralisé de deux $1$-formes différentielles. Pour cela, on a besoin du résultat suivant
\begin{definition} Soi $(M,\pi)$ une variété de Poisson et soit $[\;,\;]$ le crochet de Koszul.
Il existe sur $\Omega^\star(M)$ un crochet, qu'on notera simplement $[\;,\;]$, appelé \emph{crochet de Koszul généralisé}, tel que :
\begin{enumerate}
\item le crochet est de degré $-1$, c'est-à-dire
\[\deg[\alpha,\beta]=\deg\alpha+\deg\beta-1,\]
\item le crochet vérifie une antisymétrie graduée
\[[\alpha,\beta]=-(-1)^{(\deg\alpha-1)(\deg\beta-1)}[\beta,\alpha],\]
\item le crochet vérifie l'identité de Leibniz
\begin{align}\label{leibniz.koszul}
[\alpha,\beta\wedge\gamma]=&[\alpha,\beta]\wedge\gamma+(-1)^{(\deg\alpha-1)\deg\beta}\beta\wedge[\alpha,\gamma]\\
[\alpha\wedge\beta,\gamma]=&\alpha\wedge[\beta,\gamma]+(-1)^{(\deg\gamma-1)\deg\beta}[\alpha,\gamma]\wedge\beta,
\end{align}
\item le crochet vérifie l'identité de Jacobi graduée
\begin{multline}\label{Jacobi.koszul}
(-1)^{(\deg\alpha-1)(\deg\gamma-1)}[\alpha,[\beta,\gamma]]+
(-1)^{(\deg\beta-1)(\deg\alpha-1)}[\beta,[\gamma,\alpha]]\\
+(-1)^{(\deg\gamma-1)(\deg\beta-1)}[\gamma,[\alpha,\beta]]=0,
\end{multline}
\item le crochet généralisé de deux $1$-formes coïncide avec leur crochet de Koszul et
\[[\alpha,f]=\pi_\sharp(\alpha)(f),\]
pour toute $f\in C^\infty(M)$ et toute $\alpha\in\Omega^1(M)$.
\end{enumerate}
À remarquer l'analogie de la définition du crochet de Koszul généralisé, pour les formes différentielles, avec la définition du crochet de Schouten, pour les champs de multivecteurs. L'existence se démontre donc de la même façon.
\end{definition}
\begin{proposition} Soit $(M,\pi)$ une variété de Poisson et $\mathcal{D}$ une connexion contravariante sans torsion. Pour toutes $\alpha,\beta\in\Omega^1(M)$ on a
\begin{equation}\label{crochet1formes}
\{\alpha,\beta\}=-\mathcal{D}_\alpha d\beta-\mathcal{D}_\beta d\alpha+d\mathcal{D}_\beta\alpha+[\alpha,d\beta].
\end{equation}
\end{proposition}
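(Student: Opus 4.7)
Le crochet de Poisson généralisé $\{\,,\,\}$ sur $\Omega^*(M)$ étant entièrement caractérisé par ses axiomes (antisymétrie graduée, Leibniz pour $\wedge$, $d$-dérivation, et les valeurs initiales $\{f,g\}=\pi(df,dg)$ et $\{f,\alpha\}=\mathcal{D}_{df}\alpha$), la stratégie consiste à vérifier la formule localement : toute 1-forme s'écrivant $\alpha=\sum_i f_i\,dg_i$, il suffit par $\R$-bilinéarité de la contrôler pour $\alpha=f\,dg$. La preuve se scinde naturellement en deux temps : traiter d'abord les 1-formes exactes $\alpha=df$, puis étendre à $\alpha=f\,dg$ via la règle de Leibniz.

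Pour $\alpha=df$, la propriété de dérivation de $d$ par rapport à $\{\,,\,\}$ donne
$$\{df,\beta\}=d\{f,\beta\}-\{f,d\beta\}=d\mathcal{D}_{df}\beta-\mathcal{D}_{df}d\beta,$$
tandis que le membre de droite de l'égalité à démontrer se réduit, puisque $d(df)=0$, à $-\mathcal{D}_{df}d\beta+d\mathcal{D}_\beta df+[df,d\beta]$. Par l'absence de torsion, $\mathcal{D}_{df}\beta-\mathcal{D}_\beta df=[df,\beta]$ ; en appliquant $d$, la vérification se ramène à l'identité clé $d[df,\beta]=[df,d\beta]$. Celle-ci découle de l'observation $[df,\omega]=\mathscr{L}_{X_f}\omega$, valable pour toute forme $\omega$ : pour $\omega$ 1-forme, elle résulte de la formule de Koszul et de la formule magique de Cartan ; elle s'étend ensuite à toute forme par la règle de Leibniz du crochet de Koszul généralisé (le signe gradué $(-1)^{(\deg df-1)\cdot}$ étant trivial). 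Comme $d$ commute avec $\mathscr{L}_{X_f}$, la conclusion suit.

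Pour $\alpha=f\,dg$, on développe le membre de gauche par la règle de Leibniz de $\{\,,\,\}$ :
$$\{f\,dg,\beta\}=f\{dg,\beta\}-\mathcal{D}_{df}\beta\wedge dg.$$
Côté droit, notons $B(\alpha,\beta)$ l'expression \eqref{crochet1formes} ; on exploite successivement la $C^\infty(M)$-linéarité de $\mathcal{D}$ dans la première variable, le fait que $\mathcal{D}_\beta$ s'étend en dérivation du produit extérieur, la dérivation $d$, et la règle de Leibniz du crochet de Koszul généralisé avec $[f,d\beta]=i_{X_f}d\beta$. Après regroupement, on obtient $B(f\,dg,\beta)=f\,B(dg,\beta)+\bigl(-\mathcal{D}_\beta df+d(\pi_\sharp(\beta)(f))-i_{X_f}d\beta\bigr)\wedge dg$, où $f\,B(dg,\beta)=f\{dg,\beta\}$ par l'étape précédente. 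L'égalité avec $-\mathcal{D}_{df}\beta\wedge dg$ équivaut alors à $[df,\beta]=\mathscr{L}_{X_f}\beta$, qui découle de la formule magique de Cartan et de l'identité $\beta(X_f)=-\pi_\sharp(\beta)(f)$.

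L'obstacle principal est d'ordre combinatoire plutôt que conceptuel : il faut gérer soigneusement les multiples termes issus des règles de Leibniz pour $\{\,,\,\}$, $[\,,\,]$, $d$ et $\mathcal{D}$, avec leurs signes gradués respectifs, et tracer les annulations (notamment celle des deux occurrences de $df\wedge\mathcal{D}_\beta dg$ lors du développement de $\mathcal{D}_\beta d(f\,dg)$ et de $d\mathcal{D}_\beta(f\,dg)$). Le fil conducteur conceptuel reste l'identification $[df,\cdot]=\mathscr{L}_{X_f}$ sur tout $\Omega^*(M)$ ; combinée à la liberté de torsion de $\mathcal{D}$ et à la formule magique de Cartan, elle produit l'ensemble des simplifications requises.
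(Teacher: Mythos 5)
Votre démonstration est correcte, mais elle suit un chemin réellement différent de celui du texte. Le texte décompose le \emph{second} argument : on écrit $\beta=f\,dg$ en gardant $\alpha$ quelconque, et une seule chaîne d'égalités conclut, en n'utilisant que la règle du produit, le fait que $d$ est une dérivation du crochet, les propriétés de $\mathcal{D}$ (linéarité dans l'argument du bas, dérivation dans celui du haut), l'absence de torsion et la règle de Leibniz du crochet de Koszul généralisé — ni dérivée de Lie ni formule de Cartan n'y figurent. Vous décomposez au contraire le \emph{premier} argument $\alpha=f\,dg$ ; la formule n'étant pas symétrique en $(\alpha,\beta)$, cela impose un argument en deux temps (cas exact $\alpha=df$, puis extension par Leibniz) et fait intervenir le lemme auxiliaire $[df,\cdot\,]=\mathscr{L}_{X_f}$ sur tout $\Omega^*(M)$, la formule magique de Cartan et la commutation $d\circ\mathscr{L}_{X_f}=\mathscr{L}_{X_f}\circ d$. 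Ce que votre variante apporte : l'identité clef $d[df,\beta]=[df,d\beta]$ et la réduction finale deviennent conceptuellement transparentes, tout se ramenant au fait que $[df,\cdot\,]$ est la dérivée de Lie le long du champ hamiltonien. Ce qu'elle coûte : davantage de comptabilité de termes, et une petite imprécision de formulation — votre \emph{équivaut} devrait être un \emph{il suffit de}, car le produit extérieur par $dg$ n'est pas injectif ; heureusement vous établissez en réalité l'identité plus forte entre $1$-formes $-\mathcal{D}_{df}\beta=-\mathcal{D}_\beta df+d\bigl(\pi_\sharp(\beta)(f)\bigr)-i_{X_f}d\beta$ (absence de torsion, Cartan et $\beta(X_f)=-\pi_\sharp(\beta)(f)$), qui entraîne bien l'égalité après produit par $dg$. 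Les deux preuves reposent sur le même socle : la localité du crochet et sa $\R$-bilinéarité autorisent la vérification sur les générateurs $f\,dg$.
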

\begin{proof} Soient $\alpha,\beta\in\Omega^1(P)$, avec $\beta=fdg$ où $f,g\in C^\infty(M)$. On a
\begin{align*}
\{\alpha,fdg\}=&\{\alpha,f\}\wedge dg+f\{\alpha,dg\}\\
=&-\mathcal{D}_{df}\alpha\wedge dg+f\left(d\mathcal{D}_{dg}\alpha-\mathcal{D}_{dg}d\alpha\right)\\
=&-\mathcal{D}_{fdg}d\alpha+d\mathcal{D}_{fdg}\alpha-\mathcal{D}_{df}\alpha\wedge dg-df\wedge \mathcal{D}_{dg}\alpha\\
=&-\mathcal{D}_{fdg}d\alpha+d\mathcal{D}_{fdg}\alpha-\mathcal{D}_\alpha\left(df\wedge dg\right)-[df,\alpha]\wedge dg-df\wedge[dg,\alpha]\\
=&-\mathcal{D}_{fdg}d\alpha-\mathcal{D}_\alpha\left(d(fdg)\right)+d\mathcal{D}_{fdg}\alpha-[df,\alpha]\wedge dg-df\wedge[dg,\alpha]\\
=&-\mathcal{D}_{fdg}d\alpha-\mathcal{D}_\alpha\left(d(fdg)\right)+d\mathcal{D}_{fdg}\alpha+[\alpha,d(fdg)]\\
=&-\mathcal{D}_\alpha d\beta-\mathcal{D}_\beta d\alpha+d\mathcal{D}_\beta\alpha+[\alpha,d\beta].
\end{align*}
\end{proof}
Pour le calcul de métacourbure, on dispose du résultat suivant
\begin{proposition}~
\begin{enumerate}
\item Pour toute $1$-forme parallèle $\alpha$ et pour toute $1$-forme $\beta$,
\begin{equation}\label{1parallel}
\{\alpha,\beta\}=-\mathcal{D}_\beta d\alpha.
\end{equation}
\item Pour toutes $1$-formes parallèles $\beta$, $\gamma$ et pour toute $1$-forme $\alpha$,
\begin{equation}\label{2parallel}
\mathcal{M}(\alpha,\beta,\gamma)=-\mathcal{D}_\alpha\mathcal{D}_\beta d\gamma.
\end{equation}
\end{enumerate}
\end{proposition}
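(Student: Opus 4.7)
Le plan est de démontrer (1) en exploitant la formule générale \eqref{crochet1formes} de la proposition précédente, puis d'en déduire (2) à partir de la définition même du tenseur de métacourbure.

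Pour (1), on part de l'identité
\[\{\alpha,\beta\}=-\mathcal{D}_\alpha d\beta-\mathcal{D}_\beta d\alpha+d\mathcal{D}_\beta\alpha+[\alpha,d\beta].\]
L'hypothèse que $\alpha$ est parallèle, c'est-à-dire $\mathcal{D}_\eta\alpha=0$ pour toute $1$-forme $\eta$, annule immédiatement le terme $d\mathcal{D}_\beta\alpha$. Il reste à montrer que $-\mathcal{D}_\alpha d\beta+[\alpha,d\beta]=0$. Pour cela, on utilise la condition de nullité de la torsion $\mathcal{D}_\alpha\eta-\mathcal{D}_\eta\alpha=[\alpha,\eta]$ sur les $1$-formes : comme $\mathcal{D}_\eta\alpha=0$, on a $\mathcal{D}_\alpha\eta=[\alpha,\eta]$ pour toute $1$-forme $\eta$. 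On étend alors $\mathcal{D}_\alpha$ aux $2$-formes comme dérivation du produit extérieur, en observant que le crochet de Koszul généralisé $[\alpha,\cdot\,]$ vérifie exactement la même règle de Leibniz \eqref{leibniz.koszul} (avec $\deg\alpha-1=0$). L'égalité $\mathcal{D}_\alpha\omega=[\alpha,\omega]$ s'étend ainsi à toute $2$-forme, et en particulier à $\omega=d\beta$ ; ceci donne l'annulation voulue et démontre \eqref{1parallel}.

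Pour (2), on utilise le fait que $\mathcal{M}$ est un tenseur, donc qu'il suffit de le calculer lorsque $\alpha=df$ est exacte, puis de conclure par $C^\infty$-linéarité dans la première variable. Par définition $\mathcal{M}(df,\beta,\gamma)=J(f,\beta,\gamma)$ avec
\[J(f,\beta,\gamma)=\{\{f,\beta\},\gamma\}-\{f,\{\beta,\gamma\}\}+\{\beta,\{f,\gamma\}\}.\]
Les hypothèses $\mathcal{D}\beta=\mathcal{D}\gamma=0$ donnent $\{f,\beta\}=\mathcal{D}_{df}\beta=0$ et $\{f,\gamma\}=\mathcal{D}_{df}\gamma=0$, ce qui annule les premier et troisième termes. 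Il ne reste que $J(f,\beta,\gamma)=-\{f,\{\beta,\gamma\}\}$. On applique alors (1) au couple $(\gamma,\beta)$ (avec $\gamma$ parallèle), puis la symétrie $\{\beta,\gamma\}=\{\gamma,\beta\}$ valable pour deux $1$-formes : on obtient $\{\beta,\gamma\}=-\mathcal{D}_\beta d\gamma$. On exploite enfin que $\{f,\cdot\}$ agit sur les formes comme $\mathcal{D}_{df}$ étendue en dérivation (ce qui découle de la règle $\{f,\omega\wedge\eta\}=\{f,\omega\}\wedge\eta+\omega\wedge\{f,\eta\}$ et de l'axiome $\{f,\alpha\}=\mathcal{D}_{df}\alpha$ sur les $1$-formes). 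En substituant, $\mathcal{M}(df,\beta,\gamma)$ s'exprime comme une composition $\mathcal{D}_{df}\mathcal{D}_\beta d\gamma$ au signe près, et la tensorialité fournit alors \eqref{2parallel} pour toute $1$-forme $\alpha$.

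L'obstacle principal réside dans la cohérence des extensions de $\mathcal{D}$ et du crochet de Koszul aux $2$-formes. Il faut vérifier, avec soin, que la règle de Leibniz pour $[\alpha,\cdot\,]$ (de degré $-1$) coïncide bien avec celle de $\mathcal{D}_\alpha$ (une véritable dérivation) précisément lorsque $\alpha$ est de degré $1$, grâce à l'annulation du facteur $(-1)^{(\deg\alpha-1)\deg\omega}$. De même, la formule $\{f,\omega\}=\mathcal{D}_{df}\omega$ pour $\omega\in\Omega^2(M)$, invoquée dans (2), doit être établie à partir des axiomes du crochet de Poisson généralisé et de la formule explicite sur les $1$-formes. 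Une fois ces compatibilités vérifiées, le reste de la démonstration se ramène à des manipulations purement algébriques des définitions.
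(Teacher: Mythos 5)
Votre point (1) est correct mais suit une route différente de celle du texte. La preuve du texte est un calcul local direct : on écrit $\beta=f\,dg$ et on n'utilise que les axiomes du crochet généralisé (règle du produit, $d$ dérivation du crochet, $\{g,\alpha\}=\mathcal{D}_{dg}\alpha=0$ pour $\alpha$ parallèle). Vous partez au contraire de la formule générale \eqref{crochet1formes} et vous ramenez tout à l'identité $\mathcal{D}_\alpha\omega=[\alpha,\omega]$ pour $\alpha$ parallèle : les deux opérateurs sont des dérivations de degré $0$ du produit extérieur (pour $[\alpha,\cdot\,]$ c'est \eqref{leibniz.koszul} avec $\deg\alpha-1=0$) qui coïncident sur les $1$-formes grâce à l'absence de torsion, donc sur $d\beta$ ; ajoutez simplement l'accord sur les fonctions, $[\alpha,f]=\pi_\sharp(\alpha)(f)=\mathcal{D}_\alpha f$, pour que l'argument d'extension soit complet. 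Votre variante évite de refaire le calcul local déjà contenu dans la preuve de \eqref{crochet1formes} ; celle du texte n'utilise que les axiomes, sans lemme de comparaison d'extensions.

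Pour le point (2), votre démarche est exactement celle du texte (annulation de deux termes du jacobiateur par parallélisme, utilisation de (1) et de la symétrie du crochet de deux $1$-formes, puis $\{f,\cdot\}=\mathcal{D}_{df}$ et tensorialité en la première variable), mais vous concluez \emph{au signe près}, ce qui laisse ouverte précisément la seule étape délicate : l'énoncé \eqref{2parallel} affirme un signe précis. Menez le calcul jusqu'au bout : avec la définition \eqref{metacourbure}, $\mathcal{M}(df,\beta,\gamma)=J(f,\beta,\gamma)=-\{f,\{\beta,\gamma\}\}=-\mathcal{D}_{df}\bigl(-\mathcal{D}_\beta d\gamma\bigr)=+\mathcal{D}_{df}\mathcal{D}_\beta d\gamma$, soit le signe opposé à \eqref{2parallel}. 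La preuve du texte écrit quant à elle $\mathcal{M}(df,\beta,\gamma)=\mathcal{D}_{df}\{\beta,\gamma\}$, c'est-à-dire utilise implicitement la convention de Hawkins où la métacourbure mesure le défaut de $\{f,\cdot\}$ d'être une dérivation du crochet, autrement dit $\mathcal{M}(df,\beta,\gamma)=\{f,\{\beta,\gamma\}\}-\{\{f,\beta\},\gamma\}-\{\beta,\{f,\gamma\}\}=-J(f,\beta,\gamma)$. Il faut donc soit adopter explicitement cette convention (et le dire), soit relever l'incohérence de signe entre \eqref{metacourbure} et \eqref{2parallel} ; en l'état, votre \emph{au signe près} n'établit pas l'égalité énoncée, même si votre calcul est par ailleurs correct et met le doigt exactement là où la convention intervient.
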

\begin{proof}
Si $\alpha$ est une $1$-forme parallèle, c'est-à-dire $\mathcal{D}\alpha=0$ et si $\beta=fdg$, alors
\begin{align*}
\{\alpha,\beta\}=\{\alpha,fdg\}&=\{\alpha,f\}\wedge dg+f\{\alpha,dg\}=-\mathcal{D}_{df}\alpha\wedge dg+f\{dg,\alpha\}\\
&=f\left(d\{g,\alpha\}-\{g,d\alpha\}\right)=-f\{g,d\alpha\}=-f\mathcal{D}_{dg}d\alpha\\
&=-\mathcal{D}_{fdg}d\alpha=-\mathcal{D}_\beta d\alpha.
\end{align*}
Si $\beta$ et $\gamma$ sont parallèles alors
\begin{equation*}
\mathcal{M}(df,\beta,\gamma)=\mathcal{D}_{df}\{\beta,\gamma\}=-\mathcal{D}_{df}\mathcal{D}_\beta d\gamma,
\end{equation*}
ce qui donne l'égalité \eqref{2parallel} pour tout $\alpha\in\Omega^1(M)$.
\end{proof}
\chapter{Problème algébrique de déformation}\label{chapter3}
\epigraph{La fantaisie et la liberté d'imagination ne s'acquièrent pas comme ça, qu'il y faut du temps, de l'obstination, de la sévérité, de la rigueur, des mathématiques, de la raison.}{Philippe Sollers.}
\lettrine[lines=3, nindent=0em]{D}{ans} ce chapitre, on va montrer que le problème géométrique de déformation, au sens de Hawkins, est équivalent à un problème algébrique. Pour cela on va considérer un groupe de Lie-Poisson $(G,\pi,\prs)$ muni d'une métrique riemannienne invariante à gauche et on va traduire, une à une, les conditions de déformation de Hawkins :
\begin{description}
\item[Première condition.] La connexion de Levi-Civita contravariante $\mathcal{D}$, associée au couple $(\pi,\prs)$, doit être plate.
\item[Deuxième condition.] La métacourbure $\mathcal{M}$ de la connexion $\mathcal{D}$ doit être nulle ($\mathcal{D}$ est \emph{métaplate}).
\item[Troisième condition.] Le tenseur de Poisson $\pi$ doit être compatible avec toute forme riemannien $\mu$, c'est-à-dire \[d\left(i_\pi\mu\right)=0.\]
\end{description}
On va montrer que 
\begin{enumerate}
\item La première condition équivaut à dire que $\G^*$ est de Milnor. Ceci signifie que $\G^*$ se décompose en somme orthogonale de $S=\{\alpha\in\G^*\mid \ad_\alpha+\ad_\alpha^t=0\}$ (qui est une sous-algèbre abélienne) et de son idéal dérivé $[\G^*,\G^*]$ (qui est abélien de dimension paire).
\item La deuxième condition équivaut à dire que $\ad_\alpha\ad_\beta\rho(\gamma)=0$ pour tout $\alpha,\beta,\gamma\in S$, où $\rho$ est le $1$-cocyle de la bigèbre de Lie duale de $(\G,\xi)$.
\item La troisième condition (qui est indépendante des des deux premières) est équivalente, dans le cas où $\G$ et $\G^*$ sont unimodulaire, à la condition\footnote{Cette condition est toujours nécessaire. Elle est suffisante si $\G$ et $\G^*$ sont unimodulaires. À noter que si $\G^*$ est de Milnor alors elle est unimodulaire et l'hypothèse d'unimodularité d'une algèbre de Lie n'est pas restrictive, puisque l'ensemble des algèbres de Lie unimodulaires, de dimension $n$, est dense dans l'ensemble de toutes les algèbres de Lie de dimension $n$.}
\[d\left(i_{\xi(x)}\mu\right)=0,\]
pour tout $x\in\G$, où $\xi$ est le $1$-cocycle de la bigèbre de Lie $(\G,\G^*)$.
\end{enumerate}
\section{Algèbres de Milnor}
Une algèbre de Lie réelle de dimension finie $\G$, est appelée \emph{algèbre de Milnor} si elle est munie d'un produit scalaire $\prs$ (défini positif) telles que :
\begin{enumerate}
\item La sous-algèbre de Lie $S=\{x\in\G\mid \ad_x+\ad_x^t=0\}$ est commutative, où $\ad_x^t$ est l'application adjointe de $\ad_x$ par rapport à $\prs$.
\item L'idéal dérivé $[\G,\G]$ est abélien et est l'orthogonal de $S$ par rapport à $\prs$, \[S^\perp=[\G,\G].\]
\end{enumerate}
La terminologie se justifie par un résultat de Milnor qui, dans \cite{mil}, a montré qu'un groupe de Lie muni d'une métrique riemannienne invariante à gauche est plat, si et seulement si, son algèbre de Lie est une somme semi-directe d'une sous-algèbre commutative $\mathfrak{b}$ et d'un idéal abélien $\mathfrak{u}$ tel que, pour tout $b\in\mathfrak{b}$, $\ad_b$ est antisymétrique. 
\begin{proposition}\label{prmilnor}
Soit $(G,\prs)$ un groupe de Lie muni d'une métrique riemannienne invariante à gauche. La connexion de Levi-Civita de $G$ est plate, si et seulement si, son algèbre de Lie $\G$, munie du produit scalaire $\prs_e$, est de Milnor.
\end{proposition}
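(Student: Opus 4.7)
The proposition is essentially a reformulation of the classical theorem of Milnor cited just above, so the strategy is to translate between the two statements rather than to rework flatness from scratch. Concretely, I will take for granted Milnor's result (\cite{mil}): $(G,\prs)$ is flat if and only if $\G$ admits an orthogonal decomposition $\G=\mathfrak{b}\oplus\mathfrak{u}$ in which $\mathfrak{b}$ is an abelian subalgebra, $\mathfrak{u}$ is an abelian ideal, and $\ad_b$ is antisymmetric for every $b\in\mathfrak{b}$. The whole job is to show that this classical condition is equivalent to $\G$ being a Milnor algebra in the sense of the definition above.

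\smallskip

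\noindent\emph{Milnor algebra $\Longrightarrow$ flat.} This direction is immediate: set $\mathfrak{b}=S$ and $\mathfrak{u}=[\G,\G]$. The hypothesis on a Milnor algebra provides exactly that $\mathfrak{b}$ is an abelian subalgebra, $\mathfrak{u}$ is an abelian ideal, $\G=\mathfrak{b}\stackrel{\perp}{\oplus}\mathfrak{u}$, and $\ad_b$ is antisymmetric for each $b\in S$. Thus Milnor's classical conditions hold and the metric is flat.

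\smallskip

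\noindent\emph{Flat $\Longrightarrow$ Milnor algebra.} Start from the orthogonal splitting $\G=\mathfrak{b}\oplus\mathfrak{u}$ given by Milnor. I would refine it by introducing
\[
\mathfrak{u}_0=\{u\in\mathfrak{u}\mid [\mathfrak{b},u]=0\},\qquad \mathfrak{u}_1=\mathfrak{u}_0^{\perp}\cap\mathfrak{u},
\]
so $\mathfrak{u}=\mathfrak{u}_0\oplus\mathfrak{u}_1$ is orthogonal. Two computations, done in this order, should finish the proof:
\begin{enumerate}
\item $[\G,\G]=\mathfrak{u}_1$. Indeed $[\mathfrak{b},\mathfrak{b}]=[\mathfrak{u},\mathfrak{u}]=0$, so $[\G,\G]=[\mathfrak{b},\mathfrak{u}]$; since $\ad_b$ is antisymmetric and preserves $\mathfrak{u}$, it preserves both $\mathfrak{u}_0$ and $\mathfrak{u}_1$, vanishes on $\mathfrak{u}_0$ by definition, and the antisymmetry forces $\sum_{b\in\mathfrak{b}}\ad_b(\mathfrak{u}_1)$ to be precisely $(\ker\ad_{\mathfrak{b}}\cap\mathfrak{u}_1)^{\perp}=\mathfrak{u}_1$. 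In particular $[\G,\G]$ is abelian since it sits in $\mathfrak{u}$.
\item $S=\mathfrak{b}\oplus\mathfrak{u}_0$. The inclusion $\supseteq$ is easy: $\mathfrak{b}\subseteq S$ by hypothesis, and for $u\in\mathfrak{u}_0$ one has $\ad_u\equiv 0$ since $\ad_u\mathfrak{u}=0$ and $\ad_u\mathfrak{b}=-\ad_{\mathfrak{b}}u=0$. For the reverse inclusion, write $x=b+u_0+u_1\in S$; then with respect to the orthogonal decomposition $\mathfrak{b}\oplus\mathfrak{u}_0\oplus\mathfrak{u}_1$ the operator $\ad_{u_1}$ has a single non-zero block $\mathfrak{b}\to\mathfrak{u}_1,\ b\mapsto-\ad_b u_1$, and the antisymmetry of $\ad_x$ projected onto this block forces $\ad_b u_1=0$ for every $b\in\mathfrak{b}$, i.e.\ $u_1\in\mathfrak{u}_0\cap\mathfrak{u}_1=0$.
\end{enumerate}
Combining $S=\mathfrak{b}\oplus\mathfrak{u}_0$ with $[\G,\G]=\mathfrak{u}_1$ gives at once that $S$ is abelian (as a sum of two commuting abelian pieces), that $[\G,\G]$ is abelian, and that $S^{\perp}=\mathfrak{u}_1=[\G,\G]$, which are exactly the two axioms of a Milnor algebra.

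\smallskip

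\noindent\emph{Expected main difficulty.} The forward direction and step (1) above are essentially bookkeeping once Milnor's classical theorem is invoked. The delicate point is step (2): extracting the identity $S=\mathfrak{b}\oplus\mathfrak{u}_0$, in particular ruling out the existence of elements of $S$ with a non-trivial $\mathfrak{u}_1$-component. This is where the antisymmetry of $\ad_x$ must be exploited on the full decomposition $\mathfrak{b}\oplus\mathfrak{u}_0\oplus\mathfrak{u}_1$ and where a clean block-matrix argument is needed; any sloppiness in the choice of the initial Milnor decomposition (e.g.\ not taking it orthogonal) would break the argument, so one must either verify that Milnor's theorem produces an orthogonal splitting or re-orthogonalise it by hand using the antisymmetry of $\ad_{\mathfrak{b}}$.
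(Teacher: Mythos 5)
Your proof is correct, but it takes a genuinely different route from the paper's. You treat Milnor's Théorème 1.5 as a black box (flatness $\Longleftrightarrow$ orthogonal splitting $\G=\mathfrak{b}\oplus\mathfrak{u}$ with $\mathfrak{b}$ abelian subalgebra, $\mathfrak{u}$ abelian ideal, $\ad_b$ antisymmetric) and then refine that splitting by pure linear algebra: the decomposition $\mathfrak{u}=\mathfrak{u}_0\oplus\mathfrak{u}_1$, the identity $\sum_b\im\bigl(\ad_b|_{\mathfrak{u}_1}\bigr)=\bigl(\bigcap_b\ker\ad_b|_{\mathfrak{u}_1}\bigr)^{\perp}\cap\,\mathfrak{u}_1$, and the block argument killing the $\mathfrak{u}_1$-component of an element of $S$ are all sound, and they do give $S=\mathfrak{b}\oplus\mathfrak{u}_0$ and $[\G,\G]=\mathfrak{u}_1=S^{\perp}$, hence the two Milnor axioms. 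The paper does not cite the classical theorem but reworks its proof at the level of the infinitesimal Levi-Civita product $A$: it sets $\mathfrak{u}=\{u\mid A_u=0\}$, $\mathfrak{b}=\mathfrak{u}^{\perp}$, and invokes genuinely Riemannian global input (Cartan--Hadamard to identify the simply connected group of $\mathfrak{b}$ with $\R^p$, the bi-invariant form $-\tr(A_xA_y)$, Myers' theorem) to force $\mathfrak{b}$ abelian, arriving at the decomposition $S\oplus[\G,\G]$ directly; the converse direction is likewise done by computing $A$ explicitly rather than by appeal to the classical statement. Your route is shorter and purely algebraic once Milnor is granted, but it depends on the exact form of his statement: as you flag, orthogonality of the splitting is essential, and it is indeed part of Milnor's Theorem 1.5 (the paper's introductory paraphrase omits the word, but its own proof establishes it), so your caveat is discharged. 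What the paper's longer argument buys is self-containedness and the justification for presenting the proposition as an \emph{amélioration} of Milnor's decomposition rather than a corollary of it.
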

\begin{proof} C'est essentiellement la preuve de Milnor, dans \cite{mil}, du Théorème 1.5 pages 298 et 326-327, mais avec une amélioration de la décomposition de l'algèbre $\G$ en somme orthogonale de $S$ et de l'idéal dérivé $[\G,\G]$.\\
La connexion de Levi-Civita de $\prs$ est entièrement déterminée par l'application bilinéaire $A : \G\times\G\rightarrow\G$ donnée par 
\[2\langle A_xy,z\rangle_e=\langle[x,y],z\rangle_e+\langle[z,x],y\rangle_e+\langle[z,y],x\rangle_e.\]
Si $\G=S\oplus[\G,\G]$ est une algèbre de Milnor alors
\[A_x=\left\{\begin{array}{ll}\ad\nolimits_x,&\ \text{si}\ x\in S\\
0,&\ \text{si}\ x\in[\G,\G]\end{array}\right.\]
et donc la courbure s'annule, puisque $\ad_{[x,y]}=\ad_x\ad_y-\ad_y\ad_x$.\\
Réciproquement, supposons que la courbure est nulle. Si $\mathfrak{u}=\{u\in\G\mid A_u=0\}$, alors
$\mathfrak{u}$ est un idéal car, pour tout $x\in\G$, $y\in\mathfrak{u}$, $A_{[x,y]}=[A_x,A_y]=0$.
L'idéal $\mathfrak{u}$ est abélien, puisque pour tout $x,y\in\mathfrak{u}$, $[x,y]=A_xy-A_yx=0$. Soit maintenant $\mathfrak{b}=\mathfrak{u}^\perp$. Pour tout $x,y\in\mathfrak{b}$ et tout $z\in\mathfrak{u}$ on a
\[\langle[x,y],z\rangle=2\langle A_zy,x\rangle-\big[\langle[z,y],x\rangle+\langle[x,z],y\rangle\big]=0,\]
puisque $z,[x,z],[y,z]\in\mathfrak{u}$. On en déduit que $\mathfrak{b}$ est une sous algèbre.\\
Pour tout $x,y\in\mathfrak{b}$ et tout $z\in\mathfrak{u}$ on a
\[2\langle A_xy,z\rangle=\langle[x,y],z\rangle+\langle[z,y],x\rangle+\langle[z,x],y\rangle\big]=\langle[x,y],z\rangle=0,\] de sorte que $A_xy\in\mathfrak{b}$, c'est-à-dire la connexion infinitésimale $A$ se restreint à $\mathfrak{b}$ et est donc plate. Considérons l'algèbre de Lie $\mathfrak{b}$ et remarquons que le groupe de Lie connexe et simplement connexe $B$, qui lui est associé, est isométrique à $\mathbb{R}^p$, en tant que variété riemannienne complète à connexion plate (C'est le théorème de Cartan-Hadamard, dont une preuve est donnée dans \cite{lee} page 196). D'autre part, $\mathfrak{b}$ possède un produit scalaire bi-invariant défini par
\[b(x,y)=-\tr(A_xA_y).\]
En effet, si $x\in\mathfrak{b}$, alors $A_x : \mathfrak{b}\rightarrow\mathfrak{b}$ est antisymétrique et donc à valeurs propres complexes pures $\{i\lambda_1,...,i\lambda_p\}$ et est diagonalisable :
\[A_x=P\left(\begin{smallmatrix}i\lambda_1&&\\
&\ddots&\\&&i\lambda_p\end{smallmatrix}\right)P^{-1}.\]
Par suite $b(x,x)=\sum_{i=1}^p\lambda_i^2\geq0$ et $b(x,x)=0$, si et seulement si, $A_x=0$ (car $A_x$ serais antisymétrique et nilpotente). Ce qui signifie que $x\in\mathfrak{u}$ et donc $x=0$. Comme le produit scalaire $b$ est bi-invariant, l'orthogonal par rapport à $b$ de tout idéal est un idéal et donc $\mathfrak{b}$ se décompose en somme directe d'idéaux simples $\mathfrak{b}=\mathfrak{b}_1\oplus...\oplus\mathfrak{b}_k$. Si l'un des $\mathfrak{b}_i$ n'est pas commutatif, il serais compact d'après le Théorème de Myers (voir \cite{lee} page 201), puisque la courbure de Ricci $r(x)>0$ pour tout $x$ non nul.\footnote{Dans ce cas, le centre de $\mathfrak{b}_i$ est réduit à $0$ et la courbure sectionnelle est donnée par \[K(x,y)=\frac{1}{4}\|[x,y]\|^2.\]} Mais ceci serais en contradiction avec le fait que $B$ est isométrique à $\mathbb{R}^p$ ($\mathbb{R}^p$ aurait un sous-groupe compact non trivial). Donc tout les $\mathfrak{b}_i$ sont commutatifs et $\mathfrak{b}$ aussi. Comme $\ad_x$ est antisymétrique, pour tout $x\in\mathfrak{b}$, $\mathfrak{b}\subset S$ et $[\G,\G]=[\mathfrak{b},\mathfrak{u}]$ et on a, pour tout $y\in\mathfrak{b}$
\[\langle\ad\nolimits_xy,z\rangle+\langle y,\ad\nolimits_xz\rangle=2\langle A_yz,x\rangle-\langle[y,z],x\rangle=-\langle[y,z],x\rangle.\]
On en déduit que $S=[\G,\G]^\perp$ et la sous-algèbre $S$ est commutative car, pour tout $x,y\in S$,
\[[x,y]\in S\cap[\G,\G]=\{0\}.\]
\end{proof}
\begin{remarks}
\begin{enumerate}
\item Une algèbre de Milnor est toujours résoluble, puisque son idéal dérivé est commutatif.
\item Une algèbre de Milnor est unimodulaire, car $\tr\ad_x=0$ pour tout $x\in[\G,\G]$ et aussi pour tout $x\in S$ (un endomorphisme antisymétrique est de trace nulle).
\item Une algèbre de Minlor n'est nilpotente que si elle est abélienne. En effet, pour tout $x\in S$, $\ad_x=0$ (un endomorphisme antisymétrique et nilpotent est nul) et donc $\G=S$ et $\G$ est abélienne.
\end{enumerate}
\end{remarks}
Le Théorème suivant caractérise complètement les algèbres de Milnor.
\begin{theorem}
Si $\G=S\oplus[\G,\G]$ est une algèbre de Milnor non abélienne, alors son idéal dérivé $[\G,\G]$ est de dimension paire et il existe une base $\{e_1,...,e_p\}$ de $S$ et une base orthonormée $\{f_1,...,f_{2r}\}$ de $[\G,\G]$ telles que
\begin{align*}
[e_i,f_{2j-1}]=\delta_{ij}f_{2j},&\quad [e_i,f_{2j}]=-\delta_{ij}f_{2j-1}\ (i,j=1,...,\ell)\ \text{et}\\
[e_i,f_{2j-1}]=\lambda_{ij}f_{2j},&\quad [e_i,f_{2j}]=-\lambda_{ij}f_{2j-1}\ (i=1,...,\ell,\ j=\ell+1,...,r),
\end{align*}
avec $\ell\leq\min(p,r)$ et le centre de $\G$ est engendré par $\{e_{\ell+1},...,e_p\}$.
\end{theorem}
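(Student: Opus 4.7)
Plan de la preuve. L'idée directrice est d'étudier l'action adjointe de la sous-algèbre abélienne $S$ sur l'idéal abélien $V := [\G,\G]$. Comme $V$ est un idéal de $\G$ et que $S$ est abélienne, pour tout $x \in S$ l'opérateur $\ad_x$ s'annule sur $S$, préserve $V$, et se restreint en un endomorphisme antisymétrique de $(V, \prs|_V)$ en vertu de la condition de Milnor $\ad_x + \ad_x^t = 0$ et de l'orthogonalité $V = S^\perp$. De plus la famille $\{\ad_x|_V\}_{x \in S}$ est commutative puisque $[\ad_x, \ad_y] = \ad_{[x,y]} = 0$ pour $x, y \in S$.

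L'étape clef est la trivialité du noyau commun $K := \bigcap_{x \in S} \ker(\ad_x|_V)$. Comme $S$ et $V$ sont tous deux abéliens, tout crochet dans $\G = S \oplus V$ se ramène à un crochet de la forme $[s, t]$ avec $s \in S$ et $t \in V$, d'où $V = [S, V] = \sum_{x \in S} \ad_x(V)$. L'antisymétrie de $\ad_x|_V$ donne $\ker(\ad_x|_V) = (\im \ad_x|_V)^\perp$ dans $V$, ce qui entraîne $K \perp V$ et donc $K = 0$. Il s'ensuit en particulier que $\dim V$ est paire, disons $2r$.

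On applique alors le résultat classique de décomposition simultanée d'une famille commutative d'endomorphismes antisymétriques par blocs $2\times 2$ : il existe une base orthonormée $\{f_1, \ldots, f_{2r}\}$ de $V$ et des formes linéaires $\lambda_1, \ldots, \lambda_r \in S^*$ telles que, pour tout $x \in S$ et tout $j = 1, \ldots, r$,
\[\ad\nolimits_x(f_{2j-1}) = \lambda_j(x) f_{2j}, \qquad \ad\nolimits_x(f_{2j}) = -\lambda_j(x) f_{2j-1}.\]
Quitte à permuter les blocs, on peut supposer que $(\lambda_1, \ldots, \lambda_\ell)$ est une base de $\vect(\lambda_1, \ldots, \lambda_r) \subset S^*$, d'où $\ell \leq r$ et $\ell \leq p = \dim S$. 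On choisit alors $e_1, \ldots, e_\ell \in S$ duale, vérifiant $\lambda_j(e_i) = \delta_{ij}$ pour $i, j \leq \ell$, et on pose $\lambda_{ij} := \lambda_j(e_i)$ pour $i \leq \ell < j \leq r$. On complète cette famille en une base $\{e_1, \ldots, e_p\}$ de $S$ par des vecteurs $e_{\ell+1}, \ldots, e_p$ pris dans $\bigcap_{j=1}^{r} \ker \lambda_j$.

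Il reste à identifier le centre. Les éléments $e_{\ell+1}, \ldots, e_p$ annulent toutes les formes $\lambda_j$, donc $\ad_{e_i}|_V = 0$ pour $i > \ell$ ; combinés à l'abélianité de $S$, ces éléments sont centraux. Inversement, tout $z = s + t \in Z(\G)$ avec $s \in S$, $t \in V$ vérifie $[s', t] = 0$ pour tout $s' \in S$, ce qui place $t$ dans $K$, donc $t = 0$ ; puis $\ad_z|_V = 0$ entraîne $\lambda_j(s) = 0$ pour tout $j$, et donc $z \in \vect(e_{\ell+1}, \ldots, e_p)$. Le principal obstacle est moins conceptuel que technique : il s'agit d'articuler soigneusement la diagonalisation simultanée par blocs (qui fournit les $\lambda_j$) avec la normalisation du choix des $e_i$ (qui transforme les $\lambda_j(e_i)$ en $\delta_{ij}$ pour $i, j \leq \ell$ et en les coefficients $\lambda_{ij}$ pour $j > \ell$) sans perdre l'orthonormalité de la base $\{f_k\}$.
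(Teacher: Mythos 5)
Votre démonstration est correcte et suit, pour l'essentiel, la même route que celle du texte : réduction simultanée par blocs $2\times2$ de la famille commutative d'endomorphismes antisymétriques $\{\ad_x|_{[\G,\G]}\}_{x\in S}$, formes linéaires $\lambda_j$, choix d'une sous-famille libre $(\lambda_1,\dots,\lambda_\ell)$ et d'une famille biorthogonale $(e_1,\dots,e_\ell)$, puis identification du centre avec $\bigcap_j\ker\lambda_j$. La seule différence réelle porte sur l'étape préliminaire (noyau commun trivial et parité de $\dim[\G,\G]$) : le texte procède par récurrence sur une base de $S$, en observant qu'à chaque étape l'intersection des noyaux décroît d'une codimension paire, puis que le noyau commun final, étant central, est contenu dans $S\cap[\G,\G]=\{0\}$ ; vous obtenez directement la trivialité du noyau commun en écrivant $[\G,\G]=[S,[\G,\G]]$ (puisque $S$ et $[\G,\G]$ sont abéliens) et $\ker\big(\ad_x|_{[\G,\G]}\big)=\big(\im\ad_x|_{[\G,\G]}\big)^\perp$, d'où $K=\big([S,[\G,\G]]\big)^\perp\cap[\G,\G]=\{0\}$, la parité de la dimension résultant alors de la décomposition en plans (et non de $K=0$ seul, comme votre rédaction pourrait le laisser croire — simple question d'ordre des arguments, sans conséquence). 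Votre variante est un peu plus directe et resservira telle quelle dans l'identification du centre ; celle du texte donne la parité sans invoquer le théorème de décomposition. Le reste (existence de la base biorthogonale, complétion par une base de $\bigcap_j\ker\lambda_j$, vérification des crochets) est identique dans les deux preuves.
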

\begin{proof}
Soit $\{s_1,...,s_p\}$ une base de $S$. La restriction de $ad_{s_1}$ à $[\G,\G]$ est antisymétrique, donc de noyau $K_1$ de codimension paire dans $[\G,\G]$. Comme $\ad_{s_2}$ commute avec $\ad_{s_1}$, il laisse invariant $K_1$, et le même argument ci-dessus montre que $K_1\cap\ker\ad_{s_2}$ est de codimension paire dans $K_1$ et donc aussi dans $[\G,\G]$. On en déduit que
\[K_p=[\G,\G]\cap\left(\bigcap_{i=1}^p\ker\ad\nolimits_{s_i}\right)\]
est de codimension paire dans $[\G,\G]$. De sa définition, $K_p$ est contenu dans le centre de $\G$, et donc dans $S$ ; ce qui signifie que $K_p=\{0\}$ et $[\G,\G]$ est de dimension paire.\\
Il existe une famille de formes linéaires $\lambda_1,\ldots,\lambda_r$ de $S$, toutes non nulles, et une base orthonormée $(f_1,\ldots,f_{2r})$ de $[\G,\G]$ telles que, pour tout $j=1,\ldots,r$ et pour tout $s\in S$,
\begin{equation}\label{eq1}[s,f_{2j-1}]=\lambda_j(s)f_{2j}\quad\mbox{et}\quad
[s,f_{2j}]=-\lambda_j(s)f_{2j-1}.\end{equation}
Notons $\ell$ la dimension de $Vect\{\lambda_1,\ldots,\lambda_r\}$. Quitte à permuter les $\lambda_i$, on peut supposer que $(\lambda_1,\ldots,\lambda_{\ell})$ est une base de $Vect\{\lambda_1,\ldots,\lambda_r\}$. Pour tout $j=\ell+1,\ldots,r$, on a
\begin{equation}\label{eq2}\lambda_j=\sum_{k=1}^{\ell}a_{kj}\lambda_k.\end{equation}
Si $\sharp:\G^*\rightarrow\G$ est l'isomorphisme associé à $\prs$ et si $S_0=Vect\{\sharp(\lambda_1),\ldots,\sharp(\lambda_\ell)\}$, alors 
$S=S_0\oplus\mathcal{Z}(\G)$. En effet, si $x\in S$ alors
\begin{align*}
x\in S_0^\perp\Longleftrightarrow\langle x,\sharp(\lambda_j)\rangle=0,\ \forall j=1,...,r&\Longleftrightarrow\lambda_j(x)=0,\ \forall j=1,...,r\\
&\Longleftrightarrow[x,f_{2j-1}]=[x,f_{2j}]=0,\ \forall j=1,...,r\\
&\Longleftrightarrow x\in\mathcal{Z}(\G).
\end{align*}
D'autre part, il existe une base $(e_1,\ldots,e_{\ell})$ de $S_0$ telle que
pour tout $i,j=1,...,\ell$, $\lambda_i(e_j)=\delta_{ij}$. En effet, si $e_i=\sum_{j=1}^nb_{ji}\,\sharp(\lambda_j)$, alors les conditions $\lambda_i(e_j)=\delta_{ij}$ s'écrivent 
\[\sum_{j=1}^nb_{ji}\,\langle\lambda_i,\lambda_j\rangle^*=\delta_{ij},\]
et ce système admet une unique solution, puisque la matrice $\left(\langle\lambda_i,\lambda_j\rangle^*\right)$ est inversible, du fait que la famille $\{\lambda_1,...,\lambda_\ell\}$ est libre.
Ainsi, pour tout $i,j=1,\ldots,\ell$ et pour tout $k=\ell+1,\ldots,r$,
\begin{eqnarray}\label{eq3}[e_i,f_{2j-1}]&=&\delta_{ij}f_{2j},\;
[e_i,f_{2j}]=-\delta_{ij}f_{2j-1},\\
\;[e_i,f_{2k-1}]&=&a_{ik}f_{2k},\;
[e_i,f_{2k}]=-a_{ik}f_{2k-1}.\nonumber\end{eqnarray}
\end{proof}
\section{Groupes de Lie-Poisson plats et métaplats}
Le résultat suivant est fondamental pour toute la suite. C'est une version contravariante du résultat de Milnor, pour un groupe de Lie-Poisson à métrique riemannienne invariante à gauche plate (métrique sur le cotangent), où une formule simple de la métacourbure sera établie.
\begin{theorem}
Soit $(G,\pi,\prs)$ un groupe de Lie-Poisson riemannien. alors
\begin{enumerate}
\item $(\pi,\prs)$ est plat, si et seulement si, l'algèbre de Lie duale $(\G^*,\prs^*)$ est de Milnor.
\item Si $(\pi,\prs)$ est plat, et si on identifie $\G^*$ avec l'espace des $1$-formes invariantes à gauche, alors la métacourbure $\mathcal{M}$ est donnée par
\[\mathcal{M}(\alpha,\beta,\gamma)=\left\{\begin{array}{ll}\ad\nolimits_\alpha\ad\nolimits_\beta\rho(\gamma),&\quad\text{pour tout}\ \alpha,\beta,\gamma\in S,\\0,&\quad\text{si}\ \alpha,\beta\ \text{ou}\ \gamma\in[\G^*,\G^*],\end{array}\right.\] 
où $S=\{\alpha\in\G^*\mid \ad_\alpha+\ad_\alpha^t=0\}$ et $\rho : \G^*\rightarrow\G^*\wedge\G^*$ est le $1$-cocycle dual.
\end{enumerate}
\end{theorem}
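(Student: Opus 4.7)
The plan for part~1 is to show that the contravariant Levi-Civita connection $\mathcal{D}$ stabilises the space of left-invariant $1$-forms and that its restriction to $\G^*$ is exactly the infinitesimal Levi-Civita operator of the Lie algebra $(\G^*,\prs^*)$. For three left-invariant $1$-forms $\alpha,\beta,\gamma$, the pairings $\langle\alpha,\beta\rangle$, $\langle\alpha,\gamma\rangle$, $\langle\beta,\gamma\rangle$ are constant on $G$, so all $\pi_\sharp(\cdot)$-derivative terms in the Koszul formula for $\mathcal{D}$ vanish; by Proposition~\ref{kos.inv} the Koszul brackets that appear are themselves left-invariant and coincide with the dual bracket on $\G^*$. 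One is left with
$$2\langle A_\alpha\beta,\gamma\rangle^* = \langle[\alpha,\beta]^*,\gamma\rangle^* + \langle[\gamma,\alpha]^*,\beta\rangle^* + \langle[\gamma,\beta]^*,\alpha\rangle^*,$$
which is precisely the defining formula of the Levi-Civita operator on $(\G^*,\prs^*)$. Since the curvature $R$ of $\mathcal{D}$ is tensorial, it vanishes on all $1$-forms iff it vanishes on left-invariant ones, iff $A$ is flat, iff (by Proposition~\ref{prmilnor}) $(\G^*,\prs^*)$ is a Milnor algebra.

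For part~2, I assume $\G^*=S\oplus[\G^*,\G^*]$ is Milnor. The proof of Proposition~\ref{prmilnor} yields, on left-invariant $1$-forms, $\mathcal{D}_\alpha=\ad_\alpha$ if $\alpha\in S$ and $\mathcal{D}_\alpha\equiv 0$ if $\alpha\in[\G^*,\G^*]$. The key observation that unlocks formula~\eqref{2parallel} is that every element of $S$ is a \emph{parallel} left-invariant $1$-form: for $\beta\in S$ and any $\eta\in\G^*$, the $[\G^*,\G^*]$-component of $\eta$ gives $\mathcal{D}_\eta=0$, while the $S$-component acts by $\ad$, which vanishes on $\beta$ since $S$ is abelian. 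By $C^\infty(G)$-linearity in the first slot this extends to arbitrary $1$-forms~$\eta$ on $G$.

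For $\beta,\gamma\in S$ (parallel) and $\alpha$ arbitrary left-invariant, formula~\eqref{2parallel} gives $\mathcal{M}(\alpha,\beta,\gamma)=-\mathcal{D}_\alpha \mathcal{D}_\beta\, d\gamma$. Using $d\gamma=-\rho(\gamma)$ from~\eqref{rho} and extending $\mathcal{D}$ to $\wedge^2\G^*$ as a derivation, one obtains $\mathcal{D}_\beta\rho(\gamma)=\ad_\beta\rho(\gamma)$, with $\ad_\beta$ extended as a derivation on $\wedge^2\G^*$; hence $\mathcal{M}(\alpha,\beta,\gamma)=\mathcal{D}_\alpha\bigl(\ad_\beta\rho(\gamma)\bigr)$, which equals $\ad_\alpha\ad_\beta\rho(\gamma)$ when $\alpha\in S$ and $0$ when $\alpha\in[\G^*,\G^*]$. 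The remaining cases, where $\beta$ or $\gamma$ lies in $[\G^*,\G^*]$, are handled via the full symmetry of $\mathcal{M}$ in its three $1$-form arguments: combining the symmetry in the first two arguments established in the text with the symmetry in the last two (from the graded anti-symmetry of $J$ applied to two $1$-forms of equal degree) makes $\mathcal{M}$ invariant under any permutation of its arguments, so one can always shift the $[\G^*,\G^*]$-valued argument into the first slot, where $\mathcal{D}_{\cdot}=0$. The main obstacle is the initial identification of $S$ with the space of parallel left-invariant $1$-forms: without the abelianity of $S$, formula~\eqref{2parallel} would have no non-trivial inputs available and one would be forced into the much more involved general formula~\eqref{crochet1formes}.
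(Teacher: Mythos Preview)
Your argument for part~1 is essentially the paper's: reduce the contravariant curvature to the curvature of the infinitesimal Levi-Civita product on $(\G^*,\prs^*)$ and invoke Proposition~\ref{prmilnor}. No issue there.

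In part~2 your treatment of the cases $\alpha,\beta,\gamma\in S$ and ``exactly one argument in $[\G^*,\G^*]$'' is correct and matches the paper's cases~(a) and~(b). The gap is in the phrase ``one can always shift the $[\G^*,\G^*]$-valued argument into the first slot''. This only works when \emph{at most one} of the three arguments lies in $[\G^*,\G^*]$: formula~\eqref{2parallel} requires the second and third arguments to be parallel, and only elements of $S$ are parallel (for $\gamma\in[\G^*,\G^*]$ and $\eta\in S$ one has $\mathcal{D}_\eta\gamma=[\eta,\gamma]$, which need not vanish). When two or all three arguments lie in $[\G^*,\G^*]$, no permutation places two parallel forms in slots~2 and~3, so \eqref{2parallel} is simply unavailable.

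These missing cases are precisely the hard ones. The paper handles $\alpha,\beta\in[\G^*,\G^*]$, $\gamma\in S$ by writing $\alpha=\sum f_i\,dg_i$ locally, expanding $\mathcal{M}(\alpha,\beta,\gamma)$ via~\eqref{metacourbure}, and using \eqref{1parallel}, \eqref{levi-civita} to collapse everything to $\mathcal{D}_{\mathcal{D}_\alpha\beta}d\gamma=0$. The case $\alpha,\beta,\gamma\in[\G^*,\G^*]$ is harder still: the paper shows $\mathcal{M}(\alpha,\beta,\beta)=-[\beta,[\beta,d\alpha]]$ (for the generalized Koszul bracket) by a lengthy computation, and then argues separately that $[\beta,[\beta,d\alpha]]=0$ using the cocycle property of $d=-\rho$ and the abelianity of $[\G^*,\G^*]$. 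Your symmetry shortcut does not bypass this work; you need to supply these two cases.
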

\begin{proof}
Tout d'abord, le crochet de Koszul de deux $1$-formes invariantes à gauche est une $1$-forme invariante à gauche qui coïncide avec le crochet de Lie de $\G^*$ (en tant que bigèbre de Lie), si on l'identifie avec l'espace des $1$-formes invariantes à gauche. (Voir Proposition~\ref{kos.inv}, page~\pageref{kos.inv} et Théorème~\ref{theomultiplicatif}, page~\pageref{theomultiplicatif}).\\
\begin{enumerate}
\item On dénote par $\prs^*$ la métrique invariante à gauche sur $T^*G$ associée à $\prs$ et on dénote par $\mathcal{D}$ la connexion de Levi-Civita contravariante associée à $(\pi,\prs^*)$. Comme la métrique riemannienne est invariante à gauche, pour tout $\alpha,\beta,\gamma\in\G^*$, on a 
\[2\langle\mathcal{D}_\alpha\beta,\gamma\rangle^*=\langle[\alpha,\beta],\gamma\rangle^*+\langle[\gamma,\alpha],\beta\rangle^*+\langle[\gamma,\beta],\alpha\rangle^*.\]
Donc la restriction de $\mathcal D$ à $\G^*\times\G^*$ définit un produit sur $\G^*$. La nullité de la courbure de $\mathcal D$ est équivalente à la nullité de la courbure de ce produit. Soit $G^*$ le groupe de Lie connexe et simplement connexe associé à $\G^*$. La connexion $\mathcal{D}$ s'identifie à la connexion de Levi-Civita (covariante) associée à la métrique riemannienne sur $\G^*$ (considérée comme algèbre de Lie des champs de vecteurs invariants à gauche sur $G^*$) ; cette connexion est plate et donc, d'après la Proposition~\ref{prmilnor}, page~\pageref{prmilnor}, $\G^*$ est de Milnor.
\item Supposons maintenant que la connexion associée à $(\pi,\prs)$ est plate et donc, d'après la première partie, $\G^*=S\stackrel{\perp}\oplus [\G^*,\G^*]$ où $S=\{\alpha\in\G^*\mid \ad_\alpha+\ad_\alpha^*=0\}$ et $S$ et $[\G^*,\G^*]$ sont commutatifs. À noter que pour tout $\alpha\in\G^*$, $d\alpha\left(X,Y\right)=-\alpha\left([X,Y]\right)$ et donc $\rho=-d$. 

De la formule de Koszul, pour la connexion $\mathcal{D}$, on déduit que, pour tout $\gamma\in\G^*$,
\begin{equation}\label{levi-civita}
\mathcal{D}_\alpha\gamma=\left\{\begin{array}{ll}
0 & \text{si}\ \alpha\in [\G^*,\G^*] \\
\;[\alpha,\gamma]=\ad\nolimits_\alpha\gamma & \text{si}\ \alpha\in S,
\end{array}\right.
\end{equation}
et, pour tout $\alpha\in S$, $\mathcal{D}\alpha=0$.
\begin{enumerate}\item Soient $\alpha,\beta,\gamma\in S$. Comme $\mathcal{D}\alpha=\mathcal{D}\beta=\mathcal{D}\gamma=0$, on déduit de \eqref{2parallel} que
$$\mathcal{M}(\alpha,\beta,\gamma)=-\mathcal{D}_\alpha\mathcal{D}_\beta d\gamma\stackrel{\eqref{levi-civita}}=ad_\alpha ad_\beta\rho(\gamma).$$
\item Soient $\alpha,\gamma\in S$ et soit $\beta\in [\G^*,\G^*]$, comme $\mathcal{D}\alpha=\mathcal{D}\gamma=0$, on déduit de \eqref{2parallel} que
$$\mathcal{M}(\alpha,\beta,\gamma)=-\mathcal{D}_\beta\mathcal{D}_\alpha d\gamma\stackrel{\eqref{levi-civita}}=0.$$
\item Soient $\alpha,\beta\in [\G^*,\G^*]$ et soit $\gamma\in S$. On a localement $\alpha=\sum f_idg_i$ et on déduit de \eqref{metacourbure} que
\begin{equation*}
\mathcal{M}(\alpha,\beta,\gamma)=\sum f_i\{g_i,\{\beta,\gamma\}\}-f_i\{\{g_i,\beta\},\gamma\}-f_i\{\{g_i,\gamma\},\beta\}.
\end{equation*}
De \eqref{2parallel} et \eqref{levi-civita}, on a $\{\beta,\gamma\}=-\mathcal{D}_\beta d\gamma=0$.
D'autre part, en utilisant \eqref{levi-civita}, $\{g_i,\gamma\}=\mathcal{D}_{dg_i}\gamma=0$ et donc
\begin{equation*}
\mathcal{M}(\alpha,\beta,\gamma)=\sum-f_i\{\{g_i,\beta\},\gamma\}=\sum f_i\mathcal{D}_{\mathcal{D}_{dg_i}\beta}d\gamma=
\mathcal{D}_{\mathcal{D}_\alpha\beta}d\gamma=0.
\end{equation*}
\item Pour $\alpha,\beta\in [\G^*,\G^*]$, le calcul de $\mathcal{M}(\alpha,\beta,\beta)$ est plus difficile. Tout d'abord, en comparant $\mathcal{M}(\alpha,\beta,\beta)$ et $[\beta,[\beta,d\alpha]]$, on montre qu'ils sont égaux à un signe près, puis, on montre que $[\beta,[\beta,d\alpha]]=0$ et on aura obtenu le résultat souhaité.

Soit $\alpha=\sum f_idg_i$. En utilisant \eqref{metacourbure}, on a
\begin{align*}
\mathcal{M}(\alpha,\beta,\beta)=&\sum f_i\{g_i,\{\beta,\beta\}\}-2f_i\{\{g_i,\beta\},\beta\}\\
=&\sum f_i\mathcal{D}_{dg_i}\{\beta,\beta\}-2\sum f_i\{\mathcal{D}_{dg_i}\beta,\beta\}\\
=&\mathcal{D}_\alpha\{\beta,\beta\}-2\sum f_i\{\mathcal{D}_{dg_i}\beta,\beta\}\\
\stackrel{(*)}=&-2\sum f_i\{\mathcal{D}_{dg_i}\beta,\beta\}\\
=&-2\sum \left(\{f_i\mathcal{D}_{dg_i}\beta,\beta\}+\mathcal{D}_{df_i}\beta\wedge\mathcal{D}_{dg_i}\beta\right)\\
=&-2\{\mathcal{D}_{\alpha}\beta,\beta\}-2\sum\mathcal{D}_{df_i}\beta\wedge\mathcal{D}_{dg_i}\beta\\
=&-2\sum\mathcal{D}_{df_i}\beta\wedge\mathcal{D}_{dg_i}\beta.
\end{align*}
Dans $(*)$ on a utilisé \eqref{levi-civita} et le fait que $\{\alpha,\beta\}\in\wedge^2\G^*$, qui peut être déduit de \eqref{crochet1formes}.
D'autre part,
\begin{align*}
\;[\beta,[\beta,d\alpha]]=&\sum[\beta,[\beta,df_i\wedge dg_i]]\\
=&\sum[\beta,[\beta,df_i]\wedge dg_i]+[\beta,df_i\wedge[\beta,dg_i]]\\
=&\sum[\beta,[\beta,df_i]]\wedge dg_i+[\beta,df_i]\wedge[\beta,dg_i]\\
&+[\beta,df_i]\wedge[\beta,dg_i]+df_i\wedge[\beta,[\beta,dg_i]].
\end{align*}
On choisit une base orthonormée $\{\alpha_1,...,\alpha_n\}$ de $\G^*$. pour toute $1$-forme $\gamma\in \Omega^1(G)$, on a
$\gamma=\sum\langle \gamma,\alpha_i\rangle^*\alpha_i$, et
\begin{align*}
[\beta,\gamma]=&\sum\pi_\sharp(\beta)\cdot\langle \gamma,\alpha_i\rangle^*\,\alpha_i+
\langle \gamma,\alpha_i\rangle^*[\beta,\alpha_i]\\
=&\mathcal{D}_\beta \gamma+\sum\langle \gamma,\alpha_i\rangle^*[\beta,\alpha_i].
\end{align*}
Donc
\begin{align*}
\;[\beta,[\beta,\gamma]]=&[\beta,\mathcal{D}_\beta \gamma]+\sum\pi_\sharp(\beta)\cdot\langle \gamma,
\alpha_i\rangle^*[\beta,\alpha_i]+\sum\langle \gamma,
\alpha_i\rangle^*[\beta,[\beta,\alpha_i]]\\
\stackrel{(*)}=&[\beta,\mathcal{D}_\beta \gamma]-\sum\langle\mathcal{D}_\beta \gamma,
\alpha_i\rangle^*\mathcal{D}_{\alpha_i}\beta\\
=&[\beta,\mathcal{D}_\beta \gamma]-\mathcal{D}_{\mathcal{D}_\beta\gamma}\beta\\
=&\mathcal{D}_\beta\mathcal{D}_\beta \gamma-2\mathcal{D}_{(\mathcal{D}_\beta \gamma)}\beta\\
=&\mathcal{D}_\beta\mathcal{D}_\beta \gamma-2\mathcal{D}_{[\beta,\gamma]}\beta+2\sum\langle \gamma,\alpha_i\rangle^*\mathcal{D}_{[\beta,\alpha_i]}\beta\\
\stackrel{(**)}=&\mathcal{D}_\beta\mathcal{D}_\beta \gamma-2\mathcal{D}_{[\beta,\gamma]}\beta\\
=&\mathcal{D}_\beta\mathcal{D}_\beta \gamma-2(K(\beta,\gamma)\beta+\mathcal{D}_\beta\mathcal{D}_{\gamma}\beta-\mathcal{D}_{\gamma}\mathcal{D}_\beta\beta)\\
=&\mathcal{D}_\beta\mathcal{D}_\beta \gamma-2\mathcal{D}_\beta\mathcal{D}_{\gamma}\beta.
\end{align*}
On a utilisé, dans les égalités $(*)$ et $(**)$, le fait que $[\G^*,\G^*]$ est abélien et donc $[\beta,[\beta,\alpha_i]]=-\mathcal{D}_{[\beta,\alpha_i]}\beta=0$.

En utilisant cette formule, on a
\begin{align*}
[\beta,[\beta,df_i]]\wedge dg_i=&\mathcal{D}_\beta\mathcal{D}_\beta df_i\wedge dg_i-2\mathcal{D}_\beta\mathcal{D}_{df_i}\beta\wedge dg_i\\
=&\mathcal{D}_\beta\left(\mathcal{D}_\beta df_i\wedge dg_i\right)-\mathcal{D}_\beta df_i\wedge\mathcal{D}_\beta dg_i\\ &-2\mathcal{D}_\beta\left(D_{df_i}\beta\wedge dg_i\right)+2D_{df_i}\beta\wedge\mathcal{D}_\beta dg_i,\\
df_i\wedge[\beta,[\beta,dg_i]]=&-\mathcal{D}_\beta\left(\mathcal{D}_\beta dg_i\wedge df_i\right)+\mathcal{D}_\beta dg_i\wedge\mathcal{D}_\beta df_i\\ &+2\mathcal{D}_\beta\left(D_{dg_i}\beta\wedge df_i\right)-2D_{dg_i}\beta\wedge\mathcal{D}_\beta df_i.\\
\end{align*} 
D'autre part,
\begin{align*}
2[\beta,df_i]\wedge[\beta,dg_i]=&2\mathcal{D}_\beta df_i\wedge\mathcal{D}_\beta dg_i-2\mathcal{D}_\beta df_i\wedge\mathcal{D}_{dg_i}\beta\\&-2\mathcal{D}_{df_i}\beta\wedge\mathcal{D}_\beta dg_i+2\mathcal{D}_{df_i}\beta\wedge\mathcal{D}_{dg_i}\beta.
\end{align*}
Donc
\begin{align*}
\;[\beta,[\beta,d\alpha]]=&\mathcal{D}_\beta\mathcal{D}_\beta d\alpha+2\sum\mathcal{D}_{df_i}\beta\wedge\mathcal{D}_{dg_i}\beta-
2\mathcal{D}_\beta\left(\mathcal{D}_{df_i}\beta\wedge dg_i\right)\\&-2\mathcal{D}_\beta\left(df_i\wedge\mathcal{D}_{dg_i}\beta\right)\\
=&\mathcal{D}_\beta\mathcal{D}_\beta d\alpha+2\sum\mathcal{D}_{df_i}\beta\wedge\mathcal{D}_{dg_i}\beta+
2\mathcal{D}_\beta\left([\beta,df_i]\wedge dg_i\right)\\&-2\mathcal{D}_\beta\left(\mathcal{D}_\beta df_i\wedge dg_i\right)
+2\mathcal{D}_\beta\left(df_i\wedge[\beta,dg_i]\right)-
2\mathcal{D}_\beta\left(df_i\wedge\mathcal{D}_\beta dg_i\right)\\
=&-\mathcal{D}_\beta\mathcal{D}_\beta d\alpha-\mathcal{M}(\alpha,\beta,\beta)+2\mathcal{D}_\beta[\beta,d\alpha]\\
=&-\mathcal{M}(\alpha,\beta,\beta).
\end{align*}
Montrons maintenant que $[\beta,[\beta,d\alpha]]=0$. On a $\alpha=\sum[\gamma_i,\gamma_j]$ et comme $d$ est un $1$-cocycle par rapport à l'action adjointe de $\G^*$, 
$$d\alpha=\sum\left([d\gamma_i,\gamma_j]+[\gamma_i,d\gamma_j]\right).$$
Cette relation implique que $d\alpha=\alpha_1+\alpha_2$, où $\alpha_1\in S\wedge[\G^*,\G^*]$ et $\alpha_2\in\wedge^2[\G^*,\G^*]$. De cette décomposition et du fait que $[\G^*,\G^*]$ est abélien, on peut déduire que $[\beta,[\beta,d\alpha]]=0$ et donc $\mathcal{M}(\alpha,\beta,\beta)=0$. Comme $\mathcal{M}$ est symétrique, on déduit que, pour tout $\alpha,\beta,\gamma\in[\G^*,\G^*]$, $\mathcal{M}(\alpha,\beta,\gamma)=0$.
\end{enumerate}
\end{enumerate}
\end{proof}
\section{Classe modulaire des groupes de Lie-Poisson}
Dans cette section on va considérer la question d'unimodularité des structures de Poisson. Dans le cas d'un groupe de Lie-Poisson, une condition nécessaire d'unimodularité est donnée par
\begin{proposition}\label{unimod.néc}
Soit $(G,\pi)$ un groupe de Lie-Poisson, muni d'une forme volume $\mu$. Si $(G,\pi)$ est unimodulaire alors
\begin{equation}\label{unimod.néc1}
\rho\left(i_{\xi(x)}\mu_e\right)=0,\ \text{pour tout}\ x\in\G.
\end{equation}
où $\xi$ est le $1$-cocycle de la bigèbre de Lie $(\G,\G^*)$ et $\rho$ est le $1$-cocycle dual.
\end{proposition}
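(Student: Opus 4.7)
The plan is to apply a left-invariant Lie derivative to $i_\pi\mu$, use multiplicativity of $\pi$ to identify $\mathcal{L}_X\pi$, and conclude via the identity $\rho=-d$ on left-invariant forms. It suffices to work with $\mu$ left-invariant: the conclusion $\rho(i_{\xi(x)}\mu_e)=0$ depends only on $\mu_e\in\wedge^n\mathcal{G}^*$ up to a positive scalar, and on a connected Poisson--Lie group the modular vector field associated to a left-invariant volume form is right-invariant, so unimodularity of $(G,\pi)$ forces it to vanish at $e$ (where $\pi(e)=0$ makes every hamiltonian vector field zero), hence identically, giving $d(i_\pi\mu)=0$ for this $\mu$.

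Fix $x\in\mathcal{G}$ and let $X=x^L$ be its left-invariant extension. Cartan's formula yields
\[
\mathcal{L}_X(i_\pi\mu)=i_{\mathcal{L}_X\pi}\,\mu+i_\pi\,\mathcal{L}_X\mu.
\]
Because $\pi$ is multiplicative and $X$ is left-invariant, $\mathcal{L}_X\pi$ is left-invariant with value $\xi(x)$ at $e$, i.e.\ $\mathcal{L}_X\pi=\xi(x)^L$. A direct computation in a left-invariant frame gives $\mathcal{L}_X\mu=-\tr(\ad_x)\,\mu$. Finally, the identity
\[
i_{(L_g)_*\xi(x)}(L_{g^{-1}})^*\mu_e=(L_{g^{-1}})^*\bigl(i_{\xi(x)}\mu_e\bigr)
\]
shows that $i_{\xi(x)^L}\mu$ is exactly the left-invariant extension $\eta^L$ of $\eta:=i_{\xi(x)}\mu_e\in\wedge^{n-2}\mathcal{G}^*$. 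Consequently,
\[
\mathcal{L}_X(i_\pi\mu)=\eta^L-\tr(\ad_x)\,i_\pi\mu.
\]

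Now apply $d$. Since $d\mathcal{L}_X=\mathcal{L}_Xd$ and $d(i_\pi\mu)=0$ by the unimodularity hypothesis, the left-hand side vanishes; the second term on the right also vanishes, leaving $d\eta^L=0$. But on left-invariant forms the exterior differential coincides with the Chevalley--Eilenberg differential of $\mathcal{G}$, which, under the identification $\rho=-d$ of the excerpt extended as a graded derivation to $\wedge^\bullet\mathcal{G}^*$, sends $\eta$ to $-\rho(\eta)$. Hence $\rho\bigl(i_{\xi(x)}\mu_e\bigr)=\rho(\eta)=0$, as claimed.

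The only step that is not pure Cartan bookkeeping is the equality $\mathcal{L}_X\pi=\xi(x)^L$ for multiplicative $\pi$ and left-invariant $X$, which is precisely the characterization of multiplicative tensors on a connected Lie group proved in Chapter~1; the reduction to left-invariant $\mu$ sketched in the first paragraph is the other technical ingredient.
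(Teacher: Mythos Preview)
Your main computation is essentially the paper's argument in different clothing: your Leibniz-type identity $\mathcal{L}_X(i_\pi\mu)=i_{\mathcal{L}_X\pi}\mu+i_\pi\mathcal{L}_X\mu$ is, for a top-degree $\mu$, exactly the Koszul formula $i_{[X,\pi]}\mu=i_Xdi_\pi\mu+di_Xi_\pi\mu-i_\pi di_X\mu$ that the paper invokes. From there both proofs proceed identically: take $d$, use $d(i_\pi\mu)=0$, observe that $i_{\xi(x)^L}\mu$ is left-invariant, and read off $d\eta^L=0$, i.e.\ $\rho(i_{\xi(x)}\mu_e)=0$.

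The one point that deserves a caveat is your reduction paragraph. The claim that ``the modular vector field associated to a left-invariant volume form is right-invariant'' on a connected Poisson--Lie group is not correct in general. What is true (and what the paper establishes in the proof of the theorem immediately following this proposition) is that $X_\mu-\kappa^+$ is \emph{multiplicative}, where $\kappa^+$ is the left-invariant field with value $\kappa(\alpha)=\tr(\ad_\alpha)$ at $e$; this is weaker than right-invariance. Your deduction ``$X_\mu(e)=0$ and $X_\mu$ right-invariant $\Rightarrow X_\mu\equiv 0$'' therefore rests on an unproved (and generally false) premise. Fortunately, this reduction is not actually needed: the paper's own proof tacitly works with a left-invariant $\mu$ satisfying $d(i_\pi\mu)=0$ (this is the ``unimodularity'' hypothesis in the sense used throughout the thesis, cf.\ the formulation of Theorem~\ref{main2}), and under that reading your step~2 alone already gives the result. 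So your proof is correct once you drop the right-invariance claim and simply take $\mu$ left-invariant from the start.
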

\begin{proof}
Rappelons que $\rho : \G^*\rightarrow\wedge^2\G^*$ est égal à $-d$ (la restriction de la différentielle extérieure aux $1$-formes invariantes à gauche) et s'étend, comme la différentielle $d$, aux formes invariantes à gauche de tout degrés.\\
La formule de Koszul \cite{kos}, satisfaite pour tout champ de vecteurs $X$ et pour tout champ de multivecteurs $Q$, est donnée par :
\begin{equation}\label{koszul}
i_{[X,Q]}\mu=i_Xdi_Q\mu+(-1)^{\deg Q}di_Xi_Q\mu-i_Qdi_X\mu.
\end{equation}
On a, pour $Q=\pi$ et $X$ un champ de vecteurs invariant à gauche,
\begin{align*}
i_{[X,\pi]}\mu=i_Xdi_\pi\mu+di_Xi_\pi\mu-i_\pi di_X\mu.
\end{align*}
\end{proof}
Dans le cas d'un groupe de Lie unimodulaire on a le résultat suivant :
\begin{theorem}(\cite{bah-bou})
Soient $\pi$ et $\langle\,,\,\rangle$, respectivement, un tenseur de Poisson multiplicatif et une métrique riemannienne invariante à gauche sur un groupe de Lie $G$ unimodulaire, et soit $\mu$ la forme volume associée à $\langle\,,\,\rangle$. Alors $d\left(i_\pi\mu\right)=0$, si et seulement si, $\G^*$ est une algèbre de Lie unimodulaire et si, pour tout $x\in\G$, $\delta\left(i_{\xi(x)}\mu\right)=0$, où $\delta$ est la restriction de la différentielle $d$ aux formes invariantes à gauche.
\end{theorem}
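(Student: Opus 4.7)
The plan rests on two observations: since $\G$ is unimodular, the left-invariant volume $\mu$ is in fact bi-invariant, so $\mathscr{L}_X\mu=0$ for every left-invariant vector field $X$; and since $\pi$ is multiplicative, $\mathscr{L}_X\pi=\xi(x)^+$, the left-invariant bivector field associated with $x=X(e)$. Combining these two facts through Cartan's formula yields
$$\mathscr{L}_X(i_\pi\mu)=i_{\mathscr{L}_X\pi}\mu=i_{\xi(x)^+}\mu,$$
which is a left-invariant $(n-1)$-form. Applying $d$ and using $\mathscr{L}_X\circ d=d\circ\mathscr{L}_X$ gives
$$\mathscr{L}_X\bigl(d(i_\pi\mu)\bigr)=d(i_{\xi(x)^+}\mu)=\bigl(\delta(i_{\xi(x)}\mu_e)\bigr)^+,$$
the left-invariant extension of $\delta(i_{\xi(x)}\mu_e)$. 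This already yields one half of the equivalence: the condition $\delta(i_{\xi(x)}\mu_e)=0$ for every $x\in\G$ is precisely the statement that $d(i_\pi\mu)$ is a left-invariant form on $G$.

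It remains to evaluate $d(i_\pi\mu)$ at $e$ and to relate the result to the unimodularity of $\G^*$. From $\pi(g)=(L_g)_*\pi_\ell(g)$ and the left-invariance of $\mu$, the $(n-2)$-form $i_\pi\mu$ equals at each $g$ the left-translate of $\Theta(g):=i_{\pi_\ell(g)}\mu_e\in\wedge^{n-2}\G^*$. Writing $\Theta=\sum_I\Theta_I\,e^I$ in a basis yields the global formula $i_\pi\mu=\sum_I\Theta_I\,(e^I)^+$, with $(e^I)^+$ the left-invariant extension. Since $\pi_\ell(e)=0$, every $\Theta_I$ vanishes at $e$, so only the $\sum_I d\Theta_I(e)\wedge e^I$ term survives. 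Using $d_e\pi_\ell=\xi$ to evaluate $d\Theta_I(e)$ I obtain
$$d(i_\pi\mu)(e)=\sum_{j=1}^n e^j\wedge i_{\xi(e_j)}\mu_e.$$
A direct algebraic unwinding, using that the bracket of $\G^*$ is dual to $\xi$, identifies this element of $\wedge^{n-1}\G^*$ with $i_{\chi^{\G^*}}\mu_e$, where $\chi^{\G^*}\in\G$ is the characteristic element of $\G^*$ defined by $\chi^{\G^*}(\alpha)=\tr(\ad^{\G^*}_\alpha)$; non-degeneracy of $\mu_e$ then forces $d(i_\pi\mu)(e)=0$ to be equivalent to $\chi^{\G^*}=0$, i.e.\ to the unimodularity of $\G^*$.

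The two stages combine as follows. If $d(i_\pi\mu)=0$, then Stage 1 forces $\delta(i_{\xi(x)}\mu_e)=0$ for every $x$, and Stage 2 forces $\G^*$ to be unimodular. Conversely, under both conditions, Stage 1 ensures that $d(i_\pi\mu)$ is left-invariant and Stage 2 that it vanishes at $e$; being a left-invariant form with trivial value at $e$, it is identically zero. The main technical nuisance I anticipate sits inside Stage 2, in the algebraic identification of $\sum_j e^j\wedge i_{\xi(e_j)}\mu_e$ with $i_{\chi^{\G^*}}\mu_e$: this amounts to recognising, after careful sign bookkeeping, that $\tr(\ad^{\G^*}_{e^k})$ may be read off as the natural $e_k$-component of $\xi$, but everything else is a formal consequence of Cartan's formula together with the multiplicativity of $\pi$ and the bi-invariance of $\mu$.
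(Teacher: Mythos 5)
Your argument is correct, and it rests on exactly the two pillars of the paper's proof, but packaged differently. The paper works with the modular vector field $X_\mu$ defined by $d(i_\pi\mu)=i_{X_\mu}\mu$: it computes $X_\mu(e)=\kappa$ (the modular character of $\G^*$) by a divergence calculation in a left-invariant orthonormal frame, derives $d\left(i_{[X,\pi]}\mu\right)=-i_{[X,X_\mu]}\mu$ from Koszul's formula, and then splits $X_\mu=\kappa^{+}+X_m$ with $X_m$ multiplicative, so that $X_\mu=0$ iff $\kappa=0$ and the intrinsic derivative of $X_m$ vanishes, the latter being the condition $\delta\left(i_{\xi(x)}\mu_e\right)=0$. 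You bypass the modular field entirely: your identity $\mathscr{L}_X\left(d(i_\pi\mu)\right)=\left(\delta(i_{\xi(x)}\mu_e)\right)^{+}$ is the same content as the paper's Koszul-formula step (they are related through $d(i_\pi\mu)=i_{X_\mu}\mu$), and your first-order Taylor expansion of $i_\pi\mu$ at $e$, giving $d(i_\pi\mu)(e)=\sum_j e^j\wedge i_{\xi(e_j)}\mu_e=i_{\chi^{\G^*}}\mu_e$, replaces the paper's computation $X_\mu(e)=\kappa$; the algebraic identification you flag as the "technical nuisance" does check out (it is exactly $\sum_j\xi_j^{kj}=\tr\ad^{\G^*}_{e^k}$ after the interior-product gymnastics). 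Your version is arguably cleaner, since it avoids introducing $X_m$ and the notion of multiplicative vector field, at the price of the pointwise expansion argument. One cosmetic correction: the vanishing of $\mathscr{L}_X\left(d(i_\pi\mu)\right)$ for all left-invariant $X$ says that $d(i_\pi\mu)$ is \emph{right}-invariant (the flows of left-invariant fields are right translations), not left-invariant as you write; this does not affect the proof, since an invariant form on a connected group that vanishes at $e$ is still identically zero.
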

\begin{proof}
Soit le champ modulaire $X_\mu$ donné par $d\left(i_\pi\mu\right)=i_{X_\mu}\mu$, et soit la forme modulaire $\kappa : \G^*\to\mathbb{R}$ donnée par $\kappa(\alpha)=\tr\ad_\alpha$, où $\ad_\alpha\beta=[\alpha,\beta]^*$ (l'action adjointe de $(\G^*,[\,,\,]^*)$). La forme modulaire définit un vecteur dans $\G$, qu'on continu de noter $\kappa$, et on note $\kappa^+$ le champ invariant à gauche associée à $\kappa$. Montrons que
\begin{equation}\label{kappa}
X_\mu(e)=\kappa.
\end{equation}
Si $X_1,...,X_n$ et $\alpha,...,\alpha_n$ sont des bases orthonormées respectivement de $(\G,\langle\,,\,\rangle_e)$ et de $(\G^*,\langle\,,\,\rangle_e^*)$, on notera $X_i^+$, respectivement $\alpha_i^+$, le champ invariant à gauche associé  $X_i$, respectivement la forme invariante à gauche associée à $\alpha_i$. On a alors $\mu=\alpha_1^+\wedge...\wedge\alpha_n^+$, $\pi=\sum_{i<j}\pi_{ij}\,X_i^+\wedge X_j^+$ et le champ hamiltonien est donné par $X_f=\sum_{j=1}^n\left(\sum_{i=1}^n\pi_{ij}<df,X_i>\right)X_j^+$. On a
\begin{align*}
X_\mu(f)=&\Div\nolimits_{\alpha_1^+\wedge...\wedge\alpha_n^+}\sum_{j=1}^n\left(\sum_{i=1}^n\pi_{ij}<df,X_i>\right)X_j^+\\
=&\sum_{j=1}^n\left(\sum_{i=1}^n\pi_{ij}X_i(f)\right)\Div\nolimits_{\alpha_1^+\wedge...\wedge\alpha_n^+}X_j^++\sum_{j=1}^n
\sum_{i=1}^nX_j^+\left(\pi_{ij}<df,X_i>\right).
\end{align*}
Comme, pour tout $i,j=1,...,n$, 
\[\Div\nolimits_{\alpha_1^+\wedge...\wedge\alpha_n^+}X_j^+=0,\ \text{et}\ \pi_{ij}(e)=0,\]
on a
\[X_\mu(e)=\sum_{i=1}^n\left(\sum_{j=1}^n\mathscr{L}_{X_j}\pi_{ij}(e)\right)X_i,\]
en plus
\begin{align*}
<\alpha_i,X_\mu(e)>=&\sum_{j=1}^n\mathscr{L}_{X_j^+}\pi_{ij}(e)=\sum_{j=1}^n<\alpha_i\wedge\alpha_j,\xi(X_j)>\\
=&\sum_{j=1}^n[\alpha_i,\alpha_j]^*(X_j)=\sum_{j=1}^n\left\langle\sum_{k=1}^n[\alpha_i,\alpha_j]^*(X_k)\alpha_k,\alpha_j\right\rangle^*\\
=&\sum_{j=1}^n\langle[\alpha_i,\alpha_j]^*,\alpha_j\rangle^*=\tr\ad\nolimits_{\alpha_i}=\kappa(\alpha_i).
\end{align*}
Comme ceci est vrai pour tout $i=1,...,n$, on en déduit que $X_\mu(e)=\kappa$.\\
La formule de Koszul \eqref{koszul}
\begin{equation}
i_{[X,Q]}\mu=i_Xdi_Q\mu+(-1)^{\deg Q}di_Xi_Q\mu-i_Qdi_X\mu.
\end{equation}
Soit $X$ un champ de vecteurs invariant  gauche. Comme $G$ est unimodulaire,  
\[\mathscr{L}_X\mu=0\ \text{et}\ di_X\mu=\mathscr{L}_X\mu-i_Xd\mu=0.\]
En appliquant la formule de Koszul à $[X,X_\mu]$ et  $[X,\pi]$ on a
\begin{align*}
i_{[X,X_\mu]}\mu=&i_Xdi_{X_\mu}\mu-di_Xi_{X_\mu}\mu-i_{X_\mu}di_X\mu\\
=&-di_Xi_{X_\mu}\mu,
\end{align*}
et d'autre part
\begin{align*}
i_{[X,\pi]}\mu=&i_Xdi_\pi\mu-di_Xi_\pi\mu-i_\pi di_X\mu\\
=&i_Xi_{X_\mu}\mu-di_Xi_\pi\mu.
\end{align*}
Donc
\begin{equation}\label{formula}
d\left(i_{[X,\pi]}\mu\right)=-i_{[X,X_\mu]}\mu.
\end{equation}
Comme $[X,\pi]$ et $\mu$ sont invariants à gauche, on déduit de \eqref{formula} que $[X,X_\mu]$ est invariant à gauche. Si $X_m=X_\mu-\kappa^+$ alors $[X,X_m]=[X,X_\mu]+[X,\kappa^+]$ est invariant à gauche et $X_m(e)=0$, c'est-à-dire $X_m$ est un champ de vecteurs multiplicatif et $X_\mu=0$, si seulement si, $\kappa=0$ et $X_m=0$, c'est-à-dire $(\G^*,[\,,\,]^*)$ est unimodulaire. Mais $X_m=0$, si et seulement si, $[X,X_m](e)=0$, pour tout champ invariant à gauche $X$, puisque $X_m$ est multiplicatif et $G$ est connexe. Dans ce cas la formule \eqref{formula} en $e$ donne
\[\delta\left(i_{\xi(X)}\mu\right)=0.\]
\end{proof}
\chapter{Groupes de Lie-Poisson riemanniens déformables}\label{chapter4}
\epigraph{C'est avec la logique que nous prouvons et avec l'intuition que nous trouvons.}{Henri Poincaré.}
\lettrine[lines=3, nindent=0em]{D}{ans} ce chapitre on déterminera, en petites dimensions (jusqu'à la dimension cinq) et à isomorphisme près, tous les groupes de Lie-Poisson à métriques riemanniennes invariantes à gauche vérifiant les trois conditions de déformations de Hawkins.\\
Soit $(G,\pi)$ un groupe de Lie-Poisson, $(\G,[\,,\,])$ son algèbre de Lie et
$\xi : \G\rightarrow\G\times\G$ le $1$-cocycle associé. On notera $(\G^*,[\,,\,]^*)$ l'algèbre de Lie duale et $G^*$ le groupe de Lie connexe et simplement connexe associé. Soit l'application
linéaire $\rho: \G^*\rightarrow\G^*\times\G^*$ le dual du crochet de Lie de $\G$. Il est connu que $\rho$ est un $1$-cocycle pour la représentation adjointe de $(\G^*,[\,,\,]^*)$ (Voir \cite{duf-zun}), et définit un tenseur de Lie Poisson $\pi^*$ sur $G^*$. D'un autre côté, tout produit scalaire $\prs$, sur $\G^*$ (ou $\G$ ) muni à la fois $G$ et $G^*$ d'une métrique invariante à gauche.

On commencera par déterminer les bigèbres de Lie $(\G^*,\rho)$ telles que :
\begin{enumerate}
\item $\G^*=S\oplus[\G^*,\G^*]$ est de Milnor,
\item $\ad_\alpha\ad_\beta\rho(\gamma)=0$, pour tout $\alpha,\beta,\gamma\in S$,
\item la condition $d\left(i_{\xi(x)}\mu\right)=0$ est satisfaite pour tout $x\in\G$, dans le cas où $\G$ est unimodulaire. ($\mu$ est une forme volume et $\xi$ est le cocycle correspondant de la bigèbre de Lie duale.)
\end{enumerate}
Puis, on déterminera les groupes de Lie connexes et simplement connexes\footnote{Parfois on se contentera de trouver un goupe de Lie $G$ dont l'algèbre de Lie est $\G$.} correspondants aux algèbres de Lie trouvées $\G$, et les tenseurs de Poisson multiplicatifs en intégrant les cocycles $\xi$ trouvés.\\
Tout d'abord, on examinera quelques structures de Riemann-Poisson qui sont déformables.
\section{Quelques familles génériques}
\subsection{Structures de Poisson linéaires}
Soit $\G=S\oplus[\G,\G]$ une algèbre de Milnor. Comme $S$ est une sous-algèbre commutative qui agit sur l'idéal dérivé $[\G,\G]$ par endomorphismes anti-symétriques, il existe une famille de vecteurs non nuls $u_1,...,u_r$ de $S$ et une base orthonormale $\{f_1,...,f_{2r}\}$ de $[\G,\G]$ telle que, pour tout $j=1,...,r$ et tout $s\in S$
\begin{equation}
[s,f_{2j-1}]=<s,u_j>f_{2j},\quad [s,f_{2j}]=-<s,u_j>f_{2j-1}
\end{equation}
D'après le Corollaire, le triplet $(\G^*,\pi_\ell,\langle\,,\,\rangle)$ satisfait les conditions de Hawkins. Il existe une famille de constantes $(a_{ij})_{1\leq i,j\leq q}$ telles que $(\G^*,\pi_\ell,\langle\,,\,\rangle)$ est isomorphe à $(\mathbb{R}^{q+2r},\pi_0,\langle\,,\,\rangle_0)$, où $\langle\,,\,\rangle_0$ est la métrique euclidienne et 
\[\pi_0=\sum_{i=1}^r(a_{1i}\partial_{x_1}+\ldots+a_{qi}\partial_{x_q})\wedge(y_{2i}\partial_{y_{2i-1}}-y_{2i-1}\partial_{y_{2i}}).\]
\subsection{Structures triangulaires d'un groupe compact semi-simple}
\begin{proof} 
Soit $(G,\pi,\prs)$ un groupe de Lie-Poisson riemannien tel que $G$ est compact semi-simple, $\prs$ est bi-invariante est $\pi$ est exact, c'est-à-dire il existe $r\in\wedge^2\G$ tel que $\pi=r^--r^+$, où $r^+$ (resp. $r^-$) est le champ de bivecteurs invariant à gauche (resp. à droite) associé à $r$. Il est connu que $[r,r]$ est $ad$-invariant et la structure d'algèbre de Lie duale sur $\G^*$ est donnée par
$$[\al,\be]^*=ad_{r_{\#}(\al)}^*\be-ad_{r_{\#}(\be)}^*\al,$$
où $r_{\#}:\G^*\too\G$ est la contraction associée à $r$. Comme $\prs$ est bi-invariante, la connexion de Levi-Civita contravariante $\mathcal{D}$ associée à $(\pi,\prs)$ est donnée par
\begin{equation}\label{exact1}\mathcal{D}_\al\be=ad_{r_{\#}(\al)}^*\be,\quad\al,\be\in\G^*, \end{equation} et sa courbure est donnée par (see \cite{haw1})
\begin{equation}K(\al,\be)\ga=ad_{[r,r](\al,\be,.)}^*\ga.\end{equation}
Comme $G$ est semi-simple, $K$ s'annulle, si et seulement si, $[r,r]=0$. Supposons que $[r,r]=0$ et montrons que $(\pi,\prs)$ est métaplat et vérifie la troisième condition de Hawkins. \\ Comme $(\pi,\prs)$ est plat, en vertu du Théorème \ref{main1}, $\G^*=S\oplus[\G^*,\G^*]$, où la sous-algèbre $S=\{\al\in\G^*,\; ad_{\al}+ad_{\al}^t=0\}$ est abélienne et l'idéal dérive $[\G^*,\G^*]$ est abélien. En utilisant la preuve du Lemme \ref{killing2} et (\ref{exact1}), il est facile de montrer que \begin{equation}\label{exact2}S=\{\al\in\G^*,ad_{r_{\#}(\be)}^*\al=0\;\mbox{for all}\;\be\in\G^*\}.\end{equation} D'autre part, on peut vérifier que $\ker r_{\#}\subset S$. De plus, $\G=\mbox{Im}r_{\#}\oplus\mbox{Im}r_{\#}^\perp$ and $\mbox{Im}r_{\#}$ est unimodulaire et symplectique et donc résoluble (voir\cite{li-med}).\\
Comme $\mbox{Im}r_{\#}$ admet un produit scalaire  bi-invariant, il est nécessairement abélien (voir \cite{mi}).Montrons que (\ref{flat}) est vérifiée. Soit $\B_1=\{e_1,\ldots,e_{2p}\}$ une base orthonormale de $\mbox{Im}r_{\#}$, soit $\B_2=\{f_1,\ldots,f_{n-2p}\}$ une base orthonormale de $\mbox{Im}r_{\#}^\perp$ et soit $\{\al_1,\ldots,\al_{2p},\be_1,\ldots,\be_{n-2p}\}$ la base duale de $\B_1\cup\B_2$. Soient $\al,\ga\in S$. Pour tout $r_{\#}(\mu_1),r_{\#}(\mu_2)\in\mbox{Im}r_{\#}$ et tout
 $u\in\mbox{Im}r_{\#}^\perp$, on a 
 \begin{eqnarray*}
 d\ga(r_{\#}(\mu_1),r_{\#}(\mu_2))&=&-\ga([r_{\#}(\mu_1),r_{\#}(\mu_2)])\\
 &=&0,\\
d\ga(r_{\#}(\mu_1),u)&=&-ad_{r_{\#}(\mu_1)}^*\ga(u)\\
&\stackrel{(\ref{exact2})}=&0,\end{eqnarray*} et donc
\begin{equation*}
d\ga=\sum_{i,j}a_{i,j}\be_i\wedge\be_j,\end{equation*} où $a_{i,j}\in\R$. Ona 
$$ad_{\al}d\ga=\sum_{i,j}a_{i,j}\left(ad_{\al}\be_i\wedge\be_j+\be_i\wedge ad_{\al}\be_j\right).$$
Or
\begin{eqnarray*}
ad_{\al}\be_i&=&[\al,\be_i]^*\\
&=&ad_{r_{\#}(\al)}^*\be_i-ad_{r_{\#}(\be_i)}^*\al\\
&\stackrel{(\ref{exact2})}=&ad_{r_{\#}(\al)}^*\be_i,\end{eqnarray*} et $\be_i$, l'annulateur de $\mbox{Im}r_{\#}$, est égal à $\ker r_{\#}$. On a montré que
$\ker r_{\#}\subset S$ et, suivant (\ref{exact2}), $ad_{r_{\#}(\al)}^*\be_i=0$, alors
$ad_{\al}d\ga=0$ et (\ref{flat}) est vérifiée. Pour conclure, on va montrer que (\ref{unimodular}) est vérifiée et on obtient le résultat, suivant le Théorème \ref{main2} (puisque $\G$ est unimodulaire). À noter que dans notre cas $G$ is $\xi(u)=[u,r]$ et, en utilisant (\ref{koszul.formula}), on obtient
\begin{equation}
i_{[u,r]}\mu=i_udi_r\mu+di_ui_r\mu-i_rdi_u\mu.
\end{equation} Mais comme $G$ est unimodulaire, $di_u\mu=0$. D'autre part, $r=\sum_{i,j}b_{ij}e_i\wedge e_j$ et donc
\begin{eqnarray*}
d(i_{r}\mu)&=&d\left(\sum_{i,j}b_{ij}i_{e_i\wedge e_j}\mu\right)\\
&=&\sum_{i,j}b_{ij}\left(i_{[e_i,e_j]}\mu-
i_{e_i}\Li_{e_j}\mu-i_{e_j)}\Li_{e_i}\mu\right)\\
&=&0,\end{eqnarray*} ce qui achève la démonstration.
\end{proof}
\subsection{Groupes de Heisenberg}
On reprend ici le cas des groupes de Heisenberg de toutes dimensions, étudié dans \cite{bah-bou:heisenberg}.
Soit $H_n$ le groupe de Heisenberg de dimension $2n+1$, c'est-à-dire le groupe des matrices
\[\begin{pmatrix}1 & X^t & C\\
0 & \id_n & Y\\
0 & 0 &1\end{pmatrix},\]
où $c\in\mathbb{R}$ et $X,Y\in\mathbb{R}^n$. Soit $\mathcal{H}_n$ son algèbre de Lie et $z$ un élément non-nul du centre de $\mathcal{H}_n$. Il existe une $2$-forme $\omega$ sur $\mathcal{H}_n$, telle que $i_z\omega=0$. La projection de $\omega$ sur $\mathcal{H}_n/(\mathbb{R}z)$ est non-dégénérée et, pour
tous $u,v\in\mathcal{H}_n,\ [u,v]=\omega(u,v)z$. On a alors
\begin{theorem}Soient $\pi$ et $\prs$, respectivement, un tenseur de Poisson multiplicatif et une métrique riemannienne invariante à gauche sur $H_n$. Alors $\pi$ et $\prs$ sont compatibles dans le sens de Hawkins, si et seulement si :
\begin{enumerate}
\item il existe un endomorphisme $J:\mathcal{H}_n\too\mathcal{H}_n$ antisymétrique par rapport à $\prs_e$ tel que $J(z)=0$ et, pour tout $u\in\mathcal{H}_n$, $\xi(u)=z\wedge Ju,$
\item pour tout $u,v\in\mathcal{H}_n$, $\omega(J^2u,v)+\omega(u,J^2v)+2\omega(Ju,Jv)=0$.
\end{enumerate}
\end{theorem}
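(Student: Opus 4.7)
The plan is to specialize Theorems \ref{main1} and \ref{main2} to the case $G=H_n$. Since $H_n$ is two-step nilpotent, it is unimodular, so both theorems apply directly. Throughout, I identify $\mathcal{H}_n^*$ with $\mathcal{H}_n$ via the isomorphism $\sharp$ induced by $\prs$, and use the splitting $\mathcal{H}_n=\mathbb{R}z\oplus z^\perp$. By Theorem \ref{theomultiplicatif}, $\pi$ is encoded by its intrinsic derivative $\xi:\mathcal{H}_n\to\wedge^2\mathcal{H}_n$, and I decompose $\xi(u)=z\wedge A(u)+B(u)$ with $A:\mathcal{H}_n\to z^\perp$ and $B:\mathcal{H}_n\to\wedge^2 z^\perp$. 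Since $[u,z]=0$, one has $\ad_u(z\wedge x)=0$ for $x\in z^\perp$, so the cocycle identity $\xi([u,v])=\ad_u\xi(v)-\ad_v\xi(u)$, evaluated using $[u,v]=\omega(u,v)z$, forces $B(z)=0$ and an $\omega$-compatibility tying $A(z)$ to the $\omega$-contractions of $B$.

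For the sufficiency direction, assume condition 1 (so $B=0$, $\xi(u)=z\wedge Ju$ with $J$ antisymmetric and $J(z)=0$) and condition 2. The dual bracket is
\[
[a,b]_*=\langle b,z\rangle Ja-\langle a,z\rangle Jb,
\]
and Jacobi holds automatically because $J(z)=0$. A direct computation of $\ad_a+\ad_a^t$ gives $S=\ker J$, while $[\mathcal{H}_n^*,\mathcal{H}_n^*]^*=\mathrm{Im}(J)$; antisymmetry of $J$ yields $\mathrm{Im}(J)=(\ker J)^\perp=S^\perp$, and both summands are abelian, so $(\mathcal{H}_n^*,[\,,\,]^*,\prs^*)$ is a Milnor algebra. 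The Heisenberg relation gives $\rho(\gamma)=\gamma(z)\,\omega$. For $\alpha\in S$, $\ad_\alpha$ acts on $\wedge^\star\mathcal{H}_n^*$ as $\alpha(z)\,\mathcal{J}$, where $\mathcal{J}$ is the derivation extending the transpose of $J$. Consequently \eqref{flat} reduces to $\mathcal{J}^2\omega=0$, and the straightforward identity
\[
(\mathcal{J}^2\omega)(u,v)=\omega(J^2u,v)+2\omega(Ju,Jv)+\omega(u,J^2v)
\]
shows this is precisely condition 2. Theorem \ref{main1} then delivers Hawkins compatibility, and the third Hawkins condition follows at once: choosing an orthonormal basis $(e_1,\ldots,e_{2n},z)$ and $\mu=e_1^*\wedge\cdots\wedge e_{2n}^*\wedge\zeta$, one has $d\,e_i^*=0$ (since the bracket lands in $\mathbb{R}z$), and $i_{\xi(u)}\mu=i_{z\wedge Ju}\mu$ is a wedge of the $e_i^*$'s alone, so $d(i_{\xi(u)}\mu)=0$.

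For the necessity direction I must show that strong Hawkins compatibility forces the decomposition $\xi(u)=z\wedge Ju$ with $J$ antisymmetric and $J(z)=0$. The key reduction is that $B\equiv 0$. The third Hawkins condition computes to $\langle B(u),\omega\rangle=0$ for all $u$ (only $d\zeta=-\omega$ contributes when differentiating $i_{B(u)}\mu$). The Milnor hypothesis, together with the bialgebra compatibility stating that $\rho(\gamma)=\gamma(z)\omega$ is a 1-cocycle for $[\,,\,]^*$, constrains $B$ further via an identity of the form $[\alpha,\beta]^*(z)\,\omega=\alpha(z)\ad_\beta\omega-\beta(z)\ad_\alpha\omega$, which after careful bookkeeping pins down $B\equiv 0$. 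Once $B=0$, setting $J=A$, the cocycle relation gives $\omega(u,v)\,z\wedge Jz=0$, hence $Jz=0$, and the Milnor equality $S^\perp=\mathrm{Im}(J)$ with $S=\ker J$ is equivalent to $J$ being antisymmetric. Condition 2 then reads off from \eqref{flat} exactly as in the sufficiency direction.

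The main obstacle is the necessity argument that $B\equiv 0$: neither the Milnor condition nor the unimodularity condition alone suffices, and one must combine them with the bialgebra cocycle identity for $\rho$ in a careful way. Once this reduction is achieved, the rest of the proof is a direct translation via the identities already displayed.
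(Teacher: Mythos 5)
Your sufficiency direction is essentially sound and matches the computations the paper itself uses (the identities $\rho(\gamma)=\gamma(z)\,\omega$, i.e.\ $d\lambda=-\omega$ for the form dual to $z$, and $(\mathcal{J}^2\omega)(u,v)=\omega(J^2u,v)+2\omega(Ju,Jv)+\omega(u,J^2v)$, which turn \eqref{flat} into condition 2; the verification of the third Hawkins condition is also the same as in the paper). The problem is the necessity direction, which is precisely where the real work lies, and which you yourself flag as "the main obstacle": the claim that $B\equiv 0$, i.e.\ that $\xi$ takes values in $z\wedge\mathcal{H}_n$, is only asserted ("after careful bookkeeping pins down $B\equiv 0$"), not proved. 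This is not a routine bookkeeping step. In the paper this fact is obtained from flatness alone, by a genuinely nontrivial argument: one invokes the structure theorem for Milnor algebras to put the dual bracket of $\mathcal{H}_n^*$ in the normal form $[s,\alpha_i]^*=\langle s,\lambda_i\rangle^*\beta_i$, $[s,\beta_i]^*=-\langle s,\lambda_i\rangle^*\alpha_i$, transports it through $\sharp$ to get $\xi(e_i)=-v_i\wedge f_i$, $\xi(f_i)=v_i\wedge e_i$ and $\sharp(S)=\ker\xi$, then proves $\xi(z)=0$ by a contradiction argument (if $z\notin\ker\xi$ one produces $u_0\in\ker\xi$, $v_0\in(\ker\xi)^\perp$ with $[u_0,v_0]=z$ and derives $\omega(u,e_i)^2+\omega(u,f_i)^2=0$, forcing $u$ central), and finally uses the cocycle identity on the pairs $(e_i,f_j)$ and $(u,e_i)$ to show $\omega(e_i,v_j)=\omega(f_j,v_i)=\omega(u,v_i)=0$, so each $v_i$ lies in the centre $\mathbb{R}z$. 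None of this is replaced by your sketch, and your plan to extract $B\equiv0$ from the modular-class condition is both a detour (the paper needs only flatness for this step, and if "compatible" is read as conditions 1--2 the unimodularity condition is not even available to you) and unsubstantiated as written.

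A secondary inaccuracy: in the necessity direction you assert that, once $\xi(u)=z\wedge Ju$ with $J(z)=0$, the Milnor equality $S^\perp=[\mathcal{H}_n^*,\mathcal{H}_n^*]$ with $S=\ker J$ "is equivalent to $J$ being antisymmetric". As stated this is false: for a general endomorphism, $\mathrm{Im}\,J=(\ker J)^\perp$ only says $\ker J=\ker J^t$ (a symmetric invertible $J$ satisfies it too). The antisymmetry of $J$ has to be extracted from the full Milnor structure of the dual bracket (in the paper it comes for free from the normal form, since $J$ is built out of the rotations $e_i\mapsto f_i\mapsto -e_i$ scaled by the $\langle v_i,z\rangle$). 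So both the reduction to $z\wedge Ju$ and the antisymmetry of $J$ need actual arguments; as it stands the necessity half of your proof is a plan rather than a proof.
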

\begin{proof}
\begin{description}
\item [La nullité de la courbure.] Soit $H_n^*$ le groupe de Lie connexe et simplement connexe associé à l'algèbre de Lie $(\mathcal{H}_n^*,[\,,\,]^*)$ et soit $g$ une métrique riemannienne invariante à gauche sur $H_n^*$ de valeur $\prs_e$ en l'élément neutre. La nullité de la courbure de $g$ est équivalente à la nullité de la courbure de la connexion de Levi-Civita contravariante $\mathcal{D}$, associée à $(\pi,\prs)$. Si $\mathcal{D}$ est plate, alors d'après le Théorème $\mathcal{H}_n^*=S\oplus[\mathcal{H}_n^*,\mathcal{H}_n^*]$. De plus, il existent $\lambda_1,...,\lambda_r\in S\setminus\{0\}$ et une base orthonormale $(\alpha_1,\beta_1,...,\alpha_r,\beta_r)$ tels que, pour tout $s\in S$ et tout $i=1,...,r$,
\[[s,\alpha_i]^*=\langle s,\lambda_i\rangle^*\beta_i\quad\text{et}\quad[s,\beta_i]^*=-\langle s,\lambda_i\rangle^*\alpha_i\]
Soit $\sharp : \mathcal{H}_n^*\rightarrow\mathcal{H}_n$ l'isomorphisme induit par la métrique et soient $v_i=\sharp(\lambda_i)$, $e_j=\sharp(\alpha_j)$ et $f_j=\sharp(\beta_j)$. On a, pour tout $i=1,...,r$, $\xi(e_i)=-v_i\wedge f_i$, $\xi(f_i)=v_i\wedge e_i$ et $\sharp(S)=\ker\xi$. En effet
\begin{align*}
\alpha\in S\Leftrightarrow&\langle\alpha,[\beta,\gamma]^*\rangle^*=0, \forall\beta,\gamma\in\mathcal{H}_n^*\\
\Leftrightarrow&\langle\sharp(\alpha),\sharp\left([\beta,\gamma]^*\right)\rangle=0, \forall\beta,\gamma\in\mathcal{H}_n^*\\\Leftrightarrow&
[\beta,\gamma]^*\left(\sharp(\alpha)\right)=0, \forall\beta,\gamma\in\mathcal{H}_n^*\\
\Leftrightarrow&\langle\xi\left(\sharp(\alpha)\right),\beta\wedge\gamma\rangle=0, \forall\beta,\gamma\in\mathcal{H}_n^*\\
\Leftrightarrow&\sharp(\alpha)\in\ker\xi.
\end{align*}
Il reste à montrer que $\xi(z)=0$ et que $v_1,...,v_r$ sont dans le centre de $\mathcal{H}_n$. Si $z\not\in\ker\xi$ alors $\ker\xi$ est une sous algèbre abélienne, car pour tout $x,y\in\ker\xi$
\[0=\xi([x,y])=\xi\left(\omega(x,y)z\right)=\omega(x,y)\xi(z),\] 
c'est-à-dire $\omega(x,y)=0$ et donc $[x,y]=\omega(x,y)z=0$.
Il existent $u_0\in\ker\xi$ et $v_0\in\ker\xi^\perp$ tels que $[u_0,v_0]=z$ et on a \(\xi(z)=\xi([u_0,v_0])=\ad\nolimits_{u_0}\xi(v_0)=X\wedge z\), où 
\[X=\sum_{i=1}^r\left(-\langle v_0,e_i\rangle\omega(u_0,f_i)+\langle v_0,f_i\rangle\omega(u_0,e_i)\right)v_i\in\ker\xi\setminus\{0\}.\]
On a, pour tout $u\in\ker\xi$ et tout $i=1,...,r$,
\begin{align*}
\omega(u,e_i)X\wedge z=\xi\left([u,e_i]\right)=\ad\nolimits_u\xi(e_i)=&-\omega(u,f_i)v_i\wedge z\ \text{et}\\
\omega(u,f_i)X\wedge z=\xi\left([u,f_i]\right)=\ad\nolimits_u\xi(f_i)=&\phantom{-}\omega(u,e_i)v_i\wedge z.
\end{align*}
Comme $z\not\in\ker\xi$, $\omega(u,e_i)X+\omega(u,f_i)v_i=0$ et $\omega(u,f_i)X-\omega(u,e_i)v_i=0$. Il en résulte que $\omega(u,e_i)^2+\omega(u,f_i)^2=0$ et donc $\omega(u,e_i)\omega(u,f_i)=0$, ceci signifie que $u$ est un élément central ce qui est en contradiction avec $z\not\in\ker\xi$. Par suite, $z\in\ker\xi$.\\
La condition de cocycle, appliquée à $(e_i,f_j)$ et à $(u_i,e_i)$ donne
\begin{align*}
0=\ad\nolimits_{e_i}\xi(f_j)-\ad\nolimits_{f_j}\xi(e_i)=&\omega(e_i,v_j)z\wedge e_j+
\omega(e_i,e_j)v_j\wedge z+\omega(f_j,v_i)z\wedge f_i\\
&\hfill +\omega(f_j,f_i)v_i\wedge z\\
0=\ad\nolimits_u\xi(e_i)=&-\omega(u,v_i)z\wedge f_i-\omega(u,f_i)v_i\wedge z.
\end{align*}
On déduit des relations ci-dessus que, $\omega(e_i,v_j)=\omega(f_j,v_i)=\omega(u,v_i)=0$ et donc $v_1,...,v_r$ sont dans le centre de $\mathcal{H}_n$.
\item [La nullité de la métacourbure.] Si la courbure est nulle alors $\mathcal{H}_n^*=S\oplus[\mathcal{H}_n^*,\mathcal{H}_n^*]$ est de Milnor et d'après le Théorème, la métacourbure serait nulle, si et seulement si, $[\alpha,[\beta,d\gamma]]=0$, pour tous $\alpha,\beta,\gamma\in S$. Comme, pour tout $u\in\mathcal{H}_n$, $\xi(u)=z\wedge J(u)$ avec $J$ antisymétrique et $J(z)=0$, il existe une base orthonormée $(z_1,...,z_{2n-2r},e_1,f_1,...,e_r,f_r)$ de $(\mathbb{R}z)^\perp$ et $a_1,...,a_r\in\mathbb{R}^*$ tels que, pour tout $j=1...,2n-2r$, $J(z_j)=0$, 
$J(e_i)=a_if_i$ et $J(f_i)=-a_ie_i$, pour tout $i=1,...,r$. Soit $(\lambda,\lambda_1,...,\lambda_{2n-2r},\alpha_1,\beta_1,...,\alpha_r,\beta_r)$ la base duale de $(z,z_1,...,z_{2n-2r},e_1,f_1,...,e_r,f_r)$. On peut vérifier que $S$ est engendrée par $\lambda,\lambda_1,...,\lambda_{2n-2r}$, que $\lambda_1,...,\lambda_{2n-2r}$ sont dans le centre de $\mathcal{H}_n^*$ et $d\lambda_1=...=d\lambda_{2n-2r}=0$. La métacourbure est donc nulle, si et seulement si, $[\lambda,[\lambda,d\lambda]]=0$. Pour conclure, il suffit de remarquer que $d\lambda=-\omega$ et que, pour toute $2$-forme $\rho$ invariante à gauche, $[\lambda,\rho](u,v)=\rho(Ju,v)+\rho(u,Jv)$.
\item [La nullité de la classe modulaire.] La forme volume est donnée par
\[\mu=\lambda\wedge\lambda_1\wedge...\wedge\lambda_{2n-2r}\wedge\alpha_1\wedge\beta_1\wedge...\wedge\alpha_r\wedge\beta_r.\]
Toutes les formes $\lambda_1,\lambda_{2n-2r},\alpha_1,\beta_1,...,\alpha_r,\beta_r$ sont fermées et comme \[i_{\xi(u)}\mu=i_{z\wedge J(u)}\mu=i_{J(u)}\left(\lambda_1\wedge...\wedge\lambda_{2n-2r}\wedge\alpha_1\wedge\beta_1\wedge...\wedge\alpha_r\wedge\beta_r\right),\]
pour tout $u\in\mathcal{H}_n$, $d(i_{\xi(u)}\mu)=0$. On déduit la nullité de la classe modulaire, puisque $\mathcal{H}_n$ et $\mathcal{H}_n^*$ sont unimodulaires. 
\end{description}
\end{proof}

\section{Groupes de Lie-Poisson riemanniens déformables en petites dimensions}
Dans cette partie on déterminera, à isomorphisme près, tous les groupes de Lie-Poisson de petites dimensions vérifiant les conditions de déformation de Hawkins.
\subsection{Dimension deux}
Comme toute algèbre de Milnor de dimension $2$ est commutative, un groupe de Lie-Poisson riemannien $(G,\pi,\langle\,,\,\rangle)$ de dimension $2$ vérifie les conditions de Hawkins, si et seulement si, son tenseur de Poisson est trivial.
\subsection{Dimension trois}
Si $\G^*$ est une algèbre de Milnor de dimension $3$, alors il existe un réel non nul $\lambda$ et une base orthonormale $\{e_1^*,e_2^*,e_3^*\}$ de $\G^*$ telle que :
\[[e_1^*,e_2^*]=\lambda e_3^*,\quad[e_1^*,e_3^*]=-\lambda e_2^*,\quad[e_2^*,e_3^*]=0.\]
Un $1$-cocycle $\rho : \G^*\rightarrow\G^*\wedge\G^*$ est nécessairement de la forme :
\begin{equation}
\rho(e_1^*)=a\,e_2^*\wedge e_3^*,\quad\rho(e_2^*)=b\,e_1^*\wedge e_2^*,\quad\rho(e_3^*)=b\,e_1^*\wedge e_3^*.
\end{equation}
On considère maintenant $\G$ muni du crochet associé à $\rho$, et du produit scalaire dual. Le $1$-cocycle associé $\xi : \G\rightarrow\G\wedge\G$ est donné par :
\begin{equation}
\xi(e_1)=0,\quad\xi(e_2)=-\lambda\,e_1\wedge e_3,\quad\xi(e_3)=\lambda\,e_1\wedge e_2,
\end{equation}
où $\{e_1,e_2,e_3\}$ est la base duale de $\{e_1^*,e_2^*,e_3^*\}$. Le crochet de Lie de $\G$ est donné par :
\begin{equation}
[e_1,e_2]=be_2,\quad[e_1,e_3]=be_3,\quad[e_2,e_3]=ae_1.
\end{equation}
L'identité de Jacobi est donnée par :
\[[e_1,[e_2,e_3]]+[e_2,[e_3,e_1]]+[e_3,[e_1,e_2]]=-2abe_1\]
Pour $\mu=e_1^*\wedge e_2^*\wedge e_3^*$ on a 
\[\rho\left(i_{\xi(e_2)}\mu\right)=\lambda\rho(e_2^*)=\lambda b\,e_1^*\wedge e_2^*,\quad\rho\left(i_{\xi(e_3)}\mu\right)=\lambda\rho(e_3^*)=\lambda b\,e_1^*\wedge e_3^*.\]
En conclusion, $\rho$ définit une structure de bigèbre de Lie sur $\G^*$, qui vérifie, si et seulement si :
\begin{equation}
\rho(e_1^*)=a\,e_2^*\wedge e_3^*,\quad\rho(e_2^*)=\rho(e_3^*)=0.
\end{equation} 
À noter que dans ce cas l'algèbre de Lie $\G$ est unimodulaire. La Proposition qui suit résume tout ce qui précède.
\begin{proposition}
Soit $(G,\pi,\langle\,,\,\rangle)$ un groupe de Lie-Poisson riemannien de dimension $3$, connexe et simplement connexe. Soit $(\G,\xi,\langle\,,\,\rangle_e)$ son algèbre de Lie, munie du $1$-cocycle $\xi$, associé à $\pi$, et du produit scalaire $\langle\,,\,\rangle_e$, la valeur de la métrique riemannienne à l'élément neutre. Alors $(G,\pi,\langle\,,\,\rangle)$ vérifie les conditions de Hawkins, si et seulement si, le triplet $(\G,\xi,\langle\,,\,\rangle_e)$ est isomorphe à l'un des triplets :
\begin{enumerate}
\item $(\mathbb{R}^3,\xi_0,\langle\,,\,\rangle_0)$, où $\mathbb{R}^3$ est muni de sa structure d'algèbre de Lie abélienne, $\xi_0$ est donné par
\[\xi_0(e_1)=0,\quad\xi_0(e_2)=-\lambda\,e_1\wedge e_3,\quad\xi_0(e_3)=\lambda\,e_1\wedge e_2\quad(\lambda\neq0)\]
et $\langle\,,\,\rangle_0$ est le produit scalaire euclidien de $\mathbb{R}^3$.
\item $(\mathcal{H}_3,\xi_0,\langle\,,\,\rangle_0)$, où $\mathcal{H}_3$ est l'algèbre de Heisenberg
$\big\{\left(\begin{smallmatrix}
0&x&z\\
0&0&y\\
0&0&0\end{smallmatrix}\right),\ x,y,z\in\mathbb{R}\big\}$, $\xi_0$ est donné par :
\[\xi_0(e_3)=0,\quad\xi_0(e_1)=-\lambda\,e_3\wedge e_2,\quad\xi_0(e_2)=\lambda\,e_3\wedge e_1\quad(\lambda\neq0)\]
et $\langle\,,\,\rangle_0$ est le produit scalaire sur $\mathcal{H}_3$ dont la matrice dans la base $(e_1,e_2,e_3)$ est donnée par $\left(\begin{smallmatrix}1&0&0\\0&1&0\\0&0&a\end{smallmatrix}\right),\ a>0$.
\end{enumerate}
\end{proposition}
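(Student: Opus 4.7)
Mon plan est d'exploiter les Théorèmes~\ref{main1} et~\ref{main2} et de réduire la classification au tri d'une famille à deux paramètres de cocycles, déjà mise en place dans les pages précédentes de cette sous-section.

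Dans un premier temps, j'appliquerais le Théorème~\ref{main1} pour traduire les deux premières conditions de Hawkins en une condition algébrique sur la bigèbre duale : $(\G^*,\prs^*_e)$ doit être de Milnor et satisfaire \eqref{flat}. La Proposition~\ref{prmilnor} et la classification des algèbres de Milnor en dimension trois ne laissent, à isomorphisme près, que deux possibilités : soit $\G^*$ est abélienne, auquel cas la condition $\xi^t=[\,,\,]^*=0$ entraîne $\xi\equiv0$ et donc $\pi\equiv0$, cas trivial à écarter ; soit $\G^*$ est l'algèbre de Milnor non-abélienne à paramètre $\lambda\neq0$ fixée au début de la sous-section. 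Je me placerais désormais dans ce second cas.

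Je reprendrais alors la paramétrisation $(a,b)$ du $1$-cocycle $\rho$ déjà écrite. L'identité de Jacobi sur $\G$ se réduit à $ab=0$, et la condition~\eqref{flat} est automatique puisque $S=\vect\{e_1^*\}$ est de dimension un et que $\ad_{e_1^*}$ annule $e_2^*\wedge e_3^*$. Le pas décisif serait ensuite l'application du Théorème~\ref{main2} : les algèbres $\G$ obtenues sous la contrainte $ab=0$ étant toutes unimodulaires, les identités $\rho(i_{\xi(e_j)}\mu)=\lambda b\,e_1^*\wedge e_j^*$ calculées ci-dessus forcent $\lambda b=0$, d'où $b=0$ puisque $\lambda\neq0$.

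Il ne resterait alors qu'à traiter les deux sous-cas selon le paramètre résiduel $a$. Pour $a=0$, $\G$ est abélienne, le groupe connexe et simplement connexe correspondant est $(\R^3,+)$, et le tenseur multiplicatif $\pi$ s'obtient en intégrant $\xi_0$ ; on retrouve ainsi le premier cas de l'énoncé. Pour $a\neq0$, la relation $[e_2,e_3]=ae_1$ identifie $\G$ à l'algèbre de Heisenberg $\h_3$ (avec $e_1$ central), dont le groupe connexe et simplement connexe est $H_3$. L'étape que je juge la plus délicate est précisément ce dernier recollement : vérifier que le passage de la base orthonormée utilisée dans la décomposition de Milnor à la base matricielle standard de $\h_3$ transporte le produit scalaire en la forme diagonale $\mathrm{diag}(1,1,a)$ annoncée, et que le paramètre $a>0$ ne peut être absorbé par aucun isomorphisme de bigèbre riemannien ultérieur, assurant ainsi que l'on obtient bien une famille à un paramètre de triplets deux-à-deux non équivalents.
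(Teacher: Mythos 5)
Votre stratégie est, pour l'essentiel, exactement celle du texte : le Théorème \ref{main1} ramène les deux premières conditions de Hawkins au caractère Milnor de $(\G^*,\prs_e^*)$ et à \eqref{flat} (condition effectivement automatique ici, car $S=\vect\{e_1^*\}$ et $\ad\nolimits_{e_1^*}(e_2^*\wedge e_3^*)=0$), puis paramétrisation $(a,b)$ du cocycle $\rho$, identité de Jacobi donnant $ab=0$, élimination de $b$ par la troisième condition, et enfin intégration des deux branches $a=0$ (cas abélien) et $a\neq0$ (cas de Heisenberg, avec transport de la métrique sous la forme $\mathrm{diag}(1,1,a)$). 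Le point qui ne tient pas tel quel est l'affirmation selon laquelle les algèbres $\G$ obtenues sous la contrainte $ab=0$ seraient toutes unimodulaires : dans la branche $a=0$, $b\neq0$, on a $[e_1,e_2]=be_2$, $[e_1,e_3]=be_3$, donc $\tr\ad\nolimits_{e_1}=2b\neq0$ et $\G$ n'est pas unimodulaire. Or c'est précisément cette branche qu'il faut exclure, et le Théorème \ref{main2}, dont l'équivalence n'est énoncée que sous l'hypothèse d'unimodularité de $G$, ne peut pas y être invoqué ; on ne peut pas non plus écarter cette branche au seul motif que $\G$ n'est pas unimodulaire, puisque la sous-section de dimension quatre exhibe un groupe non unimodulaire dont le triplet satisfait la troisième condition de Hawkins. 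Telle quelle, votre nécessité ne permet donc pas de conclure $b=0$.

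La réparation est immédiate, et c'est celle que le texte utilise implicitement : dans le sens direct, on n'applique pas le Théorème \ref{main2} mais la Proposition \ref{unimod.néc}, valable pour tout groupe de Lie-Poisson connexe, qui montre que $d\left(i_\pi\mu\right)=0$ entraîne \eqref{unimodular} ; avec $\rho\left(i_{\xi(e_j)}\mu_e\right)=\lambda b\,e_1^*\wedge e_j^*$ pour $j=2,3$ et $\lambda\neq0$, on obtient $b=0$. Le Théorème \ref{main2} ne sert alors que dans le sens réciproque, où $b=0$ rend $\G$ (abélienne ou de Heisenberg) unimodulaire et où $\rho(e_2^*)=\rho(e_3^*)=0$ et $\xi(e_1)=0$ donnent \eqref{unimodular}. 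Signalez enfin, plutôt que de l'écarter silencieusement, le cas $\G^*$ abélienne : $\pi\equiv0$ satisfait trivialement les trois conditions de Hawkins (la connexion $\mathcal{D}$ est nulle) et ne rentre dans la liste de l'énoncé que si l'on autorise $\lambda=0$, comme le fait d'ailleurs le théorème intégré qui suit ; c'est une imprécision de l'énoncé lui-même, héritée du texte, mais elle mérite d'être mentionnée explicitement.
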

On peut intégrer ces bigèbres de Lie et on a alors le Théorème
\begin{theorem}
Soit $(G,\pi,\langle\,,\,\rangle)$ un groupe de Lie-Poisson riemannien de dimension $3$, connexe et simplement connexe. Alors le triplet $(G,\pi,\langle\,,\,\rangle)$ vérifie les conditions de Hawkins, si et seulement si, il est isomorphe à l'un des triplets :
\begin{enumerate}
\item $(\mathbb{R}^3,\pi,\langle\,,\,\rangle)$, où $\mathbb{R}^3$ est muni de sa structure de groupe de Lie abélien, $\langle\,,\,\rangle$ étant le produit scalaire euclidien, et le tenseur de Poisson est donné par
\[\pi=\lambda\,\partial_x\wedge\left(z\partial_y-y\partial_z\right),\quad\lambda\in\mathbb{R}.\]
\item $(H_3,\pi,\langle\,,\,\rangle)$, où $H_3=\Big\{\left(\begin{smallmatrix}1&x&z\\0&1&y\\0&0&1\end{smallmatrix}\right),\ x,y,z\in\mathbb{R}\Big\}$ (groupe de Heisenberg) et
\[\pi=\lambda\, \left(x\partial_y-y\partial_x\right)\wedge\partial_z,\ \langle\,,\,\rangle=dx^2+dy^2+a\left(dz-xdy\right)^2,\ (\lambda\in\mathbb{R},\ a>0).\]
\end{enumerate}
\end{theorem}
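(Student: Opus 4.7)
La démarche naturelle consiste à partir de la Proposition précédente, qui classifie, à isomorphisme près, les triplets algébriques $(\G,\xi,\prs_e)$ vérifiant les conditions de Hawkins en dimension $3$, puis à intégrer chacune des deux bigèbres de Lie obtenues en un groupe de Lie-Poisson riemannien au moyen du Théorème \ref{theomultiplicatif}. Concrètement, le plan est de remonter, dans les deux cas, du cocycle $\xi:\G\too\G\wedge\G$ au champ de bivecteurs multiplicatif $\pi$ sur le groupe connexe et simplement connexe $G$ associé à $\G$, puis d'exprimer la métrique riemannienne invariante à gauche à partir de $\prs_e$ dans un coframe invariant à gauche.

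Pour le premier cas, $\G=\R^3$ est abélienne, donc $G=(\R^3,+)$, les translations à gauche et à droite coïncident avec l'identité et $\Ad\equiv\id$. La condition de cocycle $\Lambda(gh)=\Lambda(g)+\Ad(g)\cdot\Lambda(h)$ se réduit à la linéarité ; le cocycle $\xi_0$ s'intègre donc trivialement en $\Lambda(x,y,z)=x\xi_0(e_1)+y\xi_0(e_2)+z\xi_0(e_3)=\lambda e_1\wedge(ze_2-ye_3)$. En identifiant $e_i$ au champ constant $\partial_{x_i}$, on obtient $\pi=\lambda\,\partial_x\wedge(z\partial_y-y\partial_z)$, et la métrique invariante à gauche associée au produit scalaire euclidien de $\R^3$ est simplement la métrique euclidienne canonique.

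Pour le second cas, le groupe connexe et simplement connexe associé à l'algèbre de Heisenberg $\h_3$ est $H_3$, avec coordonnées $(x,y,z)$ héritées de la représentation matricielle. Je calcule d'abord les champs invariants à gauche $e_1^+=\partial_x$, $e_2^+=\partial_y+x\partial_z$, $e_3^+=\partial_z$ à partir de la loi de groupe $(x,y,z)\cdot(x',y',z')=(x+x',y+y',z+z'+xy')$, ce qui fournit le coframe dual $\theta^1=dx$, $\theta^2=dy$, $\theta^3=dz-xdy$ et permet de lire la métrique $\prs=dx^2+dy^2+a(dz-xdy)^2$ correspondant à la matrice $\mathrm{diag}(1,1,a)$ dans la base $(e_1,e_2,e_3)$. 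Je pose ensuite $\pi=\lambda(x\partial_y-y\partial_x)\wedge\partial_z$ et j'écris son translaté à gauche
\[\pi_\ell(g)=-\lambda y\,e_1\wedge e_3+\lambda x\,e_2\wedge e_3.\]
Je vérifie alors directement que $\pi_\ell$ est un $1$-cocycle pour $\Ad$ : un calcul matriciel élémentaire donne $\Ad(h^{-1})e_1=e_1+y'e_3$, $\Ad(h^{-1})e_2=e_2-x'e_3$, $\Ad(h^{-1})e_3=e_3$, d'où $\Ad(h^{-1})(e_i\wedge e_3)=e_i\wedge e_3$ pour $i=1,2$ ; la condition $\pi_\ell(gh)=\pi_\ell(h)+\Ad(h^{-1})\pi_\ell(g)$ est alors immédiate. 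Enfin, la différentiation en $e$ de $\pi_\ell$ redonne exactement le cocycle $\xi_0$ prescrit, ce qui, par l'unicité du Théorème \ref{theomultiplicatif}, termine la preuve.

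L'obstacle principal est d'ordre purement calculatoire et réside dans le second cas : exhiber la formule fermée pour $\pi$ sur $H_3$ et vérifier la relation de cocycle pour $\pi_\ell$, ce qui demande un contrôle précis de l'action adjointe. Une fois cette étape franchie, la structure bijective entre bigèbres de Lie et groupes de Lie-Poisson simplement connexes (Théorème \ref{theomultiplicatif}) assure que les deux triplets exhibés sont les seuls, à isomorphisme près, à vérifier les conditions de Hawkins.
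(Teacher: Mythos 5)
Votre démarche est correcte et coïncide pour l'essentiel avec celle du texte : on part de la Proposition classifiant les triplets infinitésimaux $(\G,\xi,\langle\,,\,\rangle_e)$, puis on intègre les deux bigèbres de Lie via le Théorème \ref{theomultiplicatif} pour obtenir les tenseurs multiplicatifs et les métriques invariantes à gauche, l'unicité dans ce théorème garantissant la classification à isomorphisme près. Le seul écart est de méthode, non de fond : là où le texte construit $\pi$ en intégrant le cocycle (méthode de l'annexe), vous exhibez directement la formule et vérifiez la condition de cocycle pour $\pi_\ell$ ainsi que $d_e\pi_\ell=\xi_0$ (calculs exacts, y compris $\Ad(h^{-1})(e_i\wedge e_3)=e_i\wedge e_3$ sur $H_3$), ce qui, joint à l'unicité, est un substitut parfaitement légitime.
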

\subsection{Dimension quatre}
Si $\G^*$ est une algèbre de Milnor de dimension $4$, elle admet une base orthogonale $\{e_1^*,e_2^*,e_3^*,e_4^*\}$ telle que
\[[e_1^*,e_3^*]=e_4^*,\quad[e_1^*,e_4^*]=-e_3^*,\]
avec $\|e_2^*\|=\|e_3^*\|=\|e_4^*\|=1$.\\
Un $1$-cocycle $\rho : \G^*\rightarrow\G^*\wedge\G^*$, pour l'action adjointe, qui vérifie est nécessairement de la forme
\begin{equation*}
\begin{cases}\rho(e_1^*)=&\phantom{-}a\,e_3^*\wedge e_4^*\\
\rho(e_2^*)=&\phantom{-}b\,e_3^*\wedge e_4^*\\
\rho(e_3^*)=&\phantom{-}c\,e_1^*\wedge e_3^*+d\,e_2^*\wedge e_3^*+e\,e_2^*\wedge e_4^*\\
\rho(e_4^*)=&\phantom{-}c\,e_1^*\wedge e_4^*-e\,e_2^*\wedge e_3^*+d\,e_2^*\wedge e_4^*
\end{cases}
\end{equation*}
On considère maintenant $\G$ muni du crochet associé à $\rho$, et du produit scalaire dual. Le $1$-cocycle associé $\xi : \G\rightarrow\G\wedge\G$ est donné par :
\begin{equation}
\xi(e_1)=\xi(e_2)=0,\quad\xi(e_3)=-e_1\wedge e_4,\quad\xi(e_4)=e_1\wedge e_3,
\end{equation}
où $\{e_1,e_2,e_3,e_4\}$ est la base duale de $\{e_1^*,e_2^*,e_3^*,e_4^*\}$. Le crochet de Lie de $\G$ est donné par :
\begin{alignat*}{3}
[e_1,e_2]=0,& [e_1,e_3]=ce_3,& [e_1,e_4]=ce_4\\
[e_2,e_3]=de_3-ee_4,& [e_2,e_4]=ee_3+de_4,& [e_3,e_4]=ae_1+be_2.
\end{alignat*}
On a
\begin{align*}
[e_1,[e_2,e_3]]+[e_2,[e_3,e_1]]+[e_3,[e_1,e_2]]=&\phantom{-2a}0\\
[e_1,[e_2,e_4]]+[e_2,[e_4,e_1]]+[e_4,[e_1,e_2]]=&\phantom{-2a}0\\
[e_1,[e_3,e_4]]+[e_3,[e_4,e_1]]+[e_4,[e_1,e_3]]=&-2c[e_3,e_4]\\
[e_2,[e_3,e_4]]+[e_3,[e_4,e_2]]+[e_4,[e_2,e_3]]=&-2d[e_3,e_4].
\end{align*}
L'identité de Jacobi est donc satisfaite, si et seulement si, $a=b=0$ ou $c=d=0$. Comme $\tr\ad_{e_1}=2c$, $\tr\ad_{e_2}=2d$ et $\tr\ad_{e_3}=\tr\ad_{e_4}=0$, $\G$ est unimodulaire, si et seulement si, $c=d=0$.\\ D'autre part, pour $\mu=e_1^*\wedge e_2^*\wedge e_3^*\wedge e_4^*$ on a
\[\rho\left(i_{\xi(e_3)}\mu\right)=c\,e_1^*\wedge e_2^*\wedge e_3^*,\quad\rho\left(i_{\xi(e_4)}\mu\right)=c\,e_1^*\wedge e_2^*\wedge e_4^*.\]
En remplaçant $e_1$ par $\frac{e_1}{\|e_1\|}$ et en renommant les constantes de structures, on obtient
\begin{proposition}
Soit $(G,\pi,\langle\,,\,\rangle)$ un groupe de Lie-Poisson riemannien de dimension $4$, connexe et simplement connexe. Soit $(\G,\xi,\langle\,,\,\rangle_e)$ son algèbre de Lie, munie du $1$-cocycle $\xi$, associé à $\pi$, et du produit scalaire $\langle\,,\,\rangle_e$, la valeur de la métrique riemannienne à l'élément neutre. Alors $(G,\pi,\langle\,,\,\rangle)$ vérifie les conditions de Hawkins, si et seulement si, le triplet $(\G,\xi,\langle\,,\,\rangle_e)$ est isomorphe à $(\mathbb{R}^4,\xi_0,\langle\,,\,\rangle_0)$ où :
\begin{enumerate}
\item dans la base canonique $(e_1,e_2,e_3,e_4)$ le crochet de Lie est donné par
\[[e_1,e_i]=0,\ [e_2,e_3]=ae_3-be_4,\ [e_2,e_4]=be_3+ae_4,\ [e_3,e_4]=ce_1+de_2\]
avec $a=0$ ou $c=d=0$,
\item $\xi_0(e_1)=\xi_0(e_2)=0,\ \xi_0(e_3)=-\lambda\, e_1\wedge e_4,\ \xi_0(e_4)=\lambda\, e_1\wedge e_3,\ (\lambda\neq0)$ et
\item $\langle\,,\,\rangle_0$ est le produit scalaire euclidien de $\mathbb{R}^4$.
\end{enumerate}
\end{proposition}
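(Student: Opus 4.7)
The plan is to invoke Theorems \ref{main1} and \ref{main2} and reduce the problem to classifying, up to isomorphism, the Lie bialgebra structures $(\G^*,[\,,\,]^*,\rho)$ of dimension $4$ such that $\G^*$ is of Milnor type, the flatness condition \eqref{flat} holds, and (in the unimodular case) the condition \eqref{unimodular} holds. First I would observe that the only non-abelian Milnor structure on a $4$-dimensional Euclidean space arises when $\dim S = 2 = \dim[\G^*,\G^*]$: since $[\G^*,\G^*]$ must be abelian of even dimension and $S$ abelian acting by skew-symmetric endomorphisms, an orthonormal diagonalization of the action of $S$ on $[\G^*,\G^*]$ yields, after a normalization of the metric on $S$, the model
\[
[e_1^*,e_3^*]=e_4^*,\quad [e_1^*,e_4^*]=-e_3^*,
\]
with $e_2^*$ central and $\{e_3^*,e_4^*\}$ orthonormal.

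Next I would parametrize the most general $1$-cocycle $\rho:\G^*\to\wedge^2\G^*$ for the adjoint action. The cocycle identity applied to the two generators of $[\,,\,]^*$ forces $\rho$ to lie in a five-parameter family
\begin{align*}
\rho(e_1^*)&=a\,e_3^*\wedge e_4^*,\quad \rho(e_2^*)=b\,e_3^*\wedge e_4^*,\\
\rho(e_3^*)&=c\,e_1^*\wedge e_3^*+d\,e_2^*\wedge e_3^*+e\,e_2^*\wedge e_4^*,\\
\rho(e_4^*)&=c\,e_1^*\wedge e_4^*-e\,e_2^*\wedge e_3^*+d\,e_2^*\wedge e_4^*.
\end{align*}
Dualizing yields the bracket on $\G$ and the associated cocycle $\xi(e_1)=\xi(e_2)=0$, $\xi(e_3)=-e_1\wedge e_4$, $\xi(e_4)=e_1\wedge e_3$ exhibited in the statement.

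I would then check the three remaining conditions in turn. The flat condition \eqref{flat} is automatic here because $e_2^*$ is central in $\G^*$ and a short calculation gives $\ad_{e_1^*}(e_3^*\wedge e_4^*)=e_4^*\wedge e_4^*-e_3^*\wedge e_3^*=0$, so $\ad_\alpha\ad_\beta\rho(\gamma)=0$ for all $\alpha,\beta,\gamma\in S=\vect(e_1^*,e_2^*)$. The Jacobi identity on $\G$ reduces, after expanding the five Jacobiators, to the obstructions $-2c[e_3,e_4]$ and $-2d[e_3,e_4]$, hence requires either $c=d=0$ or $a=b=0$. The third Hawkins condition, through Proposition \ref{unimod.néc} (necessary) and Theorem \ref{main2} (sufficient in the unimodular case), is encoded by $\rho(i_{\xi(e_3)}\mu)=c\,e_1^*\wedge e_2^*\wedge e_3^*$ and $\rho(i_{\xi(e_4)}\mu)=c\,e_1^*\wedge e_2^*\wedge e_4^*$, forcing $c=0$.

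Collecting the constraints, $c=0$ is always required, and the two branches of Jacobi become either $d=0$ (leaving $a,b,e$ free) or $a=b=0$ (leaving $d,e$ free); renaming the surviving parameters and using the residual action of the orthogonal group of $(\G^*,\prs^*)$ preserving the Milnor decomposition to put the triple in canonical form produces the two cases $a=0$ or $c=d=0$ of the statement. The main obstacle I expect is purely bookkeeping: keeping the cocycle/Jacobi parameters straight through the dualization and verifying that no further normalization of the residual parameters is available, so that the listed family is both exhaustive and non-redundant.
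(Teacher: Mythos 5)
Your overall route is the same as the paper's (reduce via Theorems \ref{main1} and \ref{main2} to classifying bialgebra cocycles on the four-dimensional Milnor algebra, then impose Jacobi and the modular condition), but two steps do not hold as written. The first is the claim that the cocycle identity alone confines $\rho$ to the five-parameter family, from which you then declare \eqref{flat} «~automatique~». The cocycle condition only forces $\rho(e_2^*)\in\R\,e_3^*\wedge e_4^*$; it leaves in $\rho(e_1^*)$ four further components along $e_1^*\wedge e_3^*,\ e_1^*\wedge e_4^*,\ e_2^*\wedge e_3^*,\ e_2^*\wedge e_4^*$ (with matching $e_3^*\wedge e_4^*$-terms in $\rho(e_3^*)$ and $\rho(e_4^*)$). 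For example the coboundary $\rho(\alpha)=\ad\nolimits_{\alpha}(e_1^*\wedge e_3^*)$, i.e. $\rho(e_1^*)=e_1^*\wedge e_4^*$, $\rho(e_3^*)=e_3^*\wedge e_4^*$, $\rho(e_2^*)=\rho(e_4^*)=0$, is a cocycle whose dual bracket (up to sign, $[e_1,e_4]=e_1$, $[e_3,e_4]=e_3$) satisfies Jacobi, while $\ad\nolimits_{e_1^*}\ad\nolimits_{e_1^*}\rho(e_1^*)=-\,e_1^*\wedge e_4^*\neq 0$, so \eqref{flat} fails. Hence metaflatness is \emph{not} automatic on this Milnor algebra; it is precisely the condition $\ad\nolimits_{e_1^*}\ad\nolimits_{e_1^*}\rho(e_i^*)=0$, i.e. \eqref{flat}, that eliminates those four parameters and yields the five-parameter family — this is how the paper argues. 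In the «~only if~» direction your final list survives because \eqref{flat} is among the hypotheses, but your justification of the parametrization is incorrect and your flatness check is circular; taken literally it would also wrongly assert that every flat structure on this group is metaflat.

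The second gap concerns sufficiency of the third Hawkins condition on the branch $c=d=0$, $a\neq 0$ of the statement. There $\G$ is not unimodular ($\tr\ad\nolimits_{e_2}=2a$), so Theorem \ref{main2} does not apply, and Proposition \ref{unimod.néc} gives only the \emph{necessity} of $\rho\left(i_{\xi(x)}\mu_e\right)=0$; neither yields $d\left(i_\pi\mu\right)=0$ for the actual group. The paper closes this case separately (see the Remark following the Proposition and the paragraph «~Cas non unimodulaire~»): it integrates the bialgebra to an explicit solvable group, computes the left-invariant Riemannian volume, and checks $d\left(i_\pi\mu\right)=0$ by hand. Without such a direct verification (or a substitute argument), your «~if~» direction is incomplete for the non-unimodular family.
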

\begin{remark}
Lorsque $a=0$ les algèbres de Lie ci-dessus sont unimodulaires, qu'on peut intégrer en triplets $(G,\pi,\langle\,,\,\rangle_0)$ vérifiant les conditions de Hawkins. Cependant, même dans le cas non unimodulaire, c'est-à-dire $a\neq0$, nous allons voir que le triplet $(G,\pi,\langle\,,\,\rangle_0)$ associé vérifie les conditions de Hawkins. Ce qui montre que la condition d'unimodularité du groupe, n'est pas nécessaire, en général, pour que la troisième condition de Hawkins soit satisfaite.
\end{remark}
Maintenant, il va falloir identifier les algèbres de Lie, dans la Proposition ci-dessus pour pouvoir les  intégrer. Comme ces algèbres de Lie sont des produits directs, ou semi-directs, d'algèbres de Lie classiques, la détermination des groupes de Lie ne pose pas de difficultés particulières. Ensuite, pour déterminer les tenseurs de Poisson multiplicatifs, à partir des $1$-cocycles, on va utiliser la méthode décrite dans Zung \& Dufour (on peut aussi se référer à l'annexe).
\paragraph{Cas unimodulaire $a=0$.}
\begin{enumerate}
\item Si $b=c=d=0$, alors $(G,\pi,\prs)$ est isomorphe à $(\mathbb{R}^4,\pi_0,\prs_0)$, où $\mathbb{R}^4$ est muni de sa structure de groupe abélien et
\[\pi_0=\lambda\partial_x\wedge\left(t\partial_z-z\partial_t\right),\ (\lambda\in\mathbb{R})\quad\prs_0=dx^2+dy^2+dz^2+dt^2.\]
\item Si $a=0$ et $c\neq0$, alors $(G,\pi,\prs)$ est isomorphe au groupe
\[G=\bigg\{\left(\begin{smallmatrix}x&0&0&0\\0&1&y&t\\0&0&1&z\\0&0&0&1\end{smallmatrix}\right),\ x>0,\ y,z,t\in\mathbb{R}\bigg\},\]
avec le tenseur de Poisson multiplicatif
\[\pi=\left(\partial_t-\alpha x\partial_x\right)\wedge\left(y\partial_z-z\partial_y\right)+\frac{1}{2}\alpha\left(z^2-y^2\right)x\,\partial_x\wedge\partial_t\]
et la métrique riemannienne invariante à gauche
\[\langle\,,\,\rangle=\left(\frac{1}{x}dx+\alpha dt-\alpha ydz\right)^2+dy^2+dz^2+\alpha\left(dt-ydz\right)^2.\]
\item Si $a=0$ et $\beta\neq0$, alors $(G,\pi,\prs)$ est isomorphe au groupe
\[G=\bigg\{\left(\begin{smallmatrix}x&0&0&0\\0&1&y&t\\0&0&1&z\\0&0&0&1\end{smallmatrix}\right),\ x>0,\ y,z,t\in\mathbb{R}\bigg\},\]
avec le tenseur de Poisson multiplicatif
\[\pi=x\partial_x\wedge\left(y\partial_z-z\partial_y\right)+\frac{1}{2}\alpha\left(y^2-z^2\right)x\,\partial_x\wedge\partial_t\]
et la métrique riemannienne invariante à gauche
\[\langle\,,\,\rangle=\frac{1}{x^2}dx^2+dy^2+dz^2+\alpha\left(dt-ydz\right)^2.\]
\item Si $a\neq0$ et $\alpha=\beta=0$, alors $(G,\pi,\prs)$ est isomorphe à $(\mathbb{R}^4,\pi_0,\prs_0)$, où $\mathbb{R}^4$ est muni du produit
\[X\cdot X'=\left(x+x',y+y',z+z'\cos y+t'\sin y,t-z'\sin y+t'\cos y\right),\]
où $X=(x,y,z,t)$ et $X'=(x',y',z',t')$, $\pi_0=\partial_x\wedge\left(z\partial_t-t\partial_z\right)$ et \[\langle\,,\,\rangle=dx^2+a\,dy^2+dz^2+dt^2,\ (a>0).\]
\item Si $b\neq0$, $\alpha\neq0$ et $\beta=0$, alors $(G,\pi,\prs)$ est isomorphe à $(\mathbb{R}^2\times\mathbb{C},\pi_0,\prs_0)$, où $\mathbb{R}^2\times\mathbb{C}$ est muni de la structure de groupe oscillateurs 
\[(t,s,z)\cdot(t',s',z')=\left(t+t',s+s'+\frac{1}{2}\im(\overline{z}\exp(it)z'),z+\exp(it)z'\right),\]
$\pi_0=\partial_s\wedge\left(x\partial_y-y\partial_x\right)$ et 
\[\prs_0=a\,dt^2+b\,ds^2+ds(ydx-xdy)+\frac{1}{4}(ydx-xdy)^2,\ \text{avec}\ a,b>0.\]
\item Si $b\neq0$ et $\beta\neq0$, alors $(G,\pi,\prs)$ est isomorphe à $(\mathbb{R}\times G_0,\pi_0,\prs_0)$, où $\mathbb{R}\times G_0$ est le produit du groupe abélien $\mathbb{R}$ avec $G_0$ qui est soit $SU(2)$ ou $\widetilde{SL(2,\mathbb{R})}$ (le revêtement universel de $SL(2,\mathbb{R})$. Si $\{E_1,E_2,E_3\}$ est une base de l'algèbre de Lie de $G_0$ telle que
\[[E_1,E_2]=E_3,\quad[E_3,E_1]=E_2\quad[E_2,E_3]=\pm E_1,\]
alors $\pi=\partial_t\wedge\left(E_1^+-E_1^-\right)$,
où $E_1^+$ (respectivement $E_1^-$) est le champ invariant à gauche (respectivement à droite) associé à $E_1$. La métrique invariante à gauche $\prs_0$ est donnée par sa valeur en l'identité, dans la base $\{E_0,E_1,E_2,E_3\}$, par la matrice
\[\begin{pmatrix}a&b&0&0\\b&c&0&0\\0&0&d&0\\0&0&0&d\end{pmatrix}.\]
\paragraph{Cas non unimodulaire, $a\neq0$.} Dans ce cas $(G,\pi,\prs)$ est isomorphe à $(\mathbb{R}^
4,\pi_0,\prs_0)$, où $\mathbb{R}^4$ est muni du produit
\[X\cdot X'=\left(x+x',y+y',z+e^{bx}\left(z'\cos(cx)+t'\sin(cx)\right),t+e^{bx}\left(-z'\sin(cx)+t'\cos(cx)\right)\right)\]
et $X=(x,y,z,t)$, $X'=(x',y',z',t')$, $\pi_0=\partial_y\wedge\left(z\partial_t-t\partial_z\right)$ et \[\prs_0=dx^2+dy^2+\exp(-2bx)\left(dz^2+dt^2\right).\]
Le volume riemannien est donné par 
\[\mu=\exp(-2bx)\,dx\wedge dy\wedge dz\wedge dt,\]
et
\[i_\pi\mu=-\exp(-2bx)\left(zdx\wedge dz+tdx\wedge dt\right).\]
Donc $d\left(i_\pi\mu\right)=0$, et la troisième condition de Hawkins est satisfaite.
\end{enumerate}
\subsection{Dimension cinq}
Si $\G^*$ est une algèbre de Milnor non commutative, de dimension $5$, alors son idéal dérivé est de dimension $2$ ou $4$. 
\paragraph{Cas où $\dim[\G^*,\G^*]=2$ :}
Il existe une base orthogonale $\{e_1^*,e_2^*,e_3^*,e_4^*,e_5^*\}$ de $\G^*$ telle que $\|e_i^*\|=1$ pour $i=2,3,4,5$. La structure d'algèbre de Lie est donnée par
\begin{equation}
[e_1^*,e_4^*]=e_5^*,\ [e_1^*,e_5^*]=-e_4^*
\end{equation}
et les autres crochets sont nuls, ou obtenus par antisymétrie.\\
Le cocycle $\xi$ sur $\G$, dual du crochet de Lie $[\,,\,] : \G^*\times\G^*\rightarrow\G^*$ est donné, dans la base duale, par :
\begin{equation*}
\xi(e_1)=\xi(e_2)=\xi(e_3)=0,\ \xi(e_4)=-e_1\wedge e_5,\ \xi(e_5)=e_1\wedge e_4.
\end{equation*}
Soit $\rho : \G^*\rightarrow\wedge^2\G^*$ le $1$-cocycle dual de la bigèbre de Lie $(\G,\xi)$. La condition 
\[\ad\nolimits_{e_1^*}\ad\nolimits_{e_1^*}\rho(e_i^*)=0,\quad i=1,2,3\]
donne, pour $i=1,2,3$
\[\rho(e_i^*)=\alpha_{12}^ie_1^*\wedge e_2^*+\alpha_{13}^ie_1^*\wedge e_3^*+\alpha_{23}^ie_2^*\wedge e_3^*+\alpha_{45}^ie_4^*\wedge e_5^*.\]
La condition de cocycle, $\rho\left([e_i,e_j]\right)=\ad_{e_i}\rho(e_j)-\ad_{e_j}\rho(e_i)$ donne
\begin{equation*}
\left\{\begin{array}{lllll}
\rho(e_i^*)=&a_ie_2^*\wedge e_3^*+b_ie_4^*\wedge e_5^*\ \text{pour tout}\ i=1,2,3\\
\rho(e_4^*)=&Ae_1^*\wedge e_4^*+Be_2^*\wedge e_4^*+Ce_2^*\wedge e_5^*+De_3^*\wedge e_4^*+Ee_3^*\wedge e_5^*\\
\rho(e_5^*)=&Ae_1^*\wedge e_5^*-Ce_2^*\wedge e_4^*+Be_2^*\wedge e_5^*-Ee_3^*\wedge e_4^*+De_3^*\wedge e_5^*.
\end{array}\right.
\end{equation*}
La condition nécessaire d'unimodularité, $d\left(i_{\xi(e_i)}\mu\right)=0,\quad i=1,...,5$, où $\mu$ est la forme volume invariante à gauche $e_1^*\wedge e_2^*\wedge e_3^*\wedge e_4^*\wedge e_5^*$, donne
\begin{equation*}\left\{\begin{array}{cc}
d\left(i_{\xi(e_4)}\mu\right)&=d\left(e_2^*\wedge e_3^*\wedge e_4^*\right)=-\rho(e_2^*)\wedge e_3^*\wedge e_4^*+e_2^*\wedge\rho(e_3^*)\wedge e_4^*-e_2^*\wedge e_3^*\wedge\rho(e_4^*)\\
&=-e_2^*\wedge e_3^*\wedge\rho(e_4^*)=-Ae_1^*\wedge e_2^*\wedge e_3^*\wedge e_4^*\\
d\left(i_{\xi(e_5)}\mu\right)&=d\left(e_2^*\wedge e_3^*\wedge e_5^*\right)=-\rho(e_2^*)\wedge e_3^*\wedge e_5^*+e_2^*\wedge\rho(e_3^*)\wedge e_5^*-e_2^*\wedge e_3^*\wedge\rho(e_5^*)\\
&=-e_2^*\wedge e_3^*\wedge\rho(e_5^*)=-Ae_1^*\wedge e_2^*\wedge e_3^*\wedge e_5*
\end{array}\right.
\end{equation*}
et donc $A=0$ et $\rho$ est donné par
\begin{equation*}
\left\{\begin{array}{lllll}
\rho(e_i^*)=&a_ie_2^*\wedge e_3^*+b_ie_4^*\wedge e_5^*\ \text{pour tout}\ i=1,2,3\\
\rho(e_4^*)=&Ae_2^*\wedge e_4^*+Be_2^*\wedge e_5^*+Ce_3^*\wedge e_4^*+De_3^*\wedge e_5^*\\
\rho(e_5^*)=&-Be_2^*\wedge e_4^*+Ae_2^*\wedge e_5^*-De_3^*\wedge e_4^*+Ce_3^*\wedge e_5^*.
\end{array}\right.
\end{equation*}
La structure d'algèbre de Lie de $\G$ est donnée, dans la base duale, par
\begin{equation*}\begin{array}{lll}
\;[e_2,e_3]=a_1e_1+a_2e_2+a_3e_3,&
\;[e_2,e_4]=Ae_4-Be_5,&
\;[e_2,e_5]=Be_4+Ae_5,\\
\;[e_3,e_4]=Ce_4-De_5,&
\;[e_3,e_5]=De_4+Ce_5,&\;[e_4,e_5]=b_1e_1+b_2e_2+b_3e_3.
\end{array}
\end{equation*}
Examinons la condition de Jacobi :
\begin{equation*}
\left\{\begin{array}{lllll}
J(e_2,e_3,e_4)=&-(a_2A+a_3C)e_4+(a_2B+a_3D)e_5\\
J(e_2,e_3,e_5)=&-(a_2B+a_3D)e_4-(a_2A+a_3C)e_5\\
J(e_2,e_4,e_5)=&\phantom{-}(a_1b_3-2b_1A)e_1+(a_2b_3-2b_2A)e_2+b_3(a_3-2A)e_3\\
J(e_3,e_4,e_5)=&-(a_1b_2+2b_1C)e_1-b_2(a_2+2C)e_2-(a_3b_2+2b_3C)e_3.
\end{array}\right.
\end{equation*}
Les vecteurs, $v_1=\left(\begin{smallmatrix}a_2\\a_3\end{smallmatrix}\right)$, $v_2=\left(\begin{smallmatrix}-C\\\phantom{-}A\end{smallmatrix}\right)$ et $v_3=\left(\begin{smallmatrix}-D\\\phantom{-}B\end{smallmatrix}\right)$ sont liés et l'identité de Jacobi est vérifiée, si et seulement si
\begin{enumerate}
\item $a_1=a_2=a_3=0$, $A=C=0$, c'est-à-dire
\begin{equation*}
[e_4,e_5]=b_1e_1+b_2e_2+b_3e_3.
\end{equation*}
\item $a_2=a_3=0$, $b_1=b_2=b_3=0$, c'est-à-dire
\begin{equation*}\begin{array}{lll}
\;[e_2,e_3]=a_1e_1,&
\;[e_2,e_4]=Ae_4-Be_5,&
\;[e_2,e_5]=Be_4+Ae_5,\\
\;[e_3,e_4]=Ce_4-De_5,&
\;[e_3,e_5]=De_4+Ce_5,&\;[e_4,e_5]=0.
\end{array}
\end{equation*}
\item $a_2=a_3=0$, $b_2=b_3=0$, $A=C=0$, c'est-à-dire
\begin{equation*}\begin{array}{lll}
\;[e_2,e_3]=a_1e_1,&
\;[e_2,e_4]=-Be_5,&
\;[e_2,e_5]=Be_4,\\
\;[e_3,e_4]=-De_5,&
\;[e_3,e_5]=De_4,&\;[e_4,e_5]=b_1e_1.
\end{array}
\end{equation*}
\end{enumerate}

Si $(a_2,a_3)\neq(0,0)$, alors il existe $\alpha,\beta\in\mathbb{R}$ tels que
\[A=\alpha a_3,\ B=\beta a_3,\ C=-\alpha a_2,\ D=-\beta a_2\]
et on a le système
\begin{equation}\label{system}
\left\{\begin{array}{cccc}
(1-2\alpha)a_2b_2=&0,&\ (1-2\alpha)a_3b_3=&0\\
a_2b_3-2\alpha a_3b_2=&0,&\ 2\alpha a_2b_3-a_3b_2=&0\\
a_1b_3-2\alpha a_3b_1=&0,&\ a_1b_2-2\alpha a_2b_1=&0.
\end{array}\right.
\end{equation}
Par suite :
\begin{enumerate}
\item $b_1=b_2=b_3=0$, $A=\alpha a_3,\ B=\beta a_3,\ C=-\alpha a_2,\ D=-\beta a_2$, c'est-à-dire
\begin{equation*}\begin{array}{lll}
\;[e_2,e_3]=a_1e_1+a_2e_2+a_3e_3,&
\;[e_2,e_4]=a_3\alpha e_4-a_3\beta e_5,&
\;[e_2,e_5]=a_3\beta e_4-a_3\alpha e_5,\\
\;[e_3,e_4]=-a_2\alpha e_4+a_2\beta e_5,&
\;[e_3,e_5]=-a_2\beta e_4-a_2\alpha e_5,&\;[e_4,e_5]=0.
\end{array}
\end{equation*}
\item $b_2=b_3=0$, $A=C=0$, $B=\beta a_3,\ D=-\beta a_2$, c'est-à-dire
\begin{equation*}\begin{array}{lll}
\;[e_2,e_3]=a_1e_1+a_2e_2+a_3e_3,&
\;[e_2,e_4]=-a_3\beta e_5,&
\;[e_2,e_5]=a_3\alpha e_5,\\
\;[e_3,e_4]=a_2\beta e_5,&
\;[e_3,e_5]=-a_2\beta e_4,&\;[e_4,e_5]=b_1e_1.
\end{array}
\end{equation*}
\item $A=\frac{1}{2} a_3,\ B=\beta a_3,\ C=-\frac{1}{2} a_2,\ D=-\beta a_2$, $b_i=\gamma a_i$ $(i=1,2,3)$. En effet, si $\alpha=\frac{1}{2}$, le système \eqref{system} devient
\begin{equation*}
\left\{\begin{array}{lll}
a_2b_3-a_3b_2=&0,\\
a_1b_3-a_3b_1=&0,\\
a_1b_2-a_2b_1=&0,
\end{array}\right.
\end{equation*}
c'est-à-dire $\left(\begin{smallmatrix}a_1\\a_2\\a_3\end{smallmatrix}\right)\wedge\left(\begin{smallmatrix}b_1\\b_2\\b_3\end{smallmatrix}\right)=0$
et donc, il existe $\gamma\in\mathbb{R}$ tel que $b_i=\gamma a_i$, $(i=1,2,3)$. Dans ce cas on a
\begin{equation*}\begin{array}{lll}
\;[e_2,e_3]=a_1e_1+a_2e_2+a_3e_3,&
\;[e_2,e_4]=\frac{1}{2}a_3 e_4-a_3\beta e_5,&
\;[e_2,e_5]=a_3\beta e_4-\frac{1}{2}a_3e_5,\\
\;[e_3,e_4]=-\frac{1}{2}a_2e_4+a_2\beta e_5,&
\;[e_3,e_5]=-a_2\beta e_4\frac{1}{2}a_2e_5,&\;[e_4,e_5]=\gamma\left(a_1e_1+a_2e_2+a_3e_3\right).
\end{array}
\end{equation*}
\end{enumerate}
\paragraph{Cas où $\dim[\G^*,\G^*]=4$ :}
Il existe une base orthogonale $\{e_1^*,e_2^*,e_3^*,e_4^*,e_5^*\}$ de $\G^*$ telle que $\|e_i^*\|=1$ pour $i=2,3,4,5$. La structure d'algèbre de Lie est donnée par
\begin{equation}
[e_1^*,e_2^*]=e_3^*,\ [e_1^*,e_3^*]=-e_2^*,\ [e_1^*,e_4^*]=\lambda e_5^*,\ [e_1^*,e_5^*]=-\lambda e_4^*\ (\lambda\neq0)
\end{equation}
et les autres crochets sont nuls, ou obtenus par antisymétrie.\\
Le cocycle $\xi$ sur $\G$, dual du crochet de Lie $[\,,\,] : \G^*\times\G^*\rightarrow\G^*$, est donné, dans la base duale par :
\begin{equation*}
\xi(e_1)=0,\ \xi(e_2)=-e_1\wedge e_3,\ \xi(e_3)=e_1\wedge e_2,\ \xi(e_4)=-\lambda\,e_1\wedge e_5,\ \xi(e_5)=\lambda\,e_1\wedge e_4.
\end{equation*}
Soit $\rho : \G^*\rightarrow\wedge^2\G^*$ le $1$-cocycle dual de la bigèbre de Lie $(\G,\xi)$. La condition 
\[\ad\nolimits_{e_1^*}\ad\nolimits_{e_1^*}\rho(e_1^*)=0,\]
donne
\begin{equation}
\rho(e_1^*)=\left\{\begin{array}{ll}Ae_2^*\wedge e_3^*+Be_2^*\wedge e_4^*+Ce_2^*\wedge e_5^*\mp Ce_3^*\wedge e_4^*\pm Be_3^*\wedge e_5^*+De_4^*\wedge e_5^*,\ \text{si}\ \lambda=\pm1\\
Ae_2^*\wedge e_3^*+Be_4^*\wedge e_5^*,\ \text{sinon}.
\end{array}\right.
\end{equation}
Examinons maintenant la condition nécessaire d'unimodularité :
\[d\left(i_{\xi(e_i)}\mu\right)=0,\quad i=1,...,5,\]
où $\mu$ est la forme volume invariante à gauche $e_1^*\wedge e_2^*\wedge e_3^*\wedge e_4^*\wedge e_5^*$. On a 
\begin{equation*}\left\{\begin{array}{llll}
i_{\xi(e_2)}\mu=&e_2^*\wedge e_4^*\wedge e_5^*\\
i_{\xi(e_3)}\mu=&e_3^*\wedge e_4^*\wedge e_5^*\\
i_{\xi(e_4)}\mu=&\lambda\,e_2^*\wedge e_3^*\wedge e_4^*\\
i_{\xi(e_5)}\mu=&\lambda\,e_2^*\wedge e_3^*\wedge e_5^*.
\end{array}\right.
\end{equation*}
Pour toute $1$-forme invariante à gauche $\alpha$, et pour tout couple de champs invariants à gauche $(X,Y)$, on a $d\alpha\,(X,Y)=\mathscr{L}_X\alpha(Y)-\mathscr{L}_Y\alpha(X)-\alpha([X,Y])=-\alpha([X,Y])$, de sorte que la restriction de la différentielle extérieur aux $1$-formes invariantes à gauche, coïncide avec $-\rho$ ; donc
\begin{equation*}
\left\{\begin{array}{llll}d\left(i_{\xi(e_2)}\mu\right)&=-\rho(e_2^*)\wedge e_4^*\wedge e_5^*+e_2^*\wedge\rho(e_4^*)\wedge e_5^*-e_2^*\wedge e_4^*\wedge\rho(e_5^*)\\
d\left(i_{\xi(e_3)}\mu\right)&=-\rho(e_3^*)\wedge e_4^*\wedge e_5^*+e_3^*\wedge\rho(e_4^*)\wedge e_5^*-e_3^*\wedge e_4^*\wedge\rho(e_5^*)\\
d\left(i_{\xi(e_4)}\mu\right)&=-\lambda\left(\rho(e_2^*)\wedge e_3^*\wedge e_4^*-e_2^*\wedge\rho(e_3^*)\wedge e_4^*+e_2^*\wedge e_3^*\wedge\rho(e_4^*)\right)\\
d\left(i_{\xi(e_5)}\mu\right)&=-\lambda\left(\rho(e_2^*)\wedge e_3^*\wedge e_5^*-e_2^*\wedge\rho(e_3^*)\wedge e_5^*+e_2^*\wedge e_3^*\wedge\rho(e_5^*)\right).
\end{array}\right.
\end{equation*}
On en déduit que
\begin{equation*}
\left\{\begin{array}{lllll}
\rho(e_i^*)(e_1,e_j)=0\ \text{pour tout}\ i,j=2,...,5&\\
\rho(e_2^*)(e_2,e_3)-\rho(e_4^*)(e_3,e_4)-\rho(e_5^*)(e_3,e_5)=&0\\
\rho(e_2^*)(e_2,e_4)+\rho(e_3^*)(e_3,e_4)-\rho(e_5^*)(e_4,e_5)=&0\\
\rho(e_2^*)(e_2,e_5)+\rho(e_3^*)(e_3,e_5)+\rho(e_4^*)(e_4,e_5)=&0\\
\rho(e_3^*)(e_2,e_3)+\rho(e_4^*)(e_2,e_4)+\rho(e_5^*)(e_2,e_5)=&0.
\end{array}\right.
\end{equation*}
D'autre part, la condition de cocycle pour $\rho$ donne $\rho(e_i^*)(e_2,e_3)=\rho(e_i^*)(e_4,e_5)=0$, pour tout $i=2,...,5$, et
\begin{equation*}
\left\{\begin{array}{cc}
\rho(e_2^*)=&-\ad\nolimits_{e_1^*}\rho(e_3^*)\\
\rho(e_3^*)=&\phantom{-}\ad\nolimits_{e_1^*}\rho(e_2^*)\\
\rho(e_4^*)=&-\frac{1}{\lambda}\ad\nolimits_{e_1^*}\rho(e_5^*)\\
\rho(e_5^*)=&\phantom{-}\frac{1}{\lambda}\ad\nolimits_{e_1^*}\rho(e_4^*).
\end{array}\right.
\end{equation*}
On en déduit alors, grâce à l'identité de Jacobi du crochet de Lie de $\G$, que
$\rho(e_i^*)=0$ pour tout $i=2,3,4,5$.\\
\paragraph{Cas $\lambda\not\in\{-1,1\}$}
Dans ce cas $\rho(e_1^*)=a\,e_2^*\wedge e_3^*+b\,e_4^*\wedge e_5^*$ et la structure d'algèbre de Lie de $\G$ est donnée par
\[[e_2,e_3]=ae_1,\quad[e_4,e_5]=be_1.\]
Si $a=b=0$, alors $\G$ est l'algèbre commutative $\mathbb{R}^5$.\\
Si $a=0$ et $b\neq0$ (resp. $a\neq0$ et $b=0$), alors $\G$ est la somme directe de l'algèbre commutative $\mathbb{R}^2$ et l'algèbre de Heisenberg $\mathfrak{h}_3$
\[\G=<e_2,e_3>\oplus<e_1,e_4,e_5>\quad\text{resp.}\ <e_4,e_5>\oplus<e_1,e_2,e_3>\]
Si $(a,b)\neq(0,0)$ alors $\G$ est l'algèbre de Heisenberg $\mathfrak{h}_5$.
\paragraph{Cas $\lambda=-1$}
$\rho(e_1^*)=a\,e_2^*\wedge e_3^*+b\,e_2^*\wedge e_4^*+c\,e_2^*\wedge e_5^*+c\,e_3^*\wedge e_4^*-b\,e_3^*\wedge e_5^*+d\,e_4^*\wedge e_5^*$
et la structure d'algèbre de Lie de $\G$ est donnée par
\begin{alignat*}{3}
[e_2,e_3]=ae_1,&[e_2,e_4]=be_1,&[e_2,e_5]=ce_1\\
[e_3,e_4]=ce_1,&[e_3,e_5]=-be_1,&[e_4,e_5]=de_1.
\end{alignat*}
\paragraph{Cas $\lambda=1$}
$\rho(e_1^*)=a\,e_2^*\wedge e_3^*+b\,e_2^*\wedge e_4^*+c\,e_2^*\wedge e_5^*-c\,e_3^*\wedge e_4^*+b\,e_3^*\wedge e_5^*+d\,e_4^*\wedge e_5^*$
et la structure d'algèbre de Lie de $\G$ est donnée par
\begin{alignat*}{3}
[e_2,e_3]=ae_1,&[e_2,e_4]=be_1,&[e_2,e_5]=ce_1\\
[e_3,e_4]=-ce_1,&[e_3,e_5]=be_1,&[e_4,e_5]=de_1.
\end{alignat*}
\chapter{Annexe}\label{chapter5}
\lettrine[lines=3, nindent=0em]{L}{e} but de cette annexe est d'introduire les objets mathématiques utilisés dans cette thèse. Nous donnons les définitions et les résultats fondamentaux, illustrés de quelques exemples.
\section{Théorie de déformations et cohomologies}
\paragraph{Pourquoi la déformation non commutative ?}~\\
Pour comprendre les origines de la géométrie non commutative, Il faut faire un saut vers la fin du $19^{\text{ème}}$ siècle. 

La physique classique est constituée de la mécanique classique (théorie de la matière, mouvement des particules...), de l'électromagnétisme (théorie des champs) et des équations les gouvernants (équation de Newton, équation de Maxwell). Cette théorie était expérimentalement très bien vérifiée. Toutefois, quelques problèmes surgissaient, en début du $20^{\text{ème}}$ siècle avec l'évolution de la physique expérimentale, notamment les deux problèmes majeurs suivants :
\begin{enumerate}
\item Le spectre des atomes n'est pas continu, comme prédisait la théorie, mais plutôt discret.
\item La vitesse de la lumière est constante, indépendamment du mouvement de l'observateur.
\end{enumerate}
\paragraph{La mécanique quantique} La théorie était initiée et développé par Heisenberg, Bohr, De Broglie, Schrödinger, Dirac et d'autres en $1924$-$1925$. Elle portait des réponses au premier problème en décrivant ``l'état d'un point" par un vecteur unitaire dans un espace de Hilbert. Les coordonnées de cet espace ne commutent plus entre eux, d'après le \emph{principe d'incertitude de Heisenberg} (on ne peut pas mesurer avec précision une coordonnée sans perturber la mesure d'une autre).
Il fallait surmonter la difficulté, de définir une notion de point de l'espace ``quantique''. Dirac a eu l'idée géniale d'interpréter la mécanique quantique dans un formalisme géométrique non commutatif ! Cette idée heuristique était appuyée par le Théorème de Gelfand-Naïmark, qui établissait un pont entre la topologie et l'algèbre. Plus précisément, le Théorème de Gelfand-Naimark reconstitue parfaitement la notion de point parce que l'algèbre $C_0(X)$ (des fonction continues, nulles à l'infini sur un espace topologique
localement compact $X$) est une $C^\star$-algèbre commutative. On a ainsi une bijection
naturelle entre les espaces topologiques localement compacts et les $C^\star$-algèbres
commutatives, de sorte que le spectre de $C_0(X)$, avec une topologie naturelle, est
alors homéomorphe à l'espace $X$. On reconstitue donc non seulement la notion de
point, mais toute la topologie de l'espace.
\paragraph{La théorie de la relativité} Suite aux idées de Poincaré et de Minkowski, Einstein suggéra dans sa théorie de la relativité restreinte en $1905$, un modèle dans lequel l'espace et le temps forment un espace de dimension $4$ et où la signification du temps et de l'espace dépend de l'observateur. Cet espace est muni d'une forme bilinéaire, et les transformations qui laissent cette forme invariante sont \emph{les transformations de Lorentz} et toutes les équations, sur cet espace, doivent être invariantes par ces transformations de Lorentz. Plus tard, en $1912$, Einstein publia sa théorie de relativité générale, où il interpréta l'espace-temps comme variété pseudo-riemannienne et la gravitation comme la courbure de l'espace-temps.

La physique théorique s'est vite confrontée à un problème de taille qui continue, jusqu'à nos jours, à défier l'ensemble des mathématiciens et physiciens : comment fusionner dans un même cadre conceptuel, l'aspect opératoriel de la mécanique quantique (théorie des opérateurs, $C^\star$-algèbre, Théorème de Gelfand-Naimark...) et l'aspect géométrique de la relativité générale (géométrie pseudo-riemannienne, courbure de l'espace-temps, théorie de jauges ou les connexions sur les fibrés principaux...). Deux théories récentes, sont prometteuses :
\begin{description}
\item[Théorie des cordes]Cette théorie qui existe depuis $25$ ans n'est pas capable de faire des prédictions qui peuvent être vérifiées expérimentalement ; ce qui la laisse au rang d'une métathéorie ! Toutefois, Kontsevich s'est inspiré de cette théorie dans son Théorème de formalité, pour montrer que toute variété de Poisson admet une déformation (formelle) par quantification.
\item[Géométrie non commutative] La géométrie non commutative est devenue en quelques années un sujet de recherche très actif, aussi bien en physique théorique qu'en mathématiques.
\end{description}
La géométrie non commutative trouve son origine dans la mécanique quantique qui, dans sa première formulation, apparaît sous forme comparable à la mécanique hamiltonienne où les coordonnées de l'espaces de phases sont remplacées par des opérateurs sur un espace de Hilbert, qui ne commutent pas entre eux. Le principe d'incertitude de Heisenberg affirme qu'on ne peut pas localiser une particule, avec une précision infinie, mais on peut seulement parler de la probabilité de sa présence à un point donné. Ce sont les fonctions d'ondes (les solutions de l'équation de Schrödinger) qui permettent de calculer la probabilité de présence de la particule. Dans le but de fusionner l'aspect opératoriel de la mécanique quantique avec l'aspect géométrique de la théorie de relativité, on a cherché des équations de type Schrödinger qui sont d'ordre $1$ et invariantes par les transformations de Lorentz ; d'où l'introduction de l'opérateur de Dirac. La géométrie non commutative... (à compléter)
La théorie mathématique de la déformation est née avec la physique quantique. L'ensemble des observables de l'espace n'est plus l'algèbre commutative des fonctions numériques ; mais plutôt 

Si $A_0$ est une algèbre, une extension infinitésimale de $A_0$ est une algèbre associative sur l'espace vectoriel $A_0 \oplus A_0\hbar$, où $\hbar$ est un élément central et $\hbar^2=0$ qui coincide avec la structure initiale sur $A_0$ modulo $\hbar$. L'exemple trivial est donné par

$$(a+b\hbar)(c+d\hbar)=ac+(ad+bc)\hbar$$
Ceci revient à multiplier, comme polynômes en $\hbar$, et prendre $\hbar^2=0$.
Les exemples non triviaux sont de la forme 
$$(a+b\hbar)\star_f(c+d\hbar)=ac+(ad+bc+f(a,c))\hbar$$
où $f:A\times A\to A$ est bilinéaire, et vérifie quelques conditions (qui résultent de l'associativité), dites \emph{la condition de $2$-cocycle de Hochschild}.

En géométrie non commutative un \emph{triplet spectral} \index{triplet spectral} est un ensemble de données qui encode la géométrie de manière analytique. La définition comprend généralement un espace de Hilbert, une algèbre d'opérateurs sur cet espace et un opérateur non borné autoadjoint muni de structures supplémentaires. Ces notions ont été introduite par Alain Connes pour généraliser le Théorème de l'indice d'Atiyah-Singer aux espaces non commutatifs. On trouve ces notions avec d'autres appellations comme $K$-cycles non bornés ou \emph{modules de Fredholm}.\\
Un triplet spectral est un triplet $(A,H,D)$, où $H$ est un espace de de Hilbert, $A$ est une algèbre d'opérateurs de $H$ (stable par passage à l'adjoint) et $D$ est un opérateur non borné autoadjoint, à domaine dense tel que
\begin{equation}
\|[a,D]\|<\infty,\quad\text{pour tout}\ a\in A
\end{equation}
\paragraph{Déformation} La théorie de déformation constitue un volet important de l'algèbre homologique.
Soit $\mathcal{A}$ une algèbre (un module sur un anneau $\mathbb{K}$, muni d'un produit bilinéaire, associatif). Une extension de $\mathcal{A}$ est une courte suite exacte
$$0\rightarrow N\rightarrow E\rightarrow\mathcal{A}\to 0$$
d'algèbres (sur le même anneau). Cela veut dire que les flèches représentent des homomorphismes d'algèbres et que l'image de l'une est égale au noyau de la suivante. En particulier, l'homomorphisme de $N$ dans $E$ est injectif et celui de $E$ dans $\mathcal{A}$ est surjectif. 

On identifie alors $N$ à son image qui, étant le noyau d'un homomorphisme, est un idéal de $E$. De plus, $\mathcal{A}$ s'identifie au quotient $E/N$.\\
Une \emph{déformation} d'une algèbre $\mathcal{A}_0$ est une extension de la forme
$$0\rightarrow\hbar\mathbb{A}\rightarrow\mathbb{A}\stackrel{\mathcal{P}}\rightarrow\mathcal{A}_0\to 0$$
où $\hbar$ est un élément central de $\mathcal{A}_0$ et pour tout $a\in\mathbb{A}$,
\[\hbar^2a=0\Longrightarrow a\in\hbar\mathbb{A}.\]
\section{Cohomologie des groupes et des algèbres de Lie}
Soit $G$ un groupe qui agit sur un espace vectoriel $V$,
\[\begin{array}{ccc}
G\times V & \longrightarrow & V\\
(g,x) & \longmapsto & g\cdot x
\end{array}\]
Soit les espaces :
\begin{itemize}
\item [$\ $] $C^0(G,V)=V$,
\item [$\ $] $C^k(G,V)=\{\text{les applications}\ \omega : G\times...\times
G\longrightarrow V\}$.
\end{itemize}
On définit un opérateur $\delta : C^k(G,V)\longrightarrow C^{k+1}(G,V)$, par :
\begin{multline*}
(\delta\omega)(g_1,...,g_{k+1})=g_1\cdot\omega(g_2,...,g_{k+1})+\sum_{i=1}^k
(-1)^k\omega(g_1,...,g_ig_{i+1},...,g_{k+1})\\
+(-1)^{k+1}\omega(g_1,...,g_k).
\end{multline*}
\begin{itemize}
\item[$\bullet$] $(\delta\omega)(g)=g\cdot\omega-\omega,\ $ (pour $k=0$).
\item[$\bullet$] $(\delta\omega)(g,h)=g\cdot\omega(h)-\omega(gh)+\omega(g),\ $
(pour $k=1$).
\end{itemize}
On peut vérifier que $\delta$ est un opérateur de cobord, c'est-à-dire
$\delta\circ\delta=0$, ceci permet de définir les espaces de cohomologie :
\[H^k(G,V)=\frac{\ker\{\delta : C^k(G,V)\to C^{k+1}(G,V)\}}{\im\{\delta :
C^{k-1}(G,V)\to C^k(G,V)\}}\]
On dit que $\omega\in V$ est un $0$-cocycle, pour l'action du groupe $G$ sur
$V$, si
\[g\cdot\omega=\omega,\quad\text{pour tout}\ g\in G.\]
On dit qu'une application $\omega : G\rightarrow V$ est un $1$-cocycle, pour
l'action du groupe $G$ sur $V$, si pour tout $g,h\in G$
\begin{equation}\label{1cocycle}
\omega(gh)=\omega(g)+g\cdot\omega(h)
\end{equation}
et $\omega\in C^k(G,V)$ est un $k$-cocycle, pour l'action du groupe $G$ sur $V$,
si $\delta\omega=0$.
\paragraph{Cohomologie des algèbres de Lie}
Soit $(\G,[,])$ une algèbre de Lie réelle de dimension
finie $n$, et soit $V$ un $\mathbb{R}$-espace vectoriel de dimension
finie. Une représentation de $\G$ est un morphisme
d'algèbres de Lie
\[\rho : \G\rightarrow\gl(V)\]
tel que pour tout $X,Y\in\G$\\
\[\rho\left([X,Y]\right)=\rho(X)\circ\rho(Y)-\rho(Y)\circ\rho(X).\]
On dit aussi que $V$ est un $\G$-module.\\
Une $k$-forme sur $\G$ à valeurs dans $V$ est une
application : $\omega :
\G\times...\times\G\rightarrow V$, $k$-linéaire
alternée. On notera l'ensemble de ces $k$-formes par
$C^k(\G,V)$, avec la convention
$C^0(\G,V)=V$ et
$C^1(\G,V)=\mathscr{L}(\G,V)$.\\
On définit l'opérateur de cobord $\delta :C^k(\G,V)
\rightarrow C^{k+1}(\G,V)$ par
\begin{align*}
(\delta\omega)(X_1,...,X_{k+1})=\sum_{i=1}^{k+1}&(-1)^{i+1}\rho(X_i)\cdot
\omega\left(X_1,\ldots,\hat{X}_i,\ldots,X_{k+1}\right)\\
&+\sum_{i<j}(-1)^{i+j}\omega\left([X_i,X_j],X_1,\ldots,\hat{X}_i,\ldots,
\hat{X}_j,\ldots,X_{k+1}\right)
\end{align*}
\begin{itemize}
\item[$\bullet$] Pour $k=0$, $(\delta\omega)(X)=\rho(X)\cdot\omega$, pour tout
$X\in\G$, $\omega\in V$.
\item[$\bullet$] Pour $k=1$, $(\delta\omega)(X,Y)=\rho(X)\cdot\omega(Y)-\rho(Y)
\cdot\omega(X)-\omega\left([X,Y]\right)$.
\end{itemize}
\begin{proposition}
$\delta$ est un opérateur de cobord, c'est-à-dire $\delta\circ\delta=0$.
\end{proposition}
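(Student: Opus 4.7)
The plan is to compute $(\delta\circ\delta)\omega(X_1,\ldots,X_{k+2})$ directly and to organize the resulting terms into four families that one shows cancel in pairs. Writing $\delta\omega=A(\omega)+B(\omega)$, where
\[A(\omega)(X_1,\ldots,X_{k+1})=\sum_i(-1)^{i+1}\rho(X_i)\cdot\omega(X_1,\ldots,\widehat{X_i},\ldots,X_{k+1})\]
and
\[B(\omega)(X_1,\ldots,X_{k+1})=\sum_{i<j}(-1)^{i+j}\omega([X_i,X_j],X_1,\ldots,\widehat{X_i},\ldots,\widehat{X_j},\ldots,X_{k+1}),\]
the composition $\delta^2\omega$ decomposes as $A(A(\omega))+A(B(\omega))+B(A(\omega))+B(B(\omega))$. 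I would index the arguments carefully, keeping track of parities, and reduce the identity $\delta^2\omega=0$ to three independent cancellations.

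First I would handle the purely differential part $A\circ A$. In this sum, each term involves $\rho(X_i)\rho(X_j)\omega(\ldots)$ for distinct indices; splitting into the symmetric and antisymmetric parts in $(i,j)$, the symmetric part vanishes by antisymmetry of the remaining signs, and the antisymmetric part produces exactly the commutators $[\rho(X_i),\rho(X_j)]$ applied to $\omega$. Using the representation identity $[\rho(X_i),\rho(X_j)]=\rho([X_i,X_j])$, these terms are then shown to cancel exactly the cross terms of type $A(B(\omega))+B(A(\omega))$ in which one bracket $[X_i,X_j]$ is sitting in the last slot of $\omega$ and a $\rho(X_\ell)$ stands outside. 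The bookkeeping here is a standard sign check where one uses that inserting a bracket and then acting by $\rho$ produces opposite signs to the two orderings of $\rho(X_i),\rho(X_j)$.

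Next I would isolate, in $A(B(\omega))+B(A(\omega))$, the terms in which the bracket and the $\rho$ involve three \emph{distinct} indices $i,j,\ell$. These are the terms of the form $\rho(X_\ell)\cdot\omega([X_i,X_j],\ldots)$, and they appear twice with opposite signs coming from the two compositions; a careful tally of the displacements caused by the omissions $\widehat{X_i},\widehat{X_j},\widehat{X_\ell}$ shows that these cancel identically. Finally, I would treat the purely bracket contribution $B(B(\omega))$: every surviving term is of the shape $\omega([X_a,[X_b,X_c]],\ldots)$ with three distinguished indices $a,b,c$ summed over all cyclic permutations, or of the shape $\omega([X_a,X_b],[X_c,X_d],\ldots)$ with four distinct indices. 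In the first case the three terms add to zero by the Jacobi identity in $\G$; in the second case the two possible orderings of the two brackets appear with opposite signs and cancel.

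The main obstacle will be the sign arithmetic, especially the precise sign associated with the displacement of the remaining arguments after two simultaneous omissions $\widehat{X_i}$ and $\widehat{X_j}$ (where the convention $i<j$ must be normalized when $i>j$ after the first omission). Once those signs are tabulated once and for all, each of the three cancellations above is routine, and the proposition follows. Alternatively, one can shorten the verification by extending $\delta$ as an odd derivation of degree $+1$ on $\wedge^\star\G^\ast\otimes V$ and checking $\delta^2=0$ only on generators in $C^0(\G,V)\cup C^1(\G,V)$; the two cases then reduce, respectively, to the representation identity $\rho([X,Y])=[\rho(X),\rho(Y)]$ and to the Jacobi identity, which exhibits very transparently why these two axioms are exactly what is needed.
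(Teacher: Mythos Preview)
Your plan is correct and is the standard direct verification that the Chevalley--Eilenberg differential squares to zero: the decomposition $\delta=A+B$, the cancellation of $A\circ A$ against the ``coincident indices'' part of $A\circ B+B\circ A$ via the representation identity $\rho([X,Y])=[\rho(X),\rho(Y)]$, the internal cancellation of the ``three distinct indices'' part of $A\circ B+B\circ A$, and the vanishing of $B\circ B$ by the Jacobi identity together with the antisymmetry of $\omega$ --- all of this is exactly right, and your remark that the sign bookkeeping is the only real labor is accurate. Your alternative shortcut (extend $\delta$ as an odd derivation and check on degrees $0$ and $1$) is also valid and is in fact the cleanest way to organize the computation.

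There is nothing to compare against in the paper: the paper's proof of this proposition is left as a placeholder (``compl\'eter\ldots''). Your proposal would fill that gap completely.
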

\begin{proof}
 compléter...
\end{proof}
Un $k$-cocycle est une $k$-forme fermée, c'est-à-dire
$\omega\in C^k(\G,V)$ et
$\delta\omega=0$. On note leur ensemble par $B^k(\G,V)$.\\
Un $k$-cobord est une $k$-forme exacte, c'est-à-dire
$\omega\in C^k(\G,V)$ et il existe
$\eta\in C^{k-1}(\G,V)$ telle que $\omega=\delta\eta$.
On note leur ensemble par $Z^k(\G,V)$.
\begin{lemma}
$B^k(\G,V)$ et $Z^k(\G,V)$ sont des
$\mathbb{R}$-espaces vectoriels, et $Z^k(\G,V)$ est un
sous-espace de $B^k(\G,V)$.
\end{lemma}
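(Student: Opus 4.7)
Mon plan consiste à déduire ce lemme, qui est de nature purement formelle, directement de la $\mathbb{R}$-linéarité de l'opérateur de cobord $\delta$ combinée à la propriété $\delta\circ\delta=0$ déjà établie dans la proposition précédente. Je commencerais par établir la linéarité de $\delta : C^k(\G,V)\to C^{k+1}(\G,V)$, qui résulte d'une inspection immédiate de la formule définissant $\delta\omega$ : chacune des deux sommandes est manifestement linéaire en $\omega$, puisque l'action $\rho(X_i)\cdot\omega(\ldots)$ et l'évaluation $\omega([X_i,X_j],\ldots)$ sont toutes deux $\mathbb{R}$-linéaires en l'argument $\omega$.

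Une fois cette linéarité acquise, la structure de $\mathbb{R}$-espace vectoriel des deux ensembles en découle formellement. D'une part, $B^k(\G,V)=\ker\bigl(\delta : C^k(\G,V)\to C^{k+1}(\G,V)\bigr)$ est le noyau d'une application $\mathbb{R}$-linéaire, donc un sous-espace vectoriel de $C^k(\G,V)$ ; d'autre part, $Z^k(\G,V)=\im\bigl(\delta : C^{k-1}(\G,V)\to C^k(\G,V)\bigr)$ est l'image d'une application $\mathbb{R}$-linéaire, donc également un sous-espace vectoriel de $C^k(\G,V)$.

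Il resterait alors à établir l'inclusion $Z^k(\G,V)\subset B^k(\G,V)$, qui suit immédiatement de la relation $\delta\circ\delta=0$ : pour tout $\omega\in Z^k(\G,V)$, il existe $\eta\in C^{k-1}(\G,V)$ tel que $\omega=\delta\eta$, et par conséquent $\delta\omega=\delta^2\eta=0$, ce qui montre que $\omega\in B^k(\G,V)$. Je n'anticipe aucun obstacle technique sérieux dans cette démonstration, dont le contenu relève essentiellement de l'algèbre linéaire élémentaire appliquée à l'opérateur $\delta$ et à sa propriété de cobord ; la seule vérification à conduire avec un minimum d'attention est celle de la $\mathbb{R}$-linéarité de $\delta$, et même celle-ci reste purement formelle.
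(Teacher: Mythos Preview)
Your proposal is correct and follows exactly the same approach as the paper's own proof, which simply reads ``C'est immédiat. Le deuxième point résulte de la Proposition ci-dessus.'' You have merely spelled out the details that the paper leaves implicit: the linearity of $\delta$ makes $B^k=\ker\delta$ and $Z^k=\im\delta$ into subspaces, and the inclusion $Z^k\subset B^k$ is precisely the content of $\delta\circ\delta=0$.
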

\begin{proof}
C'est immédiat. Le deuxième point résulte de la Proposition ci-dessus.
\end{proof}
On appel $k^{\text{ème}}$ groupe de cohomologie de $\G$, à coefficients dans $V$ (ou par rapport à la représentation $\rho$), l'espace
vectoriel quotient
\[H^k(\G,V)=B^k(\G,V)/Z^k(\G,V).\]
Si $G$ est un groupe de Lie connexe d'algèbre de Lie $\G$, alors
l'espace $\Omega_L^*(G)$ des $1$-formes différentielles invariantes à gauche sur $G$
est un sous-complexe du complexe de de Rham de $G$ qui est naturellement
isomorphe au complexe de Chevalley-Eilenberg $C^*(\G,\mathbb{R})$
pour l'action triviale de $\G$ sur $\mathbb{R}$, qui implique que
leurs cohomologies
sont isomorphes :
\begin{equation}\label{action.triviale}
H_L^*(G) \cong H^*(\G,\mathbb{R}).
\end{equation}
(L'isomorphisme de $\Omega_L^*(G)$ sur $C^*(\G,\mathbb{R})$
associe toute $1$-forme invariante à gauche à sa valeur en l'élément neutre $e$
de $G$, après identification de $\G^*$ avec $T_ eG$). En particulier,
lorsque $G$ est compact le processus de la moyenne $\alpha\mapsto\int_G L_g^\alpha\,dg$
($\alpha$ dénote une forme différentielle sur $G$, et $L_g$ dénote la
translation à gauche par $g\in G$) induit un isomorphisme de $H_{dR}^*G$ dans $H_L^*(G)$, et on a $H_{dR}(G)\cong H^*(\G,\mathbb{R})$.
\begin{theorem}[Whitehead1] Si $\G$ est semi-simple et $V$ est un
$\G$-module de dimension finie, alors  $H^1(\G,V)=0$ et
$H^2(\G,V)=0$.
\end{theorem}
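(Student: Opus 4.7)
Le plan consiste à combiner le théorème de réductibilité complète de Weyl avec l'argument classique de l'opérateur de Casimir sur le complexe de Chevalley-Eilenberg. D'abord, par le théorème de Weyl, tout $\G$-module $V$ de dimension finie se décompose en somme directe de sous-modules irréductibles; comme les foncteurs $H^k(\G, -)$ commutent avec les sommes directes finies, il suffit d'établir l'annulation $H^1 = H^2 = 0$ séparément dans deux cas: $V$ irréductible non trivial, et $V = \mathbb{R}$ le module trivial.

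Dans le cas $V$ irréductible non trivial, on introduit l'opérateur de Casimir. Après décomposition de $\G$ en idéaux simples, on se ramène au facteur sur lequel $\rho$ est non nulle; par irréductibilité, $\rho$ y est alors fidèle. La forme bilinéaire $B_\rho(X, Y) = \tr(\rho(X)\rho(Y))$ est non dégénérée et $\ad$-invariante (critère de Cartan). Soient $(x_i)$ une base de $\G$ et $(x^i)$ sa base duale par rapport à $B_\rho$; on pose $C = \sum_i \rho(x_i)\rho(x^i) \in \text{End}(V)$. L'invariance de $B_\rho$ entraîne que $C$ commute avec $\rho(\G)$; par le lemme de Schur, $C$ agit sur $V$ par un scalaire $c$ satisfaisant $c \dim V = \tr(C) = \dim \G$, donc $c \neq 0$. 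D'une part, par fonctorialité de $H^*(\G, -)$ appliquée au morphisme de $\G$-modules $c \cdot \id_V$, l'endomorphisme induit sur $H^k(\G, V)$ est la multiplication par $c$. D'autre part, la formule de Cartan $L_X = \iota_X \delta + \delta \iota_X$ (pour $X \in \G$, où $\iota_X$ est la contraction par $X$ sur la première variable) montre que l'action de $\G$ sur $H^*(\G, V)$ par dérivations de Lie est identiquement nulle; cette action s'étendant en une action de l'algèbre enveloppante $U(\G)$, l'idéal d'augmentation — contenant le Casimir $C$ — agit comme zéro sur $H^*(\G, V)$. La compatibilité de ces deux actions (conséquence de la naturalité de $H^*$ en tant que foncteur sur les $U(\G)$-modules) donne $c \cdot H^k(\G, V) = 0$, d'où $H^k(\G, V) = 0$ pour tout $k \geq 0$.

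Dans le cas du module trivial $V = \mathbb{R}$, la nullité de $H^1$ est immédiate: un $1$-cocycle est une forme linéaire $f: \G \to \mathbb{R}$ s'annulant sur $[\G, \G]$, mais comme $\G$ est semi-simple on a $[\G, \G] = \G$, d'où $f = 0$. Pour établir $H^2(\G, \mathbb{R}) = 0$, on associe à tout $2$-cocycle $\omega$ l'extension centrale $\tilde{\G} = \G \oplus \mathbb{R}$ munie du crochet $[(X, a), (Y, b)] = ([X, Y], \omega(X, Y))$, l'identité de Jacobi dans $\tilde{\G}$ étant équivalente à la condition de cocycle sur $\omega$. Le radical de $\tilde{\G}$ coïncide avec l'idéal central $\mathbb{R}$ (car $\tilde{\G}/\mathbb{R} \cong \G$ est semi-simple, et $\tilde{\G}$ contient l'idéal abélien $\mathbb{R}$), et le théorème de Levi-Malcev fournit une sous-algèbre de Levi $\mathfrak{s}$ complémentaire, isomorphe à $\G$ via la projection. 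Ce scindage définit une forme linéaire $h: \G \to \mathbb{R}$ vérifiant $\omega = -\delta h$, donc $\omega$ est un cobord.

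Le point le plus délicat sera de justifier rigoureusement la formule d'homotopie de Cartan $L_X = \iota_X \delta + \delta \iota_X$ sur le complexe de Chevalley-Eilenberg à coefficients dans un module, ainsi que la compatibilité entre l'action fonctorielle de $C$ (comme scalaire sur $V$) et l'action induite par la centralité de $C$ dans $U(\G)$; c'est cette compatibilité, sous-jacente à l'identification de $H^*(\G, V)$ avec $\text{Ext}^*_{U(\G)}(\mathbb{R}, V)$, qui ferme l'argument de contradiction scalaire.
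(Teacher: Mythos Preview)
Le papier énonce ce résultat comme un théorème classique (lemmes de Whitehead) sans en donner de démonstration; il n'y a donc pas de preuve à laquelle comparer la vôtre.

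Votre schéma est le schéma standard et il est essentiellement correct, mais il recèle une circularité qu'il faut signaler. Vous invoquez d'emblée le théorème de réductibilité complète de Weyl pour décomposer $V$, puis le théorème de Levi--Malcev pour traiter $H^2(\G,\mathbb{R})$. Or la preuve algébrique usuelle de Weyl passe précisément par $H^1(\G,V)=0$, et la preuve moderne de Levi--Malcev par $H^2(\G,V)=0$. Pour Weyl, on peut briser la circularité en invoquant la preuve analytique originale (unitarian trick). Pour Levi--Malcev en revanche, l'argument cohomologique est exactement celui que vous cherchez à établir; invoquer Levi ici revient à supposer le résultat. Une issue propre et directe pour $H^2(\G,\mathbb{R})=0$: tout $2$-cocycle $\omega$ s'écrit $\omega(X,Y)=B(DX,Y)$ pour un unique $D\in\mathrm{End}(\G)$ via la non-dégénérescence de la forme de Killing $B$; la condition de cocycle et l'invariance de $B$ forcent $D$ à être une dérivation, donc $D=\ad_Z$ puisque toute dérivation d'une algèbre semi-simple est intérieure (fait démontrable sans cohomologie, par l'argument d'orthogonalité de $\ad(\G)$ dans $\mathrm{Der}(\G)$); alors $\omega(X,Y)=B(Z,[X,Y])=\delta h(X,Y)$ avec $h=-B(Z,\cdot)$.

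Deux imprécisions mineures par ailleurs. D'une part, ``on se ramène au facteur sur lequel $\rho$ est non nulle'' n'est pas tout à fait exact pour $\G$ semi-simple non simple: plusieurs facteurs simples peuvent agir non trivialement sur un même $V$ irréductible; il suffit cependant de prendre le Casimir d'\emph{un} facteur simple agissant non trivialement pour obtenir un opérateur central inversible. D'autre part, sur $\mathbb{R}$, le lemme de Schur ne garantit pas que $C$ soit un scalaire (le commutant peut être $\mathbb{C}$ ou $\mathbb{H}$), mais seulement un élément inversible du commutant; c'est néanmoins suffisant pour conclure que l'endomorphisme induit sur $H^k(\G,V)$ est à la fois inversible (par fonctorialité) et nul (car $C$ est dans l'idéal d'augmentation de $U(\G)$).
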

\begin{theorem}[Whitehead2] Si $\G$ est semi-simple et $V$ est un
$\G$-module de dimension finie, et $V^{\G}=0$ où
$V^{\G}=\{v\in V,\ \rho(x)\cdot v=0,\ \text{pour tout}\
x\in\G\}$ est l'ensemble des éléments de $V$ qui sont invariants par
l'action de $\G$, alors
\[H^k(\G,V)=0,\quad\text{pour tout}\ k\geq 0.\]
\end{theorem}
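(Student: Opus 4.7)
Le plan est d'employer l'approche classique via l'élément de Casimir, en trois étapes principales.

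\emph{Première étape : réduction au cas irréductible.} Comme $\G$ est semi-simple et $V$ est de dimension finie, le théorème de réductibilité complète de Weyl permet de décomposer $V = \bigoplus_\alpha V_\alpha$ en somme directe de sous-$\G$-modules irréductibles. Chaque $V_\alpha$ étant un sous-module de $V$, on a $V_\alpha^\G \subset V^\G = \{0\}$. Par fonctorialité de la cohomologie, $H^k(\G,V) = \bigoplus_\alpha H^k(\G, V_\alpha)$. Il suffit donc de démontrer que $H^k(\G, W) = 0$ pour tout $\G$-module irréductible $W$ avec $W^\G = 0$; un tel $W$ est nécessairement de dimension strictement positive et non isomorphe à la représentation triviale. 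Le cas $k=0$ est immédiat puisque $H^0(\G, W) = W^\G = 0$; il reste à traiter $k \geq 1$.

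\emph{Deuxième étape : l'élément de Casimir.} Comme $\G$ est semi-simple, sa forme de Killing $\kappa$ est non-dégénérée. Étant donnée une base $(e_i)$ de $\G$ et sa base duale $(e^i)$ par rapport à $\kappa$, l'élément de Casimir
\[C = \sum_i e_i\, e^i \in U(\G)\]
est central dans $U(\G)$, indépendant du choix de base, et appartient à l'idéal d'augmentation $\G \cdot U(\G)$. D'après le lemme de Schur, $C$ agit sur $W$ par un scalaire $\lambda_W \in \R$. Un calcul standard basé sur la trace (on a $\tr_W \rho(C) = \lambda_W \dim W$, tandis que la structure de la représentation irréductible non-triviale force $\tr_W \rho(C) \neq 0$) montre que $\lambda_W \neq 0$.

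\emph{Troisième étape : trivialité de l'action de $\G$ sur $H^*(\G, W)$.} Pour $X \in \G$, on définit la contraction $i_X : C^k(\G, W) \to C^{k-1}(\G, W)$ par $(i_X\omega)(X_1,\ldots,X_{k-1}) = \omega(X, X_1, \ldots, X_{k-1})$. Un calcul direct, analogue à la formule de Cartan, établit que la dérivée de Lie $\theta(X) := i_X \delta + \delta\, i_X$ coïncide avec l'action infinitésimale naturelle de $X$ sur les cochaînes de $C^*(\G, W)$. Comme $\theta(X)$ est un commutateur avec la différentielle, il induit l'application nulle sur $H^*(\G, W)$. Par multiplicativité, tout élément de l'idéal $\G \cdot U(\G)$ agit par l'application nulle sur $H^*(\G, W)$; en particulier $C$ agit par $0$. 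Mais l'action de $C$ sur les cochaînes étant induite par son action sur les coefficients $W$, elle se réduit sur $H^*(\G, W)$ à la multiplication par le scalaire non-nul $\lambda_W$. La seule possibilité pour concilier ces deux observations est $H^k(\G, W) = 0$ pour tout $k \geq 1$, ce qui achève la preuve.

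L'obstacle principal sera la vérification soigneuse de la formule $\theta(X) = i_X \delta + \delta\, i_X$ sur le complexe de Chevalley-Eilenberg, qui exige un calcul minutieux des signes alternants dans la définition de $\delta$. Une fois cette formule établie, la non-nullité du scalaire de Casimir sur une représentation irréductible non-triviale et le reste de l'argument sont entièrement standards.
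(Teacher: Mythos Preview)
L'article énonce ce théorème dans l'annexe sans démonstration ; il n'y a donc rien à quoi comparer. Votre approche via l'élément de Casimir est la voie classique et la stratégie d'ensemble est correcte, mais l'étape~3 contient une lacune réelle. Vous affirmez que l'action de $C$ sur les cochaînes est induite par son action sur les coefficients~$W$ ; or c'est faux au niveau des cochaînes : $\theta(X)$ agit sur $C^k(\G,W)\cong(\wedge^k\G)^*\otimes W$ simultanément par la coadjointe sur $(\wedge^k\G)^*$ et par $\rho$ sur $W$, de sorte que $\theta(C)=\sum_i\theta(e_i)\theta(e^i)$ diffère de l'opérateur $\Gamma(C):\omega\mapsto\rho(C)\circ\omega=\lambda_W\,\omega$ par des termes croisés et par le Casimir de la représentation coadjointe. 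Votre argument établit bien $\theta(C)=0$ en cohomologie, mais ce qu'il faut est que $\Gamma(C)=\lambda_W\cdot\mathrm{id}$ soit nul-homotope, assertion distincte qui reste à justifier.

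La réparation est standard mais doit être explicitée. Première option : interpréter $H^k(\G,W)=\mathrm{Ext}^k_{U(\G)}(\R,W)$ et utiliser qu'un élément \emph{central} de $U(\G)$ induit le même endomorphisme de $\mathrm{Ext}$, qu'il agisse sur $\R$ (où $C$ opère par~$0$) ou sur $W$ (où $C$ opère par~$\lambda_W$). Seconde option, plus concrète : poser $h=\sum_i\rho(e_i)\circ i_{e^i}:C^k\to C^{k-1}$ et vérifier, grâce à l'invariance $\sum_i[X,e_i]\otimes e^i=-\sum_ie_i\otimes[X,e^i]$ de la forme de Killing, que $\delta h+h\delta=\Gamma(C)$ ; c'est ce calcul, et non la formule de Cartan pour $\theta(X)$, qui constitue le véritable obstacle. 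Remarque mineure : votre justification de $\lambda_W\neq0$ est trop elliptique ; l'argument le plus net consiste soit à complexifier et invoquer la théorie des plus hauts poids, soit à remplacer le Casimir de Killing par celui relatif à la forme trace de $W$ sur $\G/\ker\rho$, pour lequel $\tr_W\rho(C)=\dim(\G/\ker\rho)$ donne directement $\lambda_W=\dim(\G/\ker\rho)/\dim W>0$.
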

\paragraph{Représentation adjointe}
Le cas important, qui nous intéresse, est la cohomologie relative à
la représentation adjointe, c'est-à-dire au morphisme d'algèbres de Lie
\[\begin{array}{cccc}
\ad : & \G & \rightarrow & \gl(\G)\\
     & x & \mapsto & \ad_x,
\end{array}\]
avec
\[\begin{array}{cccc}
\ad_x : & \G & \rightarrow & \G \\
 & y & \mapsto & \ad_x(y)=[x,y]
\end{array}\]
 $V=\wedge^{2}\G$ et $\rho=\ad^{(2)} :
\G\rightarrow\gl(\wedge^2\G)$,
$x\mapsto\ad_x^{(2)}$ o
\[\begin{array}{cccc}
\ad_x^{(2)} : & \wedge^2\G & \rightarrow & \wedge^2\G \\
 & y & \mapsto & \ad_x(y)\wedge z+y\wedge\ad_x(z)
  \end{array}\]
Autrement dit, puisque $\ad_x(y)=[x,y]$ on a
\[\rho(x)(y\wedge z)=[x,y]\wedge z+y\wedge[x,z].\]
\section{Intégration des algèbres et des bigèbres de Lie}
Soit $\G$ une algèbre de Lie réelle de dimension finie et soit $G$ son groupe de Lie connexe et simplement connexe. Soit $\xi : \G\rightarrow\wedge^2\G$ un $1$-cocycle pour la représentation adjointe de $\G$ sur $\wedge^2\G$. Pour tout champ de vecteurs $X\in\mathfrak{X}(G)$ on associé la $1$-forme différentielle $P$ à valeurs dans $\wedge^2\G$, définie par
\begin{equation}
<P,X>(g)=\Ad(g)\cdot\xi\left(L_g*X\right).
\end{equation}
On a 
\begin{align*}
<L_g^*P,X>(h)=&<P,L_{g*}X>(gh)\\
=&\Ad(gh)\cdot\xi\left(L_{gh}^*L_{g*}X\right)\\
=&\Ad(g)\Ad(h)\cdot\xi\left(L_h^*X\right)\\
=&\Ad(g)\cdot<P,X>(h),
\end{align*}
donc $P$ est équivariante, c'est-à-dire
\begin{equation}
L_g^*P=\Ad(g)\cdot P.
\end{equation}
D'autre part, $dP=0$ puisque $\xi$ est un cocycle. En effet,\\
Comme $G$ est simplement connexe, $P$ est exacte et donc il existe une unique fonction $\Lambda$ à valeurs dans $\wedge^2\G$, $\Lambda : G\mapsto\wedge^2\G$, telle que $d\Lambda=P$ et $\Lambda(e)=0$. On définit un champ de bivecteurs par
\begin{equation}
\pi(g)=R_{g*}\Lambda(g)
\end{equation}
D'après le Théorème, $\pi$ est multiplicatif, si et seulement si, $\Lambda$ est un $1$-cocycle pour la représentation adjointe de $G$ sur $\wedge^2\G$, c'est-à-dire
\[\Lambda(gh)=\Lambda(g)+\Ad(g)\cdot\Lambda(h)\]
et ceci est vérifié, puisque $P$ est équivariante.
\chapter{Conclusion}\label{chapter6}
La ``simplicité'' de l'approche algébrique nous a permis de construire des exemples de structures déformables au sens de Hawkins, où l'approche géométrique est avare d'exemples.\\
Toutefois, l'approche algébrique n'est pas entièrement résolue, donc elle peut faire l'objet d'un travail futur. Cette approche peut se résumer comme suit :\\
Soit $G=\mathbb{R}^n$, muni d'une structure de Lie résoluble (non abélienne), d'un tenseur de Poisson multiplicatif $\pi$ et d'une métrique riemannienne $\prs$ invariante à gauche plate. L'algèbre de Lie $\G$ de $G$ est donc de Milnor. On suppose que le $1$-cocycle $\xi=\G\rightarrow\wedge^2\G$ associé à $\pi$ vérifie :
\[\ad\nolimits_x\ad\nolimits_x\xi(z)=0,\ \text{pour tout}\ x,y,z\in S,\]
où $S$ est la sous-algèbre abélienne $S=\{x\in\G\mid \ad_x+\ad_x^t=0\}$. Alors,
\begin{enumerate}
\item Comment caractériser la bigèbre de Lie duale $(\G^*,\rho)$ ?
\item Si de plus, pour une base $\{X_1,...,X_n\}$ de $\G$ on a 
\[\xi\left(i_{\rho(\alpha)}(X_1\wedge...\wedge X_n)\right)=0,\ \text{pour tout}\ \alpha\in\G^*,\]
alors comment caractériser la bigèbre duale $(\G^*,\rho)$ ?
\end{enumerate}
Sur un autre registre, on a vu l'importance du tenseur de métacourbure dans le problème géométrique de déformation et qu'il n'existe pas de formule locale de ce tenseur. D'autre part, comme toute structure d'algèbre de Poisson graduée sur $\Omega^\star(M)$ est déterminée par
des "crochets initiaux"
$$\{f,g\}=\pi(df,dg),\{f,\alpha\}=D_{df}\alpha,\{\alpha,\beta\}=\Psi(\alpha,\beta),$$
où $\pi$ est un bivecteur Poisson, $D$ est une connexion contravariante plate et sans torsion et
$\Psi$ est un opérateur bi-différentiel d'ordre $1$ qui satisfait des condition
assurent l'identité de Jacobi. Il serais donc intéressant, d'étudier la déformation de l'algèbre des formes complètement symétriques (en plus des formes différentielles), comme dans l'article de Mitiric et Vaisman \cite{mi-vai}, pour mieux comprendre le tenseur de métacourbure avec de possibles résultats nouveaux.\\

Notons enfin qu'il sera très intéressant de considérer la déformation non commutative dans le cas des structures de Poisson affines, c'est-à-dire le cas d'un groupe de Lie muni d'un tenseur de Poisson $\pi$ vérifiant
\[\pi(gh)=L_{g*}\pi(h)+R_{h*}\pi(g)+L_{g*}R_{h*}\pi(e)\]
Ces structures ont été introduites est étudiées dans \cite{daso:affine}. Ce sont des structures qui contiennent les groupes de Lie-Poisson et les structures de Poisson invariantes à gauche. Elles possèdent des propriétés simples : leurs feuilles symplectiques sont les orbites des actions d'habillage, leur cohomologie de Poisson se réalise comme la cohomologie d'algèbres de Lie, de leur algèbres de Lie duales. Je suis particulièrement curieux de savoir si les résultats que j'ai obtenu pour les groupes de Lie-Poisson, subsistent dans le cas des structures de Poisson affines munies de métriques riemanniennes invariantes à gauche. Un sujet qui peut être, une suite logique de cette thèse. 
\newpage
\printindex
\clearpage

\end{document}